\DeclareSymbolFont{cyrletters}{OT2}{wncyr}{m}{n}
\DeclareMathSymbol{\Sha}{\mathalpha}{cyrletters}{"58}
\newcommand\undertilde[2][1]{%
 \def\useanchorwidth{T}%
  \ifnum#1>1%
    \stackunder[0pt]{\tenq[\numexpr#1-1\relax]{#2}}{\scriptscriptstyle\sim}%
  \else%
    \stackunder[1pt]{#2}{\scriptscriptstyle\sim}%
  \fi%
}
\DeclareMathAlphabet{\mathpzc}{OT1}{pzc}{m}{it}
\title[Associators in mould theory]
{
Associators 
in mould theory
}
\keywords{Moulds, associators}
\subjclass[2020]{Primary~16T05, Secondary~11M32}
\author{Hidekazu Furusho}
\author{Minoru Hirose}
\author{Nao Komiyama}
\address{Graduate School of Mathematics, Nagoya University,
Furo-cho, Chikusa-ku, Nagoya, 464-8602, Japan}
\email{furusho@math.nagoya-u.ac.jp}
\address{Graduate School of Science and Engineering, Kagoshima University, 1-21-35 Korimoto, Kagoshima, Kagoshima 890-0065, Japan}
\email{hirose@sci.kagoshima-u.ac.jp}
\address{Department of Mathematics, Josai University, 1-1 Keyakidai, Sakado, Saitama 350-0295, Japan}
\email{nkomiyama@josai.ac.jp}
\date{June 23, 2026}
\newtheorem{thm}{Theorem}
\newtheorem{lem}[thm]{Lemma}
\newtheorem{cor}[thm]{Corollary}
\newtheorem{prop}[thm]{Proposition}
\theoremstyle{definition}
\newtheorem{rem}[thm]{Remark}
\newtheorem{defn}[thm]{Definition}
\newtheorem{thm-defn}[thm]{Theorem-Definition}
\newtheorem{eg}[thm]{Example}
\newtheorem{nota}[thm]{Notation}
\theoremstyle{remark} }
\numberwithin{equation}{section}
\newcommand{\Q}{\mathbb{Q}}
\newcommand{\C}{\mathbb{C}}
\newcommand{\R}{\mathbb{R}}
\newcommand{\Z}{\mathbb{Z}}
\newcommand{\N}{\mathbb{N}}
\newcommand{\shuffle}{\scalebox{.8}{$\Sha$}}
\newcommand{\Sh}[3]{{\rm Sh}\binom{#1;#2}{#3}}
\newcommand{\Shstar}[3]{{\rm Sh}_*\binom{#1;#2}{#3}}
\newcommand{\vecx}{{\bf x}}
\newcommand{\varia}[2]{\left({}^{#1}_{#2}\right)}
\newcommand{\anti}{\mathsf{anti}}
\newcommand{\ev}{\mathsf{ev}}
\newcommand{\Ev}{\mathsf{Ev}}
\newcommand{\ma}{\mathsf{ma}}
\newcommand{\vimo}{\mathsf{vimo}}
\newcommand{\mi}{\mathsf{mi}}
\newcommand{\minus}{\mathsf{minus}}
\newcommand{\swap}{\mathsf{swap}}
\newcommand{\pari}{\mathsf{pari}}
\newcommand{\dima}{\mathsf{dima}}
\newcommand{\divimo}{\mathsf{divimo}}
\newcommand{\dimi}{\mathsf{dimi}}
\newcommand{\madec}{\mathsf{madec}}
\newcommand{\ARI}{\mathsf{ARI}}
\newcommand{\ulflex}[2]{{#1}\rceil_{\scalebox{.7}{$#2$}}}
\newcommand{\urflex}[2]{{}_{\scalebox{.7}{$#1$}}\lceil{#2}}
\newcommand{\llflex}[2]{{#1}\rfloor_{\scalebox{.7}{$#2$}}}
\newcommand{\lrflex}[2]{{}_{\scalebox{.7}{$#1$}}\lfloor{#2}}
\newcommand{\arit}{\mathsf{arit}}
\newcommand{\ari}{\mathsf{ari}}
\newcommand{\preari}{\mathsf{preari}}
\newcommand{\garit}{\mathsf{garit}}
\newcommand{\gari}{\mathsf{gari}}
\newcommand{\ganit}{{\sf ganit}}
\newcommand{\invgari}{{\sf invgari}}
\newcommand{\pal}{{\sf pal}}
\newcommand{\pic}{{\sf pic}}
\newcommand{\adari}{{\sf adari}}
\newcommand{\shmap}{\mathpzc{Sh}}
\newcommand{\unitmould}{1_{{\mathcal M}(\mathcal{F}; \Gamma)}}
\newcommand{\zeromould}{0_{{\mathcal M}(\mathcal{F}; \Gamma)}}
\newcommand{\unitdimould}{1_{\mathcal{M}_2(\mathcal F;\Gamma_1,\Gamma_2)}}
\newcommand{\unitpolymould}{1_{\mathcal M_n(\mathcal F;\Gamma_1,\dots, \Gamma_n)}}
\newcommand{\pol}{\mathsf{pol}}
\newcommand{\al}{\mathsf{al}}
\newcommand{\il}{\mathsf{il}}
\newcommand{\id}{\mathsf{id}}
\newcommand{\mulp}{\diamond}
\newcommand{\codec}{\mathsf{dec}^{-1}}
\newcommand{\flip}{\mathsf{flip}}
\newcommand{\rev}{\mathsf{rev}}
\newcommand{\bal}{\mathsf{bal}}
\newcommand{\abal}{\mathsf{bal}}
\newcommand{\GARI}{\mathsf{GARI}}
\newcommand{\as}{\mathsf{as}}
\newcommand{\is}{\mathsf{is}}
\newcommand{\pent}{\mathsf{pent}}
\newcommand{\0}{\mathbf{0}}
\newcommand{\dec}{\mathsf{dec}}
\newcommand{\Zag}{\mathsf{Zag}}
\newcommand{\Zig}{\mathsf{Zig}}
\newcommand{\Mini}{\mathsf{Mini}}
\newcommand{\Mono}{\mathsf{Mono}}
\newcommand{\Li}{\mathsf{Li}}
\newcommand{\GRT}{\mathsf{GRT}}
\newcommand{\ASTR}{\mathsf{ASTR}}
\newcommand{\grt}{\mathfrak{grt}}
\newcommand{\DMR}{\mathsf{DMR}}
\newcommand{\paj}{\mathsf{paj}}
\newcommand{\zerocut}{\mathfrak{c}_0}
\newcommand{\Lau}{\mathsf{Lau}}
\newcommand{\ser}{\mathsf{ser}}
\newcommand{\expari}{\mathsf{expari}}
\newcommand{\coeff}[3]{\left\langle #1\, \middle|\, {}^{#2}_{#3} \right\rangle}
\begin{document}
\bibliographystyle{amsalpha+}
\maketitle

\begin{abstract}
By developing various techniques of mould theory
and establishing a quasi-involutive reformulation of Drinfeld's associator set,
we introduce
$\GARI(\mathcal{F})_{\as+\abal}$,
a mould theoretic formulation of
Drinfeld's  associator set.
We give a mould-theoretical generalization of the result that
associator relations imply double shuffle relations,
namely,
we explain that $\GARI(\mathcal{F})_{\as+\abal}$
is  embedded into  \'{E}calle's set
$\GARI(\mathcal{F})_{\as\ast\is}$
which is a mould theoretic version of
Racinet's double shuffle set.
\end{abstract}

{\small \tableofcontents}

\setcounter{section}{-1}
\section{Introduction}\label{introduction}
It is known that multiple zeta values (MZVs) satisfy a vast number of algebraic/linear relations.
Associator relations and double shuffle relations are
the most important ones among them.
The associator relations are defining equations of
the set $\ASTR$ of associators introduced by Drinfeld \cite{Dr}.
Geometrically they arise from symmetries of  configuration
 spaces of the affine line.
On the other hand the double shuffle relations are rather combinatorial.
They arise from the iterated integral presentation and the
power series expansion of MZVs.
They are explored in \cite{E-flex, IKZ, G, R}.
Particularly their mould-theoretic aspect is revealed in
\cite{E-flex, SS, S-ARIGARI}
where Racinet's work on the set $\DMR$ 
of solutions of the relations are 
regarded as a polynomial part of the set
$\GARI(\mathcal{F})_{\as\ast\is}$ (cf. Definition \ref{def:GARIas*is})
of certain related moulds.

The objective of this  paper  is to present a mould-theoretic formulation
$\GARI(\mathcal{F})_{\as+\abal}$ of
$\ASTR$ 
and show its properties.
Our results are exhibited as follows:
\begin{itemize}
\item Quasi-involutive reformulation of $\ASTR$ in Theorem \ref{thm: flip characterization}.
\item Mould theoretic variant
       $\GARI(\mathcal{F})_{\as+\abal}$ of $\ASTR$
       in Definition \ref{defn: GARI as+bal}.
\item {The isomorphism (Theorem \ref{thm: recovery theorem of ASTR and GRT})
      $$
     {\ASTR}\cdot \exp{\Q f_{1}}
      \simeq\GARI(\mathcal{F})_{{\as+\abal}}$$
under the map $\ma$ (cf. \eqref{eq: ma Gamma})}
when  ${\mathcal F}={\mathcal F}_\ser$.
\item
      {The inclusion  (Theorem \ref{thm: inclusion GARIas+pent to GARIas*is})
      $$
      \GARI(\mathcal{F})_{\as+\abal}\hookrightarrow\GARI(\mathcal{F})_{\as\ast\is}
      $$
      under the map $\minus$ (cf. \eqref{eq:neg})
      which extends the inclusion $\ASTR\hookrightarrow \DMR$
      shown in \cite{EF2, F11} for a general $\mathcal F$.}
      \item The specific element $\paj$ gives an element in $\GARI(\mathcal{F})_{\as+\abal}\setminus \ma(\ASTR)$ under the map $\minus$
         (Theorem \ref{thm: paj in GARI as+pent})
         when  ${\mathcal F}={\mathcal F}_\Lau$.
   \end{itemize}

\subsubsection*{Acknowledgements}
H.F. has been supported by grants JSPS KAKENHI  JP18H01110 
and JP24K00520.
M.H. has been supported by grants JSPS KAKENHI JP18J00982 and JP18K13392.
N.K. has been supported by grants JSPS KAKENHI JP23KJ1420.
They are grateful to the referee for valuable comments.

\section{Preparation on mould theory}\label{sec:preparation}
We start by preparing the notion of family of functions
in \S \ref{sec: function family}.
The notion of moulds and dimoulds is reviewed in \S \ref{sec: moulds and dimoulds}.
We extend these notions to  polymoulds in \S \ref{sec: polymoulds}.
In \S \ref{sec: Mould-proper map and the unique prolongation theorem},
we prove  the unique prolongation theorem (Theorem \ref{thm:UniqueProlongation})
by introducing the notion of mould-proper maps (Definition \ref{defn:mould-proper map}),
which are significant tools to prove several  properties of moulds in the later sections.
Several properties of moulds  including alternal(il)ity, etc. are recalled in
\S \ref{sec: alternality, alternility, symmetrality, symmetrilty}
and the notions  of  flexions and  of gari are recalled in
\S \ref{subsec:flexions and gari}.
The map $\ma$ connecting moulds with non-commutative formal power series is reviewed
in \S \ref{subsec:map ma}.
We extend the map to $\dima$ for dimoulds in \S \ref{sec: dima and ma}.
In \S \ref{sec:Polymoulds of pure braid type}, we consider polymoulds
related with  braids and encode them with
several fundamental operations there by exploiting the unique prolongation theorem.

\subsection{Family of functions}\label{sec: function family}

The notion of moulds which will be explained in the next subsection
is  associated with each  family of functions. 
In this subsection, we provide an abstract framework of families of functions and
introduce the notion of divisible family
which is rather a working hypothesis in the  mould theory.

We denote by $\Q$-${\mathbf{Alg} }$ the category of commutative unital $\Q$-algebras, and by $\Q$-$\mathbf{Vec}\mathsf{Inj}$ the category whose objects are finite-dimensional $\Q$-vector spaces and Hom sets are injective homomorphisms.
We consider the functor $F_{\rm pol}:\Q$-$\mathbf{Vec}\mathsf{Inj} \to \Q$-${\mathbf{Alg} }$ which sends  $V$ to its symmetric algebra $S(V)$.

\begin{defn}
A {\it family of functions} means a pair
$\mathcal{F} = (F_{\mathcal F},i_{\mathcal F})$ with a functor
$F_{\mathcal F}:\Q$-$\mathbf{Vec}\mathsf{Inj}\to \Q$-${\mathbf{Alg} }$ and
a natural transformation $i_{\mathcal F}:F_{\rm pol} \Rightarrow F_{\mathcal F}$.
\end{defn}

Let $V_n := \Q x_1 \oplus \cdots \oplus \Q x_n$ where $x_1,\dots,x_n$ are formal symbols.
For a family of functions ${\mathcal F}$,
we put
$$\mathcal{F}_m := F_{\mathcal F}(V_m).$$
Note that $F_{\rm pol}(V_m)$ is canonically identified with $\Q[x_1,\dots,x_m]$, and $\mathcal{F}_m$ is regarded as a $\Q[x_1,\dots,x_m]$-algebra.
For $M\in \mathcal{F}_m$, a vector space $V$, and linearly independent vectors $u_1,\dots,u_m$, we denote by $M(u_1,\dots,u_m)$ the element of $F_{\mathcal F}(V)$ defined by $F_{\mathcal F}(f)(M)$ where $f$ is an injective homomorphism from $V_m$ to $V$ defined by $x_i \mapsto u_i$.
\begin{defn}\label{def:divisibility}
We say a family $\mathcal F$ is {\it divisible} when
for any vector space $V$ and independent vectors $u_1,\dots,u_n$, and $M\in F_\mathcal{F}(V)$, there exists a unique $N\in F_\mathcal{F}(V)$ satisfying
\[
M(u_{1},\dots u_{i-1},u_{i},u_{i+2},\dots,u_{n})-M(u_{1},\dots u_{i-1},u_{i+1},u_{i+2},\dots,u_{n})=(u_{i}-u_{i+1})\cdot N
\]
for all $i$.
\end{defn}

We denote the unique element $N$ in the above definition by
\[
\frac{1}{u_{i}-u_{i+1}}\left\{ M(u_{1},\dots u_{i-1},u_{i},u_{i+2},\dots,u_{n})-M(u_{1},\dots u_{i-1},u_{i+1},u_{i+2},\dots,u_{n}) \right\}.
\]
The following are some examples of divisible families of functions:
\begin{eg}
\begin{itemize}
\item The case where $F_{\mathcal{F}}=F_{{\rm pol}}$ and $i_{\mathcal{F}}$
is the identity. Then
\[
\mathcal{F}_{m}=\mathbb{Q}[x_{1},\dots,x_{m}].
\]
We write this case as $\mathcal{F}_{{\rm pol}}=\mathcal{F}$.
\item The case where $F_{\mathcal{F}}(V)$ is the quotient field of $S(V)$ and $i_{\mathcal{F}}$
is the obvious embedding. Then
\[
\mathcal{F}_{m}=\mathbb{Q}(x_{1},\dots,x_{m}).
\]
We write this case as $\mathcal{F}_{{\rm rat}}=\mathcal{F}$.
\item The case where $F_{\mathcal{F}}(V)$ is the completion of the symmetric
algebra $S(V)$ by degree and $i_{\mathcal{F}}$ is an obvious embedding.
Then
\[
\mathcal{F}_{m}=\mathbb{Q}[[x_{1},\dots,x_{m}]].
\]
We write this case as $\mathcal{F}_{{\ser}}=\mathcal{F}$.
\item The case where $F_{\mathcal{F}}(V)$ is the quotient field of the
completion of $S(V)$ by degree and $i_{\mathcal{F}}$ is an obvious
embedding. In this case,
\[
\mathcal{F}_{m}=\mathbb{Q}((x_{1},\dots,x_{m})).
\]
We write this case as $\mathcal{F}_{{\rm Lau}}=\mathcal{F}$.
\item The case where $F_{\mathcal{F}}(V)$ is the ring of holomorphic functions
on $\hat{V}\otimes\mathbb{C}$ where $\hat{V}$ is the dual vector
space of $V$ and $i_{\mathcal{F}}$ is an obvious embedding.
\item The case where $F_{\mathcal{F}}(V)$ is the field of meromorphic functions
on $\hat{V}\otimes\mathbb{C}$ where $\hat{V}$ is the dual vector
space of $V$ and $i_{\mathcal{F}}$ is an obvious embedding.
\item The case where $F_{\mathcal{F}}(V)$ is a free ${\rm Frac}(S(V))$-algebra
generated by formal symbols $(v_{1},\dots,v_{m})$ where $v_{1},\dots,v_{m}$
are independent vectors in $V$ and $i_{\mathcal{F}}$ is an obvious
embedding.
\end{itemize}
\end{eg}

\subsection{Review on moulds and dimoulds}\label{sec: moulds and dimoulds}
We recall the notion of moulds indexed by a set,
which generalizes  moulds labelled by a group  discussed in \cite{FK}
and also the notion of dimoulds  and  their tensor products
which can be found in \cite{Sau}.
Hereafter we take  $\Gamma$ to be any set
and put $\N_0=\{0,1,2,\dots\}$.

\begin{defn}[cf. {\cite[Definition 1.1]{FK}}]\label{def:mould}
Let $\mathcal{F}$ be a family of functions.
A {\it mould} 
indexed by $\Gamma$ in a lower layer
and  valued in  $\mathcal F$
is a collection 
\begin{equation*}
	M=\left( M\varia{x_1, \dots, x_m}{\sigma_1, \dots, \sigma_m} \right)_{m\in\N_0, \sigma_i\in\Gamma}
\end{equation*}
with 
$M\varia{x_1, \dots, x_m}{\sigma_1, \dots, \sigma_m}\in \mathcal{F}_m$ for $m\geq 0$,
\footnote{
In this case, we have $M(\emptyset)\in\mathcal{F}_0$ for $m=0$.
}
which we call the {\it length} $m$ component of $M$.
We denote the set of all moulds by $\mathcal{M}(\mathcal{F};\Gamma)$.
The set $\mathcal M(\mathcal{F}; \Gamma)$ forms a $\Q$-linear space by
\begin{align*}
	A+ B
	&:= \left(
	A\varia{x_1, \dots, x_m}{\sigma_1, \dots, \sigma_m}
	+ B\varia{x_1, \dots, x_m}{\sigma_1, \dots, \sigma_m}
	\right)_{m\in\N_0}, \\
	c A
	&:= \left(
	c A\varia{x_1, \dots, x_m}{\sigma_1, \dots, \sigma_m}
	\right)_{m\in\N_0},
\end{align*}
for $A, B\in\mathcal M(\mathcal{F}; \Gamma)$ and $c\in \Q$,
namely the addition and the scalar are taken componentwise.
The zero vector ${\zeromould}\in\mathcal M(\mathcal F;\Gamma)$ is given by ${\zeromould}\varia{x_1, \dots, x_m}{\sigma_1, \dots, \sigma_m}:=0$ for $m\geq0$ and for $(\sigma_1,\dots,\sigma_m)\in\Gamma^m$.
We define a {\it product} on $\mathcal M(\mathcal{F}; \Gamma)$ by
\begin{equation*}
	 (A\times B)\varia{x_1, \dots, x_m}{\sigma_1, \dots, \sigma_m}
	:=\sum_{i=0}^mA\varia{x_1, \dots, x_i}{\sigma_1, \dots, \sigma_i}
	B\varia{x_{i+1}, \dots, x_m}{\sigma_{i+1}, \dots, \sigma_m},
\end{equation*}
for $A,B\in{\mathcal M(\mathcal{F}; \Gamma)}$ and for $m\geq0$ and for $(\sigma_1,\dots,\sigma_m)\in\Gamma^m$.
Then the pair $(\mathcal M(\mathcal{F}; \Gamma),\times)$ is a non-commutative, associative, unital $\Q$-algebra.
Here, the unit ${\unitmould}\in\mathcal M(\mathcal{F}; \Gamma)$ is given by
$$
{\unitmould}\varia{x_1, \dots, x_m}{\sigma_1, \dots, \sigma_m}
:=\left\{\begin{array}{ll}
1 & (m=0), \\
0 & (\mbox{otherwise}),
\end{array}\right.
$$
for $m\geq0$ and for $(\sigma_1,\dots,\sigma_m)\in\Gamma^m$.
A {\it constant-mould} $C\in\mathcal M(\mathcal{F}; \Gamma)$ is a mould with all components in $\Q$.
So {$\zeromould$} and {$\unitmould$} are constant.
{By abuse of notation we simply denote them by $0$ and $1$ if there is no fear of confusion.}
\end{defn}


The above notions are nothing but a simple extension of moulds indexed by a group $\Gamma$ as discussed in \cite{FK}
to the ones labelled by a set.


\begin{rem}
Assume that $u_{1},\dots,u_{m}$ in $\mathbb{Q}x_{1}+\cdots+\mathbb{Q}x_{r}$ are linearly independent over $\Q$.
For $M \in \mathcal M(\mathcal{F};\Gamma)$ we denote $M\varia{u_1, \dots, u_m}{\sigma_1, \dots, \sigma_m}$ to be the image of $M\varia{x_1, \dots, x_m}{\sigma_1, \dots, \sigma_m}$ under the field embedding $\mathcal{F}_{m} \to \mathcal{F}$ sending $x_i \mapsto u_i$.
\end{rem}

\begin{rem}
For simplicity we occasionally denote $M\in\mathcal M(\mathcal{F}; \Gamma)$ by
\begin{equation*}
	 M=( M(\vecx_m))_{m\in\N_0},
\end{equation*}
where $\vecx_0:=\emptyset$ and $\vecx_m:=\varia{x_1,\ \dots,\ x_m}{\sigma_1,\ \dots,\ \sigma_m}$ for $m\geq1$.
\end{rem}

We define two subsets of $\mathcal M(\mathcal F; \Gamma)$ by
\begin{align*}
& \ARI(\mathcal F;\Gamma):=\{ M\in\mathcal M(\mathcal F;\Gamma)\ |\ M(\emptyset)=0 \}, \\
& \GARI(\mathcal F;\Gamma):=\{ M\in\mathcal M(\mathcal F;\Gamma)\ |\ M(\emptyset)=1 \}.
\end{align*}
We note that the pair $(\ARI(\mathcal F;\Gamma),\times)$ forms a subalgebra of $(\mathcal M(\mathcal F; \Gamma), \times)$ and the pair $(\GARI(\mathcal F;\Gamma), \times)$ forms a group.

We denote  $\mathcal M(\mathcal F;\Gamma)$,
$\ARI(\mathcal F;\Gamma)$ and $\GARI(\mathcal F;\Gamma)$ simply by $\mathcal M(\mathcal F)$,
$\ARI(\mathcal F)$ and $\GARI(\mathcal F)$ respectively
when $\Gamma=\{1\}$.
In such a case, we simply denote $M\varia{x_1, \dots, x_m}{\ 1, \dots,\ 1}$
by $M(x_1, \dots, x_m)$.
For a set $\Gamma$ and an element  $M$ in  $\mathcal M(\mathcal F)$,
we denote $M_\Gamma$ to be the element  in $\mathcal M(\mathcal F; \Gamma)$
which is defined by
\begin{equation}\label{eq: mould extension by Gamma}
M_\Gamma\varia{x_1, \dots, x_m}{\sigma_1, \dots, \sigma_m}=M(x_1, \dots, x_m)
\end{equation}
for $\sigma_1,\dots,\sigma_m\in\Gamma$.

\begin{rem}\label{rem:copy-of-set-of-moulds}
For our convenience, we prepare $\overline{\mathcal M}(\mathcal F; \Gamma)$,
a copy of $\mathcal M(\mathcal F;\Gamma)$.
We denote the components of the element $M\in\overline{\mathcal M}(\mathcal F;\Gamma)$ by
$$
M\varia{\sigma_1, \dots, \sigma_m}{x_1, \dots, x_m}
$$
instead of $M\varia{x_1, \dots, x_m}{\sigma_1, \dots, \sigma_m}$ for $m\in\mathbb N$ and $(\sigma_1,\dots,\sigma_m)\in\Gamma^m$.
Again when $\Gamma=\{1\}$, we simply denote
$M\varia{\sigma_1, \dots, \sigma_m}{x_1, \dots, x_m}$
by $M(x_1, \dots, x_m)$.
We define two subsets of $\overline{\mathcal M}(\mathcal F;\Gamma)$ by
\begin{align*}
& \overline{\ARI}(\mathcal{F};\Gamma):=\{ M\in\overline{\mathcal M}(\mathcal F;\Gamma)\ |\ M(\emptyset)=0 \}, \\
& \overline{\GARI}(\mathcal{F};\Gamma):=\{ M\in\overline{\mathcal M}(\mathcal F;\Gamma)\ |\ M(\emptyset)=1 \}.
\end{align*}
\end{rem}

\begin{defn}[cf. {\cite[Definition 5.2]{Sau}}]\label{def:2.2.1}
Let $\mathcal{F}$ be a family of functions and $\Gamma_1,\Gamma_2$ sets.
A {\it dimould}
\footnote{
We note that the notion of {\bf di}moulds is different from that of  {\bf bi}moulds in \cite{E-ARIGARI} and \cite{E-flex}
which are \lq moulds  with double-layered parameters'.
}
with values in  $\mathcal F$ and
$M$ indexed by $\Gamma_1$ and $\Gamma_2$ in a lower layer is a collection
\begin{equation*}
	M:=\left(
	M\varia{x_1, \dots, x_r;\,x_{r+1}, \dots, x_{r+s}}{\sigma_1, \dots, \sigma_r;\,\sigma_{r+1}, \dots, \sigma_{r+s}}
	\right)_{r,s\in\N_0,\sigma_1, \dots, \sigma_r\in\Gamma_1,\sigma_{r+1}, \dots, \sigma_{r+s}\in\Gamma_2},
\end{equation*}
with 
$M\varia{x_1, \dots, x_r;\,x_{r+1}, \dots, x_{r+s}}{\sigma_1, \dots, \sigma_r;\,\sigma_{r+1}, \dots, \sigma_{r+s}} \in \mathcal{F}_{r+s}$ for $r, s\in\N_0$.
We denote the set of all dimoulds with values in $\mathcal F$ by $\mathcal{M}_2(\mathcal F;\Gamma_1,\Gamma_2)$.
By the component-wise summation and the component-wise scalar multiple, the set $\mathcal{M}_2(\mathcal F;\Gamma_1,\Gamma_2)$ forms a $\Q$-linear space.
The {\it product} of $\mathcal{M}_2(\mathcal F;\Gamma_1,\Gamma_2)$ which is denoted by the same symbol $\times$ as the product of $\mathcal{M}_2(\mathcal F;\Gamma_1,\Gamma_2)$ is defined by
\begin{align}\label{eqn:2.2.1}
	&(A\times B)\varia{x_1, \dots, x_r;\,x_{r+1}, \dots, x_{r+s}}{\sigma_1, \dots, \sigma_r;\,\sigma_{r+1}, \dots, \sigma_{r+s}} \\
	&:=\sum_{i=0}^r\sum_{j=0}^s
	A\varia{x_1, \dots, x_i;\,x_{r+1}, \dots, x_{r+j}}{\sigma_1, \dots, \sigma_i;\,\sigma_{r+1}, \dots, \sigma_{r+j}}
	B\varia{x_{i+1}, \dots, x_r;\,x_{r+j+1}, \dots, x_{r+s}}{\sigma_{i+1}, \dots, \sigma_r;\,\sigma_{r+j+1}, \dots, \sigma_{r+s}}, \nonumber
\end{align}
for $A,B\in\mathcal{M}_2(\mathcal F;\Gamma_1,\Gamma_2)$ and for $r,s\geq0$ and for $(\sigma_1,\dots,\sigma_{r}) \in \Gamma_1^r$ and for $(\sigma_{r+1},\dots,\sigma_{r+s}) \in \Gamma_2^s$.
Then $(\mathcal{M}_2(\mathcal F;\Gamma_1,\Gamma_2), \times)$ is a non-commutative, associative $\Q$-algebra.
The unit $\unitdimould$ of $(\mathcal{M}_2(\mathcal F;\Gamma_1,\Gamma_2), \times)$ is given by
\begin{equation*}
\unitdimould\varia{x_1, \dots, x_r;\,x_{r+1}, \dots, x_{r+s}}{\sigma_1, \dots, \sigma_r;\,\sigma_{r+1}, \dots, \sigma_{r+s}}
:=\left\{
\begin{array}{ll}
	1 & (r=s=0), \\
	0 & (\mbox{otherwise}).
\end{array}\right.
\end{equation*}
\end{defn}

We denote  $\mathcal M_2(\mathcal F;\Gamma_1,\Gamma_2)$,
simply by $\mathcal M_2(\mathcal F)$ when $\Gamma_1=\Gamma_2=\{1\}$,
which is the case treated by Sauzin in \cite{Sau}.
In such a case, we simply denote $M\varia{x_1, \dots, x_r;x_{r+1}, \dots, x_{r+s}}{\ 1, \dots,\ 1; \quad 1, \ \dots, \ 1}$
by $M(x_1, \dots, x_r; x_{r+1},\dots, x_{r+s})$.

\begin{rem}
Similarly to Remark \ref{rem:copy-of-set-of-moulds}, we prepare $\overline{\mathcal{M}_2}(\mathcal F;\Gamma_1,\Gamma_2)$, a copy of $\mathcal{M}_2(\mathcal F;\Gamma_1,\Gamma_2)$. To avoid confusion, we often denote each  component of the element $M\in\overline{\mathcal{M}_2}(\mathcal F;\Gamma_1,\Gamma_2)$ by
$$
M\varia{\sigma_1, \dots, \sigma_r;\,\sigma_{r+1}, \dots, \sigma_{r+s}}{x_1, \dots, x_r;\,x_{r+1}, \dots, x_{r+s}}
$$
instead of $M\varia{x_1, \dots, x_r;\,x_{r+1}, \dots, x_{r+s}}{\sigma_1, \dots, \sigma_r;\,\sigma_{r+1}, \dots, \sigma_{r+s}}$ for $r,s\in\mathbb{N}_0$, $\sigma_1, \dots, \sigma_r\in\Gamma_1$ and $\sigma_{r+1}, \dots, \sigma_{r+s}\in\Gamma_2$.
\end{rem}

We consider morphisms between sets of dimoulds and sets of moulds.

\begin{defn}\label{def:tensor product}
Let $\mathcal{F}$ be a family of functions and $\Gamma_1,\Gamma_2$ sets.
Let
$$i_\otimes:\mathcal{M}(\mathcal F;\Gamma_1) \otimes_\Q \mathcal{M}(\mathcal F;\Gamma_2) \rightarrow \mathcal{M}_2(\mathcal F;\Gamma_1,\Gamma_2)$$
be the $\Q$-linear map defined by
$$
i_\otimes(M\otimes N)\varia{x_1, \dots, x_r;\,x_{r+1}, \dots, x_{r+s}}{\sigma_1, \dots, \sigma_r;\,\sigma_{r+1}, \dots, \sigma_{r+s}}
:=M\varia{x_1, \dots, x_r}{\sigma_1, \dots, \sigma_r}\cdot
N\varia{x_{r+1}, \dots, x_{r+s}}{\sigma_{r+1}, \dots, \sigma_{r+s}}
$$
for $r,s\geq0$ and $(\sigma_1,\dots,\sigma_{r}) \in \Gamma_1^r$ and for $(\sigma_{r+1},\dots,\sigma_{r+s}) \in \Gamma_2^s$.
By abuse of notation, we denote $i_\otimes(M\otimes N)$ simply
by $M\otimes N$ and call it as the {\it tensor product} of $M$ and $N$.
\end{defn}


We see that the above  $i_\otimes$  yields
an algebra homomorphism  by the equality
\begin{equation}\label{eq: Sauzin (5.7)}
(M_1 \otimes M_2) \times (N_1 \otimes N_2) = (M_1 \times N_1) \otimes (M_2 \times N_2).
\end{equation}
for $M_1, N_1\in \mathcal{M}(\mathcal F;\Gamma_1)$ and
$M_2, N_2\in \mathcal{M}(\mathcal F;\Gamma_2)$
which is shown in \cite{Sau} (5.7).
Actually when ${\mathcal F}={\mathcal F}_\ser$,
the above $i_\otimes$ induces an algebra isomorphism:

\begin{lem}\label{lem: tensor isom between moulds and dimoulds}
When ${\mathcal F}={\mathcal F}_\ser$,
the above $i_\otimes$ induces a $\Q$-algebra isomorphism
$$
i_{\widehat\otimes}:
\mathcal{M}(\mathcal F;\Gamma_1) \widehat\otimes_\Q \mathcal{M}(\mathcal F;\Gamma_2) \simeq \mathcal{M}_2(\mathcal F;\Gamma_1,\Gamma_2).
$$
Here $\widehat\otimes_\Q $ means the completed tensor product with respect to the
summation of the length and the total degree of formal power series.
\end{lem}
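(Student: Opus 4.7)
The plan is to pass through the bi-length decomposition and reduce the claim, at each fixed pair $(r,s)$ and each pair of index tuples in $\Gamma_1^r\times\Gamma_2^s$, to the well-known identification
$$
\Q[[x_1,\dots,x_r]]\,\widehat\otimes_\Q\,\Q[[x_{r+1},\dots,x_{r+s}]]\;\simeq\;\Q[[x_1,\dots,x_{r+s}]]
$$
of completed power series rings.

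First, I would make the topology precise. On $\mathcal M(\mathcal F_\ser;\Gamma)$ I define a valuation
$v(M):=\inf_{m,\,\sigma_\bullet}\bigl(m+\ord\,M\varia{x_1,\dots,x_m}{\sigma_1,\dots,\sigma_m}\bigr)$,
where $\ord$ denotes the lowest total degree of a formal power series in $\mathcal F_m=\Q[[x_1,\dots,x_m]]$. This gives a decreasing filtration with respect to which $\mathcal M(\mathcal F_\ser;\Gamma)$ is already complete. The analogous valuation (with $m$ replaced by $r+s$) puts a complete filtration on $\mathcal M_2(\mathcal F_\ser;\Gamma_1,\Gamma_2)$. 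The completed tensor product $\widehat\otimes_\Q$ on the left-hand side is, by definition, the completion of $\mathcal M(\mathcal F_\ser;\Gamma_1)\otimes_\Q\mathcal M(\mathcal F_\ser;\Gamma_2)$ with respect to the induced product filtration.

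Second, the identity \eqref{eq: Sauzin (5.7)} already shows that $i_\otimes$ is an algebra homomorphism at the algebraic level. A direct inspection of the formula defining $i_\otimes$ shows that it is filtration-preserving: if $M$ contributes in length $r$ with order $\geq a$ and $N$ contributes in length $s$ with order $\geq b$, then $i_\otimes(M\otimes N)$ has bi-length $(r,s)$ component of order $\geq a+b$, hence $v\bigl(i_\otimes(M\otimes N)\bigr)\geq v(M)+v(N)$. Therefore $i_\otimes$ extends by continuity to a $\Q$-algebra homomorphism $i_{\widehat\otimes}$ on the completed tensor product.

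Third, for bijectivity I would work componentwise. Fix a bi-length $(r,s)$ and a pair of index sequences $(\sigma_1,\dots,\sigma_r)\in\Gamma_1^r$, $(\sigma_{r+1},\dots,\sigma_{r+s})\in\Gamma_2^s$. The restriction of $i_\otimes$ to this component is exactly the multiplication map $\mathcal F_r\otimes_\Q\mathcal F_s\to\mathcal F_{r+s}$, sending $f\otimes g$ to $f\cdot g$ under the identification of $\mathcal F_r$ with power series in $x_1,\dots,x_r$ and $\mathcal F_s$ with power series in $x_{r+1},\dots,x_{r+s}$. After completion with respect to total degree this becomes the classical isomorphism recalled above. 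Assembling over all $(r,s)\in\N_0^2$ and all index tuples yields the desired inverse of $i_{\widehat\otimes}$.

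The main obstacle is the bookkeeping of the two completions simultaneously: the one inherited from the direct product structure over $\Gamma_1^r\times\Gamma_2^s$ and over the length, and the one coming from the total degree of the power series. Once the valuation $v$ is in hand and the product filtration on the tensor product is spelled out, the verification reduces cleanly to the standard power series identity at each bi-length.
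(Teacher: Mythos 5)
Your proposal is correct and follows essentially the same route as the paper's proof: the paper's explicit matching of the topological basis moulds $I_{\mathbf k;\alpha}\otimes I_{\mathbf l;\beta}=I_{\mathbf k,\mathbf l;\alpha,\beta}$ is precisely your componentwise reduction to the classical isomorphism $\Q[[x_1,\dots,x_r]]\,\widehat\otimes_\Q\,\Q[[x_{r+1},\dots,x_{r+s}]]\simeq\Q[[x_1,\dots,x_{r+s}]]$, combined with \eqref{eq: Sauzin (5.7)} for multiplicativity. Your explicit valuation $v$ and the filtration-preservation estimate make precise what the paper leaves implicit, but the substance is the same.
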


\begin{proof}
We define the mould $I_{\mathbf k;\alpha} \in \mathcal{M}(\mathcal F;\Gamma_1)$ and the dimould $I_{\mathbf k,\mathbf l;\alpha,\beta} \in \mathcal{M}_2(\mathcal F;\Gamma_1,\Gamma_2)$ by
\begin{align*}
&I_{\mathbf k;\alpha}\varia{x_1, \dots, x_m}{\sigma_1, \dots, \sigma_m}
:=\left\{\begin{array}{ll}
x_1^{k_1}\cdots x_p^{k_p} & (m=p,\ \alpha_i=\sigma_i), \\
0 & (\mbox{otherwise}),
\end{array}
\right. \\
&I_{\mathbf k,\mathbf l;\alpha,\beta}\varia{x_1, \dots, x_r;\,x_{r+1}, \dots, x_{r+s}}{\sigma_1, \dots, \sigma_r;\,\sigma_{r+1}, \dots, \sigma_{r+s}}  \\
&\qquad\qquad\qquad
:=\left\{\begin{array}{ll}
x_1^{k_1}\cdots x_p^{k_p}x_{p+1}^{l_{p+1}}\cdots x_{p+q}^{l_{p+q}} & (r=p,s=q,\ \alpha_i=\sigma_i, \beta_i=\sigma_{p+i}), \\
0 & (\mbox{otherwise}),
\end{array}
\right.
\end{align*}
for $p,q\in\N_0$ and $\mathbf k=(k_1,\dots,k_p)\in\N_0^p$, $\mathbf l=(l_1,\dots,l_q)\in\N_0^q$ and $\alpha=(\alpha_1,\dots,\alpha_p) \in \Gamma_1^{p}$, $\beta=(\beta_1,\dots,\beta_q) \in \Gamma_2^{q}$.
Note that by the definition of the tensor product $\otimes$
in Definition \ref{def:tensor product},
we have
$I_{\mathbf k;\alpha}\otimes I_{\mathbf l;\beta}=I_{\mathbf k,\mathbf l;\alpha,\beta}$.
Here, any mould $M \in \mathcal{M}(\mathcal F;\Gamma_1)$ is represented by
$$
M
=\sum_{p\in\N_0}\sum_{\mathbf k\in\N_0^p}\sum_{\alpha\in \Gamma_1^p}
c_p(M;\mathbf k;\alpha)I_{\mathbf k;\alpha}
$$
with $c_p(M;\mathbf k;\alpha)\in\Q$, and any dimould $M \in  \mathcal{M}_2(\mathcal F;\Gamma_1,\Gamma_2)$ is represented by
$$
M
=\sum_{p,q\in\N_0}\sum_{\substack{\mathbf k\in\N_0^p \\ \mathbf l\in\N_0^q}}
\sum_{\substack{\alpha\in \Gamma_1^p \\ \beta\in \Gamma_2^q}}
c_{p,q}(M;\mathbf k,\mathbf l;\alpha,\beta)I_{\mathbf k,\mathbf l;\alpha,\beta}
$$
with $c_{p,q}(M;\mathbf k,\mathbf l;\alpha,\beta)\in\Q$.
Hence, by the correspondence $I_{\mathbf k;\alpha}\otimes I_{\mathbf l;\beta}=I_{\mathbf k,\mathbf l;\alpha,\beta}$ and by the algebra homomorphism by \eqref{eq: Sauzin (5.7)}, we get the $\Q$-algebra isomorphism between $\mathcal{M}(\mathcal F;\Gamma_1) \widehat\otimes \mathcal{M}(\mathcal F;\Gamma_2)$ and $\mathcal{M}_2(\mathcal F;\Gamma_1,\Gamma_2)$.
\end{proof}

\begin{rem}\label{rem:counterexample-of-isom}
We do not expect such an isomorphism  
in general.
Consider a dimould $M\in \mathcal{M}_2({\mathcal F}_\Lau)$
with (1,1)-component
$\frac{1}{x_1+x_2}\in{\mathcal F_{\Lau,2}}$. 
It does not look to be an element of  any reasonable completed double tensor product of $\mathcal{M}(\mathcal F)$
because we have
$\frac{1}{x_1+x_2}\not\in
\mathcal{F}_{\Lau,1}\hat\otimes\mathcal{F}_{\Lau,1}
=\Q((x_1))\hat\otimes \Q((x_2))$.
\end{rem}


\subsection{Poly-moulds}\label{sec: polymoulds}
We introduce a notion of poly-moulds which extends that of moulds and dimoulds
in our previous subsection.

\begin{defn}
Let $\mathcal{F}$ be a family of functions and $\Gamma_1,\dots,\Gamma_n$ sets with $n\in \N$.
A \textit{poly-mould}
\footnote{
A poly-mould with $n=1$ (resp. $n=2$) is
a mould (resp. a dimould).}
$M$ with values in  $\mathcal F$ and indexed by $\Gamma_1,\dots,\Gamma_n$ in a lower layer is a sequence
$$
M:=
\left(
M
\varia{x_1^{(1)}, \dots, x_{r_1}^{(1)};\,x_{1}^{(2)}, \dots, x_{r_2}^{(2)};\,\cdots\cdots;\,x_{1}^{(n)}, \dots, x_{r_n}^{(n)}}
{\sigma_1^{(1)}, \dots, \sigma_{r_1}^{(1)};\,\sigma_{1}^{(2)}, \dots, \sigma_{r_2}^{(2)};\,\cdots\cdots;\,\sigma_{1}^{(n)}, \dots, \sigma_{r_n}^{(n)}}
\right)_{r_i\in\N_0,\,\sigma_j^{(i)}\in\Gamma_i}
$$
with 
$$
M
\varia{x_1^{(1)}, \dots, x_{r_1}^{(1)};\,x_{1}^{(2)}, \dots, x_{r_2}^{(2)};\,\cdots\cdots;\,x_{1}^{(n)}, \dots, x_{r_n}^{(n)}}
{\sigma_1^{(1)}, \dots, \sigma_{r_1}^{(1)};\,\sigma_{1}^{(2)}, \dots, \sigma_{r_2}^{(2)};\,\cdots\cdots;\,\sigma_{1}^{(n)}, \dots, \sigma_{r_n}^{(n)}}
\in
{\mathcal F}_{r_1+\cdots+r_n}.
$$
Here we put $x_j^{(i)}:=x_{r_1+ \cdots + r_{i-1} + j}$ for $i=1,\dots,n$ and for $j\in\N$.
We denote the set of all poly-moulds indexed by $\Gamma_i$ ($i=1,\dots,n$) in a lower layer by $\mathcal M_n(\mathcal F;\Gamma_1,\dots, \Gamma_n)$.
Similarly to the cases of $n=1$  and $2$, the set {$\mathcal M_n(\mathcal F;\Gamma_1,\dots, \Gamma_n)$} forms a $\Q$-linear space by the component-wise summation and the component-wise scalar multiple.
The {\it product} of $\mathcal M_n(\mathcal F;\Gamma_1,\dots, \Gamma_n)$ is given by
\begin{align}\label{eq:product-of-polymoulds}
&(A\times B)
\varia{x_1^{(1)}, \dots, x_{r_1}^{(1)};\cdots\cdots;\,x_{1}^{(n)}, \dots, x_{r_n}^{(n)}}
{\sigma_1^{(1)}, \dots, \sigma_{r_1}^{(1)};\,\cdots\cdots;\,\sigma_{1}^{(n)}, \dots, \sigma_{r_n}^{(n)}} \\
&:=\sum_{i_1=0}^{r_1} \cdots \sum_{i_n=0}^{r_n}
A
\varia{x_1^{(1)}, \dots, x_{i_1}^{(1)};\cdots\cdots;\,x_{1}^{(n)}, \dots, x_{i_n}^{(n)}}
{\sigma_1^{(1)}, \dots, \sigma_{i_1}^{(1)};\,\cdots\cdots;\,\sigma_{1}^{(n)}, \dots, \sigma_{i_n}^{(n)}}
B
\varia{x_{i_1+1}^{(1)}, \dots, x_{r_1}^{(1)};\cdots\cdots;\,x_{i_n+1}^{(n)}, \dots, x_{r_n}^{(n)}}
{\sigma_{i_1+1}^{(1)}, \dots, \sigma_{r_1}^{(1)};\,\cdots\cdots;\,\sigma_{i_n+1}^{(n)}, \dots, \sigma_{r_n}^{(n)}} \nonumber
\end{align}
for $A,B\in\mathcal M_n(\mathcal F;\Gamma_1,\dots, \Gamma_n)$ and for $r_j\geq0$ and for $(\sigma_1,\dots,\sigma_{r_j}) \in \Gamma_j^{r_j}$ ($j=1,\dots,n$).
Then $(\mathcal M_n(\mathcal F;\Gamma_1,\dots, \Gamma_n), \times)$ forms a non-commutative, associative $\Q$-algebra with the unit $\unitpolymould\in\mathcal M_n(\mathcal F;\Gamma_1,\dots, \Gamma_n)$ given by
\begin{equation*}
\unitpolymould
\varia{x_1^{(1)}, \dots, x_{r_1}^{(1)};\cdots\cdots;\,x_{1}^{(n)}, \dots, x_{r_n}^{(n)}}
{\sigma_1^{(1)}, \dots, \sigma_{r_1}^{(1)};\,\cdots\cdots;\,\sigma_{1}^{(n)}, \dots, \sigma_{r_n}^{(n)}}
:=\left\{
\begin{array}{ll}
1 & (r_1=\cdots=r_n=0), \\
0 & (\mbox{otherwise}).
\end{array}\right.
\end{equation*}
\end{defn}

\begin{defn}\label{defn: tensor product of two polymoulds}
Let
$$
i_\otimes: \mathcal M_m(\mathcal F;\Gamma_1,\dots, \Gamma_m)\otimes \mathcal M_n(\mathcal F;\Gamma_{m+1},\dots, \Gamma_{m+n})
\to \mathcal M_{m+n}(\mathcal F;\Gamma_{1},\dots, \Gamma_{m+n})
$$
be the $\Q$-linear map defined  by
\begin{align*}
(M\otimes N)
&\varia{x_1^{(1)}, \dots, x_{r_1}^{(1)};\cdots\cdots;\,x_{1}^{(m+n)}, \dots, x_{r_{m+n}}^{(m+n)}}
{\sigma_1^{(1)}, \dots, \sigma_{r_1}^{(1)};\,\cdots\cdots;\,\sigma_{1}^{(m+n)}, \dots, \sigma_{r_{m+n}}^{(m+n)}} \\
&=M
\varia{x_1^{(1)}, \dots, x_{r_1}^{(1)};\cdots\cdots;\,x_{1}^{(m)}, \dots, x_{r_m}^{(m)}}
{\sigma_1^{(1)}, \dots, \sigma_{r_1}^{(1)};\,\cdots\cdots;\,\sigma_{1}^{(m)}, \dots, \sigma_{r_m}^{(m)}}
N
\varia{x_1^{(m+1)}, \dots, x_{r_{m+1}}^{(m+1)};\cdots\cdots;\,x_{1}^{(m+n)}, \dots, x_{r_{m+n}}^{(m+n)}}
{\sigma_1^{(m+1)}, \dots, \sigma_{r_{m+1}}^{(m+1)};\,\cdots\cdots;\,\sigma_{1}^{(m+n)}, \dots, \sigma_{r_{m+n}}^{(m+n)}}
\end{align*}
for $r_j\geq0$ and $(\sigma_1^{(j)},\dots,\sigma_{r_j}^{(j)}) \in \Gamma_j^{r_j}$ ($j=1,\dots,n$).
By abuse of notation, we denote $i_\otimes(M\otimes N)$ simply
by $M\otimes N$ and call it as the {\it tensor product} of $M$ and $N$.
\end{defn}

Again we note  that the above $i_\otimes$ forms
an algebra homomorphism  by the equality
$$
(M_1 \otimes M_2) \times (N_1 \otimes N_2) = (M_1 \times N_1) \otimes (M_2 \times N_2)$$
which can be proved by the same arguments as \eqref{eq: Sauzin (5.7)}.
Actually when ${\mathcal F}={\mathcal F}_\ser$,
the above $i_\otimes$ induces an algebra isomorphism:

\begin{lem}\label{lem: tensor isom between moulds and polymoulds}
Let $m,n\in\N$ and $\Gamma_1,\dots,\Gamma_{m+n}$ be sets.
When ${\mathcal F}={\mathcal F}_\ser$,
the above $i_\otimes$ induces a $\Q$-algebra isomorphism
$$
\mathcal M_m(\mathcal F;\Gamma_1,\dots, \Gamma_m) \widehat\otimes \mathcal M_n(\mathcal F;\Gamma_{m+1},\dots, \Gamma_{m+n}) \simeq \mathcal M_{m+n}(\mathcal F;\Gamma_{1},\dots, \Gamma_{m+n}).
$$
\end{lem}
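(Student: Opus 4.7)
The plan is to generalize the proof of Lemma \ref{lem: tensor isom between moulds and dimoulds} essentially verbatim, exchanging the two-factor bookkeeping for an $(m+n)$-factor one. Because the algebra homomorphism property of $i_\otimes$ has already been recorded for poly-moulds, what remains is to verify that $i_\otimes$ is bijective at the level of the completed tensor product when $\mathcal{F}=\mathcal{F}_\ser$.

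First I would introduce, for each $n$-tuple of non-negative integers $(r_1,\dots,r_n)$, each family of multi-indices $\mathbf k^{(i)}=(k^{(i)}_1,\dots,k^{(i)}_{r_i})\in\N_0^{r_i}$, and each family $\alpha^{(i)}=(\alpha^{(i)}_1,\dots,\alpha^{(i)}_{r_i})\in\Gamma_i^{r_i}$, the basis poly-mould $I_{\mathbf k^{(1)},\dots,\mathbf k^{(n)};\,\alpha^{(1)},\dots,\alpha^{(n)}}\in \mathcal M_n(\mathcal F;\Gamma_1,\dots,\Gamma_n)$ whose component on the slot matching the length profile $(r_1,\dots,r_n)$ and the label profile $(\alpha^{(1)},\dots,\alpha^{(n)})$ equals the monomial $\prod_{i,j}\bigl(x^{(i)}_j\bigr)^{k^{(i)}_j}$, and which vanishes on every other slot. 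By the definition of $i_\otimes$ in Definition \ref{defn: tensor product of two polymoulds}, these basis elements satisfy the crucial compatibility
\[
I_{\mathbf k^{(1)},\dots,\mathbf k^{(m)};\,\alpha^{(1)},\dots,\alpha^{(m)}}\otimes I_{\mathbf k^{(m+1)},\dots,\mathbf k^{(m+n)};\,\alpha^{(m+1)},\dots,\alpha^{(m+n)}}
=I_{\mathbf k^{(1)},\dots,\mathbf k^{(m+n)};\,\alpha^{(1)},\dots,\alpha^{(m+n)}}.
\]

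Next I would argue that, when $\mathcal F=\mathcal F_\ser$ so that $\mathcal F_{r_1+\cdots+r_n}=\Q[[x^{(1)}_1,\dots,x^{(n)}_{r_n}]]$, every poly-mould $M\in\mathcal M_n(\mathcal F;\Gamma_1,\dots,\Gamma_n)$ admits a unique formal expansion
\[
M=\sum_{(r_1,\dots,r_n)}\sum_{\mathbf k^{(i)},\,\alpha^{(i)}} c\bigl(M;\mathbf k^{(1)},\dots,\mathbf k^{(n)};\alpha^{(1)},\dots,\alpha^{(n)}\bigr)\,I_{\mathbf k^{(1)},\dots,\mathbf k^{(n)};\,\alpha^{(1)},\dots,\alpha^{(n)}},
\]
with coefficients in $\Q$. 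This simply expresses each component of $M$ in its monomial basis and then assembles these expansions across all length profiles.

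Finally I would identify the two sides. An element of $\mathcal M_m(\mathcal F;\Gamma_1,\dots,\Gamma_m)\widehat\otimes \mathcal M_n(\mathcal F;\Gamma_{m+1},\dots,\Gamma_{m+n})$ is, by definition of the completion with respect to length plus total degree, exactly such a formal $\Q$-linear combination of tensor products of basis moulds on the two factors, and by the compatibility above this is sent by $i_\otimes$ to the formal $\Q$-linear combination of the corresponding basis poly-moulds on the combined side, which was just shown to exhaust $\mathcal M_{m+n}(\mathcal F;\Gamma_1,\dots,\Gamma_{m+n})$ uniquely. Hence $i_\otimes$ extends to a bijection on completions, and since it is already a $\Q$-algebra homomorphism by the identity $(M_1\otimes M_2)\times(N_1\otimes N_2)=(M_1\times N_1)\otimes(M_2\times N_2)$, it is the desired $\Q$-algebra isomorphism.

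The only real obstacle is the matching of topologies: one must check that the completion by length plus total degree on the tensor-product side really corresponds term-by-term to the formal expansion of poly-moulds on the right, i.e.\ that only finitely many basis poly-moulds of bounded total length $r_1+\cdots+r_{m+n}$ and bounded total degree contribute to a given slot. This is immediate from the fact that each slot $\mathcal F_{r_1+\cdots+r_{m+n}}$ has finite-dimensional graded pieces in the total-degree grading, so the argument reduces to the analogous (already proved) statement for dimoulds applied coordinate-wise.
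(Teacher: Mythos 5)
Your proposal is correct and follows essentially the same route as the paper: the paper's own proof of this lemma is a one-line reference to the proof of Lemma \ref{lem: tensor isom between moulds and dimoulds}, and your argument is precisely the expected generalization of that proof, using the monomial basis poly-moulds $I_{\mathbf k^{(1)},\dots,\mathbf k^{(n)};\,\alpha^{(1)},\dots,\alpha^{(n)}}$, their compatibility under $i_\otimes$, and the already-recorded homomorphism identity. Your closing remark on matching the completion topologies is a sensible explicit check of a point the paper leaves implicit.
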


\begin{proof}
Similarly to the proof of Lemma \ref{lem: tensor isom between moulds and dimoulds}, we obtain this claim.
\end{proof}

By the above lemma, we get the $\Q$-algebra isomorphism
\begin{equation}\label{eq: isom mould tensors and polymoulds}
i_{\widehat\otimes}:
\mathcal M(\mathcal F;\Gamma_1) \widehat\otimes
\cdots \widehat\otimes \mathcal M(\mathcal F;\Gamma_m)
\simeq \mathcal M_{m}(\mathcal F;\Gamma_{1},\dots, \Gamma_{m})
\end{equation}
for ${\mathcal F}={\mathcal F}_\ser$.

\subsection{Mould-proper map and the unique prolongation theorem}
\label{sec: Mould-proper map and the unique prolongation theorem}

In this subsection, we introduce the notion of mould-proper map (Definition \ref{defn:mould-proper map}) and  show the unique prolongation theorem
(Theorem \ref{thm:UniqueProlongation})
which will be repeatedly employed in our later arguments.

For this purpose we prepare
the  notion of $S_\bullet$-moulds which
extends that of poly-moulds in order to formulate mould-proper maps.
\begin{defn}
Let $\mathcal{F}$ be a family of functions and $S_{\bullet}=(S_{0},S_{1},S_{2},\dots)$ a sequence of sets.
An {\it $S_\bullet$-mould $M$
with values in $\mathcal{F}$}
means a collection
\[
M=(M_{s}(x_{1},\dots,x_{m}))_{m\geq 0,s\in S_{m}}
\]
with $M_{s}(x_{1},\dots,x_{m})\in{\mathcal{F}_{m}}$.
\end{defn}

We denote by $\mathcal{M}(\mathcal{F},S_\bullet)$
\footnote{
It should not be confused with  the symbol $\mathcal{M}(\mathcal{F};\Gamma)$ which stands for the set of moulds with values in the set $\Gamma$.
Actually our notion of $S_\bullet $ will not appear in our later section. }
the
set of $S_\bullet$-moulds with values in $\mathcal{F}$.
Note that a poly-mould $M\in\mathcal{M}_{n}(\mathcal{F};\Gamma_{1},\dots,\Gamma_{n})$
can be viewed as a special case of an $S_\bullet$-mould
by letting $S_{\bullet}=(S_{0},S_{1},S_{2},\dots)$ with
$S_m=\underset{i_1+\cdots+i_n=m}{\amalg}
\Gamma_1^{i_1}\times\cdots\times \Gamma_n^{i_n}$,
that is,
\[
S_{m}=\{(\sigma_{1}^{(1)},\dots,\sigma_{i_{1}}^{(1)},\dots,\sigma_{1}^{(n)},\dots,\sigma_{i_{n}}^{(n)})\mid i_{1}+\cdots+i_{n}=m,\sigma_{j}^{(k)}\in\Gamma_{k}\}.
\]
Furthermore, for any sequences of sets $S_{\bullet}^{(i)}=(S_{0}^{(i)},S_{1}^{(i)},S_{2}^{(i)}\dots)$
for $i=1,\dots,n$, we can identify $\mathcal{M}(\mathcal{F},S_{\bullet}^{(1)})\times\cdots\times\mathcal{M}(\mathcal{F},S_{\bullet}^{(n)})$
with $\mathcal{M}(\mathcal{F},S_{\bullet})$ where
$$S_{\bullet}=(\sqcup_{i=1}^{n}S_{0}^{(i)},\sqcup_{i=1}^{n}S_{1}^{(i)},\sqcup_{i=1}^{n}S_{2}^{(i)},\dots).
$$
Hence, the tuple of poly-moulds can also be viewed as a special case
of $S_{\bullet}$-moulds.

\begin{defn}\label{defn:mould-proper map}
Let $m\geq 0$ and $S_{\bullet}=(S_{0},S_{1},S_{2},\dots)$ a sequence of sets. Put
$$
\Lambda:=\left\{(n,s,v_1,\dots,v_n) \
 \middle| 
\begin{array}{l}
n\geq0,s\in S_{n}, 
\quad v_{1},\dots,v_{n}\text{  are linearly}\\
\text{  independent vectors in }\mathbb{Q}x_{1}+\cdots+\mathbb{Q}x_{m}
\end{array}
\right\}.
$$

(1) We say that a polynomial $H((w_\lambda)_{\lambda\in \Lambda})\in
    \mathbb{Q}(x_{1},\dots,x_{m})[w_\lambda\mid \lambda\in\Lambda]$ is {\it proper} if
$H((z^M_\lambda)_{\lambda\in \Lambda})$ with $z^M_\lambda=M_{s}(v_{1},\dots,v_{n})$
    is {well-defined}
    \footnote{That  means
    there exists $c\in \Q[x_1,\dots,x_m]$ such that $cH\in \mathbb{Q}[x_{1},\dots,x_{m}][w_\lambda\mid \lambda\in\Lambda]$ and there exists a unique $u\in \mathcal{F}_{m}$ satisfying $cu=(cH)((z^M_\lambda)_{\lambda\in \Lambda})$ .}
    in $\mathcal F$
     for any divisible family of functions $\mathcal{F}$ and $M\in \mathcal{M}(\mathcal{F},{S_\bullet})$.

(2) For a divisible family of functions $\mathcal{F}$,
a (set-theoretical) map $f:\mathcal{M}(\mathcal{F},{S_\bullet})\to\mathcal{F}_{m}$
   is {\it mould-proper} if and only if there exists a proper-polynomial $H((w_\lambda)_{\lambda\in \Lambda})\in
   \mathbb{Q}(x_{1},\dots,x_{m})[w_\lambda\mid \lambda\in\Lambda]$
   such that $$f(M)(x_{1},\dots,x_{m})=H((z^M_\lambda)_{\lambda\in \Lambda})$$
   for all $M\in \mathcal{M}(\mathcal{F},{S_\bullet})$
   with $z^M_\lambda=M_{s}(v_{1},\dots,v_{n})$ 
   for  $\lambda=(n,s,v_1,\dots,v_n)\in \Lambda$.
Furthermore, we
say that
{a (set-theoretical) map}
$g:\mathcal{M}(\mathcal{F},{S_\bullet})\to
\mathcal{M}(\mathcal{F},{T_\bullet})$
is {\it mould-proper} if the map
\[
g_{t}:\mathcal{M}(\mathcal{F},S_{\bullet})\to\mathcal{F}_{m}\ ;\ M\mapsto g(M)_{t}(x_1,\dots,x_m)
\]
is mould-proper for all $m\in\mathbb{Z}_{\geq0}$ and $t\in T_{m}$.
\end{defn}

\begin{eg}
Let $m=2$ and $s\in S_{1}$ be any element. Then a polynomial
\[
H((w_{\lambda})_{\lambda\in\Lambda})=\frac{1}{x_{2}-x_{1}}\left(w_{(1,s,x_{2})}-w_{(1,s,x_{1})}\right)
\]
is proper since
\[
H((z_{\lambda}^{M})_{\lambda\in\Lambda})=\frac{1}{x_{2}-x_{1}}\left(M_{s}(x_{2})-M_{s}(x_{1})\right)
\]
is well-defined by the definition of divisible families of functions.
\end{eg}

\begin{eg}
Let $m=2$ and $s\in S_{1}$ be any element. Then a polynomial
\[
H((w_{\lambda})_{\lambda\in\Lambda})=\frac{1}{x_{2}-x_{1}}w_{(1,s,x_{2})}
\]
is not proper since
\[
H((z_{\lambda}^{M})_{\lambda\in\Lambda})=\frac{1}{x_{2}-x_{1}}M_{s}(x_{2})
\]
is not well-defined when $\mathcal{F}=\mathcal{F}_{\pol}$ and $M_{s}(x_{2})\neq0$.
\end{eg}

More generally, we have the following.
\begin{lem}\label{lem: divisible}
Let $\mathcal{F}=(F_{\mathcal{F}},i_{\mathcal{F}})$ be a divisible
family of functions, $n,r_{1},\dots,r_{n}$ be non-negative integers,
and $V$ the vector space generated by formal symbols
$$\{u_{i,j}\}_{1\leq i\leq n,1\leq j\leq r_{i}}.$$
Then for $M\in\mathcal{F}_{n}$, the following term is well-defined
as an element of $\mathcal{F}_{V}$.
\[
\sum_{\substack{1\leq j_{1}\leq r_{1}\\
\vdots\\
1\leq j_{n}\leq r_{n}
}
}M(u_{1,j_{1}},\dots,u_{n,j_{n}})\prod_{i=1}^{n}\prod_{j'\in\{1,\dots,r_{i}\}\setminus\{j_{i}\}}\frac{1}{u_{1,j_{1}}-u_{1,j'}}.
\]
\end{lem}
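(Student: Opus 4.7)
The displayed sum is the standard multivariate divided difference of $M$ at the nodes $(u_{i,j})_{i,j}$, so the plan is to prove well-definedness by induction on $R:=r_{1}+\cdots+r_{n}$, with the divisibility axiom of Definition~\ref{def: divisibility} supplying the nontrivial division at each step.

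The base case $R=n$ (all $r_{i}=1$) is immediate: the sum collapses to $M(u_{1,1},\dots,u_{n,1})\in F_{\mathcal F}(V)$, well-defined since $u_{1,1},\dots,u_{n,1}$ are linearly independent in $V$. For the inductive step, pick $k$ with $r_{k}\geq 2$, write $S$ for the full sum, and apply the elementary partial-fraction identity
\[
\frac{1}{(a-b)(a-c)}\;=\;\frac{1}{b-c}\Bigl(\frac{1}{a-b}-\frac{1}{a-c}\Bigr)
\]
at $(a,b,c)=(u_{k,j_{k}},u_{k,1},u_{k,r_{k}})$ inside each summand (treating the boundary cases $j_{k}=1$ and $j_{k}=r_{k}$ separately). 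A direct rearrangement reorganizes $S$ into the classical divided-difference recursion
\[
S\;=\;\frac{T_{1}-T_{2}}{u_{k,1}-u_{k,r_{k}}},
\]
where $T_{1}$ and $T_{2}$ are the analogous sums with the $k$-th block of vectors replaced by $(u_{k,1},\dots,u_{k,r_{k}-1})$ and $(u_{k,2},\dots,u_{k,r_{k}})$ respectively.

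To finish, I would realize $T_{1}$ and $T_{2}$ as two specializations of a single element $P\in F_{\mathcal F}(V_{0})$, where $V_{0}$ is the vector space on the basis $\{u_{i,j}\mid(i,j)\ne(k,1),(k,r_{k})\}\cup\{v\}$ for a fresh symbol $v$: the inductive hypothesis applied to the multi-index $(r_{1},\dots,r_{k}-1,\dots,r_{n})$ with $k$-th block $(u_{k,2},\dots,u_{k,r_{k}-1},v)$ produces such a $P$, and the two injections $V_{0}\hookrightarrow V$ fixing the $u_{i,j}$ and sending $v$ to $u_{k,1}$ or $u_{k,r_{k}}$ transport $P$ to $T_{1}$ and $T_{2}$ respectively (here one uses that each summand is symmetric under permutation of the entries in the $k$-th block). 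The divisibility axiom then furnishes a unique $N\in F_{\mathcal F}(V)$ with $T_{1}-T_{2}=(u_{k,1}-u_{k,r_{k}})N$, and this $N$ equals $S$.

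The main subtlety I anticipate is that the divisibility axiom, as stated, permits division only by differences of \emph{adjacent} entries in the argument list. To legitimize the final division by $u_{k,1}-u_{k,r_{k}}$, one must invoke the axiom iteratively along a telescoping chain of adjacent transpositions of $v$ with intermediate entries of the basis of $V_{0}$ (or, equivalently, reorder the natural basis of $V_{0}$ so that $v$ sits adjacent to the reference vector before specializing). Setting up this bookkeeping uniformly for both injections is the delicate part of the argument; the surrounding induction is then routine.
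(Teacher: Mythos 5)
Your proof is correct and takes essentially the same route as the paper's, which simply states that the lemma ``follows from the repeated use of the definition of divisibility'' and illustrates the $n=1$, $r_{1}=3$ case of exactly your divided-difference recursion $S=(T_{1}-T_{2})/(u_{k,1}-u_{k,r_{k}})$. The adjacency subtlety you flag at the end is not a genuine obstruction, since Definition \ref{def: divisibility} compares two substitutions into the \emph{same} slot (the adjacency is only in the labelling of the auxiliary independent vectors, which are arbitrary), so your reordering remark already disposes of it.
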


\begin{proof}
It follows from the repeated use of the definition of divisibility of
family of functions in Definition \ref{def:divisibility}. For example, when $n=1$ and $r_{1}=3$, we have
\begin{align*}
 & \frac{M(x_{1})}{(x_{1}-x_{2})(x_{1}-x_{3})}+\frac{M(x_{2})}{(x_{2}-x_{1})(x_{2}-x_{3})}+\frac{M(x_{3})}{(x_{3}-x_{1})(x_{3}-x_{2})}\\
 & =\frac{1}{x_{1}-x_{2}}\left(\frac{M(x_{1})-M(x_{3})}{x_{1}-x_{3}}-\frac{M(x_{2})-M(x_{3})}{x_{2}-x_{3}}\right)
\in\mathcal{F}_{3}.
\end{align*}
\end{proof}

We note that $\mathcal{M}(\mathcal{F},S_{\bullet})\times\mathcal{M}(\mathcal{F},T_{\bullet})$
is naturally identified with $\mathcal{M}(\mathcal{F},(S_{n}\sqcup T_{n})_{n=0}^{\infty})$ as sets.
Thus the notion of ``mould-proper'' is  naturally extended to the product 
of sets of moulds.

\begin{rem}
We note that actually all the maps treated in this paper ($+$, $\times$, $\otimes$, $\shmap$, $\garit$, $\gari$, $\pari$ and $\anti$ in Definition \ref{defn: pari anti}, $\mulp$, $\bal$, $\Ev$, etc.) are mould-proper maps.
Other examples of mould-proper maps are also given in \cite[(6)]{E-ARIGARI} and \cite[(4.2)]{Sau}.
\end{rem}

We denote
$$\mathcal{F}_{\pol} \subset\mathcal{F}$$
when
$\Q[x_1,..,x_m]\to \mathcal F_m$ is injective for all $m$.
In this case,
we often regard
$\mathcal{M}(\mathcal{F}_\pol,{S_{\bullet}})$
to be a subset of
$\mathcal{M}(\mathcal{F},{S_{\bullet}})$


\begin{thm}[Unique prolongation theorem]\label{thm:UniqueProlongation}
Let $\mathcal{F}$ be a divisible family of functions such that $\mathcal{F}_{\pol}\subset \mathcal{F}$, and
$f:\mathcal{M}(\mathcal{F},{S_{\bullet}})\to\mathcal{F}_{m}$
be a mould-proper map given by
\[
f(M)(x_{1},\dots,x_{m})=H((z_{\lambda}^{M})_{\lambda\in\Lambda})
\]
where $H$ is a polynomial given in Definition \ref{defn:mould-proper map}. If $f(M)=0$ for
all $M\in\mathcal{M}(\mathcal{F}_{\pol},{S_{\bullet}})$ then
$H=0$.
\end{thm}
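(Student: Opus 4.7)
The plan is a universality argument: assume for contradiction that $H\neq 0$, substitute into $H$ a ``universal polynomial mould'' whose components are polynomials with indeterminate coefficients, and exploit the hypothesis together with the infinitude of $\Q$ to force $H$ to be the zero polynomial. Concretely, first collect the finitely many variables $w_{\lambda_1},\dots,w_{\lambda_N}$ actually appearing in $H$, say with $\lambda_i=(n_i,s_i,v^{(i)}_1,\dots,v^{(i)}_{n_i})$. Choose an integer $D$ strictly larger than $\max_s \#\{i:s_i=s\}$ and, for each label $s\in\{s_1,\dots,s_N\}$, introduce fresh indeterminates $c^{(s)}_{\mathbf{k}}$ indexed by multi-indices $\mathbf{k}\in\N_0^{n_s}$ with $|\mathbf{k}|\le D$. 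The universal mould $M^{\mathrm{univ}}\in\mathcal{M}(\mathcal{F}_{\pol},S_\bullet)$ is defined by
\[
M^{\mathrm{univ}}_s(y_1,\dots,y_{n_s})=\sum_{|\mathbf{k}|\le D}c^{(s)}_{\mathbf{k}}\,y_1^{k_1}\cdots y_{n_s}^{k_{n_s}}
\]
(and to be zero on the other labels), and evaluating it at the tuples $v^{(i)}$ produces linear forms
\[
L_i(c):=\sum_{\mathbf{k}}c^{(s_i)}_{\mathbf{k}}\,(v^{(i)}_1)^{k_1}\cdots(v^{(i)}_{n_i})^{k_{n_i}}\in\Q[x_1,\dots,x_m][c].
\]

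For every specialisation $c^{(s)}_{\mathbf{k}}\mapsto a^{(s)}_{\mathbf{k}}\in\Q$ the universal mould becomes a concrete polynomial mould $M_a\in\mathcal{M}(\mathcal{F}_{\pol},S_\bullet)$, so by hypothesis $H((L_i(a))_i)=f(M_a)=0$ in $\Q[x_1,\dots,x_m]$. Setting $K:=\Q(x_1,\dots,x_m)$, the element $\widetilde{H}(c):=H((L_i(c))_i)\in K[c]$ therefore vanishes at every $\Q$-point of the affine space of $c$'s; since $\Q$ is infinite, a routine induction on the number of $c$-variables forces $\widetilde{H}=0$ in $K[c]$. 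The desired conclusion $H=0$ in $K[w_{\lambda_1},\dots,w_{\lambda_N}]$ then reduces to showing that the $K$-algebra substitution $K[w_1,\dots,w_N]\to K[c]$, $w_i\mapsto L_i$, is injective. Because the $L_i$'s are linear forms in the $c$'s, their algebraic independence over $K$ is equivalent to their $K$-linear independence: completing them to a basis of the $K$-space of linear forms realises $K[c]$ as a polynomial ring in which they become algebraically independent coordinates.

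The only non-formal ingredient, and the step I expect to be the main obstacle, is establishing this $K$-linear independence. Since the $L_i$'s with distinct $s_i$ involve disjoint sets of $c$-variables, the problem decouples into one sub-problem per label $s$. For fixed $s$ with $r_s:=\#\{i:s_i=s\}$, linear independence of the $L_i$'s is equivalent to $K$-linear independence of the evaluation functionals $\mathrm{ev}_{p^{(i)}}:K[y_1,\dots,y_{n_s}]_{\le D}\to K$ at the pairwise distinct ordered tuples $p^{(i)}:=(v^{(i)}_1,\dots,v^{(i)}_{n_s})\in K^{n_s}$. I would establish this by a multivariate Lagrange argument: for each pair $i\neq j$ pick a coordinate $\ell_{ij}$ in which $p^{(i)}$ and $p^{(j)}$ differ and set $P_i(y):=\prod_{j\neq i}(y_{\ell_{ij}}-(p^{(j)})_{\ell_{ij}})$; then $\mathrm{ev}_{p^{(j)}}(P_i)=0$ for $j\neq i$ while $\mathrm{ev}_{p^{(i)}}(P_i)\neq 0$, which instantly kills any nontrivial relation among the evaluations. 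Since $\deg P_i\le r_s-1<D$, the Lagrange polynomials already live in the truncated polynomial space used to define $M^{\mathrm{univ}}$, and the argument goes through.
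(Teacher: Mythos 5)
Your proof is correct, and it takes a genuinely different --- and considerably more elementary --- route than the paper's. The paper first reduces to $S_0=\emptyset$ and polynomial coefficients, extends $f$ by continuity to $\mathcal{M}(\mathcal{F}_{\ser},S_\bullet)$ and then to $\C$-coefficients, invokes the identity theorem, and substitutes the single mould $Q=\bigl((1-\sum_j\alpha_{n,s,j}z_j)^{\beta}\bigr)$ with $\beta\in\C\setminus\Q$ and $\Q$-linearly independent $\alpha$'s; the algebraic independence of the resulting values $(1-L_{\lambda})^{\beta}$ is then certified by a monodromy computation around the divisors $\{1-L_{\lambda}=0\}$ combined with a Vandermonde argument and an induction on $\#\Lambda''$. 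You instead stay entirely inside polynomial algebra over $\Q$: you parametrize a truncated universal polynomial mould by indeterminate coefficients, use the hypothesis at every rational specialization to kill $\widetilde{H}\in K[c]$, and reduce the whole theorem to the $K$-linear independence of the linear forms $L_i$, i.e.\ of the evaluation functionals at the pairwise distinct tuples $p^{(i)}$, which your multivariate Lagrange interpolation settles. The underlying point is the same in both arguments --- the evaluations $M_s(v_1,\dots,v_n)$ at distinct $\lambda$ must be shown to satisfy no nontrivial polynomial relation --- but your certificate avoids analytic continuation and monodromy altogether. Two small bookkeeping remarks: you should group the indices $i$ by the pair $(n_i,s_i)$ rather than by $s_i$ alone (a priori the same element could lie in several $S_n$, and it is the pair that indexes a component of the mould), noting that distinctness of the $\lambda_i$ within one group is exactly the pairwise distinctness of the ordered tuples $p^{(i)}$ that the Lagrange step needs; and the $n=0$ labels (which the paper removes by reducing to $S_0=\emptyset$) are harmless in your setup since each such group contains at most one index.
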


\begin{proof}
Note that it is enough to only prove the case where $S_{0}=\emptyset$ and
$H\in\Q[x_{1},\dots,x_{m}][w_{\lambda}\mid\lambda\in\Lambda]$
since the general case follows from this case. Note that there are
inclusions
\[
\Q[x_{1},\dots,x_{m}][w_{\lambda}\mid\lambda\in\Lambda_{1}]\subset\Q[x_{1},\dots,x_{m}][w_{\lambda}\mid\lambda\in\Lambda_{2}]
\]
for all $\Lambda_{1}\subset\Lambda_{2}\subset\Lambda$. Let $\Lambda'\subset\Lambda$
be the minimum subset such that $H\in\Q[x_{1},\dots,x_{m}][w_{\lambda}\mid\lambda\in\Lambda']$.
By definition, $\Lambda'$ is a finite set. Let $J$ be the image of the map
\[
\Lambda'\to\{(n,s)\mid n>0,s\in S_{n}\}\ ;\;(n,s,v_{1},\dots,v_{n})\mapsto(n,s).
\]
Since $\Lambda'$ is a finite set, $J$ is also a finite set.
Since $\mathcal{F}_{\pol,m}$ is dense in $\mathcal{F}_{\ser,m}$
in the adic topology, the map
\[
\mathcal{M}(\mathcal{F}_{\pol},S_{\bullet})\xrightarrow{f}
\mathcal{F}_{\pol,m}
\]
is continuously extended to
\[
\mathcal{M}(\mathcal{F}_{\mathsf{ser}},S_{\bullet})\xrightarrow{f}
\mathcal{F}_{\ser,m}
\]
and factors through
\[
\mathcal{M}(\mathcal{F}_{\mathsf{ser}},S_{\bullet})\xrightarrow{M\mapsto(M_{s})_{(n,s)\in J}}\prod_{(n,s)\in J}\Q[[z_{1},\dots,z_{n}]]\xrightarrow{F}\Q[[x_{1},\dots,x_{m}]]
=
\mathcal{F}_{\ser,m}
\]
where
\[
F((M_{n,s})_{(n,s)\in J})=H((M_{n,s}(v_{1},\dots,v_{n}))_{(n,s,v_{1},\dots,v_{n})\in\Lambda'}).
\]
Then, the map $F$ 
naturally extends to
\[
F_{{\C}}:\prod_{(n,s)\in J}{\C}[[z_{1},\dots,z_{n}]]\to{\C}[[x_{1},\dots,x_{m}]]
\]
by
\[
F_{{\C}}((M_{n,s})_{(n,s)\in J})=H((M_{n,s}(v_{1},\dots,v_{n}))_{(n,s,v_{1},\dots,v_{n})\in\Lambda'}).
\]
By assumption, we have $F_{{\C}}(P)=0$ for $P\in\prod_{(n,s)\in J}\Q[[z_{1},\dots,z_{n}]]$. Therefore, $F_{\C}(P)=0$ for $P\in\prod_{(n,s)\in J}\C[[z_{1},\dots,z_{n}]]$
by the identity theorem since for any monomial $x_{1}^{l_{1}}\cdots x_{m}^{l_{m}}$,
the coefficient of $x_{1}^{l_{1}}\cdots x_{m}^{l_{m}}$ in $F_{\C}(P)$
is expressed as a polynomial function of coefficients of components of $P$. Fix
$\Q$-linearly independent complex numbers $\{\alpha_{n,s,j}\}_{(n,s)\in J,1\leq j\leq n}$.
Let $\beta\in\C\setminus\Q$ and put
\[
Q=\left((1-\sum_{j=1}^{n}\alpha_{n,s,j}z_{j})^{\beta}\right)_{(n,s)\in J}\in\prod_{(n,s)\in J}{\C}[[z_{1},\dots,z_{n}]].
\]
For $\lambda=(n,s,v_{1},\dots,v_{n})\in\Lambda'$, put
\[
L_{\lambda}=\sum_{j=1}^{n}\alpha_{n,s,j}v_{j}\in{\C}x_{1}+\cdots+{\C}x_{m}.
\]
Note that $L_{\lambda}\neq L_{\lambda'}$ if $\lambda\neq\lambda'$
by the $\Q$-linear independency of $\{\alpha_{n,s,j}\}$
and the assumption $n>0$. Then
\[
F_{{\C}}(Q)=H\left(\left(\left(1-L_{\lambda}\right)^{\beta}\right)_{\lambda\in\Lambda'}\right).
\]
Now, for $\Lambda''\subset\Lambda'$ and $G\in\Q[x_{1},\dots,x_{m}][w_{\lambda}\mid\lambda\in\Lambda'']$,
we show that if
\[
G\left(\left(\left(1-L_{\lambda}\right)^{\beta}\right)_{\lambda\in\Lambda''}\right)=0
\]
then $G=0$ by the induction on $\#\Lambda''$. Assume that $\Lambda''\neq\emptyset$
since the case $\Lambda''=\emptyset$ is obvious. Choose any $\lambda^{(0)}\in\Lambda''$
and write $G$ as
\[
G=\sum_{k=0}^{K}w_{\lambda^{(0)}}^{k}G_{k}\qquad(G_{k}\in\Q[x_{1},\dots,x_{m}][w_{\lambda}\mid\lambda\in\Lambda''\setminus\{\lambda^{(0)}\}]).
\]
Then
\[
0=G\left(\left(\left(1-L_{\lambda}\right)^{\beta}\right)_{\lambda\in\Lambda''}\right)=\sum_{k=0}^{K}f_{k}\cdot (1-L_{\lambda^{(0)}})^{\beta k}
\]
where $f_{k}=G_{k}\left(\left((1-L_{\lambda})^{\beta}\right)_{\lambda\in\Lambda''\setminus\{\lambda^{(0)}\}}\right)$.
Note that $f_{k}$ and $(1-L_{\lambda^{(0)}})^{\beta k}$ can be regarded
as the analytic functions on the universal covering space of ${\C}^{m}\setminus\bigcup_{\lambda\in\Lambda''}\{1-L_{\lambda}=0\}$.
Then by considering the monodromies around $\{1-L_{\lambda^{(0)}}=0\}$,
we see that
$$\sum_{k=0}^Kf_k \cdot \exp(2\pi\sqrt{-1}\beta k)^n\cdot (1-L_{\lambda^{(0)}})^{\beta k}=0$$
holds for all $n\in\Z$ since $L_{\lambda^{(0)}}\neq L_{\lambda}$
for $\lambda\in\Lambda'\setminus\{\lambda^{(0)}\}$.\\
Here, $1,\exp(2\pi\sqrt{-1}\beta),\dots,\exp(2\pi\sqrt{-1}\beta K)$
are distinct since $\beta\notin\Q$.
Thus, by the regularity of the Vandermonde matrix, all
$f_k$ must be $0$, i.e.,
$$G_{k}\left(\left((1-L_{\lambda})^{\beta}\right)_{\lambda\in\Lambda''\setminus\{\lambda^{(0)}\}}\right)=0$$
for $0\leq k\leq K$. Therefore, by the induction hypothesis, $G_{k}=0$
for $0\leq k\leq K$. Thus $G=0$ is proved. Finally, by putting $\Lambda''=\Lambda$
and $G=H$, we obtain $H=0$, which completes the proof.
\end{proof}

\begin{rem}
We say that a mould-proper map $f:\mathcal{M}(\mathcal{F},S_{\bullet})\to\mathcal{F}_{m}$
is a {\it prolongation} of mould-proper map $g:\mathcal{M}(\mathcal{F}_{\ser},S_{\bullet})\to\mathcal{F}_{\ser,m}$
if both can be expressed by a common polynomial. Such $f$ is uniquely determined by $g$ by Theorem \ref{thm:UniqueProlongation}.
Particularly we have $f=0$ if $g=0$.
We call such uniqueness as the unique prolongation theorem.
The unique prolongation theorem is also very practical to prove several identities.
For example, let $g:\mathcal{M}(\mathcal{F}_{\ser};\Gamma)\times\mathcal{M}(\mathcal{F}_{\ser};\Gamma)\to\mathcal{M}(\mathcal{F}_{\ser};\Gamma)$
be a mould-proper map and $g':\mathcal{M}(\mathcal{F};\Gamma)\times\mathcal{M}(\mathcal{F};\Gamma)\to\mathcal{M}(\mathcal{F};\Gamma)$
its prolongation. Define $h:\mathcal{M}(\mathcal{F}_{\ser};\Gamma)^{3}\to\mathcal{M}(\mathcal{F}_{\ser};\Gamma)$
and $h':\mathcal{M}(\mathcal{F};\Gamma)^{3}\to\mathcal{M}(\mathcal{F};\Gamma)^{3}$
by $h(x,y,z)=g(g(x,y),z)-g(x,g(y,z))$ and $h'(x,y,z)=g'(g'(x,y),z)-g'(x,g'(y,z))$.
Then, by definition, $h'$ is a prolongation of $h$. Thus, by the
unique prolongation theorem, if $h=0$, i.e., $g$ is associative
then $h'=0$, i.e., $g'$ is associative.
\end{rem}

\subsection{Review on alternal(il)ity and symmetral(il)ity} 
\label{sec: alternality, alternility, symmetrality, symmetrilty}
To define the alternality, alternility, symmetrality and symmetrilty for moulds, we prepare the following algebraic formulation.
Put $X:=\{\varia{x_i}{\,\sigma}\}_{i\in\mathbb N,\sigma\in\Gamma}$.
Let $X_\Z$ be the set defined by
$$
X_\Z:=X_\Z^\Gamma
:=\{ \varia{u}{\sigma} \ |\ u=a_1x_1+\cdots+a_kx_k,\ k\in\mathbb N,\ a_j\in\Z,\ \sigma\in\Gamma \},
$$
and let $X_\Z^\bullet$ be the non-commutative free monoid generated by all elements of $X_\Z$ with the empty word $\emptyset$ as the unit.
Whence it is regarded to be the set of sequences of $X_\Z$.
We put
$\mathcal A_X:=\Q \langle X_\Z \rangle$
to be the non-commutative polynomial $\Q$-algebra generated by
$X_\Z$
(i.e. $\mathcal A_X$ is the $\Q$-linear space generated by $X_\Z^\bullet$).
We equip $\mathcal A_X$ a product $\shuffle:\mathcal A_X^{\otimes2} \rightarrow \mathcal A_X$
which is linearly defined by $\emptyset\, \shuffle\, \omega:=\omega\, \shuffle\, \emptyset:=w$ and
\begin{equation}\label{eqn:shuffle product}
	a\omega\ \shuffle\ b\eta
	:=a(\omega\, \shuffle\, b\eta)+b(a\omega\, \shuffle\, \eta),
\end{equation}
for $a,b\in X_\Z$ and $\omega,\eta\in X_\Z^\bullet$.
Then the pair $(\mathcal A_X,\shuffle)$ forms a commutative, associative, unital $\Q$-algebra.
Let $\left\{ \Sh{\omega}{\eta}{\alpha} \right\}_{\omega,\eta,\alpha\in X_\Z^\bullet}$ to be the family in $\Z$ defined by
$$
\omega\ \shuffle\ \eta
=\sum_{\alpha\in X_\Z^\bullet} \Sh{\omega}{\eta}{\alpha}\alpha.
$$
\begin{defn}[{cf. \cite[Definition 1.4]{FK}}]\label{def:al-il-as-is}
Let $\mathcal{F}$ be a family of functions.
A mould $M\in \ARI(\mathcal F;\Gamma)$ (resp. $\in\GARI(\mathcal F;\Gamma)$) is called {\it alternal} (resp. {\it symmetral}) if we have
\begin{align}\label{eqn:def of al,as}
	&\sum_{\alpha\in X_\Z^\bullet}
	\Sh{\varia{x_1,\ \dots,\ x_p}{\sigma_1,\ \dots,\ \sigma_p}}{\varia{x_{p+1},\ \dots,\ x_{p+q}}	{\sigma_{p+1},\ \dots,\ \sigma_{p+q}}}{\alpha} M(\alpha)=0 \\
	&\hspace{5cm}
	(\mbox{resp. } =M\varia{x_1,\ \dots,\ x_p}{\sigma_1,\ \dots,\ \sigma_p}M\varia{x_{p+1},\ \dots,\ x_{p+q}}{\sigma_{p+1},\ \dots,\ \sigma_{p+q}}) \nonumber
\end{align}
for $p,q\geq1$.
The $\Q$-linear space {$\ARI(\mathcal F;\Gamma)_\al$ (resp. $\GARI(\mathcal F;\Gamma)_\as$)} is defined to be the subset of moulds $M$ in $\ARI(\mathcal F;\Gamma)$ (resp. $\GARI(\mathcal F;\Gamma)$) which are alternal (resp. symmetral).
Particularly when $\Gamma$ is a set with one element, we denote $\ARI(\mathcal{F};\Gamma)_\al$, (resp. $\GARI(\mathcal{F};\Gamma)_\as$) simply by $\ARI(\mathcal{F})_\al$ (resp. $\GARI(\mathcal{F})_\as$).
\end{defn}

To reformulate the notion of the alternality and the symmetrality,
we consider the following map.

\begin{defn}[{\cite[Definition 2.3]{Komi}}]\label{def:shmap}
Let $\mathcal{F}$ be a family of functions and $\Gamma$ a set.
The $\Q$-linear map
$$\shmap=\shmap_\Gamma: \mathcal{M}(\mathcal{F};\Gamma) \rightarrow\mathcal{M}_2(\mathcal{F};\Gamma,\Gamma)$$
is defined by
\begin{align*}
	\shmap(M)
	&:=\left(\sum_{\alpha\in X_\Z^\bullet}\Sh{\varia{x_1,\ \dots,\ x_p}{\sigma_1,\ \dots,\ \sigma_p}}{\varia{x_{p+1},\ \dots,\ x_{p+q}}	{\sigma_{p+1},\ \dots,\ \sigma_{p+q}}}{\alpha}
	M(\alpha)\right)_{p,q\in\N_0, \sigma_i\in\Gamma}
\end{align*}
for $M\in\mathcal{M}(\mathcal{F};\Gamma)$.
\end{defn}

This map is algebraic:

\begin{lem}[{\cite[Lemma 2.6]{Komi}, \cite[Lemma 5.1]{Sau}}]\label{lem:shmap is alg. hom.}
The map $\shmap$ is a $\Q$-algebra homomorphism.
\end{lem}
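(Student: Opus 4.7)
The plan is to verify the two axioms for a $\Q$-algebra homomorphism. Preservation of the unit is immediate: $\unitmould$ is supported only at the empty sequence with value $1$, and $\Sh{u}{v}{\emptyset}$ vanishes unless $u=v=\emptyset$, so $\shmap(\unitmould)=\unitdimould$.

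The substantive step is multiplicativity, $\shmap(A\times B) = \shmap(A)\times \shmap(B)$. The combinatorial crux is the following compatibility of the shuffle product with deconcatenation: for any $u,v,\alpha_1,\alpha_2 \in X_\Z^\bullet$,
\begin{equation*}
\Sh{u}{v}{\alpha_1\alpha_2}
=\sum_{\substack{u=u_1 u_2\\ v=v_1 v_2}}
\Sh{u_1}{v_1}{\alpha_1}\cdot \Sh{u_2}{v_2}{\alpha_2}.
\end{equation*}
This expresses the bialgebra property of $(\mathcal{A}_X,\shuffle)$ with respect to the deconcatenation coproduct, and follows by induction on $|u|+|v|$ from the recursion \eqref{eqn:shuffle product}. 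Combinatorially, it says that any shuffle of $u$ and $v$ yielding $\alpha_1\alpha_2$, with the split point fixed between $\alpha_1$ and $\alpha_2$, induces a unique compatible split of each input sequence.

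Granted this identity, multiplicativity follows by a direct manipulation. Abbreviating $u = \varia{x_1,\ldots,x_p}{\sigma_1,\ldots,\sigma_p}$ and $v = \varia{x_{p+1},\ldots,x_{p+q}}{\sigma_{p+1},\ldots,\sigma_{p+q}}$, and writing $\shmap(M)(u;v)$ for the corresponding $(p,q)$-component of $\shmap(M)$, we expand using the mould product on $A\times B$, swap the order of summation, apply the identity above, and then regroup according to the dimould product \eqref{eqn:2.2.1}:
\begin{align*}
\shmap(A\times B)(u;v)
&=\sum_{\alpha} \Sh{u}{v}{\alpha}\sum_{\alpha=\alpha_1\alpha_2} A(\alpha_1)B(\alpha_2)\\
&=\sum_{\alpha_1,\alpha_2} A(\alpha_1) B(\alpha_2)\sum_{\substack{u=u_1 u_2\\ v=v_1 v_2}}\Sh{u_1}{v_1}{\alpha_1}\Sh{u_2}{v_2}{\alpha_2}\\
&=\sum_{\substack{u=u_1 u_2\\ v=v_1 v_2}}\shmap(A)(u_1;v_1)\cdot \shmap(B)(u_2;v_2)\\
&=\bigl(\shmap(A)\times \shmap(B)\bigr)(u;v).
\end{align*}

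The principal obstacle is a clean formulation and verification of the shuffle--deconcatenation identity. Although classical in Hopf-algebraic terms, care is required because the alphabet $X_\Z$ carries a two-layer labelling and the $\Gamma$-indices must be tracked through every interleaving. Once established at the level of free words on $X_\Z^\bullet$, however, the identity transfers to moulds valued in an arbitrary family $\mathcal{F}$ without further complication, so the verification above is independent of the choice of $\mathcal{F}$.
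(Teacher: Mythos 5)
Your proof is correct and is essentially the argument the paper defers to by citation (the paper gives no proof of this lemma, referring instead to \cite[Lemma 2.6]{Komi} and \cite[Lemma 5.1]{Sau}, where the same shuffle--deconcatenation bialgebra compatibility is the key step). The invoked identity is unproblematic here since the letters $\varia{x_i}{\sigma_i}$ appearing in $u$ and $v$ are pairwise distinct, and the regrouping at the end matches the dimould product \eqref{eqn:2.2.1} exactly.
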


By using this map $\shmap$ and the tensor product (Definition \ref{def:tensor product}), we reformulate symmetral moulds and alternal moulds in terms of dimoulds:

\begin{prop}[{\cite[Proposition 2.4]{Komi}}]\label{prop:gp-like, Lie-like}
Let $\mathcal{F}$ be a family of functions and $\Gamma$ a set.
For a mould $M\in\mathcal{M}(\mathcal{F};\Gamma)$, we have the following equivalence:

{\rm (i).} $M\in\ARI(\mathcal F;\Gamma)_\al$
$\Longleftrightarrow
\shmap(M)=M\otimes \unitmould+\unitmould\otimes M$,

{\rm (ii).} $M\in\GARI(\mathcal F;\Gamma)_\as\Longleftrightarrow
\shmap(M)=M\otimes M$
and $M(\emptyset)=1$.
\end{prop}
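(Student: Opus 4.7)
The proof is essentially an unwinding of the definitions, so my plan is a termwise comparison of the two dimoulds on each side of each claimed identity, organized by the bidegree $(r,s)$.

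First I would compute the $(r,s)$-component of each dimould appearing in the statement. By Definition \ref{def:shmap}, the $(p,q)$-component of $\shmap(M)$ is exactly the shuffle sum
\[
\sum_{\alpha\in X_\Z^\bullet}\Sh{\varia{x_1,\dots,x_p}{\sigma_1,\dots,\sigma_p}}{\varia{x_{p+1},\dots,x_{p+q}}{\sigma_{p+1},\dots,\sigma_{p+q}}}{\alpha}M(\alpha).
\]
By Definition \ref{def:tensor product} and the definition of $\unitmould$, the $(r,s)$-component of $M\otimes \unitmould + \unitmould\otimes M$ equals $M(\vecx_r)$ when $s=0$, equals $M(\vecx_s)$ when $r=0$ (these two coincide and stack to $2M(\emptyset)$ at $(0,0)$), and vanishes otherwise. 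Similarly, the $(r,s)$-component of $M\otimes M$ is the pointwise product $M(\vecx_r)\cdot M(\vecx_s)$.

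With these formulas in hand I would split the verification into three case families of $(r,s)$. For the bulk case $p,q\geq 1$, the equality $\shmap(M)=M\otimes\unitmould+\unitmould\otimes M$ reduces to the vanishing of the shuffle sum, which is exactly the alternality condition \eqref{eqn:def of al,as}; likewise $\shmap(M)=M\otimes M$ reduces to the factorization of the shuffle sum as $M(\vecx_p)M(\vecx_q)$, which is exactly symmetrality. For the boundary cases $p=0,q\geq1$ or $p\geq 1,q=0$, the shuffle of the empty word with any word is that word, so the $(p,q)$-component of $\shmap(M)$ equals $M(\vecx_{p+q})$; this matches $M\otimes\unitmould+\unitmould\otimes M$ automatically, and matches $M\otimes M$ precisely when $M(\emptyset)=1$. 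For the corner $(0,0)$: the condition for (i) reads $M(\emptyset)=2M(\emptyset)$, i.e.\ $M(\emptyset)=0$, which is built into the definition of $\ARI$; for (ii) it reads $M(\emptyset)=M(\emptyset)^2$, which is why the extra normalization $M(\emptyset)=1$ is singled out in the statement.

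Combining the three cases yields both equivalences. I do not expect any serious obstacle: every step is a direct unpacking of the definitions of $\shmap$, $\otimes$, $\unitmould$, and of alternality/symmetrality. The only delicate bookkeeping is to ensure the boundary components $(0,q)$ and $(p,0)$ are handled consistently with the corner $(0,0)$, which is why the $\ARI$ versus $\GARI$ normalizations need to be explicitly carried along in the two cases.
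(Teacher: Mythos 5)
Your proof is correct and is the expected argument: the paper itself only cites \cite[Proposition 2.4]{Komi} for this statement, and the proof there (and the only natural one) is exactly this componentwise unwinding of $\shmap$, $\otimes$, and $\unitmould$ by bidegree, with the bulk components $(p,q)$, $p,q\geq 1$, giving alternality/symmetrality, the boundary components being automatic (resp.\ forcing $M(\emptyset)=1$), and the corner $(0,0)$ accounting for the $\ARI$ versus $\GARI$ normalization. No gaps.
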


In the following, we assume that $\Gamma$ is a group.\footnote{
It is because we need a product structure there
in the right hand side of the equation \eqref{eqn:shufflestar product}.
}
Put $Y:=\{\varia{\,\sigma}{x_i}\}_{i\in\mathbb N,\sigma\in\Gamma}$.
Similarly to $X_\Z$ (resp. $X_\Z^\bullet$), we define $Y_\Z$ (resp. $Y_\Z^\bullet$)
to be the set
$$
Y_\Z:=\{ \varia{\sigma}{u} \ |\ u=a_1x_1+\cdots+a_kx_k,\ k\in\mathbb N,\ a_j\in\Z,\ \sigma\in\Gamma \}
$$
(resp. the non-commutative free monoid generated by all elements of $Y_\Z$ with the empty word $\emptyset$ as the unit).
We put
\begin{equation}\label{eq: K}
\mathcal K:=\Q(x_i\ |\ i\in\N),
\end{equation}
that is, the commutative field generated by all $x_i$ over $\Q$.
We define
$$\mathcal A_Y:=\mathcal K \langle Y_\Z \rangle$$
to be the non-commutative polynomial $\mathcal K$-algebra generated by
$Y_\Z$
(i.e. $\mathcal A_Y$ is the $\mathcal K$-linear space generated by $Y_\Z^\bullet$).
The algebra $\mathcal A_Y$ is equipped with the product $\shuffle$ and the pair $(\mathcal A_Y,\shuffle)$ forms a commutative, associative, unital $\mathcal K$-algebra.
While, we also equip $\mathcal A_Y$ a product $\shuffle_*:\mathcal A_Y^{\otimes2} \rightarrow \mathcal A_Y$
which is linearly defined by $\emptyset\, \shuffle_*\, \omega:=\omega\, \shuffle_*\, \emptyset:=w$ and $\binom{\sigma}{v}\omega\, \shuffle_*\, \binom{\sigma'}{v'}\eta:=0$ for $\binom{\sigma}{v},\binom{\sigma'}{v'}\in Y_\Z$ with $v= v'$
and $\omega,\eta\in Y_\Z^\bullet$, and
\begin{align}\label{eqn:shufflestar product}
	\varia{\sigma}{v}\omega\, \shuffle_*\, \varia{\sigma'}{v'}\eta
	&:=\varia{\sigma}{v}\Bigl( \omega\, \shuffle_*\, \varia{\sigma'}{v'}\eta \Bigr)
		+\varia{\sigma'}{v'}\Bigl( \varia{\sigma}{v}\omega\, \shuffle_*\, \eta \Bigr) \\
	&\hspace{1.5cm}+\frac{1}{v-v'}\left\{\varia{\sigma\sigma'}{\ v}(\omega\, \shuffle_*\, \eta)
	-\varia{\sigma\sigma'}{\ v'}(\omega\, \shuffle_*\, \eta)\right\} \nonumber
\end{align}
for $\binom{\sigma}{v},\binom{\sigma'}{v'}\in Y_\Z$ with $v\neq v'$
and $\omega,\eta\in Y_\Z^\bullet$.
Then the pair $(\mathcal A_Y,\shuffle_*)$ forms a commutative, non-associative, unital $\mathcal K$-algebra.
Let $\left\{ \Shstar{\omega}{\eta}{\alpha} \right\}_{\omega,\eta,\alpha\in Y_\Z^\bullet}$ to be the family in $\mathcal K$ defined by
$$
\omega\ \shuffle_*\ \eta
=\sum_{\alpha\in Y_\Z^\bullet} \Shstar{\omega}{\eta}{\alpha}\alpha.
$$

\begin{defn}
Let $\mathcal{F}$ be a divisible family of functions and $\Gamma$ be a group.
A mould $M\in \overline{\ARI}(\mathcal{F};\Gamma)$ (resp. $\in\overline{\GARI}(\mathcal{F};\Gamma)$) is called {\it alternil} (resp. {\it symmetril}) if we have
\begin{align}\label{eqn:def of il,is}
	&\sum_{\alpha\in Y_\Z^\bullet}
	\Shstar{\varia{\sigma_1,\ \dots,\ \sigma_p}{x_1,\ \dots,\ x_p}}{\varia{\sigma_{p+1},\ \dots,\ \sigma_{p+q}}{x_{p+1},\ \dots,\ x_{p+q}}}{\alpha} M(\alpha)=0 \\
	&\hspace{5cm}
	(\mbox{resp. } =M\varia{\sigma_1,\ \dots,\ \sigma_p}{x_1,\ \dots,\ x_p}M\varia{\sigma_{p+1},\ \dots,\ \sigma_{p+q}}{x_{p+1},\ \dots,\ x_{p+q}}) \nonumber
\end{align}
for $p,q\geq1$.
\end{defn}

\begin{defn}[{\cite[Remark 2.7]{Komi}}]
The $\Q$-linear map
$$
\shmap_*=\shmap_{*,\Gamma}: \overline{\mathcal M}(\mathcal F; \Gamma) \rightarrow \overline{\mathcal{M}_2}(\mathcal{F};\Gamma,\Gamma)$$
is defined by
\begin{align*}
	\shmap_*(M)
	&:=\left(\sum_{\alpha\in Y_\Z^\bullet}\Shstar{\varia{\sigma_1,\ \dots,\ \sigma_p}{x_1,\ \dots,\ x_p}}{\varia{\sigma_{p+1},\ \dots,\ \sigma_{p+q}}{x_{p+1},\ \dots,\ x_{p+q}}}{\alpha}
	M(\alpha)\right)_{p,q\in\N_0, \sigma_i\in\Gamma}
\end{align*}
for $M\in\overline{\mathcal M}(\mathcal F; \Gamma)$.
\end{defn}

This map is also algebraic:

\begin{lem}[{cf. \cite[Remark 2.7]{Komi}}]
The map $\shmap_*$ is a $\Q$-algebra homomorphism.
\end{lem}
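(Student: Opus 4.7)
The plan is to mirror the proof of Lemma \ref{lem:shmap is alg. hom.} for the shuffle case, replacing $\shuffle$ by $\shuffle_*$ throughout. $\Q$-linearity of $\shmap_*$ is immediate from the definition, and preservation of the unit reduces to the observation that $\Shstar{\omega}{\eta}{\emptyset}$ vanishes unless $\omega = \eta = \emptyset$, in which case it equals $1$. The substantive claim is multiplicativity:
\[
\shmap_*(A \times B) = \shmap_*(A) \times \shmap_*(B)
\]
for all $A, B \in \overline{\mathcal M}(\mathcal F;\Gamma)$.

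The key combinatorial step is to establish the bialgebra identity
\[
\Shstar{\omega_1}{\omega_2}{\alpha \cdot \beta}
= \sum_{\substack{\omega_1 = \omega_1' \cdot \omega_1'' \\ \omega_2 = \omega_2' \cdot \omega_2''}}
\Shstar{\omega_1'}{\omega_2'}{\alpha}\,\Shstar{\omega_1''}{\omega_2''}{\beta}
\]
for all $\omega_1, \omega_2, \alpha, \beta \in Y_\Z^\bullet$, where the sum ranges over concatenation decompositions. This is the compatibility between $\shuffle_*$ and the deconcatenation coproduct on $\mathcal A_Y$. I would prove it by induction on $|\omega_1| + |\omega_2|$ using the recursion \eqref{eqn:shufflestar product}: the three terms on the right (prepending a letter from $\omega_1$, prepending a letter from $\omega_2$, and the stuffing correction $\frac{1}{v-v'}\{\cdots\}$) partition the possibilities for how the leading letter of $\alpha \cdot \beta$ arises, and each case matches cleanly with the decomposition structure on the right. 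Once the identity is in hand, expanding $\shmap_*(A\times B)$ by its definition and by the mould product $\times$ yields a double sum over $\alpha$ and its concatenation splittings; swapping the order of summation and applying the identity produces exactly the $(p,q)$-component of $\shmap_*(A) \times \shmap_*(B)$ via the product formula \eqref{eqn:2.2.1}.

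The main obstacle is the stuffing term, because a single letter of the output $\alpha \cdot \beta$ can arise from contracting two letters (one from $\omega_1$ and one from $\omega_2$); the induction must therefore track how such a contracted letter is allocated to $\alpha$ or $\beta$, and the cases $v = v'$ (vanishing) and $v \neq v'$ (stuffing) must be handled separately. A slicker alternative is to invoke the unique prolongation theorem (Theorem \ref{thm:UniqueProlongation}): both sides of the desired identity are mould-proper maps in $A$ and $B$, so it suffices to verify the equality for $\mathcal F = \mathcal F_\ser$, where the basis expansion from the proof of Lemma \ref{lem: tensor isom between moulds and dimoulds} reduces the claim to the universal combinatorial identity stated above, and the divisibility hypothesis on $\mathcal F$ ensures that all quotients entering the stuffing term are well-defined.
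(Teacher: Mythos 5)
Your proof is correct and follows essentially the same route as the paper, whose entire proof is the single sentence that the argument is identical to that for $\shmap$ in the shuffle case; the compatibility identity between $\shuffle_*$ and deconcatenation that you isolate is precisely the content of that transferred argument. The additional care you take with the stuffing term, and the observation that $\Shstar{\omega}{\eta}{\emptyset}$ vanishes unless $\omega=\eta=\emptyset$, merely fill in details the paper delegates to the cited references.
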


\begin{proof}
The proof can be done in the same way as that of
Lemma \ref{lem:shmap is alg. hom.}.
\end{proof}

\begin{rem}
In \cite[Remark 2.7]{Komi},
an analogue of Proposition \ref{prop:gp-like, Lie-like} is shown, that is, we have the following equivalences:

{\rm (i).} 
$M\in \overline{\mathcal M}(\mathcal F;\Gamma)$ is alternil if and only if
$\shmap_*(M)=M\otimes 1_{\overline{\mathcal M}(\mathcal F;\Gamma)}+1_{\overline{\mathcal M}(\mathcal F;\Gamma)}\otimes M$.

{\rm (ii).} 
$M\in\overline{\mathcal M}(\mathcal F;\Gamma)$ is symmetril if and only if
$\shmap_*(M)=M\otimes M$
and $M(\emptyset)=1$.
\end{rem}

To introduce $\GARI(\mathcal{F};\Gamma)_{\as\ast\is}$,
we prepare the map $\swap$:

\begin{defn}\label{defn:swap}
Let $\Gamma$ be a group.
We define the $\mathbb Q$-linear map $\swap:{\mathcal M}(\mathcal{F};\Gamma) \rightarrow \overline{\mathcal M}(\mathcal{F};\Gamma)$ by
\begin{equation}\label{eq:swap}
	\swap( M)
{\scriptsize\left(\begin{array}{rrr}
	\sigma_1,& \dots,& \sigma_m \\
	v_1,& \dots,& v_m
\end{array}\right)}
= M
{\scriptsize\left(\begin{array}{rrrrr}
	v_m,& v_{m-1}-v_m,& \dots,& v_2-v_3,& v_1-v_2 \\
	\sigma_1\cdots\sigma_m,& \sigma_1\cdots\sigma_{m-1},& \dots,& \sigma_1\sigma_2,& \sigma_1
\end{array}\right)}
\end{equation}
for $M\in\mathcal M(\mathcal{F};\Gamma)$.
\end{defn}


\begin{defn}\label{def:GARIas*is}
Let $\mathcal{F}$ be a divisible family of functions and
$\Gamma$ be a group.
The set {$\GARI(\mathcal{F};\Gamma)_{\as\ast\is}$} is defined to be the subset of moulds $M\in \GARI(\mathcal{F};\Gamma)_\as$ which satisfies the condition that there exists a constant mould $C$ such that $C\times\swap(M)$ is symmetril.
The subset $\GARI(\mathcal{F};\Gamma)_{\underline{\as\ast\is}}$ is defined to be the subset of moulds $M\in\GARI(\mathcal{F};\Gamma)_{\as\ast\is}$ whose length 1 components are even, that means
$M\varia{x_1}{\sigma_1}= M\varia{-x_1}{\sigma_1^{-1}}$.
\end{defn}

\begin{eg}\label{eg: Zig and Zag}
We consider the
multiple polylogarithm which is the several-variable complex function
defined by the following power series
\begin{equation}\label{eq:Li}
\Li_{n_1,\dots,n_r}(z_1,\dots,z_r):=\sum_{0<k_1<\cdots <k_r}\frac{z_1^{k_1}\cdots z_r^{k_r}}{k_1^{n_1}\cdots k_r^{n_r}}
\end{equation}
for $n_1,\dots,n_r, r\in\N$.
For $N\in\N$, we denote $\mu_N$  to be the group of $N$-th roots of unity in  $\C$.
The associated moulds
$$\Zag=\{\Zag\varia{u_1,\ \dots,\ u_m}{\epsilon_1,\ \dots,\ \epsilon_m}\}_m
\quad \text{ and }\quad
\Zig=\{\Zig\varia{\epsilon_1,\ \dots,\ \epsilon_m}{v_1,\ \dots,\ v_m}\}_m
$$
in $\mathcal M(\mathcal F;\Gamma)$ with $\Gamma=\mu_N$
defined by
\begin{align*}
&\Zag\varia{u_1,\ \dots,\ u_m}{\epsilon_1,\ \dots,\ \epsilon_m} \\
&\quad=\sum_{n_1,\dots, n_m>0}
\Li^{\scalebox{.5}{$\Sha$}}_{n_1,\dots,n_m}(\frac{\epsilon_1}{\epsilon_2},\dots,\frac{\epsilon_{m-1}}{\epsilon_m},\epsilon_m)
u_1^{n_1-1}(u_1+u_2)^{n_2-1}\cdots
(u_1+\cdots+u_m)^{n_m-1} \\
&\Zig\varia{\epsilon_1,\ \dots,\ \epsilon_m}{v_1,\ \dots,\ v_m}=
\sum_{n_1,\dots, n_m>0}
\Li^\ast_{n_m,\dots,n_1}(\epsilon_m, \dots, \epsilon_1)
v_1^{n_1-1}\cdots v_m^{n_m-1}
\end{align*}
are concerned in \cite{E-ARIGARI, E-flex}.
Here
 $\Li^{\scalebox{.5}{$\Sha$}}_{n_1,\dots,n_m}(\epsilon_1,\dots,\epsilon_m)$
 and
  $\Li^{\ast}_{n_1,\dots,n_m}(\epsilon_1,\dots,\epsilon_m)$
mean the shuffle regularization and the harmonic (stuffle) regularization of
$\Li_{n_1,\dots,n_m}(\epsilon_1,\dots,\epsilon_m)$
respectively (cf. \cite{AK}).
It is explained in \cite{E-flex} that the mould
$\Zag$ belongs to $\GARI(\mathcal{F}_\ser;\Gamma)_{\as\ast\is}$ when $\Gamma=\mu_N$
by the formula
$$\Zig=\Mini\times \swap(\Zag)$$
(see also \cite[Appendix B]{FK} and see Definition \ref{defn:Mini} for $\Mini$).
\end{eg}

Actually it is shown in Proposition \ref{prop: Hopf str on MF ser}
that $\mathcal{M}(\mathcal{F}_\ser;\Gamma)$  is equipped with a
structure of completed Hopf algebra whose coproduct is given by $\shmap$.

%
%
%
\subsection{Review on flexions and gari-product}\label{subsec:flexions and gari}
We introduce the flexions, which are binary maps on the monoid $X_\Z^\bullet$,
in Definition \ref{def:flexion}
in order to define the product $\gari$ on $\GARI(\Gamma)$ in Definition \ref{def:gari}.

\begin{defn}[{\cite[Definition 1.14]{FK}}]\label{def:flexion}
The {\it flexions} are the 
 four binary operators $\urflex{*}{*},\ \ulflex{*}{*},\ \lrflex{*}{*},\ \llflex{*}{*}:X_{\Z}^\bullet\times X_{\Z}^\bullet\rightarrow X_{\Z}^\bullet$
 which are defined by
\begin{align*}
	\urflex{\beta}{\alpha}
	&:= {\scriptsize\left(\begin{array}{rrrr}
		b_1+\cdots+b_n+a_1,& a_2,& \dots,& a_m \\
		\sigma_1,& \sigma_2,& \dots,& \sigma_m
	\end{array}\right)} , \\
	\ulflex{\alpha}{\beta}
	&:= {\scriptsize\left(\begin{array}{rrrl}
		a_1,& \dots,& a_{m-1},& a_m+b_1+\cdots+b_n \\
		\sigma_1,& \dots,& \sigma_{m-1},& \sigma_m
	\end{array}\right)} , \\
	\lrflex{\beta}{\alpha}
	&:={\scriptsize\left(\begin{array}{rrr}
		a_1,& \dots,& a_m \\
		\tau_n^{-1}\sigma_1,& \dots,& \tau_n^{-1}\sigma_m
	\end{array}\right)} , \\
	\llflex{\alpha}{\beta}
	&:={\scriptsize\left(\begin{array}{rrr}
		a_1,& \dots,& a_m \\
		\sigma_1\tau_1^{-1},& \dots,& \sigma_m\tau_1^{-1}
	\end{array}\right)}  , \\
	\urflex{\emptyset}{\gamma}&:=\ulflex{\gamma}{\emptyset}:=\lrflex{\emptyset}{\gamma}:=\llflex{\gamma}{\emptyset}:=\gamma , \\
	\urflex{\gamma}{\emptyset}&:=\ulflex{\emptyset}{\gamma}:=\lrflex{\gamma}{\emptyset}:=\llflex{\emptyset}{\gamma}:=\emptyset ,
\end{align*}
for $\alpha=\varia{a_1,\dots,a_m}{\sigma_1,\dots,\sigma_m}$, $\beta=\varia{b_1,\dots,b_n}{\tau_1,\dots,\tau_n}\in X_{\Z}^\bullet$ ($m,n\geq1$) and $\gamma\in X_{\Z}^\bullet$.
\end{defn}

To give a group structure on $\GARI(\mathcal{F};\Gamma)$, we introduce the map
$\garit$.

\begin{defn}[{\cite[(2.37)]{E-flex}}]\label{def:garit}
Let $\mathcal{F}$ be a family of functions and $\Gamma$ a group.
For $B\in\GARI(\mathcal{F};\Gamma)$, we define the map
$$
\garit(B):\mathcal M(\mathcal F;\Gamma) \rightarrow \mathcal M(\mathcal F;\Gamma)
$$
by $\garit(B)(A)(\vecx_0):=A(\emptyset)$ and
\begin{align*}
	&\garit(B)(A)(\vecx_m) \\
	&:=\sum_{s\geq1}\sum_{\substack{
			\vecx_m=\alpha_1\beta_1\gamma_1\cdots\alpha_s\beta_s\gamma_s \\
			\beta_i, \
			\gamma_j\alpha_{j+1}\neq\emptyset \\
			 1\leq i\leq s, \
			 1\leq j\leq s-1}}
		A(\ulflex{\urflex{\alpha_1}{\beta_1}}{\gamma_1}\cdots \ulflex{\urflex{\alpha_s}{\beta_s}}{\gamma_s}) \\
		&\hspace{5cm}\cdot B(\llflex{\alpha_1}{\beta_1})\cdots B(\llflex{\alpha_s}{\beta_s})B^{\times-1}(\lrflex{\beta_1}{\gamma_1})\cdots B^{\times-1}(\lrflex{\beta_s}{\gamma_s})
\end{align*}
for any $A\in\mathcal M(\mathcal F;\Gamma)$.
Here the symbol $B^{\times-1}$ means the inverse element of $B$ on the group $(\GARI(\mathcal{F};\Gamma),\times)$.
\end{defn}

\begin{eg}
For our simplicity, we denote $\omega_i:=\varia{x_i}{\sigma_i}$ for $i\in\N$, that is, we have $\vecx_m=(\omega_1,\dots,\omega_m)$.
\begin{align*}
\garit(B)(A)(\vecx_1)&=A(\omega_1), \\
\garit(B)(A)(\vecx_2)&=A(\omega_1,\omega_2) + A(\urflex{\omega_1}{\omega_2})B(\llflex{\omega_1}{\omega_2}) + A(\ulflex{\omega_1}{\omega_2})B^{\times-1}(\lrflex{\omega_1}{\omega_2}), \\
\garit(B)(A)(\vecx_3)&=A(\omega_1,\omega_2,\omega_3)
+ A(\urflex{\omega_1}{\omega_2,\omega_3})B(\llflex{\omega_1}{\omega_2,\omega_3})
+ A(\ulflex{\omega_1,\omega_2}{\omega_3})B^{\times-1}(\lrflex{\omega_1,\omega_2}{\omega_3}) \\
&\quad +A(\urflex{\omega_1,\omega_2}{\omega_3})B(\llflex{\omega_1,\omega_2}{\omega_3})
+ A(\ulflex{\omega_1}{\omega_2,\omega_3})B^{\times-1}(\lrflex{\omega_1}{\omega_2,\omega_3}) \\
&\quad + A(\urflex{\omega_1}{\ulflex{\omega_2}{\omega_3}}) B(\llflex{\omega_1}{\omega_2})B^{\times-1}(\lrflex{\omega_2}{\omega_3})
+ A(\ulflex{\omega_1}{\omega_2}\omega_3)B^{\times-1}(\lrflex{\omega_1}{\omega_2}) \\
&\quad + A(\omega_1\urflex{\omega_2}{\omega_3})B(\llflex{\omega_2}{\omega_3}).
\end{align*}
\end{eg}

\begin{defn}[{\cite[(2.43)]{E-flex}}]\label{def:gari}
Let $\mathcal{F}$ be a family of functions and $\Gamma$ a group.
We define the map
$$\gari:\GARI(\mathcal{F};\Gamma)\times \GARI(\mathcal{F};\Gamma)\rightarrow\GARI(\mathcal{F};\Gamma)$$
by
\begin{equation*}
	\gari(A,B):=\garit(B)(A)\times B
\end{equation*}
for $A,B\in\GARI(\mathcal{F};\Gamma)$.
\end{defn}

\begin{prop}\label{prop: GARI forms a group}
Under the above $\gari$-product,
$\GARI(\mathcal{F};\Gamma)$ forms a group.
\end{prop}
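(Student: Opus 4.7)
The plan is to verify closure, a two-sided identity, associativity, and existence of two-sided inverses for $(\GARI(\mathcal F;\Gamma), \gari)$. Closure is immediate since $\gari(A,B)(\emptyset) = \garit(B)(A)(\emptyset)\cdot B(\emptyset) = A(\emptyset) B(\emptyset) = 1$. For the identity I take $E = \unitmould$: inspection of the defining sum for $\garit$ shows $\garit(\unitmould)(A) = A$ (the only surviving summand has $s=1$, $\alpha_1=\gamma_1 = \emptyset$, $\beta_1 = \vecx_m$, since every other summand contains a factor $\unitmould$ evaluated on a nonempty word) and dually $\garit(B)(\unitmould)(\vecx_m) = 0$ for $m \geq 1$, giving $\gari(A,\unitmould) = A$ and $\gari(\unitmould, B) = B$.

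For associativity, my strategy is to isolate and prove two compatibility identities:
\begin{equation*}
\garit(C)(X \times Y) = \garit(C)(X) \times \garit(C)(Y), \qquad \garit(C) \circ \garit(B) = \garit(\gari(B,C)),
\end{equation*}
for $B, C \in \GARI(\mathcal F;\Gamma)$ and arbitrary $X, Y \in \mathcal M(\mathcal F;\Gamma)$. Granted these, associativity follows from
\begin{equation*}
\gari(\gari(A,B), C) = \garit(C)\bigl(\garit(B)(A) \times B\bigr) \times C = \garit(\gari(B,C))(A) \times \gari(B,C) = \gari(A, \gari(B,C)).
\end{equation*}
Both compatibilities are equalities of mould-proper maps in $A, B, C$, so by the unique prolongation theorem (Theorem \ref{thm:UniqueProlongation}) it suffices to verify them for $\mathcal F = \mathcal F_\ser$. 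There, the map $\ma$ from \S\ref{subsec:map ma} realizes $\GARI(\mathcal F_\ser;\Gamma)$ inside a completed non-commutative formal power series algebra whose intrinsic product is the $\gari$-product, at which point both identities become transparent translations of standard facts in that algebra.

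For inverses, I would construct $A^{\gari -1}$ inductively in length. Setting $A^{\gari -1}(\emptyset) = 1$ and equating the length-$m$ components of $\gari(A, A^{\gari -1}) = \unitmould$ yields a recursion that expresses $A^{\gari -1}(\vecx_m)$ as a proper-polynomial in $A$ and in the already-determined shorter components $A^{\gari -1}(\vecx_k)$, $k < m$; divisibility of $\mathcal F$ ensures the result lies in $\mathcal F_m$. A one-sided inverse upgrades to a two-sided inverse by the standard argument using the associativity and identity already established.

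The main obstacle is the flexion-compatibility $\garit(C) \circ \garit(B) = \garit(\gari(B,C))$, which is combinatorially intricate because it must match a nested double decomposition of a word in $X_\Z^\bullet$ against a single decomposition of the same word, with consistent flexion labels. The cleanest route to avoid a direct combinatorial grind is the unique-prolongation reduction outlined above, trading the flexion bookkeeping for the known group structure inside the image of $\ma$.
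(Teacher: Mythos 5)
Your proposal is correct and follows essentially the same route as the paper: the identity and the recursive construction of inverses are verified directly, and associativity is reduced via the unique prolongation theorem (Theorem \ref{thm:UniqueProlongation}) to the case $\mathcal F=\mathcal F_\ser$, where it is transferred through $\ma_\Gamma$ to the known associative product $\circledast$ on $(\widehat{U\frak f_{\Gamma}}^\dag)_1$ (the paper cites Theorem \ref{thm:ma(circledast)=gari(ma,ma)} for exactly this). Your intermediate identities $\garit(C)(X\times Y)=\garit(C)(X)\times\garit(C)(Y)$ and $\garit(C)\circ\garit(B)=\garit(\gari(B,C))$ are just a slightly more explicit organization of the same transfer, not a genuinely different argument.
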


\begin{proof}
Theorem \ref{thm:ma(circledast)=gari(ma,ma)} in Appendix
\ref{app:correspondence between circledast and gari}
and Theorem \ref{thm:UniqueProlongation} verify the validity of the associativity.
By definition, $\garit(A)(\unitmould)=\unitmould$ and $\garit(\unitmould)(A)=A$.
So we have
$$
\gari(A,\unitmould)=\gari(\unitmould,A)=A,
$$
for any $A\in\GARI(\mathcal{F};\Gamma)$.
For any $B\in\GARI(\mathcal{F};\Gamma)$, the mould $\invgari(B)\in\GARI(\mathcal{F};\Gamma)$
which satisfies
\begin{align*}\label{eqn:def of invgari}
\bigl( \gari(\invgari(B),B) \bigr)(\vecx_m)=0
\end{align*}
for all $m\in\N$,
can be constructed recursively length by length.
We see  that it is the inverse element of $B$ under the product $\gari$.
Hence  $(\GARI(\mathcal{F};\Gamma),\gari)$ forms a group.
\end{proof}

To give a Lie algebra structure on $\ARI(\mathcal{F};\Gamma)$, we introduce the map
$\arit$.

\begin{defn}[{\cite[Definition 1.9]{FK}}]\label{def:aritu}
Let $\mathcal{F}$ be a family of functions and $\Gamma$ a group.
Let $ B\in \mathcal M(\mathcal F;\Gamma)$.
The linear map
$$\arit( B):\mathcal M(\mathcal F;\Gamma) \rightarrow \mathcal M(\mathcal F;\Gamma)$$
is defined by
\footnote{In \cite{FK}, the map $\arit(B)$ is defined only for $B\in\ARI(\Gamma)$, but this map is defined for any $B\in \mathcal M(\mathcal F;\Gamma)$ as a linear map on $\mathcal M(\mathcal F;\Gamma)$.}
\begin{align*}
\arit( B)( A)(\vecx_m)
:=\left\{\begin{array}{ll}
	\displaystyle \sum_{\substack{\vecx_m=\alpha\beta\gamma \\
	\beta,\gamma\neq\emptyset}}
	A(\alpha\urflex{\beta}{\gamma}) B(\llflex{\beta}{\gamma})
	-\sum_{\substack{\vecx_m=\alpha\beta\gamma \\
		\alpha,\beta\neq\emptyset}}
	A(\ulflex{\alpha}{\beta}\gamma) B(\lrflex{\alpha}{\beta}) & (m\geq2),\\
0 & (m=0,1),
\end{array}\right.
\end{align*}
for $A\in\mathcal M(\mathcal F;\Gamma)$.
\end{defn}

\begin{defn}[{\cite[Definition 1.11]{FK}}]\label{defn:ari-bracket}
Let $\mathcal{F}$ be a family of functions and $\Gamma$ a group.
The {\it $\ari$-bracket} means
the bilinear map
$$\ari:\ARI(\mathcal{F};\Gamma)\times \ARI(\mathcal{F};\Gamma)\rightarrow\ARI(\mathcal{F};\Gamma)$$
which is defined by
\begin{equation}\label{ari-bracket}
	\ari( A, B)
	:=\arit( B)( A)-\arit( A)( B)
	+[ A,  B]
\end{equation}
for $ A, B\in\ARI(\mathcal{F};\Gamma)$.
Here we have
$ [A, B]:= A\times  B- B\times  A$.
\footnote{In the papers \cite{E-flex}, \cite{S-ARIGARI} and \cite{RS}, the product $A\times B$ (resp. the bracket $[A,B]$) is denoted by $mu(A,B)$ (resp. $lu(A,B)$).}
\end{defn}

We note that  $\arit$ and $\ari$ are denoted by
$\arit_u$ and $\ari_u$ respectively in \cite{FK}.

\begin{prop}
Under the above $\ari$-bracket,
$\ARI(\mathcal{F};\Gamma)$ forms a Lie algebra.
\end{prop}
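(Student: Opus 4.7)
The plan is to verify the three defining properties of a Lie algebra—bilinearity, antisymmetry, and the Jacobi identity—for the $\ari$-bracket. Bilinearity is immediate from the definition, since $\arit(B)(\cdot)$ and the commutator $[\cdot,\cdot]$ are each bilinear and $\arit(\cdot)(A)$ is linear in its subscript. Antisymmetry is a direct one-line calculation:
$$\ari(B,A)=\arit(A)(B)-\arit(B)(A)+[B,A]=-\bigl(\arit(B)(A)-\arit(A)(B)+[A,B]\bigr)=-\ari(A,B),$$
using $[B,A]=-[A,B]$.

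For the Jacobi identity my strategy mirrors the one used just above to derive the associativity of $\gari$. I would define the trilinear map
$$J:\ARI(\mathcal{F};\Gamma)^3\to\ARI(\mathcal{F};\Gamma),\qquad J(A,B,C):=\ari(\ari(A,B),C)+\ari(\ari(B,C),A)+\ari(\ari(C,A),B),$$
and observe that, being built out of the mould-proper operations $+$, $\times$, and $\arit$, the map $J$ is itself mould-proper in each variable. By the unique prolongation theorem (Theorem \ref{thm:UniqueProlongation}), it then suffices to prove $J\equiv 0$ in the distinguished case $\mathcal{F}=\mathcal{F}_\ser$. In that setting I would invoke Theorem \ref{thm:ma(circledast)=gari(ma,ma)} from Appendix \ref{app:correspondence between circledast and gari}, which transports $\gari$ through the map $\ma$ to a product $\circledast$ on a space of noncommutative formal power series. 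Since $(\GARI(\mathcal{F}_\ser;\Gamma),\gari)$ has just been shown to be a group, the image under $\ma$ inherits a group law, and the Jacobi identity on the corresponding tangent Lie algebra is automatic. The $\ari$-bracket is designed so as to reproduce this infinitesimal bracket under $\ma$, so transporting Jacobi back yields $J\equiv 0$ on $\ARI(\mathcal{F}_\ser;\Gamma)$, whence $J\equiv 0$ on $\ARI(\mathcal{F};\Gamma)$ for every divisible $\mathcal{F}$ with $\mathcal{F}_\pol\subset\mathcal{F}$ by unique prolongation.

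The main obstacle I anticipate is the explicit matching of $\ari$ with the tangent bracket of the $\gari$-group under $\ma$: one has to expand $\gari$ to lowest nontrivial order length by length and verify that the contribution of the $\arit$-term in Definition \ref{def:aritu} accounts for exactly the non-$\times$ part of the infinitesimal commutator of $\gari$. This is a combinatorial bookkeeping over the flexion-decompositions $\vecx_m=\alpha\beta\gamma$, and it is where the precise formulation of $\arit(B)(A)$—in particular the asymmetric $\urflex{\beta}{\gamma}$ versus $\lrflex{\alpha}{\beta}$ flexion summands—is essential. Once this identification is secured, the rest of the argument is formal: the Jacobi identity for the Lie algebra of any group, combined with the unique prolongation theorem, delivers the proposition.
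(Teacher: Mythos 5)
Your skeleton — bilinearity and antisymmetry by inspection, then the Jacobi identity by reducing to $\mathcal{F}=\mathcal{F}_\ser$ via the unique prolongation theorem and transporting the problem through $\ma_\Gamma$ — is exactly the alternative route the paper itself points to in the remark following this proposition (the paper's official proof is simply a citation to \cite[Appendix A.1]{FK}). However, as written your Jacobi step has a genuine hole. You invoke Theorem \ref{thm:ma(circledast)=gari(ma,ma)}, which is the \emph{group-level} correspondence between $\circledast$ and $\gari$, and then assert that ``the Jacobi identity on the corresponding tangent Lie algebra is automatic.'' It is not: a bare group has no canonical tangent Lie algebra, and even granting the pro-unipotent/filtered structure needed to extract one, the identification of that infinitesimal bracket with $\ari$ is precisely the combinatorial content you defer as ``the main obstacle I anticipate'' without resolving it. The tool you actually need is the Lie-level statement, Theorem \ref{thm:ma(<,>)=ari(ma,ma)}, which proves directly that $\ma_{\Gamma}(\{\psi,\varphi\})=\ari(\ma_\Gamma(\varphi),\ma_\Gamma(\psi))$ on $(\widehat{U\frak f_{\Gamma}}^\dag)_0$, so that $\ari$ is the push-forward of $-\{\ ,\ \}$ rather than something to be extracted from the group law.

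Even with that substitution, one further step remains that you cannot wave away: the Jacobi identity for the Ihara-type bracket $\{\psi,\varphi\}=[\varphi,\psi]+D_\psi(\varphi)-D_\varphi(\psi)$ on the source side must itself be established. The standard argument is that $s_\varphi(\psi)=\psi\varphi+D_\varphi(\psi)$ is a (right) pre-Lie product — the series-side counterpart of the pre-Lie property of $\preari$ proved in \cite[Proposition A.6]{FK} and quoted via Lemma \ref{lem:ma(s)=preari(ma,ma)} — and the antisymmetrization of any pre-Lie product satisfies Jacobi. Once you (i) replace the group-level theorem by Theorem \ref{thm:ma(<,>)=ari(ma,ma)}, (ii) justify Jacobi for $\{\ ,\ \}$ via the pre-Lie property, and (iii) apply the unique prolongation theorem to the mould-proper Jacobiator $J$ exactly as you describe, the argument closes and coincides with the paper's suggested alternative proof.
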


\begin{proof}
It is shown in \cite[Appendix A.1]{FK}.
\end{proof}

\begin{rem}
Actually we could obtain the above proposition from Theorem \ref{thm:ma(<,>)=ari(ma,ma)} in Appendix \ref{app:correspondence between exp and expari}
by combining  with
the unique prolongation theorem (Theorem \ref{thm:UniqueProlongation}).
\end{rem}

\subsection{Review on the map $\ma_\Gamma$}\label{subsec:map ma}
We review the notion of the map $\ma_\Gamma$ (cf. \cite{S})
which relates non-commutative formal power series with a mould.

Let $\Gamma$ be a set with $n-1$ elements.
Let $\frak f_{\Gamma}$
\footnote{
Particularly when $\Gamma$ is given by $\{1,2,\dots, n-1\}$,
we simply denote it by $\frak f_n$.}
be the free Lie algebra over $\Q$ with $n$ variables $f_0$ and $f_\sigma$ ($\sigma\in\Gamma$), and $U\frak f_{\Gamma}:=\Q\langle f_0,f_\sigma\ |\ \sigma\in\Gamma\rangle$ be its universal enveloping algebra.
We denote $\widehat{U\frak f_{\Gamma}}:=\Q\langle\langle f_0,f_\sigma\ |\ \sigma\in\Gamma\rangle\rangle$ to be their completions by degree.
Let
\begin{equation}\label{eq:e}
e_0:\widehat{U\frak f_{\Gamma}}\to\mathbb{Q}
\end{equation}
be the map taking the coefficient of $f_{0}$. Define
\begin{equation}\label{eq:UfGamma}
{\widehat{U\frak f_{\Gamma}}^\dagger}=\{w\in\widehat{U\frak f_{\Gamma}}\mid(e_0\otimes{\rm id})\circ\Delta(w)=0
\}.
\end{equation}
We note that $1,f_\sigma\in\widehat{U\frak f_{\Gamma}}^\dagger$ for $\sigma\in\Gamma$
but $f_0^m\not\in\widehat{U\frak f_{\Gamma}}^\dagger$ for $m\in\N$.


\begin{defn}\label{def:ma}
With $h\in\widehat{U\frak f_{\Gamma}}$ presented as
\begin{equation}\label{eq: word expansion}
h=  \sum_{r=0}^\infty\sum_{(\sigma_1,\dots,\sigma_r)\in\Gamma^r}\sum_{k_0,\dots,k_r\in\N_{0}}
\coeff{h}{k_0,\dots,k_r}{\sigma_1,\dots,\sigma_r}
f_0^{k_0}f_{\sigma_1}\cdots f_{\sigma_r}f_0^{k_r}
\in \widehat{U\frak f_{\Gamma}},
\end{equation}
we associate a mould
$$
\ma_{\Gamma,h}=\{\ma^r_{\Gamma,h}({}_{\sigma_1,\dots,\sigma_r}^{x_1,\dots,x_r})\}_{r\in\Z_{\geq 0},(\sigma_1,\dots,\sigma_r)\in\Gamma^r}
\in{\mathcal M(\mathcal{F}_{{\ser}};\Gamma)}
$$
with
\begin{align}\notag
&
\ma^r_{\Gamma,h}({}_{\sigma_1,\dots,\sigma_r}^{x_1,\dots,x_r})
=\vimo^r_{\Gamma,h}({}_{\qquad \sigma_1,\dots,\sigma_r}^{0,x_1,x_1+x_2,\dots,x_1+\cdots+x_r}), \\
\label{eq:vimo}
&
\vimo^r_{\Gamma,h}({}^{z_0,\dots,z_r}_{\sigma_1,\dots,\sigma_r})
=\sum_{k_0,\dots,k_r\in\N_0}
\coeff{h}{k_0,\dots,k_r}{\sigma_1,\dots,\sigma_r}
z_0^{k_0}z_1^{k_1}z_2^{k_2}\cdots z_r^{k_r}.
\end{align}
\end{defn}

{
We note that the above
$\ma^r_{\Gamma,h}({}_{\sigma_1,\dots,\sigma_r}^{x_1,\dots,x_r})$  and
$\vimo^r_{\Gamma,h}({}^{z_0,\dots,z_r}_{\sigma_1,\dots,\sigma_r})$
actually agree with
$\ma^r_{h}({}_{\sigma_1^{-1},\dots,\sigma_r^{-1}}^{x_1,\dots,x_r})$
and
$\vimo^r_h({}^{z_0,\dots,z_r}_{\sigma_1^{-1},\dots,\sigma_r^{-1}})$
in \cite[Definition 2.4]{FK}
which are defined only when $\Gamma$ forms a group.

We note that $\ma^r_{\Gamma,h}({}_{\sigma_1,\dots,\sigma_r}^{x_1,\dots,x_r})$ is the constant term of $h$
when $r=0$.

\begin{lem}\label{lem:UfGamma}
For $h\in \widehat{U\frak f_{\Gamma}}$,
the following conditions are equivalent.
\begin{enumerate}
\item $h\in \widehat{U\frak f_{\Gamma}}^\dagger$.
\item For all $r\in\Z_{\geq 0}$, $\sigma_1,\dots,\sigma_r\in\Gamma$ and $k_0,\dots,k_r\in\N_0$,
\[
\sum_{i=0}^{r}(k_i+1)\coeff{h}{k_0,\dots,k_i+1,\dots,k_r}{\sigma_1,\dots,\sigma_r}=0.
\]
\item For all $r\in\Z_{\geq 0}$, $\sigma_1,\dots,\sigma_r\in\Gamma$,
\[
\sum_{i=0}^{r}(\frac{\partial}{\partial z_i}) \vimo^r_{\Gamma,h}({}^{z_0,\dots,z_r}_{\sigma_1,\dots,\sigma_r})=0.
\]
\item For all $r\in\Z_{\geq 0}$, $\sigma_1,\dots,\sigma_r\in\Gamma$, $\vimo^r_{\Gamma,h}({}^{z_0,\dots,z_r}_{\sigma_1,\dots,\sigma_r})$ is invariant under the translation $(z_0,\dots,z_r)\mapsto (z_0+\alpha,\dots,z_r+\alpha)$ for any $\alpha\in\Q$.
\end{enumerate}
\end{lem}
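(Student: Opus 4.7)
The plan is to establish the chain of equivalences in the cyclic order $(1) \Leftrightarrow (2) \Leftrightarrow (3) \Leftrightarrow (4)$, since each adjacent pair converts cleanly into the next via an elementary manipulation. The only non-routine step is $(1) \Leftrightarrow (2)$, which requires expanding the condition $(e_0 \otimes \id)\circ\Delta(h)=0$ word by word; the other two equivalences are essentially formal.

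For $(1) \Leftrightarrow (2)$, I would first recall that the coproduct $\Delta$ on $\widehat{U\mathfrak{f}_\Gamma}$ is the one under which the generators $f_0, f_\sigma$ are primitive, so for any word $w = w_{i_1}\cdots w_{i_n}$ in the generators one has $\Delta(w) = \sum_{I \subseteq [n]} w|_I \otimes w|_{[n]\setminus I}$, where $w|_I$ denotes the ordered subword. Applying $e_0 \otimes \id$ kills every term except those in which $w|_I$ is a single letter equal to $f_0$, giving $(e_0 \otimes \id)\circ\Delta(w) = \sum_{j : w_{i_j} = f_0} w|_{[n]\setminus\{j\}}$. Now apply this to each basis word $f_0^{k_0}f_{\sigma_1}f_0^{k_1}\cdots f_{\sigma_r}f_0^{k_r}$ in the expansion \eqref{eq: word expansion}: removing one of the $k_i$ copies of $f_0$ in the $i$-th block yields $f_0^{k_0}\cdots f_0^{k_i-1}\cdots f_0^{k_r}$ with multiplicity $k_i$. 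After reindexing $k_i \mapsto k_i+1$, the coefficient of the word $f_0^{k_0}f_{\sigma_1}\cdots f_{\sigma_r}f_0^{k_r}$ in $(e_0 \otimes \id)\circ\Delta(h)$ equals
\[
\sum_{i=0}^{r}(k_i+1)\coeff{h}{k_0,\dots,k_i+1,\dots,k_r}{\sigma_1,\dots,\sigma_r}.
\]
Since $(e_0 \otimes \id)\circ\Delta(h)$ vanishes if and only if every such coefficient vanishes, conditions (1) and (2) are equivalent.

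For $(2) \Leftrightarrow (3)$, differentiating the defining formula \eqref{eq:vimo} with respect to $z_i$ brings out a factor of $k_i$ from each monomial $z_0^{k_0}\cdots z_r^{k_r}$; summing over $i$, reindexing $k_i \mapsto k_i+1$, and reading off the coefficient of $z_0^{k_0}\cdots z_r^{k_r}$ in $\sum_i (\partial/\partial z_i)\vimo^r_{\Gamma,h}$ produces exactly the left-hand side of condition (2). Since this holds for every fixed $(r,\sigma_1,\dots,\sigma_r,k_0,\dots,k_r)$, the two conditions are equivalent.

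For $(3) \Leftrightarrow (4)$, note that $\sum_{i=0}^r \partial/\partial z_i$ is the directional derivative along the diagonal vector $(1,\dots,1)$. A formal power series in $z_0,\dots,z_r$ is annihilated by this derivative if and only if it is invariant under the translation $(z_0,\dots,z_r) \mapsto (z_0+\alpha,\dots,z_r+\alpha)$ for every $\alpha \in \mathbb{Q}$: one direction is Taylor expansion in $\alpha$, the other is differentiation in $\alpha$ at $0$. Applying this to each $\vimo^r_{\Gamma,h}$ yields the equivalence, completing the cycle. The main subtlety is the bookkeeping in the first step, where one must be careful that the sum runs over all positions including the $k_0$-block at the far left and the $k_r$-block at the far right, which is precisely why the index $i$ in (2) ranges over $\{0,1,\dots,r\}$.
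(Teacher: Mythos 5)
Your proof is correct and follows essentially the same route as the paper: the paper's proof reads off $(1)\Leftrightarrow(2)$ from the expansion \eqref{eq: en otimes id Delta} of $(e_0\otimes\mathrm{id})\circ\Delta(h)$ (which you derive in detail from the primitivity of the generators), treats $(2)\Leftrightarrow(3)$ as direct, and reduces $(3)\Leftrightarrow(4)$ to the fact that a formal power series is killed by $\sum_i\partial/\partial z_i$ iff it is translation invariant. The only difference is that you supply the word-by-word computation of the coproduct that the paper leaves as "immediate."
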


\begin{proof}
The equivalence of (1) and (2) is immediate by
\begin{equation}\label{eq: en otimes id Delta}
(e_{0}\otimes{\rm id})\circ\Delta(h)=\sum_{r=0}^\infty
\sum_{(\sigma_1,\dots,\sigma_r)\in\Gamma^r}\sum_{k_0,\dots,k_r\in\N_{0}}
\sum_{i=0}^{r}(k_i+1)\coeff{h}{k_0,\dots,k_i+1,\dots,k_r}{\sigma_1,\dots,\sigma_r}
f_0^{k_0}f_{\sigma_1}\cdots f_{\sigma_r}f_0^{k_r}.
\end{equation}
The equivalence of (2) and (3) is direct.
The equivalence of (3) and (4) follows from the fact that, for a $P\in\Q[[z_0,\dots,z_r]]$,
$\sum_{i=0}^{r}(\frac{\partial}{\partial z_i})P=0$
if and only if $P$ is translation invariant.
\end{proof}

Denote $d_0$ to be the derivation of $\widehat{U\frak f_\Gamma}$
sending $f_0\mapsto 1$ and $f_\sigma\mapsto 0$ ($\sigma\in\Gamma$).
Then  by \eqref{eq: en otimes id Delta}, 
we see that
$$d_0=(e_{0}\otimes{\rm id})\circ\Delta.
$$
Whence we have a characterization
\begin{equation}\label{eq: kernel diff charzn}
\widehat{U\frak f_\Gamma}^\dag=\ker d_0.
\end{equation}

\begin{lem}\label{lem: Hopf structure of UF-Gamma-dag}
The $\Q$-linear space $\widehat{U\frak f_\Gamma}^\dag$ forms a complete Hopf subalgebra of
$\widehat{U\frak f_\Gamma}$
under the normal non-commutative  product $\cdot$ and the coproduct $\Delta$.
\end{lem}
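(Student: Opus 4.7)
The plan is to verify the three closure conditions that make $\widehat{U\frak f_\Gamma}^\dag$ a Hopf subalgebra: closure under the product $\cdot$, under the coproduct $\Delta$, and under the antipode $S$ (with $1 \in \widehat{U\frak f_\Gamma}^\dag$ being immediate since $d_0(1) = 0$). The starting point is the characterization $\widehat{U\frak f_\Gamma}^\dag = \ker d_0$ from \eqref{eq: kernel diff charzn}, together with the two descriptions of $d_0$: as the derivation sending $f_0 \mapsto 1$, $f_\sigma \mapsto 0$, and as the convolution $d_0 = (e_0 \otimes \id) \circ \Delta$.

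First, closure under the product is immediate from the derivation property: for $a,b \in \ker d_0$, $d_0(a \cdot b) = d_0(a) \cdot b + a \cdot d_0(b) = 0$. Closure under the antipode will follow by proving the identity $d_0 \circ S = -S \circ d_0$. I would check this on the generators ($S(f_0) = -f_0$ gives $d_0 S(f_0) = -1 = -S d_0(f_0)$, and trivially on $f_\sigma$) and then propagate to products using that $S$ is an anti-homomorphism and $d_0$ is a derivation. Then $d_0(h) = 0$ forces $d_0(S(h)) = -S(d_0 h) = 0$.

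The main step, and the real obstacle, is closure under the coproduct: for $h \in \ker d_0$ one needs $\Delta(h) \in \widehat{U\frak f_\Gamma}^\dag \widehat{\otimes} \widehat{U\frak f_\Gamma}^\dag$, i.e., both $(d_0 \otimes \id)\Delta(h) = 0$ and $(\id \otimes d_0)\Delta(h) = 0$. The first identity follows from coassociativity: using $d_0 = (e_0 \otimes \id)\Delta$ one computes
\[
(d_0 \otimes \id)\Delta(h) = (e_0 \otimes \id \otimes \id)(\Delta \otimes \id)\Delta(h) = (e_0 \otimes \id \otimes \id)(\id \otimes \Delta)\Delta(h) = \Delta\bigl(d_0(h)\bigr) = 0.
\]
For the second identity I would use that $\widehat{U\frak f_\Gamma}$ is cocommutative (all $f_i$ are primitive), which gives $(e_0 \otimes \id)\Delta = (\id \otimes e_0)\Delta$, and then an analogous coassociativity computation yields $(\id \otimes d_0)\Delta(h) = \Delta(d_0(h)) = 0$. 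Since $\ker(d_0 \otimes \id) \cap \ker(\id \otimes d_0) = \ker d_0 \,\widehat{\otimes}\, \ker d_0$ inside the completed tensor product, this gives the desired closure.

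The subtle point I expect to have to handle carefully is the identification $\ker(d_0 \otimes \id) \cap \ker(\id \otimes d_0) = \ker d_0 \,\widehat{\otimes}\, \ker d_0$ in the completion: one needs to pass from finite-length truncations, where the corresponding statement for ordinary tensor products is standard, to the graded completion by degree, but because $d_0$ respects the grading (it lowers total degree by one) the argument can be done degree by degree, and the resulting subspace is automatically closed in the adic topology.
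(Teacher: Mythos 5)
Your proposal is correct and follows essentially the same route as the paper: product closure via the derivation property of $d_0$, coproduct closure via coassociativity plus cocommutativity applied to $d_0=(e_0\otimes\id)\circ\Delta$, and antipode closure via the kernel characterization \eqref{eq: kernel diff charzn}. The only differences are that you spell out the antipode step (the identity $d_0\circ S=-S\circ d_0$) and the identification $\ker(d_0\otimes\id)\cap\ker(\id\otimes d_0)=\ker d_0\,\widehat{\otimes}\,\ker d_0$, both of which the paper leaves implicit; these are welcome clarifications rather than deviations.
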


\begin{proof}
By \eqref{eq: kernel diff charzn},  it is direct to see  that
$\widehat{U\frak f_\Gamma}^\dag$ forms a
$\Q$-subalgebra  under the  product $\cdot$.
We note that
\begin{align*}
(d_0\otimes\id)\circ \Delta&=
((e_0\otimes \id)\otimes\id)\circ(\Delta\otimes \id)\circ\Delta
=(e_0\otimes (\id\otimes\id))\circ(\id\otimes\Delta)\circ\Delta \\
&=(e_0\otimes\Delta)\circ\Delta
=(\id\otimes\Delta)\circ(e_0\otimes\id)\circ\Delta, \\
(\id\otimes d_0)\circ \Delta&=
(\id\otimes(\id\otimes e_0))\circ(\id\otimes\Delta)\circ\Delta
=( (\id\otimes\id)\otimes e_0)\circ(\Delta\otimes\id)\circ\Delta. \\
&=(\Delta\otimes e_0)\circ\Delta
=(\Delta\otimes\id)\circ(\id\otimes e_0)\circ\Delta.
\end{align*}
Assume that $h\in \widehat{U\frak f_\Gamma}$.
Then we have $(e_0\otimes \id)\circ \Delta(h)=0$
and also $(\id\otimes e_0)\circ \Delta(h)=0$
because the coproduct $\Delta$ is co-commutative.
Then by the above two equalities we have
$(d_0\otimes\id)\circ \Delta(h)=0$ and
$(\id\otimes d_0)\circ \Delta(h)=0$.
That means $\Delta(h) \in \widehat{U\frak f_\Gamma}^\dag\widehat{\otimes}\widehat{U\frak f_\Gamma}^\dag$.
By \eqref{eq: kernel diff charzn}, it is immediate to see that
it is closed under the antipode.
Whence $\widehat{U\frak f_\Gamma}^\dag$ forms  a Hopf subalgebra.
\end{proof}

Let $D^1{\frak f}_\Gamma$ denote the Lie subalgebra of ${\frak f}_\Gamma$
which is the depth $\geq$ 1-part of
$\frak f_\Gamma$, that is,
$${\frak f}_\Gamma=\Q f_0 \oplus D^1{\frak f}_\Gamma.$$

\begin{prop}\label{prop: UF=UD1F}
The complete Hopf algebra $\widehat{U\frak f_\Gamma}^\dag$ is identified with the completed universal enveloping algebra  of the Lie algebra $D^1{\frak f}_\Gamma$.
$$
\widehat{U\frak f_\Gamma}^\dag\simeq \widehat{U}(D^1{\frak f}_\Gamma).
$$
\end{prop}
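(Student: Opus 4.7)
The plan is to combine the characterization $\widehat{U\frak f_\Gamma}^\dagger=\ker d_0$ from \eqref{eq: kernel diff charzn} with a PBW-type decomposition of $\widehat{U\frak f_\Gamma}$ relative to the splitting $\frak f_\Gamma=\Q f_0\oplus D^1\frak f_\Gamma$. First I would observe that since $d_0$ is a derivation of the associative algebra, its kernel is closed under the commutator, hence under the Lie bracket of $\frak f_\Gamma$. Combined with the base cases $d_0(f_\sigma)=0$ and the induction $d_0([f_0,x])=[1,x]+[f_0,d_0(x)]=[f_0,d_0(x)]$, this shows $d_0(\ad(f_0)^k f_\sigma)=0$ for all $k\geq 0$ and $\sigma\in\Gamma$. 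Since the elements $\ad(f_0)^k f_\sigma$ generate $D^1\frak f_\Gamma$ as a Lie subalgebra of $\frak f_\Gamma$ (every bracketed word containing at least one $f_\sigma$ reduces, via Jacobi, to brackets of such), I can conclude $D^1\frak f_\Gamma\subseteq \ker d_0$. By the Leibniz rule applied to any polynomial expression in depth-$\geq 1$ elements, $d_0$ vanishes on $U(D^1\frak f_\Gamma)$, and by continuity on its completion $\widehat{U}(D^1\frak f_\Gamma)$.

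Next I would invoke the Poincar\'e--Birkhoff--Witt theorem, ordering a Hall basis so that $D^1\frak f_\Gamma$ comes before $f_0$. This yields a vector space isomorphism
\[
\widehat{U\frak f_\Gamma}\simeq \widehat{U}(D^1\frak f_\Gamma)\,\widehat{\otimes}_\Q\,\Q[[f_0]],
\]
so every $h\in \widehat{U\frak f_\Gamma}$ can be written uniquely as $h=\sum_{n\geq 0} h_n\cdot f_0^n$ with $h_n\in \widehat{U}(D^1\frak f_\Gamma)$. Applying $d_0$ via the Leibniz rule and using the vanishing established above,
\[
d_0(h)=\sum_{n\geq 0} d_0(h_n)f_0^n+\sum_{n\geq 1} n\,h_n\,f_0^{n-1}=\sum_{n\geq 1} n\,h_n\,f_0^{n-1}.
\]
By the uniqueness of the PBW decomposition, $d_0(h)=0$ forces $h_n=0$ for all $n\geq 1$, so $h=h_0\in \widehat{U}(D^1\frak f_\Gamma)$. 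The reverse containment was already established, giving the equality $\widehat{U\frak f_\Gamma}^\dagger=\widehat{U}(D^1\frak f_\Gamma)$ as $\Q$-algebras.

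Finally, the Hopf structure matches: since $D^1\frak f_\Gamma\subset \frak f_\Gamma$ is a Lie subalgebra whose elements are primitive for $\Delta$ in both $\widehat{U}(D^1\frak f_\Gamma)$ and $\widehat{U\frak f_\Gamma}$, the inclusion $\widehat{U}(D^1\frak f_\Gamma)\hookrightarrow \widehat{U\frak f_\Gamma}$ is a morphism of complete Hopf algebras. Lemma \ref{lem: Hopf structure of UF-Gamma-dag} already exhibits $\widehat{U\frak f_\Gamma}^\dagger$ as a complete Hopf subalgebra of $\widehat{U\frak f_\Gamma}$, so the algebra identification above upgrades to the desired complete Hopf algebra isomorphism.

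The main obstacle I anticipate is the bookkeeping around completions, namely verifying that the vector space PBW isomorphism extends to the degree-completed setting compatibly with $d_0$ and with the statement $D^1\frak f_\Gamma\subseteq \ker d_0$. The content is routine once one fixes the total-degree grading, but care is required so that the term-by-term expansion $h=\sum h_n f_0^n$ is unambiguous and that termwise application of $d_0$ is justified by continuity.
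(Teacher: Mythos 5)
Your proof is correct, but it takes a genuinely different route from the paper's. The paper's entire proof is one sentence: it observes that the Lie-like (primitive) part of $\widehat{U\frak f_\Gamma}^\dag$ equals $D^1{\frak f}_\Gamma$ and then tacitly invokes the structure theorem for complete cocommutative Hopf algebras over $\Q$ (Milnor--Moore/Cartier), namely that such a Hopf algebra is the completed enveloping algebra of its primitives. You instead argue directly at the level of algebras: using the characterization $\widehat{U\frak f_\Gamma}^\dag=\ker d_0$ from \eqref{eq: kernel diff charzn}, you show that $d_0$ kills the Lazard-elimination generators $\mathrm{ad}(f_0)^k f_\sigma$ of $D^1{\frak f}_\Gamma$, factor $\widehat{U\frak f_\Gamma}\simeq \widehat{U}(D^1{\frak f}_\Gamma)\,\widehat\otimes\,\Q[[f_0]]$ via PBW, and observe that $d_0$ acts as $\partial/\partial f_0$ on the second factor, so its kernel is exactly the first factor. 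What your approach buys is self-containedness --- no appeal to Milnor--Moore --- at the cost of needing Lazard elimination (that the elements $\mathrm{ad}(f_0)^k f_\sigma$ generate $D^1{\frak f}_\Gamma$) and the graded-completion bookkeeping you flag at the end, which does go through since every map involved respects the total-degree filtration. The paper's route is shorter but leans on an unstated structure theorem, and it still implicitly requires the computation that the primitives inside $\ker d_0$ are exactly the Lie series with vanishing coefficient of $f_0$; that computation is essentially your base-case argument that $d_0$ annihilates depth-$\geq 1$ Lie elements.
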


\begin{proof}
It is because the Lie-like part of $\widehat{U\frak f_\Gamma}^\dag$,
the subspace of elements $h$ satisfying $\Delta(h)=h\otimes1 +1\otimes h$,
is equal to $D^1{\frak f}_\Gamma$.
\end{proof}

%

Actually  $\widehat{U\frak f_{\Gamma}}^\dagger$ is isomorphic to
 $\mathcal M(\mathcal{F}_{{\ser}};\Gamma)$: 

\begin{prop}\label{prop: isom ma}
The map
\begin{equation}\label{eq: ma Gamma}
\ma_\Gamma:\widehat{U\frak f_{\Gamma}}^\dagger \to \mathcal M(\mathcal{F}_{{\ser}};\Gamma)\ ;\ h \mapsto \ma_{\Gamma,h}
\end{equation}
is an isomorphism of algebras.
\footnote{Particularly  in the case of $\Gamma=\{1\}$,
we simply denote
$\ma$
instead of $\ma_\Gamma$.
}
\end{prop}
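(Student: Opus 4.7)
The plan is to verify separately that $\ma_\Gamma$ is a bijection of $\Q$-linear spaces and that it is multiplicative, both arguments hinging on the translation-invariance characterization in Lemma~\ref{lem:UfGamma}(4).

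$\Q$-linearity is immediate from the coefficient-extracting definition of $\ma_{\Gamma,h}$. For injectivity I would exploit the fact that $\vimo^r_{\Gamma,h}$ depends only on the differences $z_i - z_{i-1}$: the substitution $z_i = x_1 + \cdots + x_i$ (with $z_0 = 0$) is precisely the map onto that quotient, and so the mould $\ma_{\Gamma,h}$ determines $\vimo^r_{\Gamma,h}$ completely through the identity $\vimo^r_{\Gamma,h}({}^{z_0,\ldots,z_r}_{\sigma_1,\ldots,\sigma_r}) = \ma^r_{\Gamma,h}({}^{z_1-z_0,\,z_2-z_1,\,\ldots,\,z_r-z_{r-1}}_{\sigma_1,\quad\sigma_2,\quad\ldots,\quad\sigma_r})$. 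Vanishing of all $\ma^r_{\Gamma,h}$ therefore forces all coefficients of $h$ to vanish.

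For surjectivity, given $M \in \mathcal M(\mathcal F_\ser;\Gamma)$, I would define
\[
\vimo^r(z_0,\ldots,z_r;\sigma_1,\ldots,\sigma_r) := M\Big({}^{z_1-z_0,\,z_2-z_1,\,\ldots,\,z_r-z_{r-1}}_{\sigma_1,\quad\sigma_2,\quad\ldots,\quad\sigma_r}\Big) \in \Q[[z_0,\ldots,z_r]],
\]
a legal substitution because each $z_i - z_{i-1}$ lies in the augmentation ideal, and manifestly translation invariant. Extracting its coefficients yields a formal sum $h$ that converges in the adic topology (since $\Gamma$ is finite, each monomial of degree $\leq N$ has bounded $r$); Lemma~\ref{lem:UfGamma} then places $h$ in $\widehat{U\frak f_\Gamma}^\dagger$, and by construction $\ma_{\Gamma,h}=M$.

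For multiplicativity, the heart of the matter is the identity
\[
\vimo^r_{\Gamma,h_1 h_2}(z_0,\ldots,z_r;\sigma_1,\ldots,\sigma_r) \;=\; \sum_{i=0}^r \vimo^i_{\Gamma,h_1}(z_0,\ldots,z_i;\sigma_1,\ldots,\sigma_i)\cdot \vimo^{r-i}_{\Gamma,h_2}(z_i,\ldots,z_r;\sigma_{i+1},\ldots,\sigma_r),
\]
which one reads off the concatenation product in $\widehat{U\frak f_\Gamma}$: a monomial $f_0^{k_0}f_{\sigma_1}\cdots f_0^{k_r}$ of $h_1 h_2$ arises by picking a split position $i$ together with a decomposition $f_0^{k_i}=f_0^a f_0^{k_i-a}$, which at the level of $\vimo$-polynomials corresponds exactly to identifying the last variable of $h_1$'s polynomial with the first variable of $h_2$'s polynomial, i.e.\ the shared variable $z_i$. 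Substituting $z_j = x_1+\cdots+x_j$ and using translation invariance of $\vimo^{r-i}_{\Gamma,h_2}$ to shift its initial argument from $x_1+\cdots+x_i$ back to $0$ converts the right-hand side precisely into $\sum_{i=0}^r \ma^i_{\Gamma,h_1}({}^{x_1,\ldots,x_i}_{\sigma_1,\ldots,\sigma_i})\cdot \ma^{r-i}_{\Gamma,h_2}({}^{x_{i+1},\ldots,x_r}_{\sigma_{i+1},\ldots,\sigma_r}) = (\ma_{\Gamma,h_1}\times \ma_{\Gamma,h_2})^r$.

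The main point requiring care is this final bookkeeping: one must recognize that the fusion of the two $f_0$-blocks at the seam is exactly what the shared variable $z_i$ records, and that translation invariance is precisely the mechanism that re-aligns the second $\vimo$-factor with the normalized argument tuple $(0, x_{i+1}, x_{i+1}+x_{i+2}, \ldots)$ demanded by the mould product.
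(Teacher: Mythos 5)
Your proposal is correct and follows essentially the same route as the paper: bijectivity via the translation-invariance characterization of Lemma \ref{lem:UfGamma} (equivalence of (1) and (4)), and multiplicativity via the factorization $\vimo^m_{\Gamma,hg}(z_0,\dots,z_m)=\sum_{r}\vimo^r_{\Gamma,h}(z_0,\dots,z_r)\cdot\vimo^{m-r}_{\Gamma,g}(z_r,\dots,z_m)$ coming from the concatenation product, followed by a translation to renormalize the second factor before substituting $z_i=x_1+\cdots+x_i$. The only difference is that you spell out the injectivity/surjectivity bookkeeping that the paper compresses into a one-line citation of the lemma.
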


\begin{proof}
The bijectivity of the map $\ma_\Gamma$ follows from the equivalence of (1) and (4) in Lemma \ref{lem:UfGamma}.
We prove that it forms an algebra homomorphism, i.e.,
\begin{equation}\label{eqn:ma(hg)=ma(h)timesma(g)}
\ma_\Gamma(hg)=\ma_\Gamma(h) \times \ma_\Gamma(g)
\end{equation}
for $h,g\in \widehat{U\frak f_{\Gamma}}^\dagger$.
By using the expression \eqref{eq: word expansion}, we have
\begin{align*}
hg
=\sum_{m=0}^\infty \sum_{\sigma_i\in\Gamma} \sum_{k_i\in\N_{0}}
\left\{
\sum_{r=0}^m\sum_{\substack{k'_r+k''_r=k_r \\ k'_r,k''_r\geq0}}
\coeff{h}{k_0,\dots,k_{r-1},k'_r}{\sigma_1,\dots,\sigma_r}\coeff{g}{k''_r,k_{r+1},\dots,k_m}{\sigma_{r+1},\dots,\sigma_m}
\right\}
f_0^{k_0}f_{\sigma_1}\cdots f_{\sigma_m}f_0^{k_m}.
\end{align*}
So, for $m\geq0$, we calculate
\begin{align*}
&\vimo^m_{\Gamma,hg}({}^{z_0,\dots,z_m}_{\sigma_1,\dots,\sigma_m}) \\
&=\sum_{k_0,\dots,k_m\in\N_0}
\left\{
\sum_{r=0}^m\sum_{\substack{k'_r+k''_r=k_r \\ k'_r,k''_r\geq0}}
\coeff{h}{k_0,\dots,k_{r-1},k'_r}{\sigma_1,\dots,\sigma_r}\coeff{g}{k''_r,k_{r+1},\dots,k_m}{\sigma_{r+1},\dots,\sigma_m}
\right\}
z_0^{k_0}z_1^{k_1}\cdots z_m^{k_m} \\
&=\sum_{r=0}^m
\left\{
\sum_{k_0,\dots,k_{r-1},k'_r\in\N_0}
\coeff{h}{k_0,\dots,k_{r-1},k'_r}{\sigma_1,\dots,\sigma_r}
z_0^{k_0}z_1^{k_1}\cdots z_{r-1}^{k_{r-1}}z_r^{k'_r}
\right\} \\
&\hspace{3cm} \cdot \left\{
\sum_{k''_r,k_{r+1},\dots,k_{r+s}\in\N_0}
\coeff{g}{k''_r,k_{r+1},\dots,k_m}{\sigma_{r+1},\dots,\sigma_m}
z_r^{k''_r}z_{r+1}^{k_{r+1}}\cdots z_m^{k_m}
\right\} \\
&=\sum_{r=0}^m
\vimo^r_{\Gamma,h}({}^{z_0,\dots,z_r}_{\sigma_1,\dots,\sigma_r})
\cdot \vimo^{m-r}_{\Gamma,g}({}^{z_r,z_{r+1},\dots,z_m}_{\sigma_{r+1},\dots,\sigma_m}).
\end{align*}
By Lemma \ref{lem:UfGamma} (4), we get
$$
\vimo^m_{\Gamma,hg}({}^{z_0,\dots,z_m}_{\sigma_1,\dots,\sigma_m})
=\sum_{r=0}^m
\vimo^r_{\Gamma,h}({}^{z_0,\dots,z_r}_{\sigma_1,\dots,\sigma_r})
\cdot \vimo^{m-r}_{\Gamma,g}({}^{0, z_{r+1}-z_r,\dots,z_m-z_r}_{\sigma_{r+1},\dots,\sigma_m}).
$$
By putting $z_0=0$ and $z_i=x_1+\cdots + x_i$ ($1\leq i\leq m$), we obtain
$$
\ma^m_{\Gamma,hg}({}^{x_1,\dots,z_m}_{\sigma_1,\dots,\sigma_m})
=\sum_{r=0}^m
\ma^r_{\Gamma,h}({}^{x_1,\dots,x_r}_{\sigma_1,\dots,\sigma_r})
\cdot \ma^{m-r}_{\Gamma,g}({}^{x_{r+1},\dots,x_m}_{\sigma_{r+1},\dots,\sigma_m})
$$
for $m\geq0$, that is, we get \eqref{eqn:ma(hg)=ma(h)timesma(g)}.
Hence, the map $\ma_{\Gamma}$ is an algebra isomorphism.

\end{proof}

We note that  for an invertible $g$, we have
\begin{equation}\label{eq: ma and inverse}
 \ma_{\Gamma}(g^{-1})=\ma_{\Gamma}(g)^{\times-1}
\end{equation}
by \eqref{eqn:ma(hg)=ma(h)timesma(g)}.


%

We will see  in Proposition \ref{prop: Hopf str on MF ser}
that the algebra isomorphism \eqref{eq: ma Gamma} actually
realizes an isomorphism of completed
Hopf algebras by encoding a Hopf algebra structure in
the algebra $\mathcal{M}(\mathcal{F}_\ser;\Gamma)$.

We remark that
a \lq polynomial' version of the isomorphism \eqref{eq: ma Gamma}
is discussed in \cite[Theorem 3.2.3]{S-ARIGARI}.
\footnote{
It looks there are errors on the statements of  \cite[Theorem 3.2.3]{S-ARIGARI} :
$\Q\langle C\rangle$ must be $\Q\langle C\rangle_{>0}$
and $\ARI^{\pol}$ must be its finite part $\ARI^{\pol,\text{fin}}$
(cf. \cite[Definition 1.3]{FK}).
}

\begin{defn}\label{defn: pari anti}
Let $\mathcal{F}$ be a family of functions.
We define the following $\mathbb Q$-linear involutions $\pari$, $\anti$, $\minus:\mathcal M(\mathcal F;\Gamma) \rightarrow \mathcal M(\mathcal F;\Gamma)$ by
\begin{align}\label{eq:swap}
\pari( M)
{\scriptsize\left(\begin{array}{rrr}
	u_1,& \dots,& u_m \\
	\sigma_1,& \dots,& \sigma_m
\end{array}\right)}
&= (-1)^mM
{\scriptsize\left(\begin{array}{rrrrr}
	u_1,& \dots,& u_m \\
	\sigma_1,& \dots,& \sigma_m
\end{array}\right)}, \\
\anti( M)
{\scriptsize\left(\begin{array}{rrr}
	u_1,& \dots,& u_m \\
	\sigma_1,& \dots,& \sigma_m
\end{array}\right)}
&= M
{\scriptsize\left(\begin{array}{rrrrr}
	u_m,& \dots,& u_1 \\
	\sigma_m,& \dots,& \sigma_1
\end{array}\right)}, \\
\minus( M)
{\scriptsize\left(\begin{array}{rrr}
	u_1,& \dots,& u_m \\
	\sigma_1,& \dots,& \sigma_m
\end{array}\right)}
&	= M
	{\scriptsize\left(\begin{array}{rrr}
		-u_1,& \dots,& -u_m \\
		\sigma_1,& \dots,& \sigma_m
\end{array}\right)} \label{eq:neg}
\end{align}
for $M\in\mathcal M(\mathcal F;\Gamma)$.
\end{defn}
}

We could extend the operation $\minus$ for polymoulds
$\minus: \mathcal M_n(\mathcal F;\Gamma_1,\dots, \Gamma_n)\to
\mathcal M_n(\mathcal F;\Gamma_1,\dots, \Gamma_n)$
by
\begin{align*}
M &
\varia{x_1^{(1)}, \dots, x_{r_1}^{(1)};\,x_{1}^{(2)}, \dots, x_{r_2}^{(2)};\,\cdots\cdots;\,x_{1}^{(n)}, \dots, x_{r_n}^{(n)}}
{\sigma_1^{(1)}, \dots, \sigma_{r_1}^{(1)};\,\sigma_{1}^{(2)}, \dots, \sigma_{r_2}^{(2)};\,\cdots\cdots;\,\sigma_{1}^{(n)}, \dots, \sigma_{r_n}^{(n)}} \\
& \qquad\qquad\mapsto
M
\varia{-x_1^{(1)}, \dots, -x_{r_1}^{(1)};\,-x_{1}^{(2)}, \dots, -x_{r_2}^{(2)};\,\cdots\cdots;\,-x_{1}^{(n)}, \dots, -x_{r_n}^{(n)}}
{\sigma_1^{(1)}, \dots, \sigma_{r_1}^{(1)};\,\sigma_{1}^{(2)}, \dots, \sigma_{r_2}^{(2)};\,\cdots\cdots;\,\sigma_{1}^{(n)}, \dots, \sigma_{r_n}^{(n)}}.
\end{align*}





\subsection{The maps $\dima_{\Gamma,\Gamma}$ and $\ma_\Gamma$}
\label{sec: dima and ma}
We introduce  $\dima_{\Gamma_1,\Gamma_2}$ \lq dimould variants' of  $\ma$ in Proposition \ref{prop: isom dima}
and then we show  that  the map $\ma_\Gamma$ gives a Hopf algebra isomorphism  in Proposition \ref{prop: Hopf str on MF ser} and it realizes a correspondence between
$\exp\widehat{\frak f_\Gamma}^\dag$ and $\GARI(\mathcal F_\ser;\Gamma)_{\as}$
and the one between  $\widehat{\frak f_\Gamma}^\dag$ and $\ARI(\mathcal F_\ser;\Gamma)_{\as}$
in Proposition \ref{prop:bijection expf and GARIas}.


\begin{defn}\label{defn: dima and divimo}
Let $\Gamma_1$ and $\Gamma_2$ be finite sets.
For $\Phi\in \widehat{U\frak f_{\Gamma_1}} \widehat\otimes \widehat{U\frak f_{\Gamma_2}}$
which we describe as
\begin{align}
\Phi
=\sum_{p,q\geq0}
\sum_{\substack{\sigma'_i\in\Gamma_1 \\
\sigma''_j\in\Gamma_2}}
\sum_{d'_i,d''_j\in\N_{0}}
\left\langle
 \Phi \Bigm|\varia{d'_{0},\dots, d'_{p}}{\sigma'_{1},\dots, \sigma'_{p}};\varia{d''_{0},\dots, d''_{q}}{\sigma''_{1},\dots, \sigma''_{q}}
 \right\rangle
f_0^{d'_{0}}f_{\sigma'_1}\cdots f_{\sigma'_p}f_0^{d'_{p}}\otimes f_0^{d''_{0}}f_{\sigma''_1}\cdots f_{\sigma''_q}f_0^{d''_{q}},
\end{align}
we associate a dimould
\begin{align*}
\dima_{\Gamma_1,\Gamma_2,\Phi}
=\left\{
\dima^{p,q}_{\Gamma_1,\Gamma_2,\Phi}\varia{x_1, \dots, x_p;\,x_{p+1}, \dots, x_{p+q}}{\sigma'_{1},\dots, \sigma'_{p};\,\sigma''_{1},\dots, \sigma''_{q}}
\right\}_{p,q\in\N_0,\sigma'_i\in\Gamma_1,\sigma''_j\in\Gamma_2}
\in{\mathcal M(\mathcal{F}_{{\ser}};\Gamma_1,\Gamma_2)}
\end{align*}
with
\begin{align*}
&
\dima^{p,q}_{\Gamma_1,\Gamma_2,\Phi}\varia{x_1, \dots, x_p;\,x_{p+1}, \dots, x_{p+q}}{\sigma'_{1},\dots, \sigma'_{p};\,\sigma''_{1},\dots, \sigma''_{q}} \\
&\quad =\divimo^{p,q}_{\Gamma_1,\Gamma_2,\Phi}\varia{0,x_1,x_1+x_2,\dots,x_1+\cdots+x_p;\,0,x_{p+1},x_{p+1}+x_{p+2},\dots,x_{p+1}+\cdots+x_{p+q}}{\hspace{1.cm} \sigma'_{1},\dots, \sigma'_{p} \hspace{1.1cm};\,\hspace{1.5cm} \sigma''_{1},\dots, \sigma''_{q}}, \\
&
\divimo^{p,q}_{\Gamma_1,\Gamma_2,\Phi}\varia{z_0, \dots, z_p;\,z'_0, \dots, z'_q}{\sigma'_{1},\dots, \sigma'_{p};\,\sigma''_{1},\dots, \sigma''_{q}} \\
&\quad =\sum_{d'_i,d''_j\in\N_{0}}
\left\langle
 \Phi \Bigm|\varia{d'_{0},\dots, d'_{p}}{\sigma'_{1},\dots, \sigma'_{p}};\varia{d''_{0},\dots, d''_{q}}{\sigma''_{1},\dots, \sigma''_{q}}
 \right\rangle
z_0^{d'_{0}}\cdots z_p^{d'_{p}}(z'_0)^{d''_{0}}\cdots(z'_q)^{d''_{q}}.
\end{align*}
\end{defn}

Particularly for
$\Phi=h_1\otimes h_2$ with $h_1 \in \widehat{U\frak f_{\Gamma_1}} ,h_2 \in \widehat{U\frak f_{\Gamma_2}}$, we have
\begin{equation}\label{eq: dima=ma otimes ma}
\dima_{\Gamma_1,\Gamma_2, \Phi}=\ma_{\Gamma_1, h_1}\otimes\ma_{\Gamma_2, h_2}.
\end{equation}

The following is a dimould version of  Proposition \ref{prop: isom ma}.

\begin{prop}\label{prop: isom dima}
Let $\Gamma_1$ and $\Gamma_2$ be finite sets.
For
The map
$$
\dima_{\Gamma_1,\Gamma_2}:{\widehat{U\frak f_{\Gamma_1}}^\dagger \widehat\otimes \  \widehat{U\frak f_{\Gamma_2}}^\dagger} \to \mathcal M_2(\mathcal{F}_\ser;\Gamma_1,\Gamma_2)
$$
sending $\Phi \mapsto \dima_{\Gamma_1,\Gamma_2,\Phi}$
is a $\Q$-algebra isomorphism
which is actually obtained by  the composition of two $\Q$-linear isomorphisms
$\ma_{\Gamma_1}\otimes\ma_{\Gamma_2}:
\widehat{U\frak f_{\Gamma_1}}^\dagger \widehat\otimes \ \widehat{U\frak f_{\Gamma_2}}^\dagger\simeq \mathcal{M}(\mathcal F_\ser;\Gamma_1) \widehat\otimes \mathcal{M}(\mathcal F_\ser;\Gamma_2)
$
and
$\widehat\otimes: \mathcal{M}(\mathcal F_\ser;\Gamma_1) \widehat\otimes \mathcal{M}(\mathcal F_\ser;\Gamma_2) \simeq \mathcal{M}_2(\mathcal F_\ser;\Gamma_1,\Gamma_2)$.
\end{prop}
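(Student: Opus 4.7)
The plan is to identify $\dima_{\Gamma_1,\Gamma_2}$ explicitly with the composition $i_{\widehat\otimes}\circ(\ma_{\Gamma_1}\widehat\otimes\ma_{\Gamma_2})$ and then invoke the two isomorphisms already established. Bijectivity and the algebra isomorphism property then come for free from Proposition \ref{prop: isom ma} and Lemma \ref{lem: tensor isom between moulds and dimoulds} (applied in the form \eqref{eq: isom mould tensors and polymoulds} for $m=2$).

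First I would check the factorization on pure tensors: for $\Phi=h_1\otimes h_2$ with $h_1\in\widehat{U\frak f_{\Gamma_1}}^\dagger$ and $h_2\in\widehat{U\frak f_{\Gamma_2}}^\dagger$, the coefficient
$$
\left\langle \Phi \Bigm|\varia{d'_{0},\dots, d'_{p}}{\sigma'_{1},\dots, \sigma'_{p}};\varia{d''_{0},\dots, d''_{q}}{\sigma''_{1},\dots, \sigma''_{q}}\right\rangle
$$
factors as $\coeff{h_1}{d'_0,\dots,d'_p}{\sigma'_1,\dots,\sigma'_p}\cdot\coeff{h_2}{d''_0,\dots,d''_q}{\sigma''_1,\dots,\sigma''_q}$. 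Plugging this into the defining series for $\divimo$ and specializing the variables $(z_\bullet;z'_\bullet)$ as prescribed gives precisely the product of the two $\ma$-series on the right-hand side of \eqref{eq: dima=ma otimes ma}, which by Definition \ref{def:tensor product} is the value of $(\ma_{\Gamma_1,h_1}\otimes\ma_{\Gamma_2,h_2})$ on the corresponding bi-sequence. This establishes \eqref{eq: dima=ma otimes ma}, i.e.\ $\dima_{\Gamma_1,\Gamma_2}(h_1\otimes h_2)=i_{\widehat\otimes}\bigl(\ma_{\Gamma_1}(h_1)\otimes\ma_{\Gamma_2}(h_2)\bigr)$, on the algebraic tensor product.

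Next I would extend the identity to the completed tensor product. Since each $\dima$-component is defined by a finite sum (only finitely many $(d'_i,d''_j)$ contribute for given total degree), and since both $\ma_{\Gamma_1}\widehat\otimes\ma_{\Gamma_2}$ and $i_{\widehat\otimes}$ are continuous with respect to the degree filtrations, the equality $\dima_{\Gamma_1,\Gamma_2}=i_{\widehat\otimes}\circ(\ma_{\Gamma_1}\widehat\otimes\ma_{\Gamma_2})$ passes from the algebraic tensor product to $\widehat{U\frak f_{\Gamma_1}}^\dagger\widehat\otimes\widehat{U\frak f_{\Gamma_2}}^\dagger$ by $\Q$-linearity and continuity.

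Finally, both factors in the composition are already known to be $\Q$-algebra isomorphisms: $\ma_{\Gamma_i}$ by Proposition \ref{prop: isom ma}, and $i_{\widehat\otimes}$ by Lemma \ref{lem: tensor isom between moulds and dimoulds}. The completed tensor product of two algebra isomorphisms is again an algebra isomorphism (using \eqref{eq: Sauzin (5.7)} together with \eqref{eqn:ma(hg)=ma(h)timesma(g)} to match the multiplications on pure tensors, then extending by continuity). Hence $\dima_{\Gamma_1,\Gamma_2}$ is a $\Q$-algebra isomorphism, yielding the stated factorization. The one delicate step is the passage to the completed tensor product: one must verify that the degree filtration on $\widehat{U\frak f_\Gamma}^\dagger$ matches, under $\ma_\Gamma$, the filtration on $\mathcal M(\mathcal F_\ser;\Gamma)$ that is used to form the completed tensor product in \eqref{eq: isom mould tensors and polymoulds}, so that the comparison on pure tensors is indeed dense; this is immediate from the definition of $\ma$ in Definition \ref{def:ma}, which sends the degree-$n$ piece into moulds whose components have total degree at most $n$.
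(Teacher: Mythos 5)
Your proof is correct and follows essentially the same route as the paper's: the paper's own proof simply cites the pure-tensor identity \eqref{eq: dima=ma otimes ma} together with Lemma \ref{lem: tensor isom between moulds and dimoulds}, which is exactly the factorization $\dima_{\Gamma_1,\Gamma_2}=i_{\widehat\otimes}\circ(\ma_{\Gamma_1}\widehat\otimes\ma_{\Gamma_2})$ you establish. You merely spell out the coefficient computation behind \eqref{eq: dima=ma otimes ma} and the density/continuity step that the paper leaves implicit.
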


\begin{proof}
By using Lemma \ref{lem: tensor isom between moulds and dimoulds} and the equation \eqref{eq: dima=ma otimes ma}, we get this claim.
\end{proof}

The coproduct $\Delta$ of $\widehat{U\frak f_{2}}^\dag$
(cf. Lemma \ref{lem: Hopf structure of UF-Gamma-dag})
and the map $\shmap$ in Definition \ref{def:shmap}
are compatible under our maps:


\begin{lem}\label{lem: dima delta=sh ma}
Let $\Gamma$  be a finite set.
We have the following commutative diagram:
\begin{equation}\label{eq:CD dima delta=sh ma}
\xymatrix{
 \widehat{U\frak f_{\Gamma}}^\dag \ar@{->}[rr]^{\simeq}_{\ma_{\Gamma}}\ar@{^{(}->}[d]_{\Delta} && \mathcal{M}(\mathcal F_\ser;\Gamma)\ar@{^{(}->}[d]^{\shmap} \\
(\widehat{U\frak f_{\Gamma}}^\dag)^{\widehat{\otimes}2}  \ar@{->}[rr]^{\simeq}_{\dima_{\Gamma,\Gamma}}&&\mathcal{M}_2(\mathcal F_\ser;\Gamma,\Gamma).
}
\end{equation}
That is,  we have $\dima_{\Gamma,\Gamma} \circ \Delta = \shmap \circ \ma_{\Gamma}$.
\end{lem}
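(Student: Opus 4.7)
The plan is to reduce the verification to a topologically generating subspace by exploiting the algebra homomorphism structure of the four maps forming the square. The maps $\ma_{\Gamma}$ and $\dima_{\Gamma,\Gamma}$ are $\Q$-algebra isomorphisms by Propositions \ref{prop: isom ma} and \ref{prop: isom dima}, the map $\shmap$ is a $\Q$-algebra homomorphism by Lemma \ref{lem:shmap is alg. hom.}, and $\Delta$ restricted to $\widehat{U\frak f_{\Gamma}}^\dagger$ is a $\Q$-algebra homomorphism by Lemma \ref{lem: Hopf structure of UF-Gamma-dag}. Consequently both compositions $\dima_{\Gamma,\Gamma}\circ\Delta$ and $\shmap\circ\ma_{\Gamma}$ are continuous $\Q$-algebra homomorphisms $\widehat{U\frak f_{\Gamma}}^\dagger\to\mathcal{M}_2(\mathcal F_\ser;\Gamma,\Gamma)$, so the equality can be checked on any topologically generating subspace.

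By Proposition \ref{prop: UF=UD1F}, the Lie subspace $D^1\frak f_\Gamma$ topologically generates $\widehat{U\frak f_{\Gamma}}^\dagger$. For a Lie primitive $h\in D^1\frak f_\Gamma$, the coproduct reads $\Delta(h)=h\otimes 1+1\otimes h$, and by \eqref{eq: dima=ma otimes ma} together with $\ma_\Gamma(1)=\unitmould$ one obtains
\[
\dima_{\Gamma,\Gamma}(\Delta(h))\;=\;\ma_\Gamma(h)\otimes\unitmould\;+\;\unitmould\otimes\ma_\Gamma(h).
\]
By Proposition \ref{prop:gp-like, Lie-like}\,(i), the other side $\shmap(\ma_\Gamma(h))$ equals the displayed expression precisely when $\ma_\Gamma(h)$ is alternal. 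The whole lemma therefore reduces to the Lie-to-alternal correspondence: every Lie element $h\in D^1\frak f_\Gamma$ satisfies $\ma_\Gamma(h)\in\ARI(\mathcal F_\ser;\Gamma)_\al$.

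The main obstacle is this last step. To avoid circularity (Proposition \ref{prop: Hopf str on MF ser} will later deduce the Hopf-algebra structure on $\mathcal{M}(\mathcal F_\ser;\Gamma)$ from the present lemma), I would prove the Lie-implies-alternal claim directly from the coefficient expansion \eqref{eq: word expansion}: for an iterated Lie bracket in $f_0$ and $f_\sigma$, expanding the alternality sum \eqref{eqn:def of al,as} and collecting monomial coefficients produces a telescoping combinatorial identity, with the $\dagger$-condition of Lemma \ref{lem:UfGamma} converting the $f_0$-translation invariance into the cancellation of the remaining terms. Once this step is in place, the algebra-homomorphism argument of the previous paragraph closes the proof by continuity. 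As an alternative route avoiding the passage through alternality, one may instead compare the two sides termwise on monomials $w=f_0^{k_0}f_{\tau_1}\cdots f_{\tau_n}f_0^{k_n}$: the deconcatenation form of $\Delta(w)$ distributes letter positions between the two tensor factors, and after the reparametrization of Definition \ref{defn: dima and divimo} the resulting polynomial in the $x$-variables coincides, via the $\dagger$-translation invariance, with the shuffle sum $\sum_\alpha{\rm Sh}(\cdots\mid\alpha)\ma_\Gamma(h)(\alpha)$ at each $(p,q)$-component.
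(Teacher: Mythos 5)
Your overall architecture is sound: all four maps in the square are continuous $\Q$-algebra homomorphisms, so it is legitimate to check the identity on the topological Lie generators $D^1{\frak f}_\Gamma$ of $\widehat{U\frak f_{\Gamma}}^\dag$, and for a primitive $h$ the identity reduces, via \eqref{eq: dima=ma otimes ma} and Proposition \ref{prop:gp-like, Lie-like}(i), to the claim that $\ma_\Gamma(h)$ is alternal. You also rightly note that you cannot quote Proposition \ref{prop:bijection expf and GARIas} here, since its proof invokes the present lemma. The genuine gap is that this ``Lie implies alternal'' step --- which carries the entire content of the lemma --- is left as an unverified sketch: ``expanding the alternality sum \ldots produces a telescoping combinatorial identity'' is not an argument, and it is not clear what telescopes or how the $\dagger$-condition enters. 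As written, the proposal does not prove the lemma.

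For comparison, the paper does not reduce to primitives at all: it computes $\Delta(\varphi)$ coefficientwise for an arbitrary $\varphi$ (deconcatenation of the letter positions into the two tensor factors, with binomial weights distributing the $f_0$-blocks), passes to $\divimo$, and uses the translation invariance of Lemma \ref{lem:UfGamma}(4) to identify the $(p,q)$-component with the shuffle sum $\sum_\alpha \Sh{\cdot}{\cdot}{\alpha}\,\ma_\Gamma(\varphi)(\alpha)$. That is exactly your one-sentence ``alternative route,'' and since the same computation restricted to Lie elements is what your sketch would have to produce anyway, the detour through primitives saves nothing unless the key step is proved by other means. If you want to keep your reduction, a clean non-circular way to close the gap is structural rather than combinatorial: by Lazard elimination $D^1{\frak f}_\Gamma$ is freely generated as a Lie algebra by the elements $\mathrm{ad}(f_0)^k(f_\sigma)$, whose images under $\ma_\Gamma$ are concentrated in length $1$ and hence vacuously alternal; alternal moulds are precisely the $\shmap$-primitives in $\ARI(\mathcal F_\ser;\Gamma)$ by Proposition \ref{prop:gp-like, Lie-like}(i), and since $\shmap$ is an algebra homomorphism (Lemma \ref{lem:shmap is alg. hom.}, together with \eqref{eq: Sauzin (5.7)}) the primitives are closed under the $\times$-commutator; continuity then handles infinite sums of iterated brackets. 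With that step supplied, your argument would constitute a correct and genuinely different proof.
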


\begin{proof}
We give a proof only the case of $\Gamma=\{1\}$.
For any $\varphi \in \widehat{U\frak f_2}^\dag$ with
$$
\varphi
=  \sum_{r=0}^\infty
\sum_{k_0,\dots,k_r\in\N_{0}}
\left\langle
\varphi \, \middle| (k_0,\dots,k_r)
\right\rangle
f_0^{k_0}f_1\cdots f_1f_0^{k_r}
$$
we have
\begin{align*}
\Delta(\varphi)
&=\sum_{p,q\geq0}\sum_{d'_i,d''_j\in\N_{0}}
\left\langle
 \Delta(\varphi)\, \middle|(d'_{0},\dots, d'_{p}) ; (d''_{0},\dots, d''_{q})
 \right\rangle
f_0^{d'_{0}}f_{1}\cdots f_{1}f_0^{d'_{p}}\otimes f_0^{d''_{0}}f_{1}\cdots f_{1}f_0^{d''_{q}},
\end{align*}
where
\begin{align*}
&\left\langle
 \Delta(\varphi)\, \middle|(d'_{0},\dots, d'_{p}) ; (d''_{0},\dots, d''_{q})
 \right\rangle \\
&=\sum_{\{i_1,\dots,i_p\}\sqcup\{j_1,\dots,j_q\}=[p+q]}\quad
\sum_{\substack{d'_{k}=e'_{i_{k}}+ \cdots+e'_{i_{k+1}-1} \\
0\leq k\leq p}}\quad
\sum_{\substack{d''_{l}=e''_{j_{l}}+ \cdots+e''_{j_{l+1}-1} \\
0\leq l\leq q}} \\
&\hspace{3cm}
\left\{ \prod_{i=0}^{p+q}\binom{e'_{i}+e''_{i}}{e'_{i}} \right\}
\cdot
\left\langle
\varphi \, \middle| (e'_{0}+e''_{0},\dots, e'_{p+q}+e''_{p+q})
\right\rangle.
\end{align*}
Here, $i_0:=j_0:=0$ and $i_{p+1}:=j_{q+1}:=p+q+1$ and $[p+q]:=\{1,2,\dots,p+q\}$
and $\{i_1,\dots,i_p\}\sqcup\{j_1,\dots,j_q\}=[p+q]$ runs over $i_1<\cdots<i_p$ and $j_1<\cdots<j_q$.
By the definition of $\divimo$ and $\vimo$, we get
\footnote{For simplicity, 
we omit the lower indices $(1,\dots, 1)$ in $\divimo^{p,q}_{\{1\},\{1\},\Delta(\varphi)}$ and in $\dima^{p,q}_{\{1\},\{1\},\Delta(\varphi)}$.}
\begin{align}\label{eqn:divimo=sum vimo}
&\divimo^{p,q}_{\{1\},\{1\},\Delta(\varphi)}\left( z_0, \dots, z_p;\,z'_0, \dots, z'_q \right) \\
&\qquad\qquad
=\sum_{\{i_1,\dots,i_p\}\sqcup\{j_1,\dots,j_q\}=[p+q]}
\vimo_{\{1\},\varphi}^{p+q}\left( z_{a_0}+z'_{b_0},\dots,z_{a_{p+q}}+z'_{b_{p+q}}\right)
\nonumber
\end{align}
for all $p,q\in\N_0$.
Here, $a_i:=k-1$ for $i_{k-1}\leq i\leq i_k-1$ and for $1\leq k\leq p+1$, and $b_j:=l-1$ for $j_{l-1}\leq j\leq j_l-1$ and for $1\leq l\leq q+1$.
By substituting
\begin{align*}
& z_0=0,\qquad
z_i=x_1 + \cdots + x_i \quad
(1\leq i\leq p), \\
& z'_0=0,\qquad
z'_j=x_{p+1} + \cdots + x_{p+j} \quad
(1\leq j\leq p+q)
\end{align*}
in \eqref{eqn:divimo=sum vimo}, we get
\begin{align*}
&\dima^{p,q}_{\{1\},\{1\},\Delta(\varphi)}\left( x_1, \dots, x_p;\,x_{p+1}, \dots, x_{p+q} \right) \\
&\qquad =\sum_{\alpha\in X_\Z^\bullet}
\Sh{(x_1,\ \dots,\ x_p)}{(x_{p+1},\ \dots,\ x_{p+q})}{\alpha} \ma_{\{1\},\varphi}^{p+q}(\alpha).
\end{align*}
Hence, we obtain
\begin{align*}
\dima_{\{1\},\{1\}} \circ \Delta(\varphi)
&=\left\{
\dima^{p,q}_{\{1\},\{1\},\Delta(\varphi)}\left( x_1, \dots, x_p;\,x_{p+1}, \dots, x_{p+q} \right)
\right\}
_{p,q\in\N_0} \\
&=\left\{
\sum_{\alpha\in X_\Z^\bullet}
\Sh{(x_1,\ \dots,\ x_p)}{(x_{p+1},\ \dots,\ x_{p+q})}{\alpha} \ma_{\{1\},\varphi}^{p+q}(\alpha)
\right\}
_{p,q\in\N_0} \\
&=\shmap \left( \ma_{\{1\},\varphi} \right) \\
&=\shmap \circ \ma_{\{1\}}\left( \varphi \right).
\end{align*}
So we get the claim.
The proof for general $\Gamma$ can be done by the same arguments.
\end{proof}

We learn that the map $\shmap$ equips a structure of a completed Hopf algebra
on the algebra $\mathcal{M}(\mathcal{F}_\ser;\Gamma)$:

\begin{prop}\label{prop: Hopf str on MF ser}
Let $\Gamma$  be a finite set.
The algebra $\mathcal{M}(\mathcal{F}_\ser;\Gamma)$ is equipped with a structure of
a completed Hopf algebra whose  coproduct is given by $\shmap$  in
Definition \ref{def:shmap}
composed with the isomorphism in Lemma \ref{lem: tensor isom between moulds and dimoulds},
and the antipode is given by $\anti \circ \pari$.
Furthermore the algebra isomorphism $\ma_\Gamma$ in
\eqref{eq: ma Gamma} realizes an isomorphism of completed
Hopf algebras.
\end{prop}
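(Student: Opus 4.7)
The plan is a transport-of-structure along the algebra isomorphism $\ma_\Gamma \colon \widehat{U\frak f_\Gamma}^\dag \to \mathcal{M}(\mathcal{F}_\ser;\Gamma)$ of Proposition \ref{prop: isom ma}. Since $\widehat{U\frak f_\Gamma}^\dag$ is already a completed Hopf algebra by Lemma \ref{lem: Hopf structure of UF-Gamma-dag}, the algebra $\mathcal{M}(\mathcal{F}_\ser;\Gamma)$ automatically inherits a completed Hopf algebra structure through $\ma_\Gamma$, and the only work left is to identify the transported coproduct with $i_{\widehat\otimes}^{-1}\circ \shmap$ and the transported antipode with $\anti\circ\pari$.

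For the coproduct, Proposition \ref{prop: isom dima} factors $\dima_{\Gamma,\Gamma}$ as $i_{\widehat\otimes}\circ(\ma_\Gamma\otimes\ma_\Gamma)$, so rearranging the commutative square of Lemma \ref{lem: dima delta=sh ma} gives
$$
i_{\widehat\otimes}^{-1}\circ \shmap \circ \ma_\Gamma \;=\; (\ma_\Gamma\otimes\ma_\Gamma)\circ \Delta,
$$
which is exactly the statement that $\ma_\Gamma$ intertwines $\Delta$ with the mould-theoretic coproduct $i_{\widehat\otimes}^{-1}\circ\shmap$. Coassociativity and compatibility with $\times$ then follow automatically from the corresponding properties on $\widehat{U\frak f_\Gamma}^\dag$.

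For the antipode, I compute directly. The antipode $S$ on $\widehat{U\frak f_\Gamma}$ is anti-multiplicative and sends each primitive generator to its negative, so
$$
S\bigl(f_0^{k_0}f_{\sigma_1}f_0^{k_1}\cdots f_{\sigma_r}f_0^{k_r}\bigr)
= (-1)^{r+k_0+\cdots+k_r}\, f_0^{k_r}f_{\sigma_r}\cdots f_{\sigma_1}f_0^{k_0}.
$$
A short calculation (expand $h$ in the basis of words and reindex the summation) yields
$$
\vimo^r_{\Gamma,S(h)}(z_0,\dots,z_r;\sigma_1,\dots,\sigma_r)
= (-1)^r\,\vimo^r_{\Gamma,h}(-z_r,-z_{r-1},\dots,-z_0;\sigma_r,\dots,\sigma_1).
$$
Specializing $z_0=0$, $z_i=x_1+\cdots+x_i$ and comparing with the analogous specialization computing $\anti\circ\pari(\ma_\Gamma(h))(x_1,\dots,x_r;\sigma_1,\dots,\sigma_r)$, the two expressions agree after shifting every $\vimo$-argument simultaneously by $x_1+\cdots+x_r$. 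By the translation invariance characterization of $\widehat{U\frak f_\Gamma}^\dag$ in Lemma \ref{lem:UfGamma}~(4), such simultaneous shifts are invisible to $\vimo^r_{\Gamma,h}$, hence the two moulds coincide. This gives $\ma_\Gamma\circ S=\anti\circ\pari\circ\ma_\Gamma$, completing the identification.

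The main technical obstacle is the double index reversal and sign bookkeeping in the antipode computation, together with the correct matching of the additive shift to the variable substitutions $z_i=x_1+\cdots+x_i$; once translation invariance is invoked these cancel cleanly, and no further nontrivial content is needed beyond the lemmas already proved.
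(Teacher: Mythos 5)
Your proposal is correct and follows essentially the same route as the paper: the coproduct identification via Lemma \ref{lem: tensor isom between moulds and dimoulds} together with Lemma \ref{lem: dima delta=sh ma}, and the antipode identification by the explicit word-reversal computation of $\vimo^r_{\Gamma,S(h)}$ followed by the translation-invariance of Lemma \ref{lem:UfGamma}~(4) and the specialization $z_i=x_1+\cdots+x_i$. The sign and reindexing bookkeeping you describe matches the paper's calculation exactly.
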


\begin{proof}
By Lemmas
\ref{lem: tensor isom between moulds and dimoulds} and
\ref{lem: dima delta=sh ma}, the map $\shmap$ induces the coproduct of $\mathcal{M}(\mathcal{F}_\ser;\Gamma)$.
Note that the antipode $S$ of $\widehat{U\frak f_\Gamma}^\dagger$ is given by
$$
S(f_{\sigma_1}\cdots f_{\sigma_m})=(-1)^m f_{\sigma_m}\cdots f_{\sigma_1}
$$
for $\sigma_1,\dots,\sigma_m\in\Gamma\cup\{0\}$.
We prove $\ma_\Gamma \circ S=(\anti \circ \pari) \circ \ma_\Gamma$.
Assume
$$h=  \sum_{r=0}^\infty\sum_{(\sigma_1,\dots,\sigma_r)\in\Gamma^r}\sum_{k_0,\dots,k_r\in\N_{0}}
\coeff{h}{k_0,\dots,k_r}{\sigma_1,\dots,\sigma_r}f_0^{k_0}f_{\sigma_1}f_0^{k_1}\cdots f_{\sigma_r}f_0^{k_r}
\in \widehat{U\frak f_\Gamma}^\dagger.$$
We have
\begin{align*}
S(h)
&=\sum_{r=0}^\infty\sum_{(\sigma_1,\dots,\sigma_r)\in\Gamma^r}\sum_{k_0,\dots,k_r\in\N_{0}}
(-1)^{k_0+\cdots k_r+r}\coeff{h}{k_0,\dots,k_r}{\sigma_1,\dots,\sigma_r}f_0^{k_r}f_{\sigma_r}\cdots f_0^{k_1}f_{\sigma_1}f_0^{k_0} \\
&=\sum_{r=0}^\infty\sum_{(\sigma_1,\dots,\sigma_r)\in\Gamma^r}\sum_{k_0,\dots,k_r\in\N_{0}}
(-1)^{k_0+\cdots k_r+r}\coeff{h}{k_r,\dots,k_0}{\sigma_r,\dots,\sigma_1}f_0^{k_0}f_{\sigma_1}\cdots f_0^{k_{r-1}}f_{\sigma_r}f_0^{k_r},
\end{align*}
so we calculate
\begin{align*}
\vimo^r_{\Gamma,S(h)}\varia{z_0,\dots,z_r}{\sigma_1,\dots,\sigma_r}
&=\sum_{k_0,\dots,k_r\in\N_{0}}
(-1)^{k_0+\cdots k_r+r}\coeff{h}{k_r,\dots,k_0}{\sigma_r,\dots,\sigma_1}z_0^{k_0}\cdots z_{r-1}^{k_{r-1}}z_r^{k_r} \\
&=(-1)^{r}\sum_{k_0,\dots,k_r\in\N_{0}}
\coeff{h}{k_0,\dots,k_r}{\sigma_r,\dots,\sigma_1}(-z_r)^{k_0}(-z_{r-1})^{k_1}\cdots (-z_0)^{k_r} \\
&=(-1)^r \vimo^r_{\Gamma,h}\varia{-z_r,-z_{r-1}\dots,-z_0}{\sigma_r,\sigma_{r-1},\dots,\sigma_1}.
\end{align*}
By Lemma \ref{lem:UfGamma}. (4) we get
$$
\vimo^r_{\Gamma,S(h)}\varia{z_0,\dots,z_r}{\sigma_1,\dots,\sigma_r}
=(-1)^r \vimo^r_{\Gamma,h}\varia{0,z_r-z_{r-1}\dots,z_r-z_0}{\sigma_r,\sigma_{r-1},\dots,\sigma_1}.
$$
By putting $z_0=0$ and $z_i=x_1+\cdots + x_i$ ($1\leq i\leq m$), we obtain
$$
\ma^r_{\Gamma,S(h)}\varia{x_1,\dots,x_r}{\sigma_1,\dots,\sigma_r}
=(-1)^r \ma^r_{\Gamma,h}\varia{x_r,\dots,x_1}{\sigma_r,\dots,\sigma_1}
=\anti \circ \pari \left( \ma^r_{\Gamma,h}\varia{x_1,\dots,x_r}{\sigma_1,\dots,\sigma_r} \right).
$$
Hence, the antipode of $\mathcal{M}(\mathcal{F}_\ser;\Gamma)$ is $\anti \circ \pari$.
\end{proof}

\begin{rem}
The condition ${\mathcal F}={\mathcal F}_\ser$ in
Proposition \ref{prop: Hopf str on MF ser} is crucial since it depends on
the isomorphisms given in Lemmas \ref{lem: tensor isom between moulds and dimoulds}
and \ref{lem: dima delta=sh ma}. 
\end{rem}

For a series $h\in \widehat{U\frak f_\Gamma}$ without constant term,
we put
$\exp(h)=\sum_{k=0}^\infty\frac{h^k}{k!}\in\widehat{U\frak f_\Gamma}$.
We denote $\exp\widehat{\frak f_\Gamma}\cap \widehat{U\frak f_\Gamma}^\dag$ by
$\exp\widehat{\frak f_\Gamma}^\dag$ and
$\widehat{\frak f_\Gamma}\cap \widehat{U\frak f_\Gamma}^\dag$ by
$\widehat{\frak f_\Gamma}^\dag$.
\footnote{By definition, $\widehat{\frak f_\Gamma}^\dag$  (resp. $\exp\widehat{\frak f_\Gamma}^\dag$) is the set of Lie (resp. group-like) series whose  coefficient of $f_0$ is zero.
Whence we have $\exp (\widehat{\frak f_\Gamma}^\dag)=\exp\widehat{\frak f_\Gamma}^\dag$.
}


\begin{prop}\label{prop:bijection expf and GARIas}
Let $\Gamma$  be a finite set.
The map $\ma_\Gamma$ induces a group isomorphism from $\exp\widehat{\frak f_\Gamma}^\dag$ to $\GARI(\mathcal F_\ser;\Gamma)_{\as}$ and a Lie algebra isomorphism from $\widehat{\frak f_\Gamma}^\dag$ to $\ARI(\mathcal F_\ser;\Gamma)_{\al}$.
\end{prop}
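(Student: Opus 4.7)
The entire proposition is transport of structure along the Hopf isomorphism $\ma_\Gamma$ of Proposition \ref{prop: Hopf str on MF ser}. Any isomorphism of complete Hopf algebras takes group-like elements to group-like elements and primitives to primitives, so it suffices to identify these two classes explicitly on each side.

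On the source side, Proposition \ref{prop: UF=UD1F} identifies $\widehat{U\frak f_\Gamma}^\dag$ with the complete enveloping algebra $\widehat{U}(D^1\frak f_\Gamma)$; standard Hopf-algebraic structure theory for the complete enveloping algebra of a pro-nilpotent Lie algebra then gives that its primitive elements are exactly $\widehat{D^1\frak f_\Gamma} = \widehat{\frak f_\Gamma}^\dag$, and its group-like elements are exactly the image of exponentiation, namely $\exp\widehat{\frak f_\Gamma}^\dag$ (as already recorded in the footnote to the definition of $\exp\widehat{\frak f_\Gamma}^\dag$). On the target side, Proposition \ref{prop:gp-like, Lie-like} identifies $\ARI(\mathcal F_\ser;\Gamma)_\al$ with the primitives of $\mathcal M(\mathcal F_\ser;\Gamma)$ under $\shmap$, and $\GARI(\mathcal F_\ser;\Gamma)_\as$ with the group-likes, with the conditions $M(\emptyset)=0$ and $M(\emptyset)=1$ matching the standard group-like/primitive dichotomy.

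Combining these two identifications with $\ma_\Gamma$ yields the two advertised bijections. Because $\ma_\Gamma$ is an algebra homomorphism (Proposition \ref{prop: isom ma}), the bijection on group-likes is automatically a group isomorphism under multiplication on the left and $\times$ on the right, and the commutator bracket on primitives is transported, giving a Lie algebra isomorphism at the primitive level. If the intended Lie structure on $\ARI(\mathcal F_\ser;\Gamma)_\al$ is the $\ari$-bracket of Definition \ref{defn:ari-bracket} rather than the $\times$-commutator inherited from the algebra structure, one additionally invokes the identity $\ma_\Gamma([h_1,h_2]) = \ari(\ma_\Gamma(h_1), \ma_\Gamma(h_2))$ on primitive series, which is the content of Theorem \ref{thm:ma(<,>)=ari(ma,ma)} in the appendix; the two brackets then agree on primitives, and the transported bracket is $\ari$.

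The main obstacle is the correct identification of primitives and group-likes inside the complete Hopf algebra $\widehat{U\frak f_\Gamma}^\dag$ (rather than the larger $\widehat{U\frak f_\Gamma}$): the point is that $\widehat{U\frak f_\Gamma}^\dag = \ker d_0$ by \eqref{eq: kernel diff charzn} and $d_0$ is a derivation, so it annihilates a power $\xi^n$ whenever it annihilates $\xi$, which pins down $\exp\widehat{\frak f_\Gamma}^\dag = \exp(\widehat{\frak f_\Gamma}^\dag)$ as the full set of group-likes. Once this and Proposition \ref{prop:gp-like, Lie-like} are in place, everything else is formal transport along the Hopf isomorphism.
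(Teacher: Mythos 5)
Your main argument coincides with the paper's: the paper's proof likewise reduces the statement to the compatibility $\dima_{\Gamma,\Gamma}\circ\Delta=\shmap\circ\ma_\Gamma$ of Lemma \ref{lem: dima delta=sh ma} (equivalently, the Hopf-algebra isomorphism of Proposition \ref{prop: Hopf str on MF ser}) together with the characterization of alternal and symmetral moulds as primitives and group-likes under $\shmap$ in Proposition \ref{prop:gp-like, Lie-like}; the identification of group-likes and primitives on the source side and the transport of the multiplicative structures along the algebra isomorphism $\ma_\Gamma$ is exactly the intended argument, so the core of your proof is correct and essentially identical to the paper's.

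One correction to your closing paragraph. Theorem \ref{thm:ma(<,>)=ari(ma,ma)} does \emph{not} assert $\ma_\Gamma([h_1,h_2])=\ari(\ma_\Gamma(h_1),\ma_\Gamma(h_2))$; it asserts $\ma_{\Gamma}(\{ \psi, \varphi \}) = \ari(\ma_\Gamma(\varphi), \ma_\Gamma(\psi))$ for the Ihara-type bracket $\{\psi,\varphi\}=[\varphi,\psi]+D_\psi(\varphi)-D_\varphi(\psi)$ of Appendix \ref{app:correspondence between exp and expari}, which involves the twisted derivations $D_\psi$ and is not the plain commutator. Since $\ma_\Gamma$ is an algebra isomorphism, it sends the commutator to the $\times$-commutator $[A,B]=A\times B-B\times A$, and on alternal moulds this is \emph{not} equal to $\ari(A,B)=\arit(B)(A)-\arit(A)(B)+[A,B]$ (Definition \ref{defn:ari-bracket}): the $\arit$ terms do not vanish there. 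So the two brackets do not ``agree on primitives''; they are genuinely different Lie structures, each transported by $\ma_\Gamma$ to the corresponding one on the source (the $\times$-commutator to the commutator of series, and $\ari$ to the Ihara-type bracket, up to the sign and order of arguments recorded in the appendix). This slip occurs only in your optional discussion of which Lie structure is meant and does not affect the validity of the bijections you establish, which is all the paper's own proof establishes.
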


\begin{proof}
Let $\varphi\in \widehat{U\frak f_\Gamma}^\dag$.
By 
\eqref{eq: dima=ma otimes ma} and \eqref{eq:CD dima delta=sh ma},
$\varphi\in\exp\widehat{\frak f_\Gamma}$ is equivalent to
$\shmap (\ma_\Gamma(\varphi))=\ma_\Gamma(\varphi) \otimes \ma_\Gamma(\varphi)$
and $\ma_\Gamma(\varphi)(\emptyset)=1$.
We also see that $\varphi\in\widehat{\frak f_\Gamma}$ is equivalent to
$\shmap (\ma_\Gamma(\varphi))=\ma_\Gamma(\varphi) \otimes \unitmould + \unitmould \otimes \ma_\Gamma(\varphi)$.
By Proposition \ref{prop:gp-like, Lie-like},
we obtain claim.
\end{proof}

%
%
%

We note that a similar claim for  polynomial-valued moulds is treated in
\cite[(3.5.13)]{S-ARIGARI}.

\subsection{Polymoulds related with braids}\label{sec:Polymoulds of pure braid type}
We investigate polymoulds which are  related with braids (Definition \ref{defn:polymoulds of pure braid type}).
By applying  the unique prolongation theorem (Theorem \ref{thm:UniqueProlongation}),
we transmit several operations for infinitesimal braid Lie algebra
to those for such polymoulds,  which will play an important role in the next section.

\begin{defn}
For $n\geq2$, the {\it infinitesimal braid Lie algebra}
$\mathfrak{t}_{n}$ means the Lie algebra generated
by $(t_{ij})_{0\leq i,j< n}$ with the relations
\[
t_{ii}=0, \quad
t_{ij}=t_{ji},\quad [t_{ij},t_{ik}+t_{jk}]=0,\quad [t_{ij},t_{kl}]=0
\]
where $i,j,k,l$ are distinct integers between $0$ and $n-1$, and
$U\mathfrak{t}_{n}$ its universal enveloping algebra. Let $\widehat{\mathfrak{t_{n}}}$
(resp. $\widehat{U\mathfrak{t}_{n}}$) be the completion of $\mathfrak{t}_{n}$
(resp. $U\mathfrak{t}_{n}$) by degree.
\end{defn}

We note that the center of $\mathfrak t_n$ is one-dimensional generated by
\begin{equation}\label{eq: center of braid Lie algebra}
c_n=\sum_{i<j}t_{ij}.
\end{equation}
We consider  the configuration space
$$
\mathsf{Conf}_n(\mathbb{A}^1)
:=\{(z_0,\dots,z_{n-1})\in\mathbb{A}^n\bigm|
z_i\neq z_j \quad (i\neq j)\}
$$
and define ${\mathcal H}_n:=H^0\bar{B}(\mathsf{Conf}_n(\mathbb{A}^1))$.
Then by Chen's theory (\cite{Ch})
${\mathcal H}_n$ is identified with the dual of the Hopf algebra $U\mathfrak{t}_{n}$
by $<t_{ij}|\omega_{kl}>=\delta_{\{i,j\},\{k,l\}}$
($0\leq i,j,k,l <n$ with $i<j$, $k<l$)
where $\omega_{ij}:=[d\log (z_i-z_j)]\in H^1(\mathsf{Conf}_n(\mathbb{A}^1)) \subset {\mathcal H}_n$.
We note that the product structure of Hopf algebra ${\mathcal H}_n$ is given by the shuffle product
and its coproduct is given by the deconcatenation denoted by $\Delta^\dec$.
We consider the action
$$\xi_n:\widehat{U\mathfrak{t}_{n}}\times {\mathcal H}_n\to {\mathcal H}_n$$
given by
$$\xi_n(w,f):=<\overleftarrow{w}\otimes 1|\Delta^\dec(f)>$$
where $\overleftarrow{w}\in\widehat{U\mathfrak{t}_{n}}$
means the element obtained by  \lq reading backwards'  of $w$, i.e.
the image of $w$ under the anti-isomorphism inducing the identity on
$H^1(\mathsf{Conf}_n(\mathbb{A}^1))$.
By definition, there are natural inclusions $\widehat{U\mathfrak{t}_{n}}\subset \widehat{U\mathfrak{t}_{m}}$ and ${\mathcal H}_n\subset {\mathcal H}_m$ for each $n\leq m$. Put $\widehat{U\mathfrak{t}_{\infty}} = \bigcup_{n}\widehat{U\mathfrak{t}_{n}}$ and ${\mathcal H}_\infty = \bigcup_{n}{\mathcal H}_n$.
Then $\xi_n$ are naturally extended to
$$\xi:\widehat{U\mathfrak{t}_{\infty}}\times {\mathcal H}_\infty\to {\mathcal H}_\infty.$$
By definition, we have
 \begin{equation}
      \xi(t_{ij},fg)=\xi(t_{ij},f)\cdot g+\xi(t_{ij},g)\cdot f,
 \end{equation}
 \begin{equation}
     \xi(1,f)=f,
 \end{equation}
  \begin{equation}\label{eq: xi w1w2}
         \xi(w_{1}w_{2},f)=\xi(w_{1},\xi(w_{2},f)).
  \end{equation}
Let $a_0,\dots, a_{m+1}\in \Z_{\geq 0}\cup \{\infty\} $ with $a_j\neq \infty$ for $1\leq j\leq m$.
By the differential equation (\cite{G-MPL} Theorem 2.1 and \cite{P} Lemma 3.3.30)
of the iterated integral
$I(z_{a_0};z_{a_1},\dots,z_{a_m};z_{a_{m+1}})$ with $z_\infty=\infty$
of \eqref{eq: symbol iterated integral},
we have an  element $\mathbb{I}(z_{a_0};z_{a_1},\dots,z_{a_m};z_{a_{m+1}})$ in $\mathcal H_{\infty}$ satisfying the same equation
\begin{align*}
\mathbb{I}&(z_{a_0};z_{a_1},\dots,z_{a_m};z_{a_{m+1}}) \\
&\qquad
\qquad=\sum_{i=1}^{m}[(\omega_{a_{i},a_{i+1}}-\omega_{a_i,a_{i-1}})|\mathbb{I}(z_{a_0};z_{a_1},\dots,\widehat{z_{a_i}}\dots,z_{a_m};z_{a_{m+1}})
]
+\delta_{m,0}
\end{align*}
where we put $\omega_{a,\infty}=0$ and $\omega_{a,a}=0$ ($a\in \Z_{\geq 0}$).
We denote 
$\mathbb{I}(z_{a_0};z_{a_{1}},\dots,z_{a_{m}};z_{a_{m+1}})\in\mathcal H_{\infty}$
simply by $\mathbb{I}(a_0;{a_{1}},\dots,{a_{m}};{a_{m+1}})\in\mathcal H_{\infty}$. Furthermore, we denote $\mathbb{I}(\infty;{a_{1}},\dots,{a_{m}};{a_{m+1}})$ $\in\mathcal H_{\infty}$ simply by $\mathbb{I}({a_{1}},\dots,{a_{m}};{a_{m+1}})$
$\in\mathcal H_{\infty}$.
Particularly by the differential equation, 
we see
 \begin{align}
   \label{eq:apply_tij}
\xi(t_{ij},& \mathbb{I}(a_{0};a_{1},\dots,a_{m};a_{m+1})) \\ \notag
&=\sum_{\epsilon\in\{\pm1\},1\leq p\leq m
}\epsilon\delta_{\{i,j\},\{a_{p},a_{p+\epsilon}\}}\mathbb{I}(a_{0}; a_{1},\dots,\widehat{a_{p}},\dots,a_{m};a_{m+1}).
 \end{align}

\begin{defn}
Let
$$
{\dec_{n+2}}:\widehat{U\mathfrak{t}_{n+2}}\to\widehat{U\mathfrak{f}_{1}}\hat{\otimes}\widehat{U\mathfrak{f}_{2}}\hat{\otimes}\cdots\hat{\otimes}\widehat{U\mathfrak{f}_{n+1}}.
$$
be the {$\Q$-linear} map given by
\begin{align*}
&\dec_{n+2}(w) \\
& \quad =\sum_{k_{0},\dots,k_{n}\geq0}\sum_{\substack{(a_{i,j})_{0\leq i\leq n,1\leq j\leq k_{i}}\\0\leq a_{i,j}\leq i}}
c\left(\xi\left(w,\mathbb{I}(a_{0,1},\dots,a_{0,k_{0},};1)\cdots\mathbb{I}(a_{n,1},\dots,a_{n,k_{n}};n+1)\right)\right)\\
& \qquad \qquad \ \times f_{a_{0,1}}\cdots f_{a_{0,k_{0}}}\otimes f_{a_{1,1}}\cdots f_{a_{1,k_{1}}}\otimes\cdots\otimes f_{a_{n,1}}\cdots f_{a_{n,k_{n}}}
\end{align*}
where
			$$c:\mathcal H_n\to\mathbb{Q}$$
is the counit of $\mathcal{H}_n$.
\end{defn}
We note that
\[
c(\mathbb{I}(a_{1},\dots,a_{k};a_{k+1}))=\delta_{k,0}.
\]
If there exists no risk of confusion, we simply write $\dec_{n+2}$ as $\dec$.

\begin{lem}\label{lem coalg isom of UF tensors and UT}
The map $\dec$ gives an isomorphism of coalgebras, and the inverse
{$\Q$-linear}
map
\begin{equation}\label{eq:codec}
{\codec}:\widehat{U\mathfrak{f}_{1}}\hat{\otimes}\widehat{U\mathfrak{f}_{2}}\hat{\otimes}\cdots\hat{\otimes}\widehat{U\mathfrak{f}_{n+1}}\to\widehat{U\mathfrak{t}_{n+2}}
\end{equation}
is given by
\begin{align*}
w_{1}(f_{0})\otimes & w_{2}(f_{0},f_{1})\otimes\cdots\otimes w_{n+1}(f_{0},f_{1},\dots,f_{n})\\
&\mapsto w_{1}(t_{01})w_{2}(t_{02},t_{12})\cdots w_{n+1}(t_{0,n+1},\dots,t_{n,n+1}).
\end{align*}
\end{lem}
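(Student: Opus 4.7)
My plan is to establish the coalgebra isomorphism by constructing $\codec$ explicitly from the displayed formula, proving $\dec \circ \codec = \id$ by direct computation with iterated integrals, using a grading argument for bijectivity, and finally checking the coalgebra property on simple tensors.

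First, for well-definedness of $\codec$: for each $k \in \{1,\dots,n+1\}$, the elements $t_{0,k}, t_{1,k}, \dots, t_{k-1,k}$ generate a free Lie subalgebra of $\mathfrak{t}_{n+2}$, a consequence of the iterated semidirect-product decomposition $\mathfrak{t}_{k+1} \simeq \mathfrak{t}_{k} \ltimes \mathsf{Lie}(t_{0,k},\dots,t_{k-1,k})$. Hence $f_j \mapsto t_{j,k}$ yields an algebra homomorphism $\widehat{U\mathfrak{f}_k} \to \widehat{U\mathfrak{t}_{n+2}}$ for each $k$, and $\codec$ is the $\Q$-linear extension of the product of these maps taken in the prescribed order.

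The main computation is $\dec \circ \codec = \id$. For $W=w_1(t_{01})\cdots w_{n+1}(t_{0,n+1},\dots,t_{n,n+1})$, I would compute $\xi\bigl(W,\,\mathbb{I}(\cdot;1)\cdots\mathbb{I}(\cdot;n+1)\bigr)$ by iterating \eqref{eq: xi w1w2} and applying the Leibniz rule for $\xi(t_{ij},\cdot)$ on products of iterated integrals. Using \eqref{eq:apply_tij}, one sees that $\xi(t_{jk},\mathbb{I}(a_1,\dots,a_m;l))$ vanishes whenever $l < k$ and no $a_i$ equals $k$. A careful bookkeeping then shows that the $k$-th block $w_k(t_{0,k},\dots,t_{k-1,k})$ acts only on the $k$-th integral factor $\mathbb{I}(\cdot;k)$; the counit $c$ afterwards extracts precisely the coefficient of the corresponding word in $w_k$, recovering the original simple tensor. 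Bijectivity then follows because both sides carry a compatible grading by total degree and have the same graded dimensions --- PBW applied to the iterated semidirect-product decomposition identifies $\widehat{U\mathfrak{t}_{n+2}}$ as a linear space with the tensor product on the right --- so $\dec \circ \codec = \id$ forces both maps to be mutually inverse bijections.

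Finally, for coalgebra compatibility: both sides are cocommutative primitively generated (complete) Hopf algebras, with primitives $\widehat{\mathfrak{t}_{n+2}}$ on the left and the direct sum of each $\widehat{\mathfrak{f}_k}$ embedded in its corresponding tensor factor on the right. To show $\codec$ is a coalgebra morphism, I would verify on simple tensors $u = w_1 \otimes \cdots \otimes w_{n+1}$ that $\Delta(\codec(u)) = (\codec \otimes \codec)(\Delta u)$; this reduces, using that $\codec$ is an algebra homomorphism \emph{within} each single tensor factor and that $\Delta(t_{ij}) = t_{ij}\otimes 1 + 1\otimes t_{ij}$, to the standard multiplicativity $\Delta(xy) = \Delta(x)\Delta(y)$ applied iteratively.

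\emph{The main obstacle} is the combinatorial analysis in the second step: one must show that each block $w_k(t_{0,k},\dots,t_{k-1,k})$ acts only on the $k$-th iterated integral and that no cross-terms survive the counit. The crucial local fact is the vanishing of $\xi(t_{jk},\mathbb{I}(a_1,\dots,a_m;l))$ for $l < k$ when no $a_i$ equals $k$, which is immediate from \eqref{eq:apply_tij}; organizing the iterated Leibniz expansions so that the surviving terms match the intended coefficients is where most of the technical work lies.
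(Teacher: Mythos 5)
Your proposal follows essentially the same route as the paper: the core step is the identity $\dec\circ\codec=\id$, verified by the $c\circ\xi$ computation on monomials, upgraded to bijectivity by an easy supplementary argument (the paper checks surjectivity of $\codec$ directly where you count graded dimensions via PBW — an interchangeable one-liner), and the coalgebra compatibility is read off from the primitivity of the $f_i$ and the $t_{ij}$, exactly as in the paper. One small caution: your intermediate claim that each block $w_k(t_{0,k},\dots,t_{k-1,k})$ ``acts only on the $k$-th integral factor'' is not literally true (e.g.\ $\xi(t_{01},\mathbb{I}(0,1;2))=\mathbb{I}(1;2)-\mathbb{I}(0;2)\neq 0$, since $t_{j,k}$ can delete an interior point of a later integral whenever $j$ and $k$ occur adjacently there); what actually holds, and what your deferred ``careful bookkeeping'' must establish, is that all such cross-factor contributions die after applying the counit, which is precisely the orthogonality relation the paper asserts.
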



\begin{proof}
To avoid confusion, we denote the map in \eqref{eq:codec}
by $\mathsf{mulp}$ only in this proof.
We can easily check the surjectivity of $\mathsf{mulp}$. Thus it is enough
to check $\dec\circ\mathsf{mulp}={\rm id}$. By definition, we have
\begin{align*}
 & \dec\circ\mathsf{mulp}(f_{a_{0,1}}\cdots f_{a_{0,k_{0}}}\otimes f_{a_{1,1}}\cdots f_{a_{1,k_{1}}}\otimes\cdots\otimes f_{a_{n},1}\cdots f_{a_{n,k_{n}}})\\
 & =\dec(t_{a_{0,1},1}\cdots t_{a_{0,k_{0}},1}\times\cdots\times t_{a_{n,1},n+1}\cdots t_{a_{n,k_{n}},n+1}),
\end{align*}
and
\begin{align*}
& c(\xi(t_{a_{0,1},1}\cdots t_{a_{0,k_{0}},1}\times\cdots\times t_{a_{n,1},n+1}\cdots t_{a_{n,k_{n}},n+1},\\ & \qquad \mathbb{I}(a_{0,1},\dots,a_{0,k_{0}};1)\cdots\mathbb{I}(a_{n,1},\dots,a_{n,k_{n}};n+1)))\\
&  =\begin{cases}
1 & (a_{i,j})=(a_{i,j}')\ \ \ \text{for all }i,j\\
0 & {\rm otherwise}.
  \end{cases}
\end{align*}
Thus
\begin{align*}
\dec(t_{a_{0,1},1}&\cdots t_{a_{0,k_{0}},1}\times\cdots\times t_{a_{n,1},n+1}\cdots t_{a_{n,k_{n}},n+1}) \\
&=f_{a_{0,1}}\cdots f_{a_{0,k_{0}}}\otimes f_{a_{1,1}}\cdots f_{a_{1,k_{1}}}\otimes\cdots\otimes f_{a_{n},1}\cdots f_{a_{n,k_{n}}}.
\end{align*}
The compatibility under coproduct is immediate by definition 
because we have $\Delta(f_i)=f_i\otimes 1 +1\otimes f_i$
and $\Delta(t_{ij})=t_{ij}\otimes 1 +1\otimes t_{ij}$.
Hence the lemma is proved.
\end{proof}



\begin{defn}
For $k$ with $1 \leq k\leq n-1$, we denote
\begin{equation}
e_{0k}:\widehat{U\mathfrak{t}_{n}}\to\mathbb{Q}
\end{equation}
to be the continuous linear map which takes the sum of coefficients of $t_{0k}$
and $t_{k0}$ in the degree $1$-part of $\widehat{U\mathfrak{t}_{n}}$
when we expand  in terms of $t_{ij}$ with $0\leq i\neq j < n$.
We put
\begin{equation}
\widehat{U\mathfrak{t}_{n}}^{\dag}=\{w\in\widehat{U\mathfrak{t}_{n}}\mid(e_{0j}\otimes{\rm id})\circ\Delta(w)=0\text{ for all }1\leq j< n\}.
\end{equation}
\end{defn}

\begin{lem}
There is an involution
\begin{equation}\label{eq: flip}
\flip: \widehat{U\mathfrak{t}_{n}}^{\dag}\to \widehat{U\mathfrak{t}_{n}}^{\dag}
\end{equation}
which sends $t_{ij}$ ($i<j$) to $t_{n-i,n-j}$ ($0< i,j<n$)
and $-\sum_{k=0}^{n-1} t_{k,n-j}$ ($i=0$).
\end{lem}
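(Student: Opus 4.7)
The plan is to verify four properties of $\flip$ in sequence: (i) well-definedness as an algebra endomorphism of $\widehat{U\mathfrak{t}_n}$; (ii) compatibility with the coproduct $\Delta$; (iii) the involution property $\flip^2 = \mathrm{id}$; and (iv) preservation of the $\dag$-subspace $\widehat{U\mathfrak{t}_n}^\dag$.

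For (i), I would extend $\flip$ from the given formulas on generators (together with $\flip(t_{ji}) := \flip(t_{ij})$) and check that it respects the Kohno relations. When all indices involved lie in $\{1,\dots,n-1\}$ the check is immediate, since $i \mapsto n-i$ is a bijection on this set and the Kohno relations are invariant under index relabeling. The nontrivial cases are those where the index $0$ appears: applying $\flip$ to a relation such as $[t_{0j}, t_{0k} + t_{jk}] = 0$ or $[t_{0j}, t_{kl}] = 0$ produces commutators involving sums $\sum_{a=0}^{n-1} t_{a, n-j}$, which must be shown to vanish. The key cancellation is itself a Kohno relation: for distinct $a, j', k'$, the identity $[t_{a,k'}, t_{a,j'} + t_{k',j'}] = 0$ gives $[t_{a,j'}, t_{a,k'}] + [t_{k',j'}, t_{a,k'}] = 0$, which collapses the various cases to zero after rearrangement. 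This case-by-case verification is the main obstacle, though each case reduces to one such Kohno-type cancellation.

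For (ii), each $\flip(t_{ij})$ is primitive (a single generator or a $\Q$-linear combination of generators), so $\Delta \circ \flip = (\flip \otimes \flip) \circ \Delta$ holds on generators and extends multiplicatively. For (iii), the case $i, j > 0$ is immediate; for $i = 0$, separating the $k = 0$ term in $\flip(t_{0j}) = -\sum_{k=0}^{n-1} t_{k, n-j}$ and applying $\flip$ gives $\flip^2(t_{0j}) = \sum_{l=0}^{n-1} t_{l, j} - \sum_{k=1}^{n-1} t_{n-k, j} = t_{0j}$ after reindexing $l = n-k$ in the second sum.

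For (iv), I will compute on the degree-$1$ part that $e_{0k} \circ \flip = -e_{0, n-k}$: indeed $\flip(t_{ab})$ for $a, b > 0$ involves no index $0$, while $\flip(t_{0b}) = -\sum_l t_{l, n-b}$ contributes to $e_{0k}$ only through the $l = 0$ term, which requires $b = n-k$. Combined with (ii), we get
\begin{equation*}
(e_{0k} \otimes \mathrm{id}) \circ \Delta \circ \flip = -(\mathrm{id} \otimes \flip) \circ (e_{0, n-k} \otimes \mathrm{id}) \circ \Delta,
\end{equation*}
which vanishes on $\widehat{U\mathfrak{t}_n}^\dag$ since the $\dag$-condition kills $(e_{0, n-k} \otimes \mathrm{id}) \circ \Delta$ for every $1 \leq n-k \leq n-1$. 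Hence $\flip$ restricts to an involution of $\widehat{U\mathfrak{t}_n}^\dag$.
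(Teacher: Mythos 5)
Your proof is correct and follows essentially the same route as the paper, which simply asserts that the defining relations of $\mathfrak{t}_n$ are preserved by a direct calculation and that the involution property is immediate; your identity $e_{0k}\circ\flip=-e_{0,n-k}$ is the same observation the paper records (as $\ker e_{0j}=\ker e_{n,n-j}$) when it proves $\dag$-stability in the subsequent lemma. The extra detail you supply — in particular the reduction of the case $[t_{0j},t_{0k}+t_{jk}]=0$ to Kohno-type cancellations — checks out.
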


\begin{proof}
It can be checked by direct calculation that the defining equations of
$\mathfrak t_n$ are preserved under $\flip$.
Seeing that it forms an involution is immediate.
\end{proof}

We also define the continuous linear map
\begin{equation*}
e_{nk}:\widehat{U\mathfrak{t}_{n}}\to\mathbb{Q}
\end{equation*}
with $0\leq k<n$
which takes the sum of coefficients of $\flip(t_{0,n-k})$
and $\flip(t_{n-k,0})$ in the degree $1$-part of $\widehat{U\mathfrak{t}_{n}}$
when we expand it in terms of $\flip(t_{ij})$ with $0\leq i\neq j < n$.

\begin{lem}\label{lem: Hopf algebra with flip}
The subspace $\widehat{U\mathfrak{t}_{n}}^{\dag}$ forms a Hopf subalgebra of
$\widehat{U\mathfrak{t}_{n}}$.
It is stable under
$\flip$ in \eqref{eq: flip}.
\end{lem}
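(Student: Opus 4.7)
The plan is to mirror Lemma \ref{lem: Hopf structure of UF-Gamma-dag} with the natural adaptation: for each $1\le j< n$ I introduce
$$d_{0j} := (e_{0j}\otimes\id)\circ\Delta:\widehat{U\mathfrak{t}_n}\to\widehat{U\mathfrak{t}_n},$$
so that by definition $\widehat{U\mathfrak{t}_n}^\dag = \bigcap_{j=1}^{n-1}\ker d_{0j}$. Each $d_{0j}$ is a derivation: since $e_{0j}$ is supported in degree $1$ and the counit $\epsilon$ in degree $0$, a direct check gives the identity $e_{0j}(uv) = e_{0j}(u)\epsilon(v) + \epsilon(u)e_{0j}(v)$, from which the derivation property of $d_{0j}$ is formal. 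Hence $\widehat{U\mathfrak{t}_n}^\dag$ is closed under the product. Coassociativity yields $(d_{0j}\otimes\id)\circ\Delta = (\id\otimes\Delta)\circ d_{0j}$, and cocommutativity of the universal enveloping algebra yields $(\id\otimes d_{0j})\circ\Delta = (\Delta\otimes\id)\circ d_{0j}$, giving closure under $\Delta$. By cocommutativity again $\Delta\circ S = (S\otimes S)\circ\Delta$, and since $S=-\id$ on degree $1$ one has $e_{0j}\circ S = -e_{0j}$, whence $d_{0j}\circ S = -S\circ d_{0j}$, so $\widehat{U\mathfrak{t}_n}^\dag$ is closed under the antipode.

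For the stability under $\flip$, I compute $e_{0j}\circ\flip$ on the degree-$1$ basis $\{t_{ab}\}_{0\le a<b<n}$: when $0<a<b$, $\flip(t_{ab})=t_{n-a,n-b}$ contains no $t_{0\bullet}$; when $a=0$, $\flip(t_{0b})=-\sum_{k=0}^{n-1}t_{k,n-b}$ contributes $-1$ to the coefficient of $t_{0j}$ precisely when $b=n-j$. As both $e_{0j}\circ\flip$ and $-e_{0,n-j}$ vanish outside degree $1$, we conclude $e_{0j}\circ\flip=-e_{0,n-j}$. Since $\flip$ extends a Lie algebra automorphism of $\mathfrak{t}_n$, it preserves the space of primitives of $\widehat{U\mathfrak{t}_n}$ and is therefore a Hopf algebra automorphism; in particular $\Delta\circ\flip=(\flip\otimes\flip)\circ\Delta$. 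Substituting gives
$$d_{0j}\circ\flip = (e_{0j}\otimes\id)\circ(\flip\otimes\flip)\circ\Delta = ((-e_{0,n-j})\otimes\flip)\circ\Delta = -\flip\circ d_{0,n-j},$$
and hence $w\in\bigcap_k\ker d_{0k}$ implies $\flip(w)\in\bigcap_j\ker d_{0j}$, proving stability.

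The main obstacle is establishing the identity $e_{0j}\circ\flip=-e_{0,n-j}$, which depends on carefully reexpressing $\flip(t_{0b})$ in the standard basis $\{t_{ab}\}_{a<b}$ and exploiting that the ``error'' from treating $0$ asymmetrically in the definition of $\flip$ is a Lie element still primitive for $\Delta$; the rest of the stability argument is a formal consequence of $\flip$ being a Hopf algebra automorphism, and the Hopf subalgebra statement is essentially a transcription of the argument for Lemma \ref{lem: Hopf structure of UF-Gamma-dag}.
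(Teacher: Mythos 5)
Your proof is correct and follows essentially the same route as the paper: the Hopf-subalgebra part is the verbatim adaptation of the argument for $\widehat{U\mathfrak f_\Gamma}^\dag$ (via the derivations $(e_{0j}\otimes\id)\circ\Delta$), and the stability under $\flip$ rests on the same observation the paper records as $\ker e_{0j}=\ker e_{n,n-j}$, which you make explicit as $e_{0j}\circ\flip=-e_{0,n-j}$ so that $\flip$ permutes the kernels. Your computation of $e_{0j}\circ\flip$ on the degree-one basis is accurate and in fact spells out the one-line justification the paper omits.
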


\begin{proof}
It can be proved in the same way as Lemma \ref{lem: Hopf structure of UF-Gamma-dag}.
By definition, we see
$$\ker e_{0j}=\ker e_{n,n-j},$$
from which
the second half of the claim follows.
\end{proof}

\begin{lem}\label{lem:theta_dagger_isom}
The map $\dec$ induces an isomorphism of coalgebras
\begin{equation}\label{eq:dec isom for dag}
\dec:
\widehat{U\mathfrak{t}_{n+2}}^{\dag} 
\simeq\widehat{U\mathfrak{f}_{1}}^{\dag}\hat{\otimes}\widehat{U\mathfrak{f}_{2}}^{\dag}\hat{\otimes}\cdots\hat{\otimes}\widehat{U\mathfrak{f}_{n+1}}^{\dag}
 (\simeq\widehat{U\mathfrak{f}_{2}}^{\dag}\hat{\otimes}\cdots\hat{\otimes}\widehat{U\mathfrak{f}_{n+1}}^{\dag}).
\end{equation}
\end{lem}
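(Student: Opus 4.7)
The plan is to upgrade the coalgebra isomorphism $\dec$ of Lemma \ref{lem coalg isom of UF tensors and UT} to a bijection between the $\dag$-subspaces. To this end I will identify $\widehat{U\mathfrak{t}_{n+2}}^\dag$ and $\widehat{U\mathfrak{f}_1}^\dag \widehat{\otimes}\cdots\widehat{\otimes}\widehat{U\mathfrak{f}_{n+1}}^\dag$ as joint kernels of explicit derivations and check that $\dec$ intertwines them.

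First, define $d_{0j}:=(e_{0j}\otimes\id)\circ\Delta$ on $\widehat{U\mathfrak{t}_{n+2}}$ for $1\leq j\leq n+1$. Since each $t_{kl}$ is primitive, a computation parallel to the proof of Lemma \ref{lem: Hopf structure of UF-Gamma-dag} shows that $d_{0j}$ is a well-defined derivation of $\widehat{U\mathfrak{t}_{n+2}}$ with $d_{0j}(t_{kl})=\delta_{\{k,l\},\{0,j\}}$; by construction $\widehat{U\mathfrak{t}_{n+2}}^\dag=\bigcap_j\ker d_{0j}$. Analogously, on the tensor-product side let $e_0^{(j)}:=\varepsilon_1\otimes\cdots\otimes e_0\otimes\cdots\otimes\varepsilon_{n+1}$, where $e_0$ sits in the $j$-th slot and $\varepsilon_k$ denotes the counit elsewhere, and set $d_0^{(j)}:=(e_0^{(j)}\otimes\id)\circ\Delta_\otimes$; unfolding the tensor-product coproduct yields $d_0^{(j)}=\id\otimes\cdots\otimes d_0\otimes\cdots\otimes\id$, so that $\widehat{U\mathfrak{f}_1}^\dag\widehat{\otimes}\cdots\widehat{\otimes}\widehat{U\mathfrak{f}_{n+1}}^\dag=\bigcap_j\ker d_0^{(j)}$.

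The crucial identity to verify is $e_0^{(j)}\circ\dec=e_{0j}$. Since $\dec$ is degree-preserving and both sides vanish outside degree $1$, it suffices to check on the generators $t_{kl}$, where the explicit formula for $\codec$ in Lemma \ref{lem coalg isom of UF tensors and UT} gives $\dec(t_{0j})=1\otimes\cdots\otimes f_0\otimes\cdots\otimes 1$ (with $f_0$ in the $j$-th slot) and $\dec(t_{kl})=1\otimes\cdots\otimes f_{\min(k,l)}\otimes\cdots\otimes 1$ (in the $\max(k,l)$-th slot for $k,l>0$); from this $e_0^{(j)}(\dec(t_{kl}))=\delta_{\{k,l\},\{0,j\}}=e_{0j}(t_{kl})$. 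Combining with the coalgebra property $\Delta_\otimes\circ\dec=(\dec\otimes\dec)\circ\Delta$ yields
\[
(e_0^{(j)}\otimes\id)\circ\Delta_\otimes\circ\dec=\dec\circ(e_{0j}\otimes\id)\circ\Delta,
\]
so $\dec$ carries $\bigcap_j\ker d_{0j}$ bijectively onto $\bigcap_j\ker d_0^{(j)}$. The restricted map is automatically a coalgebra isomorphism, since by Lemmas \ref{lem: Hopf algebra with flip} and \ref{lem: Hopf structure of UF-Gamma-dag} each $\dag$-subspace inherits its coproduct from the ambient algebra. The parenthetical simplification reduces to $\widehat{U\mathfrak{f}_1}^\dag=\Q$, which follows because $\mathfrak{f}_1=\Q f_0$ forces $\widehat{U\mathfrak{f}_1}=\Q[[f_0]]$ with $d_0=\partial/\partial f_0$, whose kernel is $\Q$.

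The main obstacle I anticipate is checking that $d_{0j}$ respects the defining relations of $\mathfrak{t}_{n+2}$; this reduces to a short case analysis of $[t_{ij},t_{ik}+t_{jk}]=0$ and $[t_{ij},t_{kl}]=0$ against the index pair $\{0,j\}$, in which each term either has derivative zero or produces a bracket with $1\in\widehat{U\mathfrak{t}_{n+2}}$, but some care is needed to handle the $t_{ij}=t_{ji}$ symmetry built into the definition of $e_{0j}$.
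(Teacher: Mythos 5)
Your proof is correct and follows essentially the same route as the paper's: both identify $\widehat{U\mathfrak{t}_{n+2}}^{\dag}$ and the tensor product of $\dag$-spaces as intersections of kernels of the operators $(e_{0j}\otimes\id)\circ\Delta$, and show that under $\dec$ the $j$-th such operator only ``sees'' the $j$-th tensor factor through $(e_{0}\otimes\id)\circ\Delta$. The only cosmetic difference is that you verify the key identity $e_{0}^{(j)}\circ\dec=e_{0j}$ on degree-one generators and then invoke the coalgebra-morphism property, whereas the paper computes $(e_{0j}\otimes\id)\circ\Delta\circ\dec^{-1}$ directly on pure tensors; your explicit justification of the parenthetical $\widehat{U\mathfrak{f}_{1}}^{\dag}=\Q$ is a small bonus the paper leaves implicit.
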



\begin{proof}
For $w_{i}\in\widehat{U\mathfrak{f}_{i}}^{\dag}$ ($1\leq i\leq n+1$),
we have
\begin{align*}
(e_{0j}\otimes{\rm id})&\circ\Delta\circ\dec^{-1}(w_{1}\otimes\cdots\otimes w_{n+1})  \\
 & =(e_{0j}\otimes{\rm id})\circ\Delta\left(
 w_1(t_{01})\cdots w_{n+1}(t_{0,n+1},\dots,t_{n,n+1})
 \right)\\
 & =(e_{0j}\otimes{\rm id})\Bigl(
  \Delta(w_1(t_{01}))\cdots \Delta(w_{n+1}\bigl(t_{0,n+1},\dots,t_{n,n+1})\bigr) \Bigr) \\
 & =\dec^{-1}(w_{1}\otimes\cdots\otimes w_{j-1}\otimes(e_{0}\otimes{\rm id})(\Delta w_{j})\otimes w_{j+1}\otimes \cdots \otimes w_{n+1}).
\end{align*}
Thus the kernel of $(e_{0j}\otimes{\rm id})\circ\Delta(w)$ is equal
to
\[
\widehat{U\mathfrak{f}_{1}}\hat{\otimes}\cdots\hat{\otimes}\widehat{U\mathfrak{f}_{j-1}}\hat{\otimes}\widehat{U\mathfrak{f}_{j}}^{\dag}\hat{\otimes}\widehat{U\mathfrak{f}_{j+1}}\hat{\otimes}\cdots\hat{\otimes}\widehat{U\mathfrak{f}_{n+1}}.
\]
Thus the lemma holds since the intersection for all $1\leq j\leq n+1$
is equal to the right hand side.
The compatibility of the coproduct is
by Lemma \ref{lem coalg isom of UF tensors and UT}.
\end{proof}

We note that the above is not  a Hopf algebra isomorphism,
though both
$\widehat{U\mathfrak{t}_{n+2}}^{\dag}$
and
$\widehat{U\mathfrak{f}_{1}}^{\dag}\hat{\otimes}\widehat{U\mathfrak{f}_{2}}^{\dag}\hat{\otimes}\cdots\hat{\otimes}\widehat{U\mathfrak{f}_{n+1}}^{\dag}$
are equipped with Hopf algebra structures.
Now we define a mould-theoretic analogue of  $\widehat{U\mathfrak{t}_{n}}^{\dag}$
below:

\begin{defn}\label{defn:polymoulds of pure braid type}
(1).
Let $\mathcal{F}$ be a family of functions.
The  $\Q$-linear space ${\mathcal P}_{n+2}(\mathcal{F})$
is defined to be
\begin{equation}\label{eq: P n+2 F}
{\mathcal P}_{n+2}(\mathcal{F})\coloneqq\mathcal{M}_{n}(\mathcal{F};[1],[2],\dots, [n]),
\end{equation}
where
for any $n\in\N$, we denote the set
$$[n]:=\{1,\dots,n\}.$$


(2).
There exists a $\Q$-linear isomorphism
\[
{\madec_n}:
\widehat{U\mathfrak{t}_{n}}^{\dag}\simeq {\mathcal P}_{n}(\mathcal{F}_{{\ser}})
\]
given by
\begin{align*}
\madec_n:
\widehat{U\mathfrak{t}_{n}}^{\dag}
\overset{\dec}{\simeq}
\widehat{U\mathfrak{f}_{2}}^{\dag}\hat{\otimes}\cdots\hat{\otimes}\widehat{U\mathfrak{f}_{n-1}}^{\dag}
\overset{\otimes_k\ma_{[k]}}{\simeq}
{\mathcal M}(\mathcal{F}_{{\ser}};[1])\hat{\otimes}
\cdots\hat{\otimes}{\mathcal M}(\mathcal{F}_{{\ser}};[n-2])
\overset{i_{\widehat{\otimes}}}{\simeq}
{\mathcal P}_{n}(\mathcal{F}_{{\ser}}),
\end{align*}
where
$\dec$ is the coalgebra isomorphism \eqref{eq:dec isom for dag},
$\ma_{[k]}$ means $\ma_{\Gamma}$ with $\Gamma=[k]$ in \eqref{eq: ma Gamma}
and 
$i_{\widehat{\otimes}}$ is
the algebra isomorphism
in \eqref{eq: isom mould tensors and polymoulds}.
\end{defn}

We encode ${\mathcal P}_{n}(\mathcal{F}_{{\ser}})$ with a structure of Hopf algebra
by transmitting from that of $\widehat{U\mathfrak{t}_{n}}^{\dag}$
(cf. Lemma \ref{lem: Hopf algebra with flip})
under the map $\madec_n$.
Particularly we denote its product by $\diamond$, which is
the  binary operator on ${\mathcal P}_{n}(\mathcal{F}_{{\ser}})$
given by
\begin{equation}\label{eq: diamond product}
M \mulp N \coloneqq \madec_{n}\left(\madec_{n}^{-1}(M)\cdot\madec_{n}^{-1}(N)\right).
\end{equation}
It will be shown later (in  Lemma \ref{lem:mould-proper-mulB}) that it is
a mould-proper binary operator.

\begin{lem}\label{lem:mould-proper-bydef}
For any sets $\Gamma_1,\dots,\Gamma_s$ and
a given map
\[
g:\mathcal{M}_{s}(\mathcal{F}_{{\ser}};\Gamma_{1},\dots,\Gamma_{s})\to\widehat{U\mathfrak{t}_{n+2}}^{\dag},
\]
the composite map
\[
\madec_{n+2}\circ g:\mathcal{M}_{s}(\mathcal{F}_{{\ser}};\Gamma_{1},\dots,\Gamma_{s})\to\mathcal{P}_{n+2}(\mathcal{F}_{{\ser}})
\]
is mould-proper if and only if the map
\[
M\mapsto c\circ\xi\bigg(g(M),\prod_{i=1}^{n}\mathbb{I}\bigg(\langle0\rangle,\sigma_{i,1},\langle u_{i,1}\rangle,\dots,\sigma_{i,l_{i}},\langle u_{i,1}+\cdots+u_{i,l_{i}}\rangle;i+1\bigg)\bigg)
\in\mathbb{Q}[[ u_{1,1},\dots,u_{n,l_{n}}]]
\]
where we put
\begin{multline*}
\mathbb{I}(\langle v_{0}\rangle,\sigma_{1},\langle v_{1}\rangle,\dots,\sigma_{d},\langle v_{d}\rangle;\sigma_{d+1}) \\
\coloneqq\sum_{k_0,\dots,k_d=0}^{\infty}v_{0}^{k_{0}}\cdots v_{d}^{k_{d}} \mathbb{I}(\{0\}^{k_{0}},\sigma_{1},\{0\}^{k_{1}},\dots,\sigma_{d},\{0\}^{k_{d}};\sigma_{d+1}),
\end{multline*}
is mould-proper for any $l_1,\dots,l_n\geq 0$ and $\sigma_{i,j}$ with $0\leq\sigma_{i,j}\leq i$.
\end{lem}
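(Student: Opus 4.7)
The plan is to reduce the equivalence to an explicit identification of each polymould component of $\madec_{n+2}(g(M))$ with the stated iterated-integral generating function, by unwinding the definitions of $\dec$ and $\ma_{[i]}$; the argument is essentially a bookkeeping exercise.

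First, by Definition \ref{defn:mould-proper map}, a map into $\mathcal{P}_{n+2}(\mathcal{F}_\ser)=\mathcal{M}_n(\mathcal{F}_\ser;[1],\ldots,[n])$ is mould-proper iff each of its scalar-valued polymould components is mould-proper. Thus $\madec_{n+2}\circ g$ is mould-proper iff, for every choice of lengths $l_1,\ldots,l_n\geq 0$ and of indices $\sigma_{i,j}\in[i]$, the component map
\[
M\mapsto \madec_{n+2}(g(M))\binom{u_{1,1},\ldots,u_{1,l_1};\cdots;u_{n,1},\ldots,u_{n,l_n}}{\sigma_{1,1},\ldots,\sigma_{1,l_1};\cdots;\sigma_{n,1},\ldots,\sigma_{n,l_n}}
\]
is mould-proper.

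Next, I would explicitly unfold $\madec_{n+2}=i_{\widehat{\otimes}}\circ(\ma_{[1]}\otimes\cdots\otimes\ma_{[n]})\circ\dec$. By the definition of $\dec_{n+2}$, the coefficient in $\dec(g(M))$ of the pure tensor $1\otimes f_{\sigma_{1,1}}f_0^{k_{1,1}}\cdots f_{\sigma_{1,l_1}}f_0^{k_{1,l_1}}\otimes\cdots\otimes f_{\sigma_{n,1}}f_0^{k_{n,1}}\cdots f_{\sigma_{n,l_n}}f_0^{k_{n,l_n}}$ equals
\[
c\circ\xi\Bigl(g(M),\prod_{i=1}^{n}\mathbb{I}\bigl(\sigma_{i,1},\{0\}^{k_{i,1}},\ldots,\sigma_{i,l_i},\{0\}^{k_{i,l_i}};i+1\bigr)\Bigr),
\]
where the trivial first tensor slot is forced by $\widehat{U\mathfrak{f}_1}^\dag=\mathbb{Q}$ via Lemma \ref{lem:theta_dagger_isom}. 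Applying $\ma_{[i]}$ to the $i$-th factor then substitutes $z_0=0$ and $z_j=u_{i,1}+\cdots+u_{i,j}$ in the $\vimo$-generating series of Definition \ref{def:ma}, killing the monomials with leading exponent $k_{i,0}>0$ and weighting the remaining ones by $\prod_{j=1}^{l_i}(u_{i,1}+\cdots+u_{i,j})^{k_{i,j}}$. Summing over all $k_{i,j}\geq 0$ and pulling the summation inside the linear maps $c\circ\xi(g(M),\cdot)$, the component in question equals exactly
\[
c\circ\xi\Bigl(g(M),\prod_{i=1}^{n}\mathbb{I}\bigl(\langle 0\rangle,\sigma_{i,1},\langle u_{i,1}\rangle,\ldots,\sigma_{i,l_i},\langle u_{i,1}+\cdots+u_{i,l_i}\rangle;i+1\bigr)\Bigr),
\]
which is the statement's expression restricted to $\sigma_{i,j}\in[i]$.

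Finally, to cover the wider range $\sigma_{i,j}\in\{0,\ldots,i\}$ permitted in the statement, I would observe that the cases with some $\sigma_{i,j}=0$ are expressible as divided differences of the $\sigma\in[i]$ cases: collapsing the extra $f_0$-letter at position $(i,j)$ and re-summing yields $\frac{1}{u_{i,j}}$ times a difference of two generating functions of the same type but with one fewer nonzero letter, where $u_{i,j}=(u_{i,1}+\cdots+u_{i,j})-(u_{i,1}+\cdots+u_{i,j-1})$ is a difference of linear forms in the input variables. Such divided differences are mould-proper by the divisibility of $\mathcal{F}_\ser$ (Definition \ref{def: divisibility}), so the full and the restricted families of scalar maps have the same mould-proper closure; the equivalence follows. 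The only substantive step is the componentwise identification in the middle paragraph; no genuine difficulty arises beyond careful index tracking.
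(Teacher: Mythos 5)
Your proposal is correct and follows the same route as the paper, whose entire proof is the single line ``It follows from definition'': you have simply supplied the definitional unwinding that the authors leave implicit, namely the componentwise identification of $\madec_{n+2}\circ g$ with the generating functions $c\circ\xi(g(M),\prod_i\mathbb{I}(\langle 0\rangle,\dots))$ for nonzero $\sigma_{i,j}$, plus the divided-difference reduction (as in \eqref{eq: erase0}) handling $\sigma_{i,j}=0$ via divisibility. Both directions of the equivalence are covered and no gap remains.
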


\begin{proof}
It follows from definition.
\end{proof}

Below we prepare a technical criterion to be mould-proper.

\begin{lem}\label{lem:mould-proper-B-pre}
Let $r\geq0$, $m_{1},\dots,m_{r}\geq0$.
Let $\{s_{k,l}\}_{1\leq k\leq r,0\leq l\leq m_{k}}$
be a collection of elements in
$\bigoplus_{0\leq i<j}\mathbb{Q}t_{ij}$.
Assume that for any $1\leq k\leq r$,
\begin{enumerate}
\item \label{assump1} for $l>0$, we have $s_{k,l}\in \bigoplus_{1\leq i<j}\mathbb{Q}t_{ij}$;
\item \label{assump2}
for any $1\leq i,j$, at least one of
the following conditions holds:
\begin{description}
\item[$H_1(i,j;k)$] for any $l$, the coefficients of $t_{ij}$, $t_{0i}$ and $t_{0j}$ in $s_{k,l}$ are equal;
\item[$H_2(i,j;k)$] the coefficient of $t_{0i}$ and that of $t_{0j}$ in $s_{k,0}$ are
not equal.
\end{description}
\end{enumerate}
Furthermore, let $N\geq0$, $l_1,\dots,l_N\geq0$,
$\{\rho_{i,j}\}_{1\leq i\leq N,1\leq j\leq l_{i}+1}$
be a collection of elements of
$\Z_{\geq1}$.
Then the map
\[
\mathcal{M}(\mathcal{F}_{{\ser}};\{1,\dots,m_{1}\})\times\cdots\times\mathcal{M}(\mathcal{F}_{{\ser}};\{1,\dots,m_{r}\})
\to
\Q[[ u_{i,j} \mid 1\leq i\leq N, 1\leq j\leq l_j ]]
\]
given by
\begin{multline*}
(M_{1},\dots,M_{r})\mapsto\\
c\circ\xi\left(\prod_{p=1}^{r}\ma_{[m_p]}^{-1}(M_{p})(s_{p,0},\dots,s_{p,m_{p}}),\prod_{i=1}^{N}\mathbb{I}(\langle0\rangle,\rho_{i,1},\langle u_{i,1}\rangle,\dots,\rho_{i,l_{i}},\langle u_{i,0}+\cdots+u_{i,l_{i}}\rangle;\rho_{i,l_{i}+1})\right).
\end{multline*}
is mould-proper.
\footnote{
In this paper, a product $\prod_{i=1}^n r_i$ in a noncommutative algebra $R$  is taken from left to right, that is,  $\prod_{i=1}^n r_i:=r_1\cdots r_n\in R$.}
\end{lem}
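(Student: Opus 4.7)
The plan is to verify the criterion established in Lemma \ref{lem:mould-proper-bydef}. The proof should proceed in three steps. First, I would unpack $\ma^{-1}_{[m_p]}(M_p)$ via Definition \ref{def:ma} and Lemma \ref{lem:UfGamma}: this is the unique element of $\widehat{U\mathfrak{f}_{[m_p]}}^\dag$ whose coefficients at words in $f_0,f_1,\dots,f_{m_p}$ are determined translation-invariantly from the length-$m_p$ part of $M_p$, and by construction these coefficients depend mould-properly on $M_p$. Substituting $f_l\mapsto s_{p,l}$ (allowed since each $s_{p,l}$ is a linear combination of $t_{ij}$) and forming the product over $p$ yields an element of $\widehat{U\mathfrak{t}}$ whose coefficient at any fixed monomial in the $t_{ij}$ is a mould-proper function of $(M_1,\dots,M_r)$.

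Second, using the derivation property $\xi(t_{ij},fg)=\xi(t_{ij},f)\cdot g+f\cdot\xi(t_{ij},g)$ together with the multiplicativity \eqref{eq: xi w1w2} and the explicit action formula \eqref{eq:apply_tij}, I would expand the map
\[
c\circ\xi\Bigl(\cdot,\ \prod_{i=1}^{N}\mathbb{I}\bigl(\langle 0\rangle,\rho_{i,1},\langle u_{i,1}\rangle,\dots,\rho_{i,l_i},\langle u_{i,0}+\cdots+u_{i,l_i}\rangle;\rho_{i,l_i+1}\bigr)\Bigr)
\]
into a finite sum of terms indexed by combinatorial contractions pairing each generator $t_{ab}$ occurring in the substituted series with a consecutive index pair in one of the $\mathbb{I}$'s. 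Each elementary contraction contributes a factor that is polynomial in the $u_{i,j}$ (produced by the $\vimo$-style expansion implicit in $\mathbb{I}(\langle v_0\rangle,\sigma_1,\langle v_1\rangle,\dots)$); combined with the mould-proper coefficients of the first step, each individual diagram yields a mould-proper summand in the precise sense of Definition \ref{defn:mould-proper map}.

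Third, the main difficulty is the terms in which a substituted letter is $t_{0a}$. Since by assumption (1) such letters can arise only from $s_{p,0}$, and since the iterated integrals begin at $0$, such contractions can naively introduce poles in the $u_{i,j}$ that are not visibly cancellable by the rational-function framework of Lemma \ref{lem:mould-proper-bydef}. For each pair $(i,j)$ of positive indices, the dichotomy in assumption (2) is exploited to circumvent this. Under $H_1(i,j;k)$, the three coefficients of $t_{ij}$, $t_{0i}$, $t_{0j}$ in every $s_{k,l}$ coincide, so the dagger condition $\ma^{-1}_{[m_p]}(M_p)\in\widehat{U\mathfrak{f}_{[m_p]}}^\dag$ (equivalently, the translation-invariance in Lemma \ref{lem:UfGamma}(3)) allows the problematic $t_{0a}$ contributions to be rewritten as combinations of $t_{ab}$ contributions with $a,b\geq 1$, which were already handled in the second step. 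Under $H_2(i,j;k)$, the strict inequality of the two relevant boundary coefficients in $s_{k,0}$, combined with the divisibility axiom of $\mathcal{F}_\ser$ (Definition \ref{def: divisibility}), recasts the would-be pole as a mould-proper difference quotient. Assembling these two cases pair-by-pair and summing over all contraction diagrams completes the verification.
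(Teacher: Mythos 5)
Your overall architecture (reduce to Lemma \ref{lem:mould-proper-bydef}, expand the action of $\xi$ on the product of $\mathbb{I}$'s, and invoke the $H_1$/$H_2$ dichotomy to tame the $t_{0a}$ letters) matches the paper's, but the central mechanism is missing and one of your intermediate claims is false. The element $\prod_p\ma_{[m_p]}^{-1}(M_p)(s_{p,0},\dots,s_{p,m_p})$ contains arbitrarily high powers $s_{p,0}^{k_{p,q}}$, and acting with $s_{p,0}^{k}$ on $\ev(a)$ produces $V(s_{p,0},a)^{k}\cdot\ev(a)$ plus deletion terms, where $V(s_{p,0},a)$ is a linear form in the $u_{i,j}$. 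So the expansion is \emph{not} a finite sum of diagrams each contributing a polynomial in the $u_{i,j}$: the infinitely many $f_0$-powers must be resummed against the coefficients of $M_p$, and this resummation reconstructs $M_p$ evaluated at the linear forms $V(s_{p,0},a_{p,q,h})$. The poles do not come from a contraction of $t_{0a}$ with an integral based at $0$, as you suggest; they come from the partial-fraction identity
$\sum_{e_0+\cdots+e_n=m}a_0^{e_0}\cdots a_n^{e_n}=\sum_i a_i^{m+n}\prod_{j\neq i}(a_i-a_j)^{-1}$
applied to the geometric sums over the $k_{p,q,h}$, and the denominators are differences $V(s_{p,0},a_{p,q,h})-V(s_{p,0},a_{p,q,h'})$.

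Consequently the whole proof hinges on a fact you never state or derive: the vectors $(V(s_{p,0},a_{p,q,h}))_{q,h}$ attached to the deletion history are \emph{affinely independent}, which is exactly what lets one apply Lemma \ref{lem: divisible} and conclude mould-properness. This is where $H_1$ and $H_2$ actually enter: if $H_1(\sigma_{j,p},\sigma_{j,p+1};k)$ holds, the coefficient of the deletion term $a'(j,p)$ in $\xi'_{s_{k,l}}(a)$ is $\sum c_{xy}(\delta_{\{x,y\},\{\sigma_{j,p},\sigma_{j,p+1}\}}-\delta_{\{x,y\},\{\sigma_{j,p},0\}})=0$, so the term simply does not occur (not a rewriting via translation invariance, as you propose — that step is not justified as stated); if the deletion survives, $H_1$ fails, hence $H_2$ holds, and a direct computation gives $V(s_{k,0},a)-V(s_{k,0},a'(j,p))=(v_{j,p}-v_{j,p-1})(c_{0,\sigma_{j,p+1}}-c_{0,\sigma_{j,p}})\notin\mathcal{W}(a'(j,p))$, which is the affine independence. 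Without identifying the resummation, the resulting difference denominators, and this independence statement, the assertion that the $H_2$ case ``recasts the would-be pole as a mould-proper difference quotient'' is an unsubstantiated leap, so the proposal as written does not constitute a proof.
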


\begin{proof}
Let $\mathcal{V}$ be the $\Q$-linear space generated by
$u_{1,1},\dots,u_{N,l_N}$.
We consider  the $\Q$-linear space $Q$ generated by
all non-commutative products
\[
\prod_{i=1}^{N}\tilde{\mathbb{I}}(\langle v_{i,0}\rangle,\sigma_{i,1},\langle v_{i,1}\rangle,\dots,\sigma_{i,d_i},\langle v_{i,d_i}\rangle;\sigma_{i,d_i+1}),
\]
of formal symbols
$\tilde{\mathbb{I}}(\langle v_{i,0}\rangle,\sigma_{i,1},\langle v_{i,1}\rangle,\dots,\sigma_{i,d_i},\langle v_{i,d_i}\rangle;\sigma_{i,d_i+1})$
with $\sigma_{1,1},\dots,\sigma_{N,d_{N}+1}\in \Z_{\geq 0}$
and $v_{1,0},\dots,v_{N,d_N}\in \mathcal{V}$
such that $\{v_{i,j}-v_{i,0}\mid1\leq i\leq N,1\leq j\leq d_{i}\}$ are linearly independent in $\mathcal V$.
We define the $\Q$-linear map
$$\ev:Q\to\mathcal H_{\infty}[[u_{1,1},\dots,u_{N,l_N}]]$$
by
sending 
\begin{align*}
\prod_{i=1}^{N}\tilde{\mathbb{I}}&
(\langle v_{i,0}\rangle,\sigma_{i,1},\langle v_{i,1}\rangle,\dots,\sigma_{i,d_i},\langle v_{i,d_i}\rangle;\sigma_{i,d_i+1}) \\
&\qquad\qquad \mapsto\qquad
\prod_{i=1}^{N}{\mathbb{I}}(\langle v_{i,0}\rangle,\sigma_{i,1},\langle v_{i,1}\rangle,\dots,\sigma_{i,d_i},\langle v_{i,d_i}\rangle;\sigma_{i,d_i+1}).
\end{align*}

For a monomial $a\in Q$ given by
\begin{equation}\label{a product presentation for a}
a  =\prod_{i=1}^{N}\tilde{\mathbb{I}}(\langle v_{i,0}\rangle,\sigma_{i,1},\langle v_{i,1}\rangle,\dots,\sigma_{i,d_{i}},\langle v_{i,d_{i}}\rangle;\sigma_{i,d_{i}+1}),
\end{equation}
we introduce elements $a'(j,p)$ and $a''(j,p)$ in $Q$ by
\begin{align*}
a'(j,p) & =\prod_{i=1}^N \begin{cases}
\tilde{\mathbb{I}}(\langle v_{j,0}\rangle,\sigma_{j,1},\langle v_{j,1}\rangle,\dots,\widehat{\sigma_{j,p},\langle v_{j,p}\rangle},\dots,\sigma_{j,d_{j}},\langle v_{j,d_{j}}\rangle;\sigma_{j,d_{j}+1}) & i = j\\
\tilde{\mathbb{I}}(\langle v_{i,0}\rangle,\sigma_{i,1},\langle v_{i,1}\rangle,\dots,\sigma_{i,d_{i}},\langle v_{i,d_{i}}\rangle;\sigma_{i,d_{i}+1}) & i\neq j,
\end{cases}\\
a''(j,p) & =\prod_{i=1}^N \begin{cases}
\tilde{\mathbb{I}}(\langle v_{j,0}\rangle,\sigma_{j,1},\dots,\widehat{\langle v_{j,p-1}\rangle,\sigma_{j,p}},\dots,\langle v_{j,d_{j}}\rangle;\sigma_{j,d_{j}+1}) & i = j\\
\tilde{\mathbb{I}}(\langle v_{i,0}\rangle,\sigma_{i,1},\langle v_{i,1}\rangle,\dots,\sigma_{i,d_{i}},\langle v_{i,d_{i}}\rangle;\sigma_{i,d_{i}+1}) & i\neq j.
\end{cases}
\end{align*}

For a such $a\in Q$ and
\begin{equation}\label{eq:s=sum ct}
s=\sum_{x,y}c_{x,y}t_{x,y}\in\bigoplus_{0\leq i<j}\mathbb{Q}t_{ij},
\end{equation}
 we have
\[
\xi(s,\ev(a))=\ev( \xi'_{s}(a))+V(s,a)\cdot \ev(a )
\]
where
\begin{align*}
\xi'_{s}(a)
 \coloneqq &\sum_{j=1}^{N}\sum_{\substack{1\leq p\leq d_{j}\\\sigma_{j,p}\neq\sigma_{j,p+1}}}
 \sum_{x,y}c_{x,y}\left(\delta_{\{x,y\},\{\sigma_{j,p},\sigma_{j,p+1}\}}-\delta_{\{x,y\},\{\sigma_{j,p},0\}}\right) a'(j,p)\\
 & \ -\sum_{j=1}^{N}\sum_{\substack{1\leq p\leq d_{j}\\\sigma_{j,p}\neq\sigma_{j,p-1}}}
 \sum_{x,y}c_{xy}\left(\delta_{\{x,y\},\{\sigma_{j,p},\sigma_{j,p-1}\}}-\delta_{\{x,y\},\{\sigma_{j,p},0\}}\right) a''(j,p)\\
 & \text{(\ensuremath{\sigma_{j,p-1}} is understood to be \ensuremath{\infty} if \ensuremath{p=1})}
\end{align*}
and $V(s,a)\in \mathcal{V}
\subset \mathcal H_\infty[[u_{1,1},\dots,u_{N,l_N}]]$ is given by
$$
V(s,a)
\coloneqq \sum_{j=1}^{N}
\sum_{x,y}c_{x,y}\left(\sum_{p=0}^{d_j}\delta_{\{x,y\},\{\sigma_{j,p+1},0\}}v_{j,p}-\sum_{p=1}^{d_j}\delta_{\{x,y\},\{\sigma_{j,p},0\}}v_{j,p}\right).
$$
For a monomial $a
\in Q$ given as in \eqref{a product presentation for a}
and
$s\in \bigoplus_{0\leq i<j}\mathbb{Q}t_{ij}$ given as in
\eqref{eq:s=sum ct},
we denote by $\mathcal W(a)$ the $\Q$-linear
subspace of $\mathcal{V}$ spanned by
$\{v_{i,j}-v_{i,0}\mid1\leq i\leq N,1\leq j\leq d_{i}\}$.
Let us write the above definitions of $\xi'_s$ as
\[
\xi_{s}'(a)=\sum_{{\bf p}\in D(s,a)}c_{{\bf p}}A({\bf p})\bf
\]
where $c_{{\bf p}}\in \Z$ and $A({\bf p})\in Q$ is a monomial and
$D(s,a)$ is chosen to be minimal, that is,
$A({\bf p})\neq\ A({\bf q})$
when ${\bf p} \neq {\bf q}\in D(s,a)$ and
$c_{\bf p}\neq 0$ for all ${\bf p}\in D(s,a)$.
Note that $\mathcal W(A({\bf p}))\subset \mathcal W(a)$.
By the assumption (\ref{assump1}), $V(s_{k,l},a)=0$ if $l>0$.

Note that, for any ${\bf p}\in D(s,a)$, all $A({\bf p})$ is expressed as, for some $j$ and $p$,
$a'(j,p)$ such that  $a'(j,p)\in \{A({\bf p})\mid{\bf p}\in D(s_{k,l},a)\}$
or $a''(j,p)$ such that $a''(j,p)\in \{A({\bf p})\mid{\bf p}\in D(s_{k,l},a)\}$.
If the condition ${H_{1}(\sigma_{j,p},\sigma_{j,p+1};k)}$ holds then
\begin{equation}\label{eq:H1a1}
a'(j,p)\notin\{A({\bf p})\mid{\bf p}\in D(s_{k,l},a)\}
\end{equation}
for any $l$ since the coefficient of $a'(j,p)$ in $\xi_{s_{k,l}}'(a)$ is given by
\begin{equation*}
\sum_{x,y}c_{xy}\left(\delta_{\{x,y\},\{\sigma_{j,p},\sigma_{j,p+1}\}}-\delta_{\{x,y\},\{\sigma_{j,p},0\}}\right)=0
\end{equation*}
where $c_{x,y}$ are coefficients of $t_{x,y}$ in $s_{k,l}$.
Similarly if the condition $H_1(\sigma_{j,p},\sigma_{j,p-1};k)$ holds then
\begin{equation}\label{eq:H1a2}
a''(j,p)\notin\{A({\bf p})\mid{\bf p}\in D(s_{k,l},a)\}
\end{equation}
for any $l$.
Therefore, if $A({\bf p})=a'(j,p)$ (resp. $a''(j,p)$) for some $j$ and $p$, then the condition ${H_{1}(\sigma_{j,p},\sigma_{j,p+1};k)}$ (resp. $H_1(\sigma_{j,p},\sigma_{j,p-1};k)$) does not hold.
Whence the condition ${H_{2}(\sigma_{j,p},\sigma_{j,p+1};k)}$ (resp. $H_2(\sigma_{j,p},\sigma_{j,p-1};k)$) must hold.

Let us calculate $V(s,a)-V(s,A({\bf p}))$
with $s=\sum_{x,y}c_{x,y}t_{x,y}$ and  $a\in Q$ given as \eqref{a product presentation for a} under the condition ${H_{2}(\sigma_{j,p},\sigma_{j,p+1};k)}$ or $H_2(\sigma_{j,p},\sigma_{j,p-1};k)$
for the later purpose:
We have
\begin{align*}
V(s,a) - & V(s,a'(j,p))  =\sum_{y}c_{0,y}\left(V(t_{0,y}',a)-V(t_{0,y}',a'(j,p))\right)\\
 & =\sum_{y}c_{0,y}\left(\delta_{y,\sigma_{j,p+1}}v_{j,p}-\delta_{y,\sigma_{j,p}}v_{j,p}+\delta_{y,\sigma_{j,p}}v_{j,p-1}-\delta_{y,\sigma_{j,p+1}}v_{j,p-1}\right)\\
 & =(v_{j,p}-v_{j,p-1})\sum_{y}c_{0,y}(\delta_{y,\sigma_{j,p+1}}-\delta_{y,\sigma_{j,p}})\\
 & =(v_{j,p}-v_{j,p-1})(c_{0,\sigma_{j,p+1}}-c_{0,\sigma_{j,p}}).
\end{align*}
Furthermore, $v_{j,p}-v_{j,p-1}\notin \mathcal W(a'(j,p))$ by the assumption that $\{v_{i,j}-v_{i,0}\mid1\leq i\leq N,1\leq j\leq d_{i}\}$ are linearly independent in $\mathcal V$.
Thus $V(s,a) - V(s,a'(j,p)) \notin \mathcal W(a'(j,p))$ if (and only if) the coefficients of $t_{0,\sigma_{j,p+1}}$ and $t_{0,\sigma_{j,p}}$ in $s$ are not equal.
Therefore if the condition $H_2(\sigma_{j,p},\sigma_{j,p+1};k)$ holds then
\begin{equation}\label{eq:H2a1}
V(s_{k,0},a)-V(s_{k,0},a'(j,p))\notin \mathcal W(a'(j,p)).
\end{equation}
Similarly, if the condition $H_2(\sigma_{j,p},\sigma_{j,p-1};k)$ holds then
\begin{equation}\label{eq:H2a2}
V(s_{k,0},a)-V(s_{k,0},a''(j,p))\notin \mathcal W(a''(j,p)).
\end{equation}

Thus by  \eqref{eq:H2a1}, \eqref{eq:H2a2} and
the assumption (\ref{assump2}), for any ${\bf p}\in D(s_{k,l},a)$, we have
\begin{align}
V(s_{k,0},a)-V(s_{k,0},A({\bf p})) \notin \mathcal W(A({\bf p})) \label{eq:V_diff_notin_W}.
\end{align}


We have
\begin{align}\label{eq: products of ma in terms of a}
 & \prod_{p=1}^{r}\ma_{[m_p]}^{-1}(M_p)(s_{p,0},\dots,s_{p,m_p})\\
 \notag &  =\sum_{b_{1},\dots,b_{r}=0}^{\infty}\sum_{\substack{(k_{p,q})_{1\leq p\leq r,0\leq q\leq b_{p}}\\(\sigma_{p,q})_{1\leq p\leq r,1\leq q\leq b_{p}}}}
 \prod_{p=1}^{r}\langle\ma_{[m_p]}^{-1}(M_p)|
{}^{k_{p,0},\dots,k_{p,b_{p}}}_{\sigma_{p,1},\dots,\sigma_{p,b_{p}}}\rangle\\
\notag & \ \ \
 \times
 \prod_{p=1}^{r}s_{p,0}^{k_{p,0}}s_{p,\sigma_{p,1}}s_{p,0}^{k_{p,1}}\cdots s_{p,\sigma_{p,b_{p}}}s_{p,0}^{k_{p,b_{p}}}
 \in \widehat{U\mathfrak{t}_{\infty}},
\end{align}
where we employ the notation in \eqref{eq: word expansion}.
Furthermore, by using
\[
\xi(s,\ev(a))=\sum_{{\bf p}\in D(s,a)}c_{{\bf p}}\ev(A({\bf p}))+V(s,a)\cdot\ev(a)
\]
and \eqref{eq: xi w1w2}
repeatedly, for $k\geq0$, we inductively have
\begin{align*}
&\xi(s_{p,0}^k,\ev(a)) \\
&=\sum_{0\leq N\leq k} \sum_{\substack{{\bf p}_h\in D(s_{p,0},a_h) \\
0\leq h\leq N-1}}
\left\{
\left( \prod_{h=0}^{N-1}c_{{\bf p}_h} \right)\left( \sum_{\substack{\sum_{h=0}^N\beta_h=k-N \\
\beta_h\geq0}}\prod_{h=0}^NV(s_{p,0},a_h)^{\beta_h} \right)
\ev(a_N)
\right\}.
\end{align*}
Here, $a_0:=a$ and $a_h:=A({\bf p}_{h-1})$ for $N\geq h\geq1$.
So by using this, we get
\begin{align*}
 & \xi(\prod_{p=1}^rs_{p,0}^{k_{p,0}}s_{p,\sigma_{p,1}}s_{p,0}^{k_{p,1}}\cdots s_{p,\sigma_{p,b_{p}}}s_{p,0}^{k_{p,b_{p}}},\ev(a))\\
 & =\sum_{\substack{(N_{p,q})_{1\leq p\leq r,0\leq q\leq b_{p}}\\
0\leq N_{p,q}\leq k_{p,q}
}
}\sum_{\substack{(k_{p,q,h})_{\substack{1\leq p\leq r\\
0\leq q\leq b_{p}\\
0\leq h\leq N_{p,q}
}
}\\
\sum_{h=0}^{N_{p,q}}k_{p,q,h}=k_{p,q}-N_{p,q}
}
}\sum_{\substack{{\bf p}_{p,q,h}\in D(s_{p,0},a_{p,q,h+1})\\
{\bf p}_{p,q}\in D(s_{p,\sigma_{p,q}},a_{p,q,0})
}
}\\
 & \ \ \
 \left[
 (\prod_{p,q,h}c_{{\bf p}_{p,q,h}})(\prod_{p,q}c_{{\bf p}_{p,q}})(\prod_{p,q,h}V(s_{p,0},a_{p,q,h})^{k_{p,q,h}})\ev(a_{1,0,0})
 \right]
\end{align*}
where  the above $a_{p,q,h}$ is recursively defined as
\[
a_{p,q,h}=\begin{cases}
a & h=N_{p,q},\ q=b_{p},\ p=r\\
a_{p+1,0,0}
 & h=N_{p,q},\ q=b_{p},\ p<r\\
A({\bf p}_{p,q+1}) & h=N_{p,q},\ q<b_{p}\\
A({\bf p}_{p,q,h+1}) & h<N_{p,q}.
\end{cases}
\]
Here for any $1\leq p\leq r$, the vectors $(V(s_{p,0}, a_{p,q,h}))_{q,h}$ are {\it affinely independent}\footnote{We say that vectors $w_0,\dots,w_m$ in a vector space are affinely independent if $w_1-w_0,\dots,w_m-w_0$ are linearly independent.} 
by (\ref{eq:V_diff_notin_W}).
Since we have
\begin{multline*}
\sum_{\substack{(k_{p,q,h})_{h}\\
k_{p,q,0}+\cdots+k_{p,q,N_{p,q}}=k_{p,q}-N_{p,q}
}
}\prod_{h=0}^{N_{p,q}}V(s_{p,0},a_{p,q,h})^{k_{p,q,h}}\\
=
\sum_{h=0}^{N_{p,q}}V(s_{p,0},a_{p,q,h})^{k_{p,q}}\prod_{h'\neq h}\frac{1}{V(s_{p,0},a_{p,q,h})-V(s_{p,0},a_{p,q,h'})}
\end{multline*}
by the following identity (cf. \cite{HMO} Lemma 2.1)
$$
\sum_{\substack{ e_1+\cdots+e_r=m \\ e_i\ge0\,(1\le i\le r) }}
a_1^{e_1}\cdots a_r^{e_r}
=\sum_{i=1}^r a_i^{m+r-1} \prod_{j\ne i}(a_i-a_j)^{-1},
$$
we obtain
\begin{align*}
 & \xi(\prod_{p}s_{p,0}^{k_{p,0}}s_{p,\sigma_{p,1}}s_{p,0}^{k_{p,1}}\cdots s_{p,\sigma_{p,b_{p}}}s_{p,0}^{k_{p,b_{p}}},\ev(a))\\
 & =\sum_{\substack{(N_{p,q})_{1\leq p\leq r,0\leq q\leq b_{p}}\\
0\leq N_{p,q}\leq k_{p,q}
}
}\sum_{\substack{{\bf p}_{p,q,h}\in D(s_{p,0},a_{p,q,h+1})\\
{\bf p}_{p,q}\in D(s_{p,q},a_{p,q,0})
}
}(\prod_{p,q,h}c_{{\bf p}_{p,q,h}})(\prod_{p,q}c_{{\bf p}_{p,q}})\\
 & \ \ \times\Bigg(\prod_{p,q}\sum_{h=0}^{N_{p,q}}V(s_{p,0},a_{p,q,h})^{k_{p,q}}\prod_{h'\neq h}\frac{1}{V(s_{p,0},a_{p,q,h})-V(s_{p,0},a_{p,q,h'})}\Bigg)\ev(a_{1,0,0}).
\end{align*}
Thus
{\small
\begin{align*}
 &c\left( \sum_{(k_{p,q})_{1\leq p\leq r,0\leq q\leq b_{p}}}\prod_{p}\langle\ma_{[m_p]}^{-1}(M_p)|
 {}^{k_{p,0},\dots,k_{p,b_{p}}}_{\sigma_{p,1},\dots,\sigma_{p,b_{p}}}\rangle
\xi(\prod_{p}s_{p,0}^{k_{p,0}}s_{p,\sigma_{p,1}}s_{p,0}^{k_{p,1}}\cdots s_{p,\sigma_{p,b_{p}}}s_{p,0}^{k_{p,b_{p}}},\ev(a) ) \right) \\
 & =\sum_{(N_{p,q})_{1\leq p\leq r,0\leq q\leq b_{p}}}\sum_{\substack{{\bf p}_{p,q,h}\in D(s_{p,0},a_{p,q,h+1})\\
{\bf p}_{p,q}\in D(s_{p,q},a_{p,q,0})}}c(\ev(a_{1,0,0}))(\prod_{p,q,h}c_{{\bf p}_{p,q,h}})(\prod_{p,q}c_{{\bf p}_{p,q}})\sum_{\substack{(h_{p,q})_{p,q}\\
0\leq h_{p,q}\leq N_{p,q}}}\\
 & \ \ \times\prod_{p,q}\prod_{h\neq h_{p,q}}\frac{1}{V(s_{p,0},a_{p,q,h_{p,q}})-V(s_{p,0},a_{p,q,h})}\\
 & \ \ \times\prod_{p}\Bigg(\sum_{(k_{p,q})_{0\leq q\leq b_{p}}}\langle\ma_{[m_p]}^{-1}(M_p)|
 {}^{k_{p,0},\dots,k_{p,b_{p}}}_{\sigma_{p,1},\dots,\sigma_{p,b_{p}}}\rangle
 \prod_{q}V(s_{p,0},a_{p,q,h_{p,q}})^{k_{p,q}}\Bigg)\\
 & =\sum_{(N_{p,q})_{1\leq p\leq r,0\leq q\leq b_{p}}}\sum_{\substack{{\bf p}_{p,q,h}\in D(s_{p,0},a_{p,q,h+1})\\{\bf p}_{p,q}\in D(s_{p,q},a_{p,q,0})}}
 c(\ev(a_{1,0,0}))(\prod_{p,q,h}c_{{\bf p}_{p,q,h}})(\prod_{p,q}c_{{\bf p}_{p,q}})\sum_{\substack{(h_{p,q})_{p,q}\\
0\leq h_{p,q}\leq N_{p,q}
}
}\\
 & \ \ \times\prod_{p,q}\prod_{h\neq h_{p,q}}\frac{1}{V(s_{p,0},a_{p,q,h_{p,q}})-V(s_{p,0},a_{p,q,h})}\\
 & \ \ \times\prod_{p}M_{p}\Bigg(\begin{array}{c}
V(s_{p,0},a_{p,1,h_{p,1}})-V(s_{p,0},a_{p,0,h_{p,0}}),\dots,V(s_{p,0},a_{p,b_{p},h_{p,b_{p}}})-V(s_{p,0},a_{p,b_{p-1},h_{p,b_{p-1}}})\\
\sigma_{p,1},\dots,\sigma_{p,b_{p}}
\end{array}\Bigg)\\
& \eqqcolon L_{a}(\sigma_{1,1},\dots,\sigma_{1,b_{1}};\dots;\sigma_{r,1},\dots,\sigma_{r,b_{r}})
\in\Q[[u_{1,1},\dots, u_{N,l_N}]]
.
\end{align*}
}
So $L_{a}(\sigma_{1,1},\dots,\sigma_{1,b_{1}};\dots;\sigma_{r,1},\dots,\sigma_{r,b_{r}})$ is mould-proper by Lemma \ref{lem: divisible} since the vectors $(V(s_{p,0}, a_{p,q,h}))_{q,h}$ are taken to be affinely independent.

Finally, by \eqref{eq: products of ma in terms of a}
we have
\begin{multline*}
c(\xi(\prod_{p}\ma_{[m_p]}^{-1}(M_{p})(s_{p,0},\dots,s_{p,m_{p}}),\ev(a) ))\\
=\sum_{b_{1},\dots,b_{p}=0}^{\infty}\sum_{(\sigma_{p,q})_{1\leq p\leq r,1\leq q\leq b_{p}}}L_{a}(\sigma_{1,1},\dots,\sigma_{1,b_{1}};\dots;\sigma_{r,1},\dots,\sigma_{r,b_{r}}).
\end{multline*}
We note that it is a finite sum 
by the definition of $\xi$.

Especially by considering the case
\[
a = \prod_{i=1}^{N}\tilde{\mathbb{I}}(\langle0\rangle,\sigma_{i,1},\langle u_{i,1}\rangle,\dots,\sigma_{i,l_{i}},\langle u_{i,1}+\cdots+u_{i,l_{i}}\rangle;i+1),
\]
we obtain the lemma  because $L_{a}(\sigma_{1,1},\dots,\sigma_{1,b_{1}};\dots;\sigma_{r,1},\dots,\sigma_{r,b_{r}})$ is shown to be mould-proper.
\end{proof}

\begin{lem}\label{lem:mould-proper-B}
Let $n,r\geq0$, $m_{1},\dots,m_{r}\geq0$.
Let $\{s_{k,l}\}_{1\leq k\leq r,0\leq l\leq m_{k}}$
be a collection of elements in
$\bigoplus_{1\leq i<j\leq n+2}\mathbb{Q}t_{ij}$ satisfying the same conditions as in Lemma \ref{lem:mould-proper-B-pre}.
Then the map
\[
\mathcal{M}(\mathcal{F}_{{\ser}};\{1,\dots,m_{1}\})\times\cdots\times\mathcal{M}(\mathcal{F}_{{\ser}};\{1,\dots,m_{r}\})\to {\mathcal P}_{n+2}(\mathcal{F}_\ser)
\]
given by
\begin{align*}
 & \mathcal{M}(\mathcal{F}_{{\ser}};\{1,\dots,m_{1}\})\times\cdots\times\mathcal{M}(\mathcal{F}_{{\ser}};\{1,\dots,m_{r}\})\simeq\widehat{U\mathfrak{f}_{1+m_{1}}}^{\dag}\times\cdots\times\widehat{U\mathfrak{f}_{1+m_{r}}}^{\dag}\\
 & \xrightarrow{(\varphi_{1},\dots,\varphi_{r})\mapsto\varphi_{1}(s_{1,0},\dots,s_{1,m_{1}})\cdots\varphi_{r}(s_{r,0},\dots,s_{r,m_{r}})}\widehat{U\mathfrak{t}_{n+2}}^{\dag}\xrightarrow[\simeq]{\madec_{n+2}} {\mathcal P}_{n+2}(\mathcal{F}_{{\ser}})
\end{align*}
is mould-proper.
\end{lem}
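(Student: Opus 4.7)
The proof plan is to reduce the assertion to Lemma \ref{lem:mould-proper-B-pre} by unraveling what ``mould-proper'' means for a map landing in the polymould space $\mathcal{P}_{n+2}(\mathcal{F}_{\ser})$.

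First, I would decompose the map under consideration into the three stages described: the first stage $(M_1,\dots,M_r)\mapsto(\varphi_1,\dots,\varphi_r)$ with $\varphi_p=\ma_{[m_p]}^{-1}(M_p)$ is the componentwise inverse of the algebra isomorphisms of Proposition \ref{prop: isom ma}; the second stage is the substitution $(\varphi_1,\dots,\varphi_r)\mapsto g(M_1,\dots,M_r):=\prod_{p=1}^r\varphi_p(s_{p,0},\dots,s_{p,m_p})$ in $\widehat{U\mathfrak{t}_{n+2}}$; and the third stage is $\madec_{n+2}$. A preliminary sanity check: since each $\varphi_p\in\widehat{U\mathfrak{f}_{1+m_p}}^\dag$ and by Lemma \ref{lem: Hopf algebra with flip} the product of $\dag$-elements is again a $\dag$-element, one checks (using that the assumptions on $s_{k,l}$ force each substituted factor to lie in $\widehat{U\mathfrak{t}_{n+2}}^\dag$) that $g(M_1,\dots,M_r)\in\widehat{U\mathfrak{t}_{n+2}}^\dag$, so $\madec_{n+2}$ can be applied.

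Next, I would invoke Lemma \ref{lem:mould-proper-bydef} with $(\Gamma_1,\dots,\Gamma_r)=(\{1,\dots,m_1\},\dots,\{1,\dots,m_r\})$. This reduces the mould-properness of $\madec_{n+2}\circ g$ to the mould-properness, for every $l_1,\dots,l_n\geq 0$ and every choice of $\sigma_{i,j}$ with $0\leq\sigma_{i,j}\leq i$, of the map
\[
(M_1,\dots,M_r)\mapsto c\circ\xi\Bigl(g(M_1,\dots,M_r),\;\prod_{i=1}^{n}\mathbb{I}(\langle 0\rangle,\sigma_{i,1},\langle u_{i,1}\rangle,\dots,\sigma_{i,l_i},\langle u_{i,1}+\cdots+u_{i,l_i}\rangle;i+1)\Bigr)
\]
landing in $\mathbb{Q}[[u_{1,1},\dots,u_{n,l_n}]]$. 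But this map is exactly the one handled by Lemma \ref{lem:mould-proper-B-pre}, applied with $N=n$ and $\rho_{i,j}$ chosen from the given data (and using $\rho_{i,l_i+1}=i+1$). Since the $s_{k,l}$ of the present lemma satisfy the same hypotheses as in Lemma \ref{lem:mould-proper-B-pre}, the conclusion there gives exactly the desired mould-properness.

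Finally, combining the reduction via Lemma \ref{lem:mould-proper-bydef} with the output of Lemma \ref{lem:mould-proper-B-pre} yields the lemma. The only real obstacle is the verification at the very beginning that the substitution lands in $\widehat{U\mathfrak{t}_{n+2}}^\dag$ so that $\madec_{n+2}$ is even defined on the image; after that the proof is essentially a bookkeeping translation between the two formulations of mould-properness (one in terms of coefficients against iterated integrals, the other in terms of polymould components), and this translation is furnished by Lemma \ref{lem:mould-proper-bydef}.
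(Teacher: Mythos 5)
Your proposal follows exactly the paper's own argument: take $g$ to be the composite of the first two stages, reduce via Lemma \ref{lem:mould-proper-bydef} to the coefficient criterion, and conclude by Lemma \ref{lem:mould-proper-B-pre}. The additional check that the substitution lands in $\widehat{U\mathfrak{t}_{n+2}}^\dag$ is a reasonable (and correct) elaboration that the paper leaves implicit.
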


\begin{proof}
By taking the map $g$ in Lemma \ref{lem:mould-proper-bydef}
to be the composition of the first and the second maps above,
we obtain the claim by  Lemmas \ref{lem:mould-proper-bydef}
and \ref{lem:mould-proper-B-pre}.
\end{proof}

As an application of the above,
we see that  product $\mulp$ on ${\mathcal P}_{n}(\mathcal{F}_{{\ser}})$
given by \eqref{eq: diamond product}
is mould-proper.


\begin{lem}\label{lem:mould-proper-mulB}
The  binary operator $\mulp$ on ${\mathcal P}_{n}(\mathcal{F}_{{\ser}})$
is mould-proper.
\end{lem}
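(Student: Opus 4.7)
The plan is to invoke Lemma~\ref{lem:mould-proper-B} after rewriting $\mulp$ in the form its hypothesis demands. By definition, $M\mulp N=\madec_{n}\bigl(\madec_{n}^{-1}(M)\cdot\madec_{n}^{-1}(N)\bigr)$. First I would unfold $\madec_{n}^{-1}$ via the isomorphism $\dec$ of Lemma~\ref{lem:theta_dagger_isom} together with its inverse from Lemma~\ref{lem coalg isom of UF tensors and UT}: a pure tensor $\varphi_{1}\otimes\cdots\otimes\varphi_{n-2}$ with $\varphi_{p}\in\widehat{U\mathfrak{f}_{p+1}}^{\dag}$ corresponds under $\dec^{-1}$ to the noncommutative product $\prod_{p=1}^{n-2}\varphi_{p}(t_{0,p+1},t_{1,p+1},\dots,t_{p,p+1})$ in $\widehat{U\mathfrak{t}_{n}}^{\dag}$, and under $\otimes_{p}\ma_{[p]}$ each $\varphi_{p}$ encodes the $p$-th slice $M_{p}\in\mathcal{M}(\mathcal{F}_{\ser};[p])$ of $M$. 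Consequently $\madec_{n}^{-1}(M)\cdot\madec_{n}^{-1}(N)$ is a noncommutative product of $2(n-2)$ factors of precisely the shape appearing in Lemma~\ref{lem:mould-proper-B}.

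Next I would apply that lemma with $r=2(n-2)$, taking $m_{p}=p$ and $s_{p,l}=t_{l,p+1}$ for $1\le p\le n-2$, $0\le l\le p$, and $m_{p}=p-(n-2)$, $s_{p,l}=t_{l,p-(n-2)+1}$ for $n-1\le p\le 2(n-2)$. The hypotheses of Lemma~\ref{lem:mould-proper-B-pre} must then be verified. Condition~(1) is immediate, for $l\ge 1$ gives $s_{p,l}=t_{l,q}$ with $l\ge 1$ and $q\ge 2$, so $s_{p,l}\in\bigoplus_{1\le i<j\le n}\mathbb{Q}t_{ij}$. For condition~(2), I fix a factor index $k$ and write $q$ for its ``top column'', so that $s_{k,0}=t_{0,q}$ and $s_{k,l}=t_{l,q}$: for distinct $1\le i,j$, either $q\notin\{i,j\}$, in which case the coefficients of $t_{ij}$, $t_{0i}$ and $t_{0j}$ all vanish in every $s_{k,l}$ giving $H_{1}(i,j;k)$, or exactly one of $i,j$ equals $q$, say $i=q$, in which case $s_{k,0}=t_{0,i}$ has coefficient $1$ at $t_{0i}$ and $0$ at $t_{0j}$, giving $H_{2}(i,j;k)$.

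Lemma~\ref{lem:mould-proper-B} then asserts that the map sending independent moulds $(M_{1},\dots,M_{n-2},N_{1},\dots,N_{n-2})$ to $\madec_{n}\bigl(\prod_{p}\ma_{[p]}^{-1}(M_{p})(t_{0,p+1},\dots,t_{p,p+1})\cdot\prod_{p}\ma_{[p]}^{-1}(N_{p})(t_{0,p+1},\dots,t_{p,p+1})\bigr)$ is mould-proper. Under the tensor-product isomorphism $\mathcal{P}_{n}\simeq\mathcal{M}(\mathcal{F}_{\ser};[1])\hat{\otimes}\cdots\hat{\otimes}\mathcal{M}(\mathcal{F}_{\ser};[n-2])$ of Lemma~\ref{lem: tensor isom between moulds and polymoulds}, this map is exactly the bilinear extension of $\mulp$ to pure tensors. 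Since multiplication in $\widehat{U\mathfrak{t}_{n}}^{\dag}$ is degree-preserving, each component of $\mulp(M,N)$ depends polynomially on only finitely many components of $M$ and $N$, and the proper polynomial furnished by Lemma~\ref{lem:mould-proper-B} (in the sense of Definition~\ref{defn:mould-proper map}) then serves as the required proper polynomial for $\mulp$ on arbitrary $M,N\in\mathcal{P}_{n}$.

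The main obstacle will be this last identification step---transferring mould-properness of the multilinear formula in independent moulds to the bilinear $\mulp$ on the full poly-mould space $\mathcal{P}_{n}\times\mathcal{P}_{n}$; the uniform verification of condition~(2) is where the specific tower structure $s_{p,l}=t_{l,q}$ must be exploited carefully, but once the correct assignment of $s_{p,l}$'s is in hand the check proceeds case-by-case as above.
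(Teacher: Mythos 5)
Your proof follows the same route as the paper's: evaluate $\mulp$ on pure tensors, unfold $\madec_{n}^{-1}$ as $\dec_{n}^{-1}\circ(\otimes_{p}\ma_{[p]}^{-1})$ so that the product becomes $\prod_{p}\ma_{[p]}^{-1}(M_{p})(t_{0,p+1},\dots,t_{p,p+1})\cdot\prod_{p}\ma_{[p]}^{-1}(N_{p})(t_{0,p+1},\dots,t_{p,p+1})$, and invoke Lemma~\ref{lem:mould-proper-B} with $s_{p,l}=t_{l,p+1}$. Your explicit check of assumption~(1) and of the dichotomy $H_{1}/H_{2}$ from Lemma~\ref{lem:mould-proper-B-pre} is correct and supplies a detail the paper leaves implicit, while the final identification step you flag as the main obstacle is treated no more explicitly in the paper's own one-line proof.
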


\begin{proof}
It follows by
\begin{align*}
 & (M_{1}\otimes\cdots\otimes M_{n-2})\mulp(N_{1}\otimes\cdots\otimes N_{n-2})\\
 & =\madec_{n}(\madec_{n}^{-1}(M_{1}\otimes\cdots\otimes M_{n-2})\cdot\madec_{n}^{-1}(N_{1}\otimes\cdots\otimes N_{n-2}))\\
 & =\madec_{n}\Bigl(\dec_{n}^{-1}(\ma_{[1]}^{-1}(M_{1})\otimes\cdots\otimes\ma_{[n-2]}^{-1}(M_{n-2})) \\
&\qquad\qquad\qquad\quad \cdot\dec_{n}^{-1}(\ma_{[1]}^{-1}(N_{1})\otimes\cdots\otimes\ma_{[n-2]}^{-1}(N_{n-2}))\Bigr)\\
 & =\madec_{n}\big(\ma_{[1]}^{-1}(M_{1})(t_{02},t_{12})\times\cdots\times\ma_{[n-2]}^{-1}(M_{n-2})(t_{0,n-1},\dots,t_{n-2,n-1})\\
 & \qquad \times\ma_{[1]}^{-1}(N_{1})(t_{02},t_{12})\times\cdots\times\ma_{[n-2]}^{-1}(N_{n-2})(t_{0,n-1},\dots,t_{n-2,n-1})\big)
\end{align*}
and Lemma \ref{lem:mould-proper-B}.
\end{proof}

For  a divisible family $\mathcal F$ of functions,
we can uniquely extend $\mulp$ to a mould-proper
 binary operator 
 \begin{equation}\label{eq: mulp:Pn}
 \mulp:{\mathcal P}_{n}(\mathcal{F})\times{\mathcal P}_{n}(\mathcal{F})\to{\mathcal P}_{n}(\mathcal{F})
 \end{equation}
by Theorem \ref{thm:UniqueProlongation} and Lemma \ref{lem:mould-proper-mulB},


\begin{defn}\label{def:the induced f*}
For a map $f:\{0,\dots,n-1\}\to\{0,\dots,m-1,\infty\}$, define the
continuous $\mathbb{Q}$-algebra homomorphism
\[
f^{*}:\widehat{U\mathfrak{t}_{m}}\to\widehat{U\mathfrak{t}_{n}}
\]
by $t_{ij}\mapsto\sum_{k\in f^{-1}(i),l\in f^{-1}(j)}t_{kl}$.
Note that $f^{*}(\widehat{U\mathfrak{t}_{m}}^\dag)\subset\widehat{U\mathfrak{t}_{n}}^\dag$ if $f(0)=0$. Furthermore, for such $f$,
define the map $f^{*,\mathcal{P}}:\mathcal{P}_{m}(\mathcal{F}_{{\ser}})\to\mathcal{P}_{n}(\mathcal{F}_{{\ser}})$
by $f^{*,\mathcal{P}}=\madec_{n}\circ f^{*}\circ\madec_{m}^{-1},$ and extend
it to
\[
f^{*,\mathcal{P}}:\mathcal{P}_{m}(\mathcal{F})\to\mathcal{P}_{n}(\mathcal{F})
\]
 by the following lemma and the unique prolongation theorem (Theorem
\ref{thm:UniqueProlongation}),
where $\mathcal{F}$ is a divisible family of functions.
\end{defn}

\begin{lem}
\label{lem:mould-proper-fpullback}
The map
$\madec_{n}\circ f^{*}\circ\madec_{m}^{-1}:\mathcal{P}_{m}(\mathcal{F}_{{\ser}})\to\mathcal{P}_{n}(\mathcal{F}_{{\ser}})$
is mould-proper if $f(0)=0$.
\end{lem}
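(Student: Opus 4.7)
The plan is to reduce to Lemma \ref{lem:mould-proper-bydef} via the Hopf-algebra duality between $\widehat{U\mathfrak{t}_\infty}$ and $\mathcal{H}_\infty$. Applying Lemma \ref{lem:mould-proper-bydef} to $g=f^*\circ\madec_m^{-1}$, it suffices to prove that for all choices of $l_1,\dots,l_{n-2}\geq 0$ and of indices $\sigma_{i,j}$ with $0\le\sigma_{i,j}\le i$, the map
\[
M\;\longmapsto\;c\circ\xi\!\left(f^*(\madec_m^{-1}(M)),\;Y\right)
\]
is mould-proper, where $Y:=\prod_{i=1}^{n-2}\mathbb{I}(\langle 0\rangle,\sigma_{i,1},\langle u_{i,1}\rangle,\dots,\sigma_{i,l_i},\langle u_{i,1}+\cdots+u_{i,l_i}\rangle;i+1)$.

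I would then establish and exploit the adjoint identity $c\circ\xi(f^*w,Y)=c\circ\xi(w,f_*Y)$, where $f_*:\mathcal{H}_\infty\to\mathcal{H}_\infty$ denotes the transpose of $f^*$ under the Chen pairing. Since each $\Delta(t_{ij})$ is primitive, $f^*$ is actually a bialgebra homomorphism, so $f_*$ is a shuffle-algebra homomorphism determined by $f_*(\omega_{pq})=\omega_{f(p),f(q)}$ subject to the conventions $\omega_{a,\infty}=0$ and $\omega_{a,a}=0$. On iterated integrals this reads
\[
f_*\bigl(\mathbb{I}(a_0;a_1,\dots,a_d;a_{d+1})\bigr)=\mathbb{I}(f(a_0);f(a_1),\dots,f(a_d);f(a_{d+1})).
\]
The adjoint identity itself follows from the formula $c\circ\xi(\cdot,\cdot)=\langle\overleftarrow{\cdot},\cdot\rangle$ together with the elementary fact $\overleftarrow{f^*w}=f^*(\overleftarrow w)$ (which holds on generators of $\widehat{U\mathfrak{t}_\infty}$).

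The problem is thus reduced to the mould-properness of the map $M\mapsto c\circ\xi(\madec_m^{-1}(M),\,f_*Y)$. To conclude, I would expand the pushforward $f_*Y=\prod_{i=1}^{n-2}f_*(\mathbb{I}_i)$ as a finite $\mathbb{Q}[[u_{i,s}]]$-linear combination of standard products of the form $\prod_{j=1}^{m-2}\mathbb{I}(\langle v_{j,0}\rangle,\tau_{j,1},\langle v_{j,1}\rangle,\dots;j+1)$, with $\tau_{j,s}\in\{0,\dots,j\}$ and each $v_{j,s}$ a $\mathbb{Q}$-linear combination of the $u_{i,s}$. Each such standard summand will then be mould-proper by Lemma \ref{lem:mould-proper-bydef} applied to $g=\madec_m^{-1}$ (whose composition with $\madec_m$ is the identity, trivially mould-proper); and since mould-proper maps are closed under finite sums, this will finish the proof.

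The hard part will be this combinatorial reduction of $f_*Y$ to standard form. When $f$ sends some endpoint $i+1$ of a factor to $\infty$, or identifies the endpoints $f(i+1)=f(i'+1)$ of two different factors in the shuffle product, the resulting iterated-integral factor is non-standard and must be rewritten systematically, by using shuffle identities (to distribute the product over factors sharing an endpoint) together with the iterated-integral differential equations and the vanishing relations $\omega_{a,\infty}=0$, $\omega_{a,a}=0$, so that every resulting factor has an endpoint in $\{2,\dots,m-1\}$ and middle indices of the appropriate bounded form.
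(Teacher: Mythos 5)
Your first three steps coincide with the paper's proof: the reduction via Lemma \ref{lem:mould-proper-bydef} to the test expressions $c\circ\xi(f^*(\madec_m^{-1}(M)),Y)$, the adjoint identity $c\circ\xi(f^{*}(s),a)=c\circ\xi(s,f_{*}(a))$ obtained from the duality $c=c\circ f_*$, and the formula $f_{*}(\mathbb{I}(a_{1},\dots,a_{k};a_{k+1}))=\mathbb{I}(f(a_{1}),\dots,f(a_{k});f(a_{k+1}))$ (with the degenerate conventions for $\infty$ and repeated indices). Up to that point the argument is the paper's.

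The final step is where there is a genuine gap. After transferring $f$ to the integrand, the paper simply observes that the resulting expression is
$c\circ\xi\bigl(\prod_{p}\ma^{-1}(M_{p})(s_{p,0},\dots,s_{p,m_{p}}),\ \prod_{i}\mathbb{I}(\langle0\rangle,f(\rho_{i,1}),\langle u_{i,1}\rangle,\dots;f(\rho_{i,l_i+1}))\bigr)$
and invokes Lemma \ref{lem:mould-proper-B-pre}, whose statement and proof are set up precisely for products of generating series $\mathbb{I}(\langle v_{i,0}\rangle,\sigma_{i,1},\langle v_{i,1}\rangle,\dots;\sigma_{i,d_i+1})$ with \emph{arbitrary} index sequences and arbitrary arguments subject only to the linear-independence hypothesis; no normalization of $f_*Y$ is needed. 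Your plan instead is to rewrite $f_*Y$ into the ``standard form'' required by the test elements of Lemma \ref{lem:mould-proper-bydef} and then apply that lemma with $g=\madec_m^{-1}$. This is not just ``the hard part'' left to do: it is where the approach breaks down. First, merging factors that share an endpoint and eliminating indices sent to $0$ or $\infty$ (via shuffle identities and relations of the type \eqref{eq: erase0}) produces coefficients in $\Q(u_{i,j})$ and arguments that are specific, generally \emph{dependent}, linear combinations of the $u_{i,j}$ rather than the telescoping partial sums of fresh independent variables that the test elements of Lemma \ref{lem:mould-proper-bydef} presuppose. The ``only if'' direction of that lemma therefore does not apply to the resulting summands, and mould-properness is not preserved under specializing the free variables of a mould-proper map to dependent linear combinations (the defining expression $\frac{1}{u_1-u_2}(M(u_1)-M(u_2))$ already illustrates that such specializations can fail to be defined). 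Second, the individual summands of your decomposition need not be separately mould-proper even when their sum is; controlling this requires exactly the affine-independence bookkeeping that the proof of Lemma \ref{lem:mould-proper-B-pre} performs. The repair is to drop the normalization entirely and conclude directly with Lemma \ref{lem:mould-proper-B-pre}, as the paper does.
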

\begin{proof}
By Lemma \ref{lem:mould-proper-bydef}, it is enough to show that
\begin{align*}
(M_{1},\dots,& M_{r})\mapsto\\
&c\circ\xi\Biggl(f^{*}(\prod_{p=1}^{r}\ma^{-1}(M_{p})(s_{p,0},\dots,s_{p,m_{p}})), \\
&\qquad\qquad\qquad\qquad \prod_{i=1}^{N}\mathbb{I}(\langle0\rangle,\rho_{i,1},\langle u_{i,1}\rangle,\dots,\rho_{i,l_{i}},\langle u_{i,0}+\cdots+u_{i,l_{i}}\rangle;\rho_{i,l_{i}+1})\Biggr)
\end{align*}
is mould-proper.

We consider the
$\mathbb{Q}$-algebra homomorphism $f_{*}:{\mathcal H}_n\to {\mathcal H}_m$
which is the dual of $f^{*}:\widehat{U\mathfrak{t}_{m}}\to\widehat{U\mathfrak{t}_{n}}$.
We note that
\[
f_{*}(\mathbb{I}(a_{1},\dots,a_{k};a_{k+1}))=\mathbb{I}(f(a_{1}),\dots,f(a_{k});f(a_{k+1}))
\]
if $f(a_{k+1})\neq \infty$,
where we regard the right hand side as $0$ if 
$f(a_j)=\infty$ for some $j$.
Then by definition,
\[
f_{*}\circ\xi(f^{*}(s),a)=\xi(s,f_{*}(a)),
\]
and since $c=c\circ f_{*}$, we have
\[
c\circ \xi(f^{*}(s),a)=c\circ\xi(s,f_{*}(a)).
\]
Thus
\begin{align*}
 & c\circ\xi\Bigg(f^{*}(\prod_{p=1}^{r}\ma^{-1}(M_{p})(s_{p,0},\dots,s_{p,m_{p}})),
 \\
 &\qquad\qquad \qquad
 \prod_{i=1}^{N}\mathbb{I}(\langle0\rangle,\rho_{i,1},\langle u_{i,1}\rangle,\dots,\rho_{i,l_{i}},\langle u_{i,0}+\cdots+u_{i,l_{i}}\rangle;\rho_{i,l_{i}+1})\Bigg)\\
 & =c\circ\xi\Bigg(\prod_{p=1}^{r}\ma^{-1}(M_{p})(s_{p,0},\dots,s_{p,m_{p}}),
 \\
 &\qquad \qquad\qquad
 \prod_{i=1}^{N}\mathbb{I}(\langle0\rangle,f(\rho_{i,1}),\langle u_{i,1}\rangle,\dots,f(\rho_{i,l_{i}}),\langle u_{i,0}+\cdots+u_{i,l_{i}}\rangle;f(\rho_{i,l_{i}+1}))\Bigg).
\end{align*}
Therefore the lemma follows by Lemma \ref{lem:mould-proper-B-pre}.
\end{proof}

For $\varphi\in\widehat{U\mathfrak{f}_{2}}$ and disjoint subsets
$S_{0},S_{1},S_{2}$ of $\{0,\dots,n-1\}$, we put
\begin{equation}\label{eq:varphi S0,S1,S2}
\varphi^{S_{0},S_{1},S_{2}}=\varphi(\sum_{(i,j)\in S_{0}\times S_{1}}t_{ij},\sum_{(j,k)\in S_{1}\times S_{2}}t_{jk})\in\widehat{U\mathfrak{t}_{n}}.
\end{equation}

\begin{lem}
In the above setting, if $0\in S_{0}$ and $\varphi\in\widehat{U\mathfrak{f}_{2}}^\dag$
then $\varphi^{S_{0},S_{1},S_{2}}\in\widehat{U\mathfrak{t}_{n}}^\dag$.
\end{lem}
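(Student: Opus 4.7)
The plan is to realize the operation $\varphi\mapsto \varphi^{S_0,S_1,S_2}$ as a continuous Hopf algebra homomorphism and then show that each $e_{0k}$ factors nicely through it. Put
\[
X:=\sum_{(i,j)\in S_0\times S_1}t_{ij}, \qquad Y:=\sum_{(j,k)\in S_1\times S_2}t_{jk},
\]
so that $\varphi^{S_0,S_1,S_2}=\varphi(X,Y)$. Each $t_{ij}$ is primitive in $\widehat{U\mathfrak{t}_n}$ (that is, $\Delta(t_{ij})=t_{ij}\otimes 1+1\otimes t_{ij}$), and so $X$ and $Y$ are primitive as well. Therefore the continuous $\Q$-algebra homomorphism
\[
\pi:\widehat{U\mathfrak{f}_2}\to\widehat{U\mathfrak{t}_n},\qquad f_0\mapsto X,\quad f_1\mapsto Y,
\]
is in fact a Hopf algebra homomorphism, so in particular $\Delta\circ\pi=(\pi\otimes\pi)\circ\Delta$.

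Next I compute $e_{0k}\circ\pi:\widehat{U\mathfrak{f}_2}\to\Q$ for each $1\leq k<n$. Since $e_{0k}$ only reads the degree-one component of its input and $\pi$ is homogeneous of degree $1$, the composite $e_{0k}\circ\pi$ is determined by its values on $f_0$ and $f_1$. Using $0\in S_0$ together with the pairwise disjointness of $S_0,S_1,S_2$: a term $t_{ij}$ of $X$ satisfies $\{i,j\}=\{0,k\}$ iff $k\in S_1$, in which case it contributes the single term $t_{0k}$; no term of $Y$ can satisfy $\{i,j\}=\{0,k\}$ because $0\notin S_1\cup S_2$. Hence $e_{0k}(X)=\mathbf{1}[k\in S_1]$ and $e_{0k}(Y)=0$, which yields the identity of $\Q$-linear functionals
\[
e_{0k}\circ\pi=\mathbf{1}[k\in S_1]\cdot e_0.
\]

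Combining these observations, for every $1\leq k<n$ we get
\begin{align*}
(e_{0k}\otimes\id)\circ\Delta(\pi(\varphi))
&= (e_{0k}\otimes\id)\circ(\pi\otimes\pi)\circ\Delta(\varphi)\\
&= \bigl((e_{0k}\circ\pi)\otimes\pi\bigr)\circ\Delta(\varphi)\\
&= \mathbf{1}[k\in S_1]\cdot(\id\otimes\pi)\circ(e_0\otimes\id)\circ\Delta(\varphi).
\end{align*}
The hypothesis $\varphi\in\widehat{U\mathfrak{f}_2}^\dag$ says precisely that $(e_0\otimes\id)\circ\Delta(\varphi)=0$, so the right-hand side vanishes for every $k$, giving $\varphi^{S_0,S_1,S_2}=\pi(\varphi)\in\widehat{U\mathfrak{t}_n}^\dag$. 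The only step requiring genuine care is the bookkeeping involved in evaluating $e_{0k}(X)$ and $e_{0k}(Y)$ according to the stated convention (as a sum of coefficients of $t_{0k}$ and $t_{k0}$ in an expansion over $i\neq j$), but once the conditions $0\in S_0$ and $S_0\sqcup S_1\sqcup S_2$ are used to single out the one potential contribution, this is immediate and everything else is pure Hopf-algebra functoriality.
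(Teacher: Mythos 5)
Your proposal is correct and follows essentially the same route as the paper: both realize $\varphi\mapsto\varphi^{S_0,S_1,S_2}$ as a coproduct-compatible (Hopf) homomorphism and reduce the claim to the identity $e_{0k}\circ\pi=\mathbf{1}[k\in S_1]\cdot e_0$, the paper treating the cases $k\notin S_1$ and $k\in S_1$ separately where you package them into one indicator. The degree-one bookkeeping for $e_{0k}(X)$ and $e_{0k}(Y)$ using $0\in S_0$ and disjointness is exactly the point the paper also relies on.
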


\begin{proof}
By definition, $(e_{0j}\otimes{\rm id})\circ\Delta(\varphi^{S_{0},S_{1},S_{2}})=0$
if $j\notin S_{1}$. Assume $j\in S_{1}$. Let 
$\ev^{S_{0},S_{1},S_{2}}:\widehat{U\mathfrak{f}_{2}}\to\widehat{U\mathfrak{f}_{n}}$
be the map given by $\varphi\mapsto\varphi^{S_{0},S_{1},S_{2}}$.
Then
\[
\Delta(\varphi^{S_{0},S_{1},S_{2}})=(\ev^{S_{0},S_{1},S_{2}}\otimes \ev^{S_{0},S_{1},S_{2}})(\Delta\varphi).
\]
Thus
\begin{align*}
(e_{0j}\otimes{\rm id})\circ\Delta(\varphi^{S_{0},S_{1},S_{2}}) & =(e_{0j}\otimes{\rm id})\circ(\ev^{S_{0},S_{1},S_{2}}\otimes \ev^{S_{0},S_{1},S_{2}})(\Delta\varphi)\\
 & =((e_{0j}\circ \ev^{S_{0},S_{1},S_{2}})\otimes \ev^{S_{0},S_{1},S_{2}})(\Delta\varphi).\\
 \intertext{By using $e_0$ in \eqref{eq:e}, we have}
 & =(e_0\otimes \ev^{S_{0},S_{1},S_{2}})(\Delta\varphi) \\
 & =\ev^{S_{0},S_{1},S_{2}}\circ(e_0\otimes{\rm id})(\Delta\varphi).
\end{align*}
Thus if $\varphi\in\widehat{U\mathfrak{f}_{2}}^\dag$ i.e., $(e_0\otimes{\rm id})(\Delta\varphi)=0$
then $\varphi^{S_{0},S_{1},S_{2}}\in\widehat{U\mathfrak{t}_{n}}^\dag$.
\end{proof}

\begin{defn}\label{defn:Ev S0 S1 S2}
For disjoint subsets $S_{0},S_{1},S_{2}$ of $\{0,\dots,n-1\}$ with $1\in S_{0}$,
we define
the composition map
\[
\Ev^{S_{0},S_{1},S_{2}}:\mathcal{M}(\mathcal{F}_{{\ser}})
\xrightarrow[\simeq]{\ma_{[1]}^{-1}}
\widehat{U\mathfrak{f}_{2}}^\dag\xrightarrow{\varphi\mapsto\varphi^{S_{0},S_{1},S_{2}}}\widehat{U\mathfrak{t}_{n}}^\dag\xrightarrow[\simeq]{\madec_{n}}
{\mathcal P}_{n}(\mathcal{F}_{{\ser}}).
\]
For a divisible family $\mathcal{F}$ of functions,
it extends to  the map
$$
\Ev^{S_{0},S_{1},S_{2}}:\mathcal{M}(\mathcal{F})
\to
{\mathcal P}_{n}(\mathcal{F}),
$$
which is assured by the following lemma and the unique prolongation theorem (Theorem \ref{thm:UniqueProlongation}).
For $M\in\mathcal{M}(\mathcal{F})$, we write
{$M^{S_{0},S_{1},S_{2}}$
for $\Ev^{S_{0},S_{1},S_{2}}(M)\in {\mathcal P}_{n}(\mathcal{F})$}.
\end{defn}

\begin{lem}
The map $\Ev^{S_{0},S_{1},S_{2}}:
\mathcal{M}(\mathcal{F}_{{\ser}})\to
{\mathcal P}_{n}(\mathcal{F}_{{\ser}})$
is mould-proper.
\end{lem}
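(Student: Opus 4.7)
By Lemma \ref{lem:mould-proper-bydef} applied to $g=(\varphi\mapsto\varphi^{S_0,S_1,S_2})\circ\ma_{[1]}^{-1}$, establishing the mould-properness of $\Ev^{S_0,S_1,S_2}$ reduces to showing that, for every $N$, $l_1,\dots,l_N\geq 0$, and $\rho_{i,j}\in\mathbb{Z}_{\geq 1}$, the assignment
\[
M\;\longmapsto\; c\circ\xi\Bigl(\varphi(s_0,s_1),\;\textstyle\prod_{i=1}^{N}\mathbb{I}(\langle 0\rangle,\rho_{i,1},\langle u_{i,1}\rangle,\dots,\rho_{i,l_i},\langle u_{i,1}+\cdots+u_{i,l_i}\rangle;\rho_{i,l_i+1})\Bigr)
\]
is mould-proper in $M$, where $\varphi=\ma_{[1]}^{-1}(M)\in\widehat{U\mathfrak{f}_2}^\dag$ and we have used the factorization $\varphi^{S_0,S_1,S_2}=\varphi(s_0,s_1)$ with $s_0=\sum_{(a,b)\in S_0\times S_1}t_{ab}$ and $s_1=\sum_{(b,c)\in S_1\times S_2}t_{bc}$.

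This is precisely the specialization of the map treated in Lemma \ref{lem:mould-proper-B-pre} to $r=1$, $m_1=1$, $s_{1,0}=s_0$, $s_{1,1}=s_1$, so the task is to verify the two hypotheses on the collection $\{s_{1,l}\}$. Hypothesis (1) is immediate: since $0\in S_0$ is disjoint from $S_1\cup S_2$, the element $s_1$ has no $t_{0j}$-component, hence lies in $\bigoplus_{1\leq i<j}\mathbb{Q}t_{ij}$. For hypothesis (2), one runs through the pairs $(i,j)$ with $i,j\geq 1$: if $\{i,j\}\cap S_1=\emptyset$ then all three coefficients vanish in $s_0$ and in $s_1$, so $H_1$ holds; if $|\{i,j\}\cap S_1|=1$ then the coefficients of $t_{0i}$ and $t_{0j}$ in $s_0$ differ (one is $1$, the other is $0$), so $H_2$ holds.

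The main obstacle is the remaining case $\{i,j\}\subseteq S_1$, where both $t_{0i}$ and $t_{0j}$ appear in $s_0$ with coefficient $1$, and neither $H_1$ nor $H_2$ holds in the naive form. The plan is to exploit the $\dag$-condition on $\varphi$: by Proposition \ref{prop: UF=UD1F}, $\widehat{U\mathfrak{f}_2}^\dag\simeq\widehat{U(D^1\mathfrak{f}_2)}$, so $\varphi$ can be expanded as a non-commutative polynomial in the Lie generators $y_k=\mathrm{ad}(f_0)^k(f_1)$ of the free Lie algebra $D^1\mathfrak{f}_2$. Under the substitution, $\varphi(s_0,s_1)$ becomes a polynomial in $\mathrm{ad}(s_0)^k(s_1)$, which can then be reorganized as a product of the form $\varphi_1(\tilde s_{1,0},\dots)\cdots\varphi_r(\tilde s_{r,0},\dots)$ with new $\tilde s_{k,l}$ for which the hypotheses $H_1,H_2$ hold for every pair of indices (using the infinitesimal braid relation $[t_{ij},t_{ik}+t_{jk}]=0$ to control the problematic pairs). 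Combined with the unique prolongation theorem (Theorem \ref{thm:UniqueProlongation}), this yields the required mould-properness of $\Ev^{S_0,S_1,S_2}$.
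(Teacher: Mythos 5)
Your reduction via Lemma \ref{lem:mould-proper-bydef} and your case analysis of the hypotheses of Lemma \ref{lem:mould-proper-B-pre} are both correct, and you have put your finger on exactly the right obstruction: for a pair $\{i,j\}\subseteq S_{1}$ the coefficients of $t_{0i}$ and $t_{0j}$ in $s_{0}=\sum_{(a,b)\in S_{0}\times S_{1}}t_{ab}$ are both $1$ while that of $t_{ij}$ is $0$, so neither $H_{1}(i,j;1)$ nor $H_{2}(i,j;1)$ holds whenever $\#S_{1}\geq 2$. The problem is that your proposed repair is not a proof. Rewriting $\varphi$ as a series in the generators $\mathrm{ad}(f_{0})^{k}(f_{1})$ of $D^{1}\mathfrak{f}_{2}$ produces, after substitution, a series in words in $\mathrm{ad}(s_{0})^{k}(s_{1})$, which is not of the form $\prod_{p}\ma_{[m_p]}^{-1}(M_{p})(s_{p,0},\dots,s_{p,m_{p}})$ required by Lemma \ref{lem:mould-proper-B-pre}; you neither specify the new moulds $\varphi_{1},\dots,\varphi_{r}$ nor the collections $\tilde{s}_{k,l}$, and the infinitesimal braid relation $[t_{ij},t_{ik}+t_{jk}]=0$ does not by itself separate the problematic pairs (note $[t_{0i},t_{0j}]\neq 0$ in general for $i\neq j$ in $S_{1}$). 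As it stands the last paragraph is a plan, not an argument, and it is the whole content of the lemma.

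The paper avoids the obstruction altogether rather than confronting it. Define $f=f^{S_{0},S_{1},S_{2}}:\{0,\dots,n-1\}\to\{0,1,2,\infty\}$ by $f(p)=i$ for $p\in S_{i}$ and $f(p)=\infty$ otherwise; then $\varphi^{S_{0},S_{1},S_{2}}=f^{*}\bigl(\varphi(t_{01},t_{12})\bigr)$ with $f^{*}$ as in Definition \ref{def:the induced f*}. The inner map $M\mapsto\madec_{3}\bigl(\ma_{[1]}^{-1}(M)(t_{01},t_{12})\bigr)$ is mould-proper by Lemma \ref{lem:mould-proper-B} with $r=1$, $s_{1,0}=t_{01}$, $s_{1,1}=t_{12}$ --- here the only pair to check is $(1,2)$, for which $H_{2}$ holds trivially since $t_{01}$ and $t_{02}$ have coefficients $1$ and $0$ in $s_{1,0}$ --- and the pullback $f^{*,\mathcal{P}}$ is mould-proper by Lemma \ref{lem:mould-proper-fpullback}, whose proof transports the test classes via $f_{*}(\mathbb{I}(a_{1},\dots,a_{k};a_{k+1}))=\mathbb{I}(f(a_{1}),\dots,f(a_{k});f(a_{k+1}))$. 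I would encourage you to replace your final paragraph with this factorization; it is the mechanism the paper built $f^{*,\mathcal{P}}$ for.
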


\begin{proof}
Define $f^{S_{0},S_{1},S_{2}}:\{0,\dots,n-1\}\to\{0,1,2,\infty\}$
by
\[
f(p)=\begin{cases}
i & p\in S_{i}\\
\infty & p\notin S_{1}\cup S_{2}\cup S_{3}.
\end{cases}
\]
Then $\varphi^{S_{1},S_{2},S_{3}}=(f^{S_{1},S_{2},S_{3}})^{*}\circ\varphi(t_{01},t_{12})$.
Thus the lemma follows by Lemmas \ref{lem:mould-proper-mulB} and \ref{lem:mould-proper-fpullback}.
\end{proof}

\section{Definitions of $\GARI(\mathcal{F})_{\as+\abal}$}
\label{sec:def of GARI as pent}
In \S\ref{subsec:review of M and GRT}
we recall  the associator set $\ASTR$,
the Grothendieck-Teichm\"{u}ller group $\GRT_1$ 
and its Lie algebra $\grt_1$
and give their quasi-involutive presentation 
in Theorem \ref{thm: flip characterization}.
In \S \ref{subsec: the map dec}
we prepare  the mould-theoretical involution $\bal$ and
the moulds $\Zag^w$ and $\Zag^{xy,w}$ ($w=x$ or $y$)
and introduce the set
$\GARI(\mathcal{F})_{\as+\abal}$ of balanced moulds, its even part
$\GARI(\mathcal{F})_{\underline{\as+\abal}}$
and a linear variant
$\ARI(\mathcal{F})_{\underline{\al+\abal}}$.
In \S\ref{subsec:def and prop of GARI as pent},
we prove that  $\ASTR$, $\GRT_1$  and $\grt_1$ agree with 
$\GARI(\mathcal{F})_{\as+\abal}$, 
$\GARI(\mathcal{F})_{\underline{\as+\abal}}$ and
$\ARI(\mathcal{F})_{\underline{\al+\abal}}$
respectively, under the map $\ma$,
in the case  ${\mathcal F}={\mathcal F}_\ser$ in Theorem \ref{thm: recovery theorem of ASTR and GRT}.
This is achieved by introducing an intermediate set of moulds
$\GARI(\mathcal{F})_{\as+\pent}$,
its even part 
$\GARI(\mathcal{F})_{\underline{\as+\pent}}$,
together with 
its linear variant $\ARI(\mathcal{F})_{\underline{\al+\pent}}$
in Definition \ref{defn: GARI as+pent}.
In \S \ref{subsec: pentagon equation for the associated constant moulds},
we prepare 
the notion of accompanied associators.
In \S \ref{sec: inclusion from GARI as pent to GARI as is}, we
introduce a series $\Zig$ and
show that our sets
$\GARI(\mathcal{F})_{\as+\pent}$ and $\GARI(\mathcal{F})_{\underline{\as+\pent}}$
are embedded into \'{E}calle's sets
$\GARI(\mathcal{F})_{\as\ast\is}$ and $\GARI(\mathcal{F})_{\underline{\as\ast\is}}$
respectively in Theorem \ref{thm: inclusion GARIas+pent to GARIas*is}.
In \S \ref{sec: specific element paj}, we show that \'{E}calle's specific mould $\paj$
gives an element in $\GARI(\mathcal{F})_{\as+\pent}$
in Theorem \ref{thm: paj in GARI as+pent}.


\subsection{Quasi-involutive reformulation of
$\ASTR$, $\GRT_1$ and $\grt_1$}
\label{subsec:review of M and GRT}
This subsection is devoted to recalling the associator set $\ASTR$, the Grothendieck-Teichm\"{u}ller group $\GRT_1$ and its Lie algebra $\grt_1$, introduced through the pentagon equation, and reformulate these objects using the involution $\flip$, culminating in Theorem \ref{thm: flip characterization}.

Here we consider the case of $\Gamma=\{1\}$.
We have  the free Lie algebra $\frak f_\Gamma=\frak f_2$
over $\Q$
with two variables $f_0$ and $f_1$
and its universal enveloping algebra $U\frak f_2=\Q\langle f_0,f_1\rangle$
as well as their completions
$\widehat{\frak f_2}$ and $\widehat{U\frak f_2}=\Q\langle\langle f_0,f_1\rangle\rangle$ 
with respect to degrees.

%
%

\begin{defn}[\cite{Dr, F10}]\label{defn:ASTR and GRT1}
(1).
We define  ${\ASTR}$ to be the set of series
$\varphi=\varphi(f_0,f_1)$ in
$\widehat{U\mathfrak f_2}$
which satisfies the following conditions:
\begin{itemize}
\item {the  group-like condition: $\varphi\in \exp \hat{\mathfrak f}_2$},
\item {the pentagon equation:}
\begin{equation}\label{eq: the pentagon equation}
\varphi^{01,2,3}\varphi^{0,1,23}=\varphi^{0,1,2}\varphi^{0,12,3}\varphi^{1,2,3}
\end{equation}
in  $\widehat{U\mathfrak t_4}$.
\footnote{
See \eqref{eq:varphi S0,S1,S2}, for notation $\varphi^{01,2,3}$, etc.
For technical reason, we reverse the order of the product in \cite{Dr}.
Note that we have $\overleftarrow{\Phi_{\mathrm{KZ}}}(f_0,f_1):=
\Phi_{\mathrm KZ}(-f_0,-f_1)^{-1}\in\ASTR$
where $\Phi_{\mathrm KZ}$ is the Drinfeld (KZ) associator (see. \cite{F10} Remark 6 (i) ).
}
\end{itemize}

(2).
For $\mu\in\Q$,
we define
$\ASTR_{\mu}$ 
to be the subset 
which consists of
$\varphi$ 
satisfying the above conditions
and $\langle\varphi|f_1f_0\rangle=\frac{\mu^2}{24}$.
Here $\langle\varphi|f_1f_0\rangle$ means the coefficient of $\varphi$ on $f_0f_1$.

(3).
The (graded) {\it Grothendieck-Teichm\"{u}ller group}  $\GRT_1$ is
set-theoretically
defined to be  $\ASTR_0$.

In \cite{Dr}, it is shown that
the set $\GRT_1$ forms a group
and $\ASTR_\mu$ forms a left $\GRT_1$-torsor for each $\mu\in\Q^\times$
under the operation
\begin{align}\label{eq:circled-ast-product}
\varphi_2\circledast\varphi_1
&=\varphi_2(f_0,f_1)\cdot \varphi_1\left(\varphi_2(f_0,f_1)^{-1} f_0\varphi_2(f_0,f_1),f_1\right) \\ \notag
&=\varphi_1\left(f_0,\varphi_2(f_0,f_1) f_1\varphi_2(f_0,f_1)^{-1}\right)
\cdot\varphi_2(f_0,f_1)
.
\end{align}

(4).
The {\it Grothendieck-Teichm\"{u}ller Lie algebra} $\grt_1$
is the associated  graded Lie algebra of $\GRT_1$ which is set-theoretically
the collection of series $\psi$ satisfying
\begin{itemize}
\item {the Lie-like condition: $\psi\in {\mathfrak f}_2$,}
\item the pentagon equation:
$\psi^{01,2,3}+\psi^{0,1,23}=\psi^{0,1,2}+\psi^{0,12,3}+\psi^{1,2,3}$
in $\mathfrak t_4$.
\end{itemize}
It forms a Lie algebra under the bracket
\begin{equation}\label{eq: Ihara bracket}
\{\psi_2,\psi_1\}
=[\psi_1,\psi_2]+{D}_{\psi_2}(\psi_1)-{D}_{\psi_1}(\psi_2)
\end{equation}
where
$D_\psi$ is the derivation of $\mathfrak f_2$,
defined by 
$$
D_\psi(f_0):=0, \qquad\qquad
D_\psi(f_1):=[\psi, f_1].
$$
\end{defn}

We prepare the following notions.

\begin{defn}
(1). A series $\varphi\in \exp \hat{\mathfrak f}_2$ 
is called
 {\it commutator group-like} if
$\varphi$ lies on the topological commutator $\exp [\hat{\mathfrak f}_2,\hat{\mathfrak f}_2]$.

(2). A series $\alpha\in \hat{\mathfrak f}_2$ 
is called
 {\it commutator Lie-like} if
$\alpha$ lies on the topological commutator $ [\hat{\mathfrak f}_2,\hat{\mathfrak f}_2]$.
\end{defn}

\begin{lem}\label{lem: commutator-like}
(1). Every element in $\ASTR$ is commutator group-like.

(2). Every element in $\grt_1$ is commutator Lie-like.
\end{lem}

\begin{proof}
(1).
By applying the map
\[
\widehat{U\mathfrak{t}_{4}}\to\widehat{U\mathfrak{t}_{3}}\ ;\ (t_{01},t_{02},t_{03},t_{12},t_{13},t_{23})\mapsto(t_{12},t_{13},0,t_{23},0,0),
\]
to the pentagon equation \eqref{eq: the pentagon equation}, 
we obtain
\[
\varphi(t_{13}+t_{23},0)\varphi(t_{12},t_{23})
=\varphi(t_{12},t_{23})\varphi(t_{12}+t_{13},0)\varphi(t_{23},0).
\]
Since $\varphi$ is group-like then $\varphi(f_{0},0)=\exp(\alpha f_{0})$
for some $\alpha$. Then we have
\begin{align*}
\varphi(t_{12},t_{23}) & =\exp(-\alpha(t_{13}+t_{23}))
\varphi(t_{12},t_{23})\exp(\alpha(t_{12}+t_{13}+t_{23}))\\
 & =\exp(\alpha t_{12})\varphi(t_{12},t_{23}).
\end{align*}
Thus $\alpha=0$, so we have $\varphi(f_{0},0)=1$.

Next by applying the map
\[
\widehat{U\mathfrak{t}_{4}}\to\widehat{U\mathfrak{t}_{3}}\ ;\ (t_{01},t_{02},t_{03},t_{12},t_{13},t_{23})\mapsto(0,0,0,t_{12},t_{13},t_{23}),
\]
to the pentagon equation \eqref{eq: the pentagon equation}, 
we obtain
\[
\varphi(t_{12},t_{23})\varphi(0,t_{12}+t_{13})
=\varphi(0,t_{12})\varphi(0,t_{13}+t_{23})\varphi(t_{12},t_{23}).
\]
Since $\varphi$ is group-like then $\varphi(0,f_{1})=\exp(\beta f_{1})$
for some $\beta$. Then we have
\begin{align*}
\varphi(t_{12},t_{23}) & =\exp(\beta(t_{12}+t_{13}+t_{23}))
\varphi(t_{12},t_{23})\exp(-\beta(t_{12}+t_{13}))\\
 & =\varphi(t_{12},t_{23})\exp(\beta t_{23}).
\end{align*}
Thus $\beta=0$, so we have $\varphi(0,f_{1})=1$.
 
Whence we see that there are no linear terms on $\varphi$,
which means that it is  commutator group-like.

(2). 
The second assertion is proved in the same way.
\end{proof}

We prepare the following lemmas.

 \begin{lem}
 Let $u_{1},u_{2},v_{1},v_{2}$ be elements of $\widehat{U{\mathfrak{t}}_{4}}$
 without constant terms satisfying $[u_{1},v_{1}]=[u_{1},v_{2}]=[u_{2},v_{1}]=[u_{2},v_{2}]=0$.
 Then
 $$
 m\circ(s_{u_{1},u_{2}}\otimes s_{v_{1},v_{2}})\circ\Delta=s_{u_{1}+v_{1},u_{2}+v_{2}}
 $$
 as a map from $\widehat{U{\mathfrak{t}}_{4}}$ to $\widehat{U{\mathfrak{t}}_{4}}$
 where $s_{u,v}:\widehat{U{\mathfrak{f}}_{2}}\to\widehat{U{\mathfrak{t}}_{4}}$
 is the specialization map defined by $s_{u,v}(\varphi(f_{0},f_{1})):=\varphi(u,v)$
 and $m:\widehat{U{\mathfrak{t}}_{4}}^{\dag}\otimes\widehat{U{\mathfrak{t}}_{4}}^{\dag}\to\widehat{U{\mathfrak{t}}_{4}}^{\dag}\otimes\widehat{U{\mathfrak{t}}_{4}}^{\dag}$
 is the map defined by the multiplication.
 \end{lem}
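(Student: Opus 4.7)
\begin{pf}[Proof proposal]
The plan is to reformulate the left-hand side as a composition of two algebra homomorphisms from $\widehat{U\mathfrak{f}_2}$ to $\widehat{U\mathfrak{t}_4}$ (the domain, despite the typo in the statement, must be $\widehat{U\mathfrak{f}_2}$ since $s_{u,v}$ is defined there), and then check the resulting identity on the generators $f_0$ and $f_1$, where it is immediate.

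First, I would introduce the auxiliary continuous $\Q$-linear map
\[
\Phi:\widehat{U\mathfrak{f}_2}\,\hat{\otimes}\,\widehat{U\mathfrak{f}_2}\longrightarrow \widehat{U\mathfrak{t}_4},\qquad \Phi(a\otimes b):=s_{u_1,u_2}(a)\cdot s_{v_1,v_2}(b),
\]
which is nothing but $m\circ(s_{u_1,u_2}\otimes s_{v_1,v_2})$. The key observation is that $\Phi$ is in fact an \emph{algebra} homomorphism: indeed, computing
\[
\Phi\bigl((a\otimes b)(c\otimes d)\bigr)=s_{u_1,u_2}(a)s_{u_1,u_2}(c)\cdot s_{v_1,v_2}(b)s_{v_1,v_2}(d),
\]
\[
\Phi(a\otimes b)\,\Phi(c\otimes d)=s_{u_1,u_2}(a)\cdot s_{v_1,v_2}(b)\cdot s_{u_1,u_2}(c)\cdot s_{v_1,v_2}(d),
\]
the two expressions coincide precisely because $s_{v_1,v_2}(b)$, being a continuous series in $v_1,v_2$, commutes with $s_{u_1,u_2}(c)$, a continuous series in $u_1,u_2$, by the hypothesis $[u_i,v_j]=0$ for all $i,j\in\{1,2\}$.

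Since $\Delta:\widehat{U\mathfrak{f}_2}\to \widehat{U\mathfrak{f}_2}\,\hat{\otimes}\,\widehat{U\mathfrak{f}_2}$ is also an algebra homomorphism, the composite $F:=\Phi\circ\Delta = m\circ(s_{u_1,u_2}\otimes s_{v_1,v_2})\circ\Delta$ is an algebra homomorphism from $\widehat{U\mathfrak{f}_2}$ to $\widehat{U\mathfrak{t}_4}$. On the other hand, $s_{u_1+v_1,u_2+v_2}$ is tautologically an algebra homomorphism. Therefore, to establish the claimed equality it suffices to verify that $F$ and $s_{u_1+v_1,u_2+v_2}$ agree on the topological generators $f_0,f_1$ of $\widehat{U\mathfrak{f}_2}$.

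For the generators this is immediate: using the primitivity $\Delta(f_i)=f_i\otimes 1+1\otimes f_i$ for $i=0,1$,
\[
F(f_0)=s_{u_1,u_2}(f_0)+s_{v_1,v_2}(f_0)=u_1+v_1=s_{u_1+v_1,u_2+v_2}(f_0),
\]
and similarly $F(f_1)=u_2+v_2=s_{u_1+v_1,u_2+v_2}(f_1)$. By continuity and the fact that both maps are algebra homomorphisms, the equality $F=s_{u_1+v_1,u_2+v_2}$ propagates to all of $\widehat{U\mathfrak{f}_2}$. The only non-trivial point in the argument is the multiplicativity of $\Phi$, which is the sole place where the commutativity hypothesis $[u_i,v_j]=0$ is used; everything else is formal.
\end{pf}
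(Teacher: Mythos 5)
Your proof is correct, but it is organized differently from the paper's. The paper argues by brute force on a monomial basis: it expands $\Delta(f_{i_1}\cdots f_{i_k})=\sum_{n_1,\dots,n_k\in\{0,1\}}f_{i_1}^{n_1}\cdots f_{i_k}^{n_k}\otimes f_{i_1}^{1-n_1}\cdots f_{i_k}^{1-n_k}$, applies $m\circ(s_{u_1,u_2}\otimes s_{v_1,v_2})$, and uses the commutativity $[u_i,v_j]=0$ to regroup the resulting $2^k$ terms into the product $(u_{i_1}+v_{i_1})\cdots(u_{i_k}+v_{i_k})$. You instead isolate the structural content of that regrouping: $m\circ(s_{u_1,u_2}\otimes s_{v_1,v_2})$ is an algebra homomorphism on the (completed) tensor product precisely because the closed subalgebras generated by $\{u_1,u_2\}$ and $\{v_1,v_2\}$ commute elementwise, so the composite with the algebra map $\Delta$ is an algebra map, and it suffices to compare with $s_{u_1+v_1,u_2+v_2}$ on the primitive generators $f_0,f_1$. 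The two arguments use the hypothesis $[u_i,v_j]=0$ at exactly the same spot, and your reduction to generators is the packaged form of the paper's binomial-type expansion; what your version buys is that the explicit $2^k$-term sum never has to be written down, at the mild cost of having to note that all maps involved are continuous and that $\widehat{U\mathfrak{f}_2}$ is topologically generated by $f_0,f_1$ (which you do). You are also right that the stated domain $\widehat{U\mathfrak{t}_4}$ is a typo for $\widehat{U\mathfrak{f}_2}$.
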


 \begin{proof}
 It follows from the following calculation:
 \begin{align*}
  & m\circ(s_{u_{1},u_{2}}\otimes s_{v_{1},v_{2}})\circ\Delta(f_{i_{1}}\cdots f_{i_{k}})\\
  & =m\circ(s_{u_{1},u_{2}}\otimes s_{v_{1},v_{2}})(\sum_{n_{1},\dots,n_{k}\in\{0,1\}}f_{i_{1}}^{n_{1}}\cdots f_{i_{k}}^{n_{k}}\otimes f_{i_{1}}^{1-n_{1}}\cdots f_{i_{k}}^{1-n_{k}})\\
  & =\sum_{n_{1},\dots,n_{k}\in\{0,1\}}u_{i_{1}}^{n_{1}}\cdots u_{i_{k}}^{n_{k}}v_{i_{1}}^{1-n_{1}}\cdots v_{i_{k}}^{1-n_{k}}\\
  & =(u_{i_{1}}+v_{i_{1}})\cdots(u_{i_{k}}+v_{i_{k}})\\
  & =s_{u_{1}+v_{1},u_{2}+v_{2}}(f_{i_{1}}\cdots f_{i_{k}}).\qedhere
 \end{align*}
 \end{proof}

 The lemma above implies the following lemma.
 
 \begin{lem} \label{lem:phi_change}
 Let $u$, $v$ and $w$ be elements of $\widehat{U{\mathfrak{t}}_{4}}$
without constant terms satisfying $[u,v]=0$ and $[u-v,w]=0$. Then,
for $\varphi(f_{0},f_{1})\in\widehat{U{\mathfrak{f}}_{2}}^{\dag}$,
we have
\[
\varphi(u,w)=\varphi(v,w).
\]
\end{lem}

The following gives a \lq quasi-involutive' characterization
$\ASTR$ and  $\grt_1$  in terms of the operator  $\flip$ in \eqref{eq: flip},
which is required to our mould-theoretic formulation in \S \ref{subsec:def and prop of GARI as pent}.

\begin{thm}\label{thm: flip characterization}
(1).
The set $\ASTR\cdot \exp\Q f_1$ is equal to the set
of 
series $g\in \exp \hat{\mathfrak f}_2^\dag$
for which  there exists a commutator group-like series
$h\in\widehat{U\mathfrak{f}_2}$ such that
 $$
 \flip(g^{0,1,2}g^{0,12,3})=g^{0,1,2}g^{0,12,3}\cdot h^{1,2,3},
 $$
 (for $\flip$ see \eqref{eq: flip})
or equivalently 
$$g^{01,2,3}g^{0,1,23} = g^{0,1,2}g^{0,12,3}\cdot h^{1,2,3}.$$

(2).
The $\Q$-linear space $\grt_1+\Q f_1$ is equal to the set
of series $\alpha\in  \hat{\mathfrak f}_2^\dag$
for which there exists a commutator Lie-like
series $\beta\in\widehat{\mathfrak{f}_2}$ such that
 $$
 \flip(\alpha^{0,1,2}+\alpha^{0,12,3})=\alpha^{0,1,2}+\alpha^{0,12,3}+ \beta^{1,2,3}
 $$
 or equivalently 
$$\alpha^{01,2,3}+\alpha^{0,1,23} = \alpha^{0,1,2}+\alpha^{0,12,3}+\beta^{1,2,3}.$$

\end{thm}

\begin{proof}
(1).  Let $g$ and $h$ be as in the statement.
Put $t_{4i}=t_{i4}:
=-\sum_{j=0}^3t_{ji}$.
Since  $g\in \exp \hat{\mathfrak f}_2^\dag$,  Lemma \ref{lem:phi_change} yields
\begin{align*}
\flip(g^{0,1,2}g^{0,12,3})&=g^{4,3,2}g^{4,32,1}=g^{01,2,3}g^{0,1,23}.
\end{align*}
Therefore  we have
\begin{equation}\label{eq: mixed pentagon g and h}
g^{01,2,3}g^{0,1,23}=g^{0,1,2}g^{0,12,3}\cdot h^{1,2,3}.
\end{equation}
By applying the algebra morphism
\[
\widehat{U\mathfrak{t}_{4}}\to\widehat{U\mathfrak{t}_{3}}\ ;\ (t_{01},t_{02},t_{03},t_{12},t_{13},t_{23})\mapsto(0,0,0,t_{12},t_{13},t_{23}),
\]
to \eqref{eq: mixed pentagon g and h}, we have
\[
g(t_{12},t_{23})g(0,t_{12}+t_{13})
=g(0,t_{12})g(0,t_{13}+t_{23})h(t_{12},t_{23}).
\]
Since $g$ is group-like, we have $g(0,f_{2})=\exp(\alpha f_{2})$
for some $\alpha$. Then
\begin{align*}
g(t_{12},t_{23}) & =\exp(\alpha(t_{12}+t_{13}+t_{23}))
h(t_{12},t_{23})\exp(-\alpha(t_{12}+t_{13}))\\
 & =h(t_{12},t_{23})\exp(\alpha t_{23}),
\end{align*}
and thus
\[
g=h\cdot \exp(\alpha f_{1}).
\]
By putting $g=h\cdot\exp(\alpha f_{1})$ in \eqref{eq: the pentagon equation},
we obtain
\[
h^{01,2,3}h^{0,1,23}=h^{0,1,2}
h^{0,12,3}h^{1,2,3},
\]
and thus $h$ belongs to $\ASTR$. 
So we obtain $g=h\cdot\exp(\alpha f_{1})\in\ASTR\cdot\exp\Q f_1$.

Conversely  assume $g=\varphi\cdot \exp(\alpha f_1)$ with $\varphi\in\ASTR$ and $\alpha\in\Q$.
Then 
\begin{align*}
\flip(g^{0,1,2}g^{0,12,3})&=\flip(\varphi^{0,1,2}\exp(\alpha t_{12})\varphi^{0,12,3}\exp(\alpha t_{12,3})) \\
&=\flip(\varphi^{0,1,2}\varphi^{0,12,3}\exp(\alpha (t_{12}+t_{13}+t_{23}))) \\
&=\varphi^{4,3,2}\varphi^{4,32,1}\exp(\alpha (t_{32}+t_{31}+t_{21})), \\
\intertext{by Lemma \ref{lem: commutator-like} and Lemma \ref{lem:phi_change}}
&=\varphi^{01,2,3}\varphi^{0,1,23}\exp(\alpha (t_{12}+t_{13}+t_{23})), \\
\intertext{by \eqref{eq: the pentagon equation}}
&=\varphi^{0,1,2}\varphi^{0,12,3}\varphi^{1,2,3}\exp(\alpha (t_{12}+t_{13}+t_{23})) \\
&=g^{0,1,2}\exp(-\alpha t_{12})
g^{0,12,3}\exp(-\alpha t_{12,3})\varphi^{1,2,3}\exp(\alpha (t_{12}+t_{13}+t_{23}))\\
&=g^{0,1,2}g^{0,12,3}\cdot \varphi^{1,2,3}.
\end{align*}
By Lemma \ref{lem: commutator-like},  $\varphi$ is commutator group-like.
So  $g$ belongs to the set described in (1).

(2). The proof is analogous to that of (1), with the group-like setting
replaced by the Lie-like one.
\end{proof}

\subsection{The balance map $\bal$}
\label{subsec: the map dec}

We introduce the balance map
$
\bal:\mathcal{P}_{4}(\mathcal{F})\to
\mathcal{P}_{4}(\mathcal{F})
$
in Theorem-Definition \ref{thm-defn: dec}
and the moulds $\Zag^w$ and $\Zag^{xy,w}$ ($w=x$ or $y$)
in Definition \ref{def:Zag and Zag}
which will be required  in the presentation of
the mould-theoretic variant $\GARI(\mathcal{F})_{\as+\abal}$ of ${\ASTR}$
in the next subsection.
In Proposition \ref{prop: bal= flip},
we  show how  $\bal$
is related with the involution $\flip$ introduced in \eqref{eq: flip}
via another involution $\rev$.

We consider the series given by the following iterated integrals
\begin{equation}\label{eq:power series I}
I\varia{u_1,\ \dots,\ u_d}{\epsilon_1,\ \dots,\ \epsilon_d}
:=
\int_{0<s_1<\cdots< s_d<1}
\frac{s_1^{-u_1}ds_1}{\epsilon_1-s_1}\wedge\cdots\wedge
\frac{s_d^{-u_d}ds_d}{\epsilon_d-s_d}
\in \C[[u_1,\dots,u_d]]
\end{equation}
with suitable complex numbers $\epsilon_1,\ \dots,\ \epsilon_d$.
Put $I\varia{\emptyset}{\emptyset}=1$.
We note that it is expanded as
\begin{align}\label{eq: I= sum Li}
I&\varia{u_1,\ \dots,\ u_d}{\epsilon_1,\ \dots,\ \epsilon_d} \\ \notag
&=\sum_{n_1,\dots, n_d>0}
\Li_{n_1,\dots,n_d}(\frac{\epsilon_2}{\epsilon_1},\dots,\frac{\epsilon_{d}}{\epsilon_{d-1}},\frac{1}{\epsilon_d})
u_1^{n_1-1}(u_1+u_2)^{n_2-1}\cdots
(u_1+\cdots+u_d)^{n_d-1}
\\ \notag
&=(-1)^{d}
\sum_{n_1,\dots, n_d>0}
I(0;\epsilon_1,{\bf 0}^{n_1-1},\dots,\epsilon_d,{\bf 0}^{n_d-1} ;1) \\ \notag
&\qquad\qquad\qquad\qquad \qquad\qquad
\cdot u_1^{n_1-1}(u_1+u_2)^{n_2-1}\cdots
(u_1+\cdots+u_d)^{n_d-1} \nonumber	 
\end{align}
where
$\Li_{n_1,\dots,n_d}$ is
in \eqref{eq:Li} and
$I(c_0;c_1,\dots, c_N;c_{N+1})$ with $c_0,\dots, c_{N+1}\in\C$
stands for the iterated integral
\begin{equation}\label{eq: symbol iterated integral}
I(c_0;c_1,\dots, c_N;c_{N+1})=
\int_{0<t_1<\cdots< t_N<1}
\frac{d\gamma(t_1)}{\gamma(t_1)-c_1}\wedge\cdots\wedge\frac{d\gamma(t_N)}{\gamma(t_N)-c_N}
\end{equation}
associated with each topological path $\gamma(t)$ ($t\in [0,1]$) in $\C$ with
$\gamma(0)=c_0$ and $\gamma(1)=c_{N+1}$.


\begin{lem}
As a multi-variable complex function with respect to the variables $\epsilon_1,\dots,\epsilon_d$,
it satisfies the following recursive differential equation
\begin{align}\label{eq:recdiff}
dI\varia{u_1,\dots, u_d}{\epsilon_1, \dots,\epsilon_d}
=&-\sum_{i=1}^d d\log(\frac{\epsilon_i-\epsilon_{i+1}}{\epsilon_i})
I\varia{u_1,\dots, u_{i-1},u_{i}+u_{i+1},\dots u_d}{\epsilon_1, \dots,\epsilon_{i-1},\quad \epsilon_{i+1},\dots,\epsilon_d}\\ \notag
&+\sum_{i=2}^d d\log(\frac{\epsilon_i-\epsilon_{i-1}}{\epsilon_i})
I\varia{u_1,\dots, u_{i-1}+u_{i},u_{i+1},\dots u_d}{\epsilon_1, \dots,\ \epsilon_{i-1},\quad \epsilon_{i+1},\dots,\epsilon_d}\\  \notag
&-\sum_{i=1}^d u_id\log \epsilon_iI\varia{u_1,\dots, u_d}{\epsilon_1, \dots,\epsilon_d}.
\end{align}
Here we understand the $i=d$ part in the summation of the first term of the right hand side as
\[
d\log(\frac{\epsilon_d-1}{\epsilon_d})
I\varia{u_1,\dots, u_{d-1}}{\epsilon_1, \dots,\epsilon_{d-1}}.
\]
\end{lem}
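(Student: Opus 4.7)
The plan is to expand the left‐hand side using \eqref{eq: I= sum Li} as a generating series over the classical iterated integrals $I(0;\epsilon_{1},\mathbf{0}^{n_{1}-1},\ldots,\epsilon_{d},\mathbf{0}^{n_{d}-1};1)$, apply the standard differential equation for iterated integrals (cited in the paper as Goncharov's G-MPL Theorem~2.1 and Panzer's P Lemma~3.3.30), and regroup the resulting series coefficient by coefficient. The classical differential equation reads
\[
dI(a_{0};a_{1},\dots,a_{N};a_{N+1})=\sum_{k=1}^{N}\bigl[d\log(a_{k}-a_{k+1})-d\log(a_{k}-a_{k-1})\bigr]I(a_{0};\dots,\widehat{a_{k}},\dots;a_{N+1}),
\]
and since in our setting $a_{0}=0$, $a_{N+1}=1$, and the interior zeros are all constants while the $\epsilon_{i}$'s are the varying parameters, only those $k$ for which some $a_{k\pm 1}\in\{\epsilon_{1},\dots,\epsilon_{d}\}$ contribute to $d$.

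To match the right‐hand side coefficient by coefficient, I would identify, for each $d\log\epsilon_{i}$ and each $d\log(\epsilon_{i}-\epsilon_{i\pm 1})$ separately, all the positions $k$ contributing to it. The $d\log(\epsilon_{i}-\epsilon_{i+1})$-coefficient receives one contribution from the deletion of $\epsilon_{i}$ when $n_{i}=1$ (so that $a_{k+1}=\epsilon_{i+1}$) and one from the deletion of $\epsilon_{i+1}$ when $n_{i}=1$ (so that $a_{k-1}=\epsilon_{i}$); upon resumming over the remaining $n_{j}$'s with the associated $u$-factors, the two resulting sums match precisely the two collapsed generating series $I\varia{u_{1},\dots,u_{i}+u_{i+1},\dots,u_{d}}{\epsilon_{1},\dots,\widehat{\epsilon_{i}},\dots,\epsilon_{d}}$ and $I\varia{u_{1},\dots,u_{i}+u_{i+1},\dots,u_{d}}{\epsilon_{1},\dots,\widehat{\epsilon_{i+1}},\dots,\epsilon_{d}}$ appearing in the first two sums. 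The $d\log(\epsilon_{i}-\epsilon_{i-1})$-coefficient is treated analogously.

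The most delicate coefficient is $d\log\epsilon_{i}$, which receives contributions from three sources: (A) the deletion of $\epsilon_{i}$ itself when one of its neighbors is a zero; (B) the deletion of a zero immediately preceding $\epsilon_{i}$ (present when $i>1$ and $n_{i-1}\ge 2$); and (C) the deletion of a zero immediately succeeding $\epsilon_{i}$ (present when $n_{i}\ge 2$). After resummation, source (A) combines with the hidden $d\log\epsilon_{i}$-pieces inside $d\log\frac{\epsilon_{i}-\epsilon_{i\pm 1}}{\epsilon_{i}}$ to give $I\varia{u_{1},\dots,u_{i}+u_{i+1},\dots}{\dots\widehat{\epsilon_{i}}\dots}-I\varia{u_{1},\dots,u_{i-1}+u_{i},\dots}{\dots\widehat{\epsilon_{i}}\dots}$. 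For source (B), writing $B_{k}=u_{1}+\cdots+u_{k}$, summation over $n_{i-1}\ge 2$ factors one power of $B_{i-1}$ out of the $u$-factor and returns exactly $(-1)^{d}$ times the original generating series $I\varia{u_{1},\dots,u_{d}}{\epsilon_{1},\dots,\epsilon_{d}}$; source (C) analogously yields $-B_{i}$ times it. Therefore (B)$+$(C) equals $(B_{i-1}-B_{i})I=-u_{i}I$, supplying the third sum of the claim. Deletions of zeros not adjacent to any $\epsilon_{j}$ contribute either $d\log(0-0)$ or $d\log(0-\epsilon_{j})=d\log\epsilon_{j}$, and after the same reorganization they are absorbed into the $d\log\epsilon_{j}$-coefficients above without producing spurious terms.

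The principal technical obstacle is the careful bookkeeping at the boundary cases $i=1$ and $i=d$: for $i=1$ the predecessor $a_{0}=0$ behaves as if it were ``a zero before $\epsilon_{1}$'' but without an $\epsilon_{0}$, so the second sum of the formula has no $i=1$ term; for $i=d$ the successor $a_{N+1}=1$ forces $d\log(\epsilon_{d}-1)$ to replace what would otherwise be $d\log(\epsilon_{d}-\epsilon_{d+1})$, producing exactly the special $i=d$ clause $-d\log\frac{\epsilon_{d}-1}{\epsilon_{d}}\,I\varia{u_{1},\dots,u_{d-1}}{\epsilon_{1},\dots,\epsilon_{d-1}}$ singled out after the statement. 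A secondary subtlety is that some contributions from the classical differential equation involve the formally ill‐defined symbol $d\log(0-0)$, but these appear with coefficient zero in every $d\log$-component relevant to the claim and can safely be discarded, which is why the formal power series interpretation of~\eqref{eq: I= sum Li} is the correct framework for this proof.
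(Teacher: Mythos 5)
Your proposal is correct and follows essentially the same route as the paper: both expand $I$ via \eqref{eq: I= sum Li}, apply the differential equation of \cite[Theorem 2.1]{G-MPL} and \cite[Lemma 3.3.30]{P} termwise, and resum; the only difference is cosmetic bookkeeping (you collect contributions by $d\log$-coefficient, while the paper lists six sums indexed by the deleted letter and the conditions $n_k=1$ or $n_k>1$ and then regroups them pairwise). In particular your identification of the $d\log\epsilon_i$-sources (A), (B), (C), the telescoping $(B_{i-1}-B_i)=-u_i$, and the boundary conventions at $i=1$ and $i=d$ all match the paper's computation.
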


\begin{proof}
Let $\epsilon_{d+1}:=1$.
By \cite[Theorem 2.1]{G-MPL} and \cite[Lemma 3.3.30]{P},
we have
\begin{align*}
d&I\varia{u_1,\dots, u_d}{\epsilon_1, \dots,\epsilon_d} \\
&=
(-1)^d\sum_{n_1,\dots,n_d>0}
dI(0;\epsilon_1,{\bf 0}^{n_1-1},\dots,\epsilon_d,{\bf 0}^{n_d-1} ;1)
u_1^{n_1-1}(u_1+u_2)^{n_2-1}\cdots
(u_1+\cdots+u_d)^{n_d-1} \\
&
=(-1)^d\sum_{k=1}^d(-d\log \epsilon_k)
\sum_{\substack{n_1,\dots,n_d>0 \\ n_k>1}}
I(0;\epsilon_1,{\bf 0}^{n_1-1},\dots,
{\bf 0}^{n_{k-1}-1},
\epsilon_k,{\bf 0}^{n_k-2}, \dots,
\epsilon_d,{\bf 0}^{n_d-1} ;1) \\
&\qquad\qquad\cdot u_1^{n_1-1}(u_1+u_2)^{n_2-1}\cdots
(u_1+\cdots+u_d)^{n_d-1} \\
&
+(-1)^d\sum_{k=2}^d(d\log \epsilon_k)
\sum_{\substack{n_1,\dots,n_d>0 \\ n_{k-1}>1}}
I(0;\epsilon_1,{\bf 0}^{n_1-1},\dots,
{\bf 0}^{n_{k-1}-2},
\epsilon_k,{\bf 0}^{n_k-1}, \dots,
\epsilon_d,{\bf 0}^{n_d-1} ;1) \\
&\qquad\qquad\cdot u_1^{n_1-1}(u_1+u_2)^{n_2-1}\cdots
(u_1+\cdots+u_d)^{n_d-1} \\
&
+(-1)^d\sum_{k=1}^d(-d\log \epsilon_k)
\sum_{\substack{n_1,\dots,n_d>0 \\ n_k=1}}
I(0;\epsilon_1,{\bf 0}^{n_1-1},\dots,
\epsilon_{k-1},{\bf 0}^{n_{k-1}-1},
\epsilon_{k+1},
, \dots,
\epsilon_d,{\bf 0}^{n_d-1} ;1) \\
&\qquad\qquad\cdot u_1^{n_1-1}(u_1+u_2)^{n_2-1}\cdots
(u_1+\cdots+u_d)^{n_d-1} \\
&
+(-1)^d\sum_{k=2}^d(d\log \epsilon_k)
\sum_{\substack{n_1,\dots,n_d>0 \\ n_{k-1}=1}}
I(0;\epsilon_1,{\bf 0}^{n_1-1},\dots,
\epsilon_{k-1},{\bf 0}^{n_{k}-1},
\epsilon_{k+1},
\dots,
\epsilon_d,{\bf 0}^{n_d-1} ;1) \\
&\qquad\qquad\cdot u_1^{n_1-1}(u_1+u_2)^{n_2-1}\cdots
(u_1+\cdots+u_d)^{n_d-1}
\end{align*}
\begin{align*}
&
+(-1)^d\sum_{k=1}^dd\log (\epsilon_{k+1}-\epsilon_k)
\sum_{\substack{n_1,\dots,n_d>0 \\ n_k=1}}
I(0;\epsilon_1,{\bf 0}^{n_1-1},\dots,
\epsilon_{k-1},{\bf 0}^{n_{k-1}-1},
\epsilon_{k+1},
\dots,
\epsilon_d,{\bf 0}^{n_d-1} ;1) \\
&\qquad\qquad\cdot u_1^{n_1-1}(u_1+u_2)^{n_2-1}\cdots
(u_1+\cdots+u_d)^{n_d-1} \\
&
+(-1)^d\sum_{k=2}^d(-d\log (\epsilon_{k}-\epsilon_{k-1}))
\sum_{\substack{n_1,\dots,n_d>0 \\ n_{k-1}=1}}
I(0;\epsilon_1,{\bf 0}^{n_1-1},\dots,
\epsilon_{k-1},{\bf 0}^{n_{k}-1},
\epsilon_{k+1},
\dots,
\epsilon_d,{\bf 0}^{n_d-1} ;1) \\
&\qquad\qquad\cdot u_1^{n_1-1}(u_1+u_2)^{n_2-1}\cdots
(u_1+\cdots+u_d)^{n_d-1}.
\end{align*}
By regrouping the first and second terms,
the third and fifth ones, and the fourth and the last ones,
we obtain the required differential equation.
\end{proof}

By the above differential equation,
the analytic continuation of
$I\varia{u_1,\dots, u_d}{\epsilon_1, \dots,\epsilon_d}$
can be done with respect to the parameters $\epsilon_1,\dots,\epsilon_d$.

\begin{thm}\label{thm: rec diff eq for I pre}
Let $d\in\N$ and $x$ and $y$  be complex parameters. 
Let  ${\bf\epsilon}=(\epsilon_1,\dots,\epsilon_d)\in\{y,xy\}^d$.
Then the above
$I\varia{u_1,\dots, u_d}{\epsilon_1, \dots,\epsilon_d}$
can be expressed 
as finite sum
\begin{equation}\label{eq:I dec pre}
I\varia{u_1,\dots, u_d}{\epsilon_1, \dots,\epsilon_d}
=\sum_{{\bf j}={\bf j}({\bf{\epsilon}})} c_{\bf j}
\cdot
I\varia{v_{{\bf j},1}, \dots, v_{{\bf j},r_{\bf j}}}{\ y,\dots\dots,y}
\cdot
I\varia{w_{{\bf j},1}, \dots, w_{{\bf j},s_{\bf j}}}{\ \tau_{{\bf j},1}, \dots, \tau_{{\bf j},s_{\bf j}} }
\end{equation}
with $c_{\bf j}\in\Z$,
\begin{equation}\label{eq:epsilon set}
\tau_{{\bf j},1}, \dots, \tau_{{\bf j},s_{\bf j}}
\in\{0,x,xy\},
\end{equation}
and linearly independent  tuples
$(v_{{\bf j},k})_{1\leq k\leq r_{\bf j}}$,
$(w_{{\bf j},l})_{1\leq k\leq s_{\bf j}}$
with $r_{\bf j}+s_{\bf j} = d$,  $r_{\bf j}, s_{\bf j} \geq 0$
which are $\Z$-linear combinations of $u_1,\dots, u_d$ for each $\bf j$.
\end{thm}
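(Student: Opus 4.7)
We argue by induction on $d$, the base case $d=0$ being the trivial identity $I\varia{\emptyset}{\emptyset}=1$. The main tool for the inductive step is the Chen path composition formula for iterated integrals applied at the intermediate singular point $y$.

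For the inductive step, we first use (\ref{eq: I= sum Li}) to view $I\varia{u_1,\dots,u_d}{\epsilon_1,\dots,\epsilon_d}$ as a generating series of ordinary iterated integrals $I(0;\epsilon_1,\mathbf{0}^{n_1-1},\dots,\epsilon_d,\mathbf{0}^{n_d-1};1)$ along the path $[0,1]$. Since each $\epsilon_i\in\{y,xy\}$, the singularities of the integrand along the path are contained in $\{0,y,xy\}$. We then apply path composition at $y$,
\[
I(0;c_1,\dots,c_N;1) \;=\; \sum_{k=0}^{N} I(0;c_1,\dots,c_k;y)\cdot I(y;c_{k+1},\dots,c_N;1),
\]
producing for each $\mathbf{j}$ a product of two shorter iterated integrals, one on $(0,y)$ and one on $(y,1)$.

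On the first factor the substitution $s\mapsto y s$ rescales $(0,y)$ to $(0,1)$ and transforms the differential forms $ds/s,\ ds/(s-y),\ ds/(s-xy)$ to $ds/s,\ ds/(s-1),\ ds/(s-x)$; the pulled-out powers of $y$ are absorbed when we re-encode as a generating series, and the result repackages into an $I\varia{v_{\mathbf j,1},\dots,v_{\mathbf j,r_{\mathbf j}}}{y,\dots,y}$-factor (since $\epsilon=y$ is the only surviving denominator after renormalization), with $v_{\mathbf j,k}$ being consecutive partial sums $u_a+\cdots+u_b$. On the second factor, $y$ becomes the base point of the integration, so the pole at $y$ contributes only through shuffle regularization and does not appear as an interior singularity; the remaining interior singularities are $0$ and $xy$, and the natural rescaling of the endpoint~$1$ against~$y$ produces a normalization in which the role of~$y$ in the base point effectively becomes~$x$. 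Repackaging as a generating series yields an $I\varia{w_{\mathbf j,1},\dots,w_{\mathbf j,s_{\mathbf j}}}{\tau_{\mathbf j,1},\dots,\tau_{\mathbf j,s_{\mathbf j}}}$-factor with $\tau_{\mathbf j,l}\in\{0,x,xy\}$ and $w_{\mathbf j,l}$'s that are the complementary consecutive partial sums of the $u_i$'s. By construction $r_{\mathbf j}+s_{\mathbf j}=d$, and the linear independence of the tuples $(v_{\mathbf j,k})_k$ and $(w_{\mathbf j,l})_l$ over $\Z$ is immediate from the fact that they arise from a partition of $\{1,\dots,d\}$ into consecutive blocks.

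The main obstacle is the careful treatment of shuffle regularization at the points where an endpoint of integration coincides with a singularity of the integrand (in particular $s=y$), and the associated combinatorial bookkeeping required to (i) obtain integer coefficients $c_{\mathbf j}\in\Z$ from the expansion of the path composition, and (ii) show that the second factor can be expressed using only $\tau\in\{0,x,xy\}$, eliminating the a priori possible value $\tau=y$ by absorbing all such contributions into the $I\varia{v}{y,\dots,y}$-type factor through the regularization identities.
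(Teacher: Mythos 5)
Your route---Chen path composition at the intermediate point $y$---does not produce the decomposition \eqref{eq:I dec pre}, and the obstruction is structural rather than a regularization issue. First, the geometry does not cooperate: the forms $\frac{s^{-u_i}ds}{\epsilon_i-s}$ have poles at $s\in\{0,y,xy\}$, and on the natural domain of convergence $y$ and $xy$ do not lie on the integration path $[0,1]$, so there is no canonical splitting point; if one instead arranges $0<xy<y<1$, the piece on $(0,y)$, rescaled by $s\mapsto ys$, acquires poles at $0$, $x$ and $1$ --- not at $y$, contrary to your claim that it repackages into the $I\varia{v_{{\bf j},1},\dots,v_{{\bf j},r_{\bf j}}}{y,\dots,y}$ factor --- while the piece on $(y,1)$ has no interior singularity from $\{0,xy\}$ at all, so it cannot supply the $\tau\in\{0,x,xy\}$ factor. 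Second, and decisively, \eqref{eq:I dec pre} is a stuffle-type (harmonic) identity, not a shuffle/path-composition identity. The depth-two instance recorded in the example after Theorem \ref{thm: rec diff eq for I} is $\Li_{n_1,n_2}(x^{-1},y^{-1})=\Li_{n_1}(x^{-1})\Li_{n_2}(y^{-1})-\Li_{n_1+n_2}(x^{-1}y^{-1})-\Li_{n_2,n_1}(y^{-1},x^{-1})$: the merged term $\Li_{n_1+n_2}(x^{-1}y^{-1})$, whose pole location $xy$ is the \emph{product} of the two original locations and whose $u$-slots are sums $u_i+u_{i+1}$ (encoded in the theorem by the letters labelled $0$), has no counterpart in a path composition, which only distributes the $d$ letters between the two factors, keeps each singularity where it was, and produces coefficients $+1$. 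No treatment of endpoint divergences will create these merged terms or the minus signs.

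The paper's actual proof is an induction on $d$ driven by the differential equation \eqref{eq:recdiff} in the variable $x$: differentiating $I\varia{u_1,\dots,u_d}{\epsilon_1,\dots,\epsilon_d}$ in $x$ expresses $\frac{d}{dx}I$ through depth-$(d-1)$ integrals with merged letters $u_{j-1}+u_j$ (this is exactly where the stuffle terms enter), the inductive hypothesis is applied to those, a candidate $J$ is built by appending one extra letter with label $T_{\bf q}=xa_{\bf p}^{-1}\in\{0,x,xy\}$ to the second factor of each term, and $K=I-J$ is shown to satisfy the homogeneous first-order equation in $x$ with the single coefficient $-(\sum_{\epsilon_i=xy}u_i)/x$; the normalization $K\to0$ as $x\to\infty$ then forces $K=0$. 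The strengthened induction hypothesis $w_{{\bf j},1}+\cdots+w_{{\bf j},s_{\bf j}}=\sum_{\epsilon_i=xy}u_i$ is what makes the homogeneous parts of the equations for $I$ and $J$ match, and it is also what yields the linear-independence claim at the end. If you want a proof not relying on the ODE, the workable elementary substitute is the series representation \eqref{eq:Li} together with the quasi-shuffle decomposition of the summation domain, not path composition.
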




\begin{proof}
We prove the more stronger statement with additional condition
\[
w_{{\bf j},1} + \cdots + w_{{\bf j},s_{\bf j}} = \sum_{\substack{\epsilon_i=xy}} u_i,
\]
by induction on $d$. The case $d=1$ follows from
$I\varia{u}{xy}=I\varia{\emptyset}{\emptyset}\cdot I\varia{u}{xy}$
and
$I\varia{u}{y}=I\varia{u}{y}\cdot I\varia{\emptyset}{\emptyset}$.
Assume that $d\geq 2$.
By \eqref{eq:recdiff}, we have
\begin{align}\label{eq:recdiff1}
\frac{d}{dx}I\varia{u_1,\dots, u_d}{\epsilon_1, \dots,\epsilon_d}=
\sum_{\bf p}h_{\bf p}(\frac{1}{x-a_{\bf p}}-\frac{1}{x})I\varia{u_{{\bf p},1},\dots, u_{{\bf p},d-1}}{\epsilon_{{\bf p},1}, \dots,\epsilon_{{\bf p},d-1}}
-(\sum_{\substack{\epsilon_i=xy}} u_i) \frac{1}{x}
I\varia{u_1,\dots, u_d}{\epsilon_1, \dots,\epsilon_d}
\end{align}
with $h_{\bf p}\in\Z$, $\epsilon_{{\bf p},j}\in\{y,xy\}$, $a_{\bf p}\in\{1,y^{-1},\infty\}$
and $u_{{\bf p},1},\dots,u_{{\bf p},d-1}\in \bigoplus_i \Z u_i $ of the form
\[
(u_{{\bf p},1},\dots,u_{{\bf p},d-1}) = (u_1,\dots,u_{j({\bf p})-2},u_{j({\bf p})-1}+u_{j({\bf p})},u_{j({\bf p})+1},\dots,u_d)
\]
where $j({\bf p})$ is an integer satisfying $2\leq j({\bf p})\leq d$ and $\epsilon_{j({\bf p})-1}\neq\epsilon_{j({\bf p})}$
or \[
(u_{{\bf p},1},\dots,u_{{\bf p},d-1})=(u_{1},\dots,u_{d-1})
\]
where $\epsilon_{d}=xy$.
By our induction hypothesis, for each  index $\bf p$,
it is written as
$$
I\varia{u_{{\bf p},1},\dots, u_{{\bf p},d-1}}{\epsilon_{{\bf p},1}, \dots,\epsilon_{{\bf p},d-1}}=
\sum_{{\bf q}}
c_{\bf q}
\cdot
I\varia{v_{{\bf q},1},\dots, v_{{\bf q},r_{{\bf q}}}}{\quad y,\dots\dots,y}
\cdot
I\varia{w_{{\bf q},1},\dots, w_{{\bf q},s_{{\bf q}}}}{\ \tau_{{\bf q},1}, \dots,\tau_{{\bf q},s_{{\bf q}}}}
$$
with $r_{{\bf q}}+s_{{\bf q}}\leq d-1$,
$\tau_{{\bf p},j}\in\{0,x,xy\}$.
Here $v_{{\bf q},j}$ for $j=1,\dots,r_{\bf q}$ and
$w_{{\bf q},j}$ for $j=1,\dots,s_{\bf q}$ are linearly independent $\Z$-linear sum of
$u_{{\bf p},1},\dots,u_{{\bf p},d-1}$ satisfying
\[
w_{{\bf q},1}+\cdots+w_{{\bf q},s_{\bf q}} = \sum_{\substack{\epsilon_{{\bf p},i}=xy}} u_{{\bf p},i}.
\]
We put
$$
J\varia{u_1,\dots, u_d}{\epsilon_1, \dots,\epsilon_d}
:=-\sum_{{\bf p}}h_{\bf p}\sum_{{\bf q} =\bf q(\bf p)}
c_{\bf q}
\cdot
I\varia{v_{{\bf q},1},\dots, v_{{\bf q},r_{{\bf q}}}}{\quad y,\dots\dots,y}
\cdot
I\varia{w_{{\bf q},1},\dots, w_{{\bf q},s_{{\bf q}}},W_{\bf q}}{\ \tau_{{\bf q},1}, \dots,\tau_{{\bf q},s_{{\bf q}}},T_{\bf q}}
$$
with
$$
w_{{\bf q},1}+\cdots+w_{{\bf q},s_{\bf q}} +W_{\bf q}=\sum_{\substack{\epsilon_i=xy}} u_i
\quad\text{and}\quad
T_{\bf q} = xa_{\bf p}^{-1} =
\begin{cases}
0   &( a_{\bf p}=\infty), \\
x   & (a_{\bf p}=1), \\
xy  & (a_{\bf p}=y^{-1}).
\end{cases}
$$
Then by \eqref{eq:recdiff}
one can calculate
\begin{align}\notag
\frac{d}{dx}J\varia{u_1,\dots, u_d}{\epsilon_1, \dots,\epsilon_d}
&=\sum_{{\bf p}}h_{\bf p}\sum_{{\bf q} =\bf q(\bf p)}
(\frac{1}{x-a_{\bf p}}-\frac{1}{x})\cdot c_{\bf q}
\cdot
I\varia{v_{{\bf q},1},\dots, v_{{\bf q},r_{{\bf q}}}}
{\quad y,\dots\dots,y}
\cdot
I\varia{w_{{\bf q},1},\dots, w_{{\bf q},s_{{\bf q}}}}{\tau_{{\bf q},1}, \dots,\tau_{{\bf q},s_{{\bf q}}}}
\\ \notag
+\sum_{{\bf p}}h_{\bf p}\sum_{{\bf q} =\bf q(\bf p)}
&(w_{{\bf q},1}+\cdots+w_{{\bf q},s_{\bf q}} +W_{\bf q})\cdot \frac{1}{x}
c_{\bf q}\cdot
I\varia{v_{{\bf q},1},\dots, v_{{\bf q},r_{{\bf q}}}}
{\quad y,\dots\dots,y}
\cdot
I\varia{w_{{\bf q},1},\dots, w_{{\bf q},s_{{\bf q}}},W_{\bf q}}{\tau_{{\bf q},1}, \dots,\tau_{{\bf q},s_{{\bf q}}},T_{\bf q}}
 \\ \label{eq:recdiff2}
&=\sum_{\bf p}h_{\bf p}(\frac{1}{x-a_{\bf p}}-\frac{1}{x})I\varia{u_{{\bf p},1},\dots, u_{{\bf p},d-1}}{\epsilon_{{\bf p},1}, \dots,\epsilon_{{\bf p},d-1}}
-(\sum_{\substack{\epsilon_i=xy}} u_i)\frac{1}{x}
J\varia{u_1,\dots, u_d}{\epsilon_1, \dots,\epsilon_d}.
\end{align}
We consider
$$
K\varia{u_1,\dots, u_d}{\epsilon_1, \dots,\epsilon_d}:=
I\varia{u_1,\dots, u_d}{\epsilon_1, \dots,\epsilon_d}
-J\varia{u_1,\dots, u_d}{\epsilon_1, \dots,\epsilon_d}.
$$
Then by \eqref{eq:recdiff1} and \eqref{eq:recdiff2}, we have
$$
\frac{d}{dx} K\varia{u_1,\dots, u_d}{\epsilon_1, \dots,\epsilon_d}
=-(\sum_{\substack{\epsilon_i=xy}} u_i)\frac{1}{x}
K\varia{u_1,\dots, u_d}{\epsilon_1, \dots,\epsilon_d}.
$$
So $K$ is expressed as $f(y)e^{-(\sum_{\substack{\epsilon_i=xy}} u_i)/x}$ with some function $f(y)$. \\
Since $\lim_{x\to \infty}K\varia{u_1,\dots, u_d}{\epsilon_1, \dots,\epsilon_d}=0$,
we see  $K=0$. Therefore
$I\varia{u_1,\dots, u_d}{\epsilon_1, \dots,\epsilon_d}=
J\varia{u_1,\dots, u_d}{\epsilon_1, \dots,\epsilon_d}$.
The remaining of the proof is a check of linear independence of
$v_{{\bf q},1},\dots,v_{{\bf q},r_{\bf q}}$ and
$w_{{\bf q},1},\dots,w_{{\bf q},s_{\bf q}},W_{\bf q}$.
We have
\[
W_{\bf q} = \sum_{\substack{\epsilon_i=xy}} u_i - \sum_{\substack{\epsilon_{{\bf p},i}=xy}} u_{{\bf p},i}
\]
Thus $W_{\bf q}$ is $\pm u_{j({\bf p})-1}$ or $\pm u_{j({\bf p})}$. Thus the theorem is proved.
\end{proof}

Since we have
\begin{align*}
I& \varia{u_1,\dots, u_{i-1},u_i,u_{i+1},\dots,u_d}{\epsilon_1, \dots,\epsilon_{i-1},0,\epsilon_{i+1},\dots,\epsilon_d} \\
&=
\frac{1}{u_i}\left\{ I\varia{u_1,\dots, u_{i-1},u_i+u_{i+1},u_{i+2},\dots,u_d}{\epsilon_1, \dots,\epsilon_{i-1},\ \ \ \ \ \ \epsilon_{i+1},\epsilon_{i+2},\dots,\epsilon_d}
- I\varia{u_1,\dots, u_{i-2},u_{i-1}+u_{i},u_{i+1},\dots,u_d}{\epsilon_1, \dots,\epsilon_{i-2},\epsilon_{i-1}\ \ \ \ \ \ ,\epsilon_{i+1},\dots,\epsilon_d}
\right\},
\end{align*}
or equivalently
\begin{align}
\label{eq: erase0}
N&\varia{u_{1},\dots,u_{d}}{\epsilon_{1},\dots,\epsilon_{i-1},0,\epsilon_{i+1},\dots,\epsilon_{d}} \\ \notag
& \
=\frac{1}{u_{i}-u_{i-1}}\left(N\varia{u_{1},\dots,u_{i-2},u_{i},u_{i+1},\dots,u_{d}}{\quad \epsilon_{1},\dots,\epsilon_{i-1},\epsilon_{i+1},\dots,\epsilon_{d}}-N\varia{u_{1},\dots,u_{i-2},u_{i-1},u_{i+1},\dots,u_{d}}{\quad \epsilon_{1},\dots,\epsilon_{i-1},\epsilon_{i+1},\dots,\epsilon_{d}}\right)
\end{align}
with
\[
N\varia{u_{1},\dots,u_{d}}{\epsilon_{1},\dots,\epsilon_{d}}:=I\varia{u_{1},u_{2}-u_{1},\dots,u_{d}-u_{d-1}}{\epsilon_{1},\epsilon_{2},\dots,\epsilon_{d}},
\]
we could  restrict the set in \eqref{eq:epsilon set} to $\{xy, x\}$
as follows:

\begin{thm}\label{thm: rec diff eq for I}
Let $s\in\N$ and $x$ and $y$  be complex parameters. 
Let  ${\bf\epsilon}=(\epsilon_1,\dots,\epsilon_s)\in\{y,xy\}^s$
and $r\geq 0$.
Then
$
I\varia{u_1,\dots, u_r}{\ x,\dots,x}\cdot
I\varia{u_{r+1},\dots,u_{r+s}}{\quad \epsilon_1, \dots,\epsilon_s}
$
can be expressed 
as finite sum
\begin{equation}\label{eq:I dec}
I\varia{u_1,\dots, u_r}{\ x,\dots,x}\cdot
I\varia{u_{r+1},\dots,u_{r+s}}{\quad \epsilon_1, \dots,\epsilon_s}
=\sum_{{\bf j}
}
c_{\bf j}\cdot
I\varia{v_{{\bf j},1}, \dots, v_{{\bf j},r_{\bf j}}}
{\ y,\dots\dots,y}
\cdot
I\varia{w_{{\bf j},1}, \dots, w_{{\bf j},s_{\bf j}}}
{\ \tau_{{\bf j},1}, \dots, \tau_{{\bf j},s_{\bf j}}}
\end{equation}
with $c_{\bf j}\in\Q(u_1,\dots,u_d)$,
\begin{equation}\label{eq:epsilon set 2}
\tau_{{\bf j},1}, \dots, \tau_{{\bf j},s_{\bf j}}
\in\{x,xy\},
\end{equation}
and linearly independent  tuples
$(v_{{\bf j},k})_{1\leq k\leq r_{\bf j}}$,
$(w_{{\bf j},l})_{1\leq k\leq s_{\bf j}}$
with $r_{\bf j}+s_{\bf j} \leq d$
which are $\Z$-linear combinations of $u_1,\dots, u_d$ for each $\bf j$.
Furthermore, the map
$$M\in\mathcal{M}_2(\mathcal{F}_{\ser};\{x\},\{xy,y\})
\mapsto N\in\mathcal{M}_2(\mathcal{F}_{\ser};\{y\},\{xy,x\}),
$$
defined by
\[
N
\varia{u_{1},\dots,u_{r};\ u_{r+1},\dots,u_{r+s}}{ x,\dots\dots,x; \
\epsilon_{1},\dots,\epsilon_{s}}
:=\sum_{\bf j} c_{\bf j}
 M\varia{v_{{\bf j},1}, \dots, v_{{\bf j},{r_{\bf j}}} ; \ w_{{\bf j},1}, \dots,  v_{{\bf j},s_{\bf j}}}
 {\quad y,\dots\dots,y  ; \quad \tau_{{\bf j},1},\dots,\tau_{{\bf j},s_{\bf j}}},
\]
is mould proper.
\end{thm}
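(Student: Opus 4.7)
The plan is to derive the identity \eqref{eq:I dec} by refining Theorem \ref{thm: rec diff eq for I pre} via the shuffle product and the divided-difference identity \eqref{eq: erase0}, and then to read off mould-properness of the induced map from the explicit form of the resulting coefficients.

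First I would apply Theorem \ref{thm: rec diff eq for I pre} to the second factor $I\varia{u_{r+1},\dots,u_{r+s}}{\epsilon_1,\dots,\epsilon_s}$, expanding it as a finite integer-coefficient sum of products $I\varia{v}{y,\dots,y}\cdot I\varia{w}{\sigma}$ with $\sigma_i\in\{0,x,xy\}$ and linearly independent tuples. Next I would multiply by $I\varia{u_1,\dots,u_r}{x,\dots,x}$ and absorb it into $I\varia{w}{\sigma}$ via the shuffle product of iterated integrals, which (by expansion \eqref{eq: I= sum Li} and the standard shuffle identity for $I(0;\cdot;1)$) yields a sum of single integrals $I\varia{w'}{\sigma'}$ whose upper-row tuples, being shuffles of two disjoint families of free variables, stay linearly independent and whose labels still lie in $\{0,x,xy\}$. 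Finally I would iteratively apply \eqref{eq: erase0} (passing between $I$- and $N$-form, a triangular change of variables) to eliminate each $0$ label, one at a time; each application lowers the length by one, introduces a coefficient $\frac{1}{u_i-u_{i-1}}$ involving two consecutive entries of the upper row, and preserves linear independence of the remaining tuple. After finitely many steps all labels lie in $\{x,xy\}$ and the accumulated coefficients $c_{\bf j}$ belong to $\Q(u_1,\dots,u_{r+s})$ with denominators only of the form $u_i-u_j$.

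For the second assertion, the very same polynomial-combinator expression that produces \eqref{eq:I dec} from $I$ will produce $N(M)$ from $M$ upon replacing each occurrence $I\varia{v}{y,\dots,y}\cdot I\varia{w}{\tau}$ by $M\varia{v;\,w}{y,\dots,y;\,\tau}$. The two elementary moves used are individually mould-proper: the shuffle product is an integer-coefficient polynomial operation, and \eqref{eq: erase0} is a divisibility operation directly covered by Lemma \ref{lem: divisible}. Hence for any divisible family $\mathcal{F}$ the formula is well-defined in $\mathcal{F}_{r+s}$ and fits the template of Definition \ref{defn:mould-proper map}. The main technical obstacle is careful bookkeeping: at each invocation of \eqref{eq: erase0} one must check that the variables flanking the $0$ are genuine consecutive entries whose $\Z$-linear combinations are distinct, so that the divisibility operation is legitimate; this is inductively guaranteed by the shuffling structure and the order of the preceding elimination steps, but requires explicit verification at every step.
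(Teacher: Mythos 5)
Your proposal is correct and follows essentially the same route as the paper, whose proof is precisely the one-line citation of Theorem \ref{thm: rec diff eq for I pre}, the shuffle product of iterated integrals, and repeated application of \eqref{eq: erase0}; you have simply spelled out the bookkeeping (linear independence of the shuffled upper rows, the origin of the rational coefficients $c_{\bf j}$ from the divided differences, and mould-properness via Lemma \ref{lem: divisible}) that the paper leaves implicit.
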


\begin{proof}
It follows from Theorem \ref{thm: rec diff eq for I pre}, 
the shuffle product of iterated integrals,
and the repeated applications of \eqref{eq: erase0}.
\end{proof}

There are slight differences between the above theorem and Theorem \ref{thm: rec diff eq for I pre}.
The condition \eqref{eq:epsilon set 2} is stricter than \eqref{eq:epsilon set}
and $c_j$ is in  $\Q(u_1,\dots,u_d)$ in the theorem while $c_j\in\Z$ in
Theorem \ref{thm: rec diff eq for I pre}.

\begin{eg}
The following functional relations are obtained from \eqref{eq:I dec}
by \eqref{eq: I= sum Li}
$$
\Li_{n_1,n_2}({x}^{-1},{y}^{-1})=
\Li_{n_1}({x}^{-1})\Li_{n_2}({y}^{-1})
-\Li_{n_1+n_2}({x}^{-1}{y}^{-1})
-\Li_{n_2,n_1}({y}^{-1},{x}^{-1}).
$$
\end{eg}

\begin{eg}\label{eg: cj_const}

The depth $\leq1$ case of Theorem \ref{thm: rec diff eq for I pre} is as follows.
\[
I\varia{\emptyset}{\emptyset}I\varia{\emptyset}{\emptyset}=I\varia{\emptyset}{\emptyset}I\varia{\emptyset}{\emptyset},
\]
\[
I\varia u{xy}I\varia{\emptyset}{\emptyset}=I\varia u{xy}I\varia{\emptyset}{\emptyset},
\]
\[
I\varia uyI\varia{\emptyset}{\emptyset}=I\varia{\emptyset}{\emptyset}I\varia uy,
\]
\[
I\varia{\emptyset}{\emptyset}I\varia ux=I\varia uxI\varia{\emptyset}{\emptyset}.
\]
\end{eg}

Let ${\mathcal O}={\mathcal O}({\mathbb D}^2)$ be  the $\C$-algebra of
complex analytic functions on the 2-dimensional disk ${\mathbb D}^2\subset \C^2$
with respect to the variables $(x,y)\in\mathbb D^2$.

\begin{defn}\label{def:Zag and Zag}
We denote by ${}_\mathcal{O}\mathcal{F}_{\ser}$ the family of functions
defined by
$${}_\mathcal{O}F_{\mathcal{F}_{\ser}}(V)=\mathcal{O}\otimes F_{\mathcal{F}_{\ser}}(V).$$
Let $w=x$ or $y$. Put
\begin{align*}
&
\Zag^{w}\varia{u_1,\dots, u_d}{\ 1, \dots,1}
:=I\varia{u_1,\dots,u_d}{{w^{-1},\dots, w^{-1}}},\\
&\Zag^{xy,w}\varia{u_1,\dots, u_d}{\sigma_1, \dots,\sigma_d}
:=I\varia{\qquad u_1,\dots, u_d}{{
\epsilon(\sigma_1)^{-1}, \dots,\epsilon(\sigma_d)^{-1}}}
\qquad (\sigma_1,\dots,\sigma_d\in [2]:=\{1,2\}) ,
\end{align*}
where 
we put
$$\epsilon(\sigma)=
\begin{cases}w  &(\sigma=1), \\
xy & (\sigma=2).
\end{cases}$$
We regard
$$
\Zag^{w}\in\mathcal{M}({}_\mathcal{O}{\mathcal F}_{\ser};[1])
\quad \text{  and  }\quad
\Zag^{xy,w}\in\mathcal{M}({}_\mathcal{O}{\mathcal F}_{\ser};[2]), 
$$
by putting their constant parts to be $1$.
By taking their tensor product, we also have
 $$
 \Zag^{x}\otimes\Zag^{xy,y}
 \quad\text{ and }\quad
  \Zag^{y}\otimes\Zag^{xy,x}
 \in\mathcal P_4({}_\mathcal{O}\mathcal{F}_{\ser})
 $$
with $\mathcal P_4({}_\mathcal{O}\mathcal{F}_{\ser})=
  \mathcal{M}_2({}_\mathcal{O}\mathcal{F}_{\ser};[1],[2])$
  given in \eqref{eq: P n+2 F}.
\end{defn}

Similarly to Example \ref{eg: Zig and Zag}, by \eqref{eq: I= sum Li},
we have expansions
\begin{align*}
\Zag^{w}\varia{u_1,\dots, u_d}{\ 1, \dots,1}
=\sum_{n_1,\dots, n_d>0}&
\Li_{n_1,\dots,n_d}(1,\dots,1,w)
 \\
&
\cdot u_1^{n_1-1}(u_1+u_2)^{n_2-1}\cdots
(u_1+\cdots+u_d)^{n_d-1},\\
\Zag^{xy,w}\varia{u_1,\dots, u_d}{\sigma_1, \dots,\sigma_d}
=\sum_{n_1,\dots, n_d>0}&
\Li_{n_1,\dots,n_d}\left(\frac{\epsilon(\sigma_1)}{\epsilon(\sigma_2)},\dots,\frac{\epsilon(\sigma_{d-1})}{\epsilon(\sigma_{d})},{\epsilon(\sigma_d)}\right) \\
&\cdot u_1^{n_1-1}(u_1+u_2)^{n_2-1}\cdots
(u_1+\cdots+u_d)^{n_d-1}
\end{align*}
for
$\sigma_1,\dots,\sigma_d\in [2]:=\{1,2\}$
by \eqref{eq: I= sum Li}.


\begin{prop}\label{prop: group-likeness for Zag}
  We have
  \begin{align*}
  \shmap(\Zag^{xy,x})&=\Zag^{xy,x}\otimes\Zag^{xy,x}, \quad
  \shmap(\Zag^{x})=\Zag^{x}\otimes\Zag^{x}, \\
  \shmap(\Zag^{y})&=\Zag^{y}\otimes\Zag^{y}, \qquad\quad
  \shmap(\Zag^{xy,y})=\Zag^{xy,y}\otimes\Zag^{xy,y}.
  \end{align*}
\end{prop}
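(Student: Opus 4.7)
By Proposition \ref{prop:gp-like, Lie-like}(ii), each of the four identities $\shmap(M)=M\otimes M$ together with $M(\emptyset)=1$ is equivalent to the symmetrality of $M$, i.e. to
\[
M\varia{u_1,\dots,u_p}{\sigma_1,\dots,\sigma_p}\cdot M\varia{u_{p+1},\dots,u_{p+q}}{\sigma_{p+1},\dots,\sigma_{p+q}}
=\sum_{\alpha\in X_\Z^\bullet}\Sh{\varia{u_1,\dots,u_p}{\sigma_1,\dots,\sigma_p}}{\varia{u_{p+1},\dots,u_{p+q}}{\sigma_{p+1},\dots,\sigma_{p+q}}}{\alpha}\,M(\alpha).
\]
The constant-term condition $M(\emptyset)=1$ is built into Definition \ref{def:Zag and Zag}, so the task is to establish the four families of shuffle identities for the components of $\Zag^{w}$ and $\Zag^{xy,w}$.

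The approach is to deduce each shuffle identity from Chen's classical shuffle product formula for iterated integrals applied to the defining expression \eqref{eq:power series I}. Writing
\[
\omega_i(s):=\frac{s^{-u_i}\,ds}{\epsilon_i-s},\qquad \epsilon_i\in\bigl\{w^{-1},(xy)^{-1}\bigr\},
\]
the quantity $I\varia{u_1,\dots,u_d}{\epsilon_1,\dots,\epsilon_d}$ is an honest iterated integral on $[0,1]$ of the one-forms $\omega_i$, viewed as elements of $\mathcal O[[u_1,\dots,u_d]]\otimes\Omega^1$. Chen's formula then yields
\[
\Bigl(\int_{\Delta_p}\omega_1\cdots\omega_p\Bigr)\cdot\Bigl(\int_{\Delta_q}\omega_{p+1}\cdots\omega_{p+q}\Bigr)
=\sum_{\tau\in\mathrm{Sh}(p,q)}\int_{\Delta_{p+q}}\omega_{\tau^{-1}(1)}\cdots\omega_{\tau^{-1}(p+q)},
\]
where $\mathrm{Sh}(p,q)$ is the set of $(p,q)$-shuffle permutations. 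Rewriting the right-hand side in terms of the integrals $I\varia{\cdots}{\cdots}$ and collecting shuffles with a given output word produces precisely the factor $\Sh{\cdots}{\cdots}{\alpha}$ that appears in the definition of $\shmap$ via Definition \ref{def:shmap} and \eqref{eqn:shuffle product}.

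Specializing this master identity to the two relevant index sets yields all four statements: taking all $\epsilon_i=w^{-1}$ (equivalently $\Gamma=[1]$ with the sole letter $\sigma=1$) gives the identities for $\Zag^{w}$; taking $\epsilon_i=\epsilon(\sigma_i)^{-1}$ with $\sigma_i\in[2]$ gives those for $\Zag^{xy,w}$. Combined with the normalization $M(\emptyset)=1$, Proposition \ref{prop:gp-like, Lie-like}(ii) then delivers the four equalities $\shmap(M)=M\otimes M$.

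\textbf{Expected main obstacle.} The argument is essentially formal once Chen's shuffle formula is in hand; the only genuine verification is that Chen's formula may be applied coefficient-wise in the power-series ring $\mathcal O[[u_1,\dots,u_d]]$, which follows since the coefficients of $\omega_i$ in the $u_i$-expansion are polynomial-log one-forms on $[0,1]$ with integrable singularities, and that the combinatorial $\mathrm{Sh}(p,q)$-sum matches the coefficients $\Sh{}{}{\alpha}$ of Definition \ref{def:shmap} when one permutes the paired data $\binom{u_i}{\sigma_i}$ as a block; both points are routine bookkeeping.
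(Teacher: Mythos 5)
Your argument is correct and matches the paper's own (one-line) proof, which likewise derives the four identities from the iterated-integral presentation \eqref{eq:power series I} via the shuffle product formula; you have simply spelled out the reduction to symmetrality through Proposition \ref{prop:gp-like, Lie-like}(ii) and the application of Chen's formula. No issues.
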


\begin{proof}
  They are consequence of  (\ref{eq:power series I}).
\end{proof}

\begin{lem}\label{lem: f(Zag)=0 then f=0}
Let $f:\mathcal{P}_{4}(\mathcal{F}_{\ser})\to\mathcal{F}_{\ser,m}$
be a mould-proper map such that
\[
f_{\mathcal{O}}(\Zag^{y}\otimes\Zag^{xy,x})=0
\]
where $f_{\mathcal{O}}:\mathcal{P}_{4}({}_{\mathcal{O}}\mathcal{F}_{\ser})\to _{\mathcal{O}}\mathcal{F}_{\ser,m}$ is the $\mathcal{O}$-linear extension of
$f$. Then $f=0$.
\end{lem}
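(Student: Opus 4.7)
The plan is to adapt the Vandermonde--monodromy technique used at the end of the proof of Theorem \ref{thm:UniqueProlongation}, with the analytic family $(\Zag^{y}\otimes\Zag^{xy,x})_{(x,y)\in\mathbb{D}^{2}}$ playing the role that the ansatz $M_{n,s}(z_{1},\dots,z_{n})=(1-\sum_{j}\alpha_{n,s,j}z_{j})^{\beta}$ played there. Since $f$ is mould-proper, one may write $f(M)(x_{1},\dots,x_{m})=H((z_{\lambda}^{M})_{\lambda\in\Lambda})$ with $H$ a proper polynomial supported on a finite subset $\Lambda'\subset\Lambda$; it then suffices to prove $H=0$, from which $f\equiv 0$ follows exactly as in Theorem \ref{thm:UniqueProlongation}.

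First I would extract, from the hypothesis $f_{\mathcal{O}}(\Zag^{y}\otimes\Zag^{xy,x})=0$ (an identity in $\mathcal{O}\otimes\mathcal{F}_{\ser,m}$), the coefficient of each monomial $x_{1}^{l_{1}}\cdots x_{m}^{l_{m}}\in\mathcal{F}_{\ser,m}$. This produces, for every $(l_{1},\dots,l_{m})\in\N_{0}^{m}$, an analytic identity on $\mathbb{D}^{2}$ that is a polynomial (with $\mathbb{Q}$-coefficients in $x_{1},\dots,x_{m}$, cleared of denominators by properness of $H$) in finitely many values of the form $\Zag^{y}\varia{v_{1},\dots,v_{i_{1}}}{1,\dots,1}\cdot\Zag^{xy,x}\varia{v_{i_{1}+1},\dots,v_{n}}{\sigma_{i_{1}+1},\dots,\sigma_{n}}$, each of which is, up to expansion in $u$'s, a $\mathbb{Q}$-linear combination of multiple polylogarithms in $(x,y)$ at arguments drawn from $\{1,x,y,xy,x/y,y/x\}$ via \eqref{eq:power series I} and \eqref{eq: I= sum Li}.

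Next I would lift these identities to the universal cover of $\mathbb{D}^{2}\setminus(\{x=1\}\cup\{y=1\}\cup\{xy=1\})$ and apply the monodromy operators around each of the branch divisors. Each such operator multiplies appropriate iterated integrals by factors of the form $\exp(2\pi\sqrt{-1}k)$ on suitable logarithmic branches of $\log(1-x),\log(1-y),\log(1-xy)$, giving a family of analytic identities analogous to the sequence $\sum_{k}f_{k}\cdot\exp(2\pi\sqrt{-1}\beta k)^{n}\cdot(1-L_{\lambda^{(0)}})^{\beta k}=0$ that appeared in the proof of Theorem \ref{thm:UniqueProlongation}. Running the same Vandermonde-regularity argument, induction on $\#\Lambda'$ then forces each coefficient of $H$ (with respect to an appropriate monomial basis in $w_{\lambda}$) to vanish, so $H=0$.

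The main obstacle will be that the iterated integrals appearing in the values $z_{\lambda}^{\Zag^{y}\otimes\Zag^{xy,x}}$ satisfy many shuffle/stuffle identities, reflecting the group-likeness established in Proposition \ref{prop: group-likeness for Zag}, and are therefore very far from algebraically independent. One must argue that modulo these already-known relations the remaining values still carry enough transcendence to detect $H$; I would handle this by specializing $(x,y)$ along a one-parameter generic curve (e.g.\ $x$ a transcendental function of $y$) and tracking the leading logarithmic behaviour near $y=1$, which reduces the independence statement to the classical transcendence of $(1-y)^{\beta}$-type branches that was already used in Theorem \ref{thm:UniqueProlongation}, matching the number of $\mathbb{Q}$-linearly independent monodromy orbits against the number of monomials in $H$.
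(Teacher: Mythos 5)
There is a genuine gap, and it sits at your very first reduction: you claim it suffices to prove $H=0$, but that conclusion is simply false under the stated hypothesis. The hypothesis is vanishing at the \emph{single} element $\Zag^{y}\otimes\Zag^{xy,x}$, whose values satisfy many nontrivial polynomial relations, so nonzero proper polynomials $H$ do vanish on it. Concretely, by Proposition \ref{prop: group-likeness for Zag} the mould $\Zag^{y}$ is symmetral, so the mould-proper map $f_{0}(M)=M\varia{x_{1};\emptyset}{1;\emptyset}M\varia{x_{2};\emptyset}{1;\emptyset}-M\varia{x_{1},x_{2};\emptyset}{1,1;\emptyset}-M\varia{x_{2},x_{1};\emptyset}{1,1;\emptyset}$ satisfies $(f_{0})_{\mathcal{O}}(\Zag^{y}\otimes\Zag^{xy,x})=0$, while its defining polynomial $H_{0}=w_{(1,s,x_{1})}w_{(1,s,x_{2})}-w_{(2,s',x_{1},x_{2})}-w_{(2,s',x_{2},x_{1})}$ is visibly nonzero. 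Hence no amount of monodromy or transcendence analysis can yield $H=0$: the ``main obstacle'' you flag at the end is not a technical nuisance to be argued away but an outright obstruction to the plan. (This example also shows that the lemma is really being invoked, as in all its applications, for $\Q$-linear $f$, which is where the commutativity of the specialization diagram in the paper's proof is legitimate.) A second, independent problem is that the Vandermonde step of Theorem \ref{thm:UniqueProlongation} depends on being free to \emph{choose} the test moulds $(1-\sum_{j}\alpha_{n,s,j}z_{j})^{\beta}$ so that monodromy acts by the distinct scalars $\exp(2\pi\sqrt{-1}\beta k)$; here the element is fixed and the monodromy of multiple polylogarithms is unipotent, so there is no invertible Vandermonde matrix to exploit.

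The paper's proof goes a completely different way and uses only linear algebra over $\Q$: the products $\Li_{k_{1}+1,\dots,k_{r}+1}(1,\dots,1,y)\cdot\Li_{l_{1}+1,\dots,l_{s}+1}(\tfrac{\epsilon_{1}}{\epsilon_{2}},\dots,\tfrac{\epsilon_{s-1}}{\epsilon_{s}},\epsilon_{s})$ occurring as coefficients of $\Zag^{y}\otimes\Zag^{xy,x}$ are $\Q$-linearly independent in $\mathcal{O}$, so for every target $M\in\mathcal{P}_{4}(\mathcal{F}_{\ser})$ there is a $\Q$-linear functional $\theta_{M}:\mathcal{O}\to\Q$ whose induced specialization $\Theta_{M}$ sends $\Zag^{y}\otimes\Zag^{xy,x}$ to $M$, whence $f(M)=f(\Theta_{M}(\Zag^{y}\otimes\Zag^{xy,x}))=\Theta_{M}(f_{\mathcal{O}}(\Zag^{y}\otimes\Zag^{xy,x}))=0$. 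In other words $\Zag^{y}\otimes\Zag^{xy,x}$ is \emph{universal} for $\Q$-linear specializations; nothing beyond $\Q$-linear independence of its coefficients is needed, and nothing beyond it is true. If you want to salvage your write-up, that linear independence is the statement to prove, not $H=0$.
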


\begin{proof}
Since the functions
\[
I(0;y^{-1},{\bf 0}^{k_{1}},\dots,y^{-1},{\bf 0}^{k_{r}};1)\cdot I(0;\epsilon_{1}^{-1},{\bf 0}^{l_{1}}\dots,\epsilon_{r}^{-1},{\bf 0}^{l_{s}};1)
\]
with $r,s,k_{i},l_{i}\in\Z_{\geq0}$ and $\epsilon_{i}\in\{x,xy\}$,
are of the form
\[
\Li_{k_{1}+1,\dots,k_{r}+1}(1,\dots,1,y)\cdot\Li_{l_{1}+1,\dots,l_{s}+1}(\frac{\epsilon_{1}}{\epsilon_{2}},\dots,\frac{\epsilon_{s-1}}{\epsilon_{s}},\epsilon_{s}),
\]
they are linearly independent over $\Q$. Thus for each $M\in\mathcal{P}_{4}({}_{\mathcal{O}}\mathcal{F}_{\ser})$,
there exists a $\Q$-linear map (not uniquely determined) $\theta_{M}:\mathcal{O}\to\Q$
which induces
\[
\Theta_{M}:\mathcal{P}_{4}({}_{\mathcal{O}}\mathcal{F}_{\ser})\to\mathcal{P}_{4}(\mathcal{F}_{\ser})
\]
such that
\[
\Theta_{M}(\Zag^{y}{\otimes}\Zag^{xy,x})=M.
\]
Then the following diagram is commutative
\[
\begin{CD}\mathcal{P}_{4}({}_{\mathcal{O}}\mathcal{F}_{\ser})@>\Theta_{M}>>\mathcal{P}_{4}(\mathcal{F}_{\ser})\\
@V{f_{\mathcal{O}}}VV@VV{f}V\\
_{\mathcal{O}}\mathcal{F}_{\ser,m}@>\Theta_{M}>>\mathcal{F}_{\ser,m}
\end{CD}
\]
because $\Theta_{M}$ is induced from $\theta_{M}$. Thus for all
$M\in\mathcal{P}_{4}(\mathcal{F}_{\ser})$, we have
\[
f(M)=f(\Theta_{M}(\Zag^{y}{\otimes}\Zag^{xy,x}))=\Theta_{M}(f_{\mathcal{O}}(\Zag^{y}{\otimes}\Zag^{xy,x}))=0.
\]
\end{proof}

\begin{thm-defn}\label{thm-defn: dec}
There exists a unique $\Q$-linear mould-proper automorphism
\begin{equation*}
\bal:\mathcal P_4(\mathcal{F}_{\ser})
\to
\mathcal P_4(\mathcal{F}_{\ser})
\end{equation*}
such that
\begin{equation*}
 {
 {\bal_{\mathcal O}(\minus(\Zag^{y})\otimes \minus(\Zag^{xy,x}))
 =\minus(\Zag^x)\otimes \minus(\Zag^{xy,y})
 }
 }
\end{equation*}
 Here $\minus$ is given by  \eqref{eq:neg} and
$\bal_{\mathcal O}$ is the $\mathcal O$-linear extension of $\bal$
to $\mathcal P_4({}_\mathcal{O}\mathcal{F}_{\ser})$.
\end{thm-defn}

By
the unique prolongation theorem (Theorem \ref{thm:UniqueProlongation})
and Theorem-Definition \ref{thm-defn: dec}, for a divisible family $\mathcal{F}$  of functions, we have
\begin{equation}\label{eq:bal P4F}
\bal:\mathcal P_4(\mathcal{F})
\to
\mathcal P_4(\mathcal{F}).
\end{equation}

\begin{lem}
The automorphism $\bal$ is an involution.
\end{lem}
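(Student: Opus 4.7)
The plan is to invoke the unique prolongation theorem (Theorem \ref{thm:UniqueProlongation}) together with Lemma \ref{lem: f(Zag)=0 then f=0} in order to reduce the involutory statement $\bal\circ\bal=\id$ on $\mathcal{P}_4(\mathcal{F}_\ser)$ to a single verification on the specific element $\minus(\Zag^y)\otimes\minus(\Zag^{xy,x})$, which will then follow from the defining property of $\bal$ together with an $x\leftrightarrow y$ symmetry argument.

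First I would observe that $\bal\circ\bal-\id$ is mould-proper, being built from mould-proper maps. By the unique prolongation theorem --- after composing with the bijective mould-proper operator $\minus\otimes\minus$ and applying Lemma \ref{lem: f(Zag)=0 then f=0} componentwise to each projection to $\mathcal{F}_{\ser,m}$ --- it suffices to establish
\[
\bal_\mathcal{O}\circ\bal_\mathcal{O}\bigl(\minus(\Zag^y)\otimes\minus(\Zag^{xy,x})\bigr)=\minus(\Zag^y)\otimes\minus(\Zag^{xy,x}).
\]
By the defining property of $\bal$ in Theorem-Definition \ref{thm-defn: dec}, the first application of $\bal_\mathcal{O}$ yields $\minus(\Zag^x)\otimes\minus(\Zag^{xy,y})$, so the issue reduces to evaluating $\bal_\mathcal{O}$ at this second element.

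For this evaluation, I would introduce the $\C$-algebra automorphism $\sigma\colon\mathcal{O}\to\mathcal{O}$ induced by swapping the coordinates $x$ and $y$ on $\mathbb{D}^2$. From the iterated-integral presentation in Definition \ref{def:Zag and Zag} (together with the obvious fact $\sigma(xy)=xy$), the automorphism $\sigma$ interchanges $\Zag^x\leftrightarrow\Zag^y$ and $\Zag^{xy,x}\leftrightarrow\Zag^{xy,y}$, while $\minus$ acts only on the mould variables and hence commutes with $\sigma$. Since $\bal$ itself is a $\Q$-linear mould-proper endomorphism of $\mathcal{P}_4(\mathcal{F}_\ser)$ with no reference to $\mathcal{O}$, its $\mathcal{O}$-linear extension $\bal_\mathcal{O}$ commutes with $\sigma$. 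Consequently
\begin{align*}
\bal_\mathcal{O}(\minus(\Zag^x)\otimes\minus(\Zag^{xy,y}))
&=\bal_\mathcal{O}\bigl(\sigma(\minus(\Zag^y)\otimes\minus(\Zag^{xy,x}))\bigr)\\
&=\sigma\bigl(\bal_\mathcal{O}(\minus(\Zag^y)\otimes\minus(\Zag^{xy,x}))\bigr)\\
&=\sigma\bigl(\minus(\Zag^x)\otimes\minus(\Zag^{xy,y})\bigr)\\
&=\minus(\Zag^y)\otimes\minus(\Zag^{xy,x}),
\end{align*}
which is exactly the required identity.

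The hard part will be making the $\sigma$-equivariance of $\bal_\mathcal{O}$ precise: one must carefully check that the mould-proper polynomial defining $\bal$ has coefficients in $\Q$ and acts only on the mould data, so that the $\mathcal{O}$-linear extension is automatically equivariant under any $\C$-algebra automorphism of the coefficient ring $\mathcal{O}$. While this is morally immediate from the fact that $\bal$ is defined over $\mathcal{F}_\ser$ and extended by $\mathcal{O}$-linearity, some care is needed to match this with the formalism of ${}_\mathcal{O}\mathcal{F}_\ser$ used in the statement of Theorem-Definition \ref{thm-defn: dec}.
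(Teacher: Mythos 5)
Your proof is correct and takes essentially the same approach as the paper, whose entire argument is ``It is immediate by the symmetry of $x$ and $y$''; your reduction via Lemma \ref{lem: f(Zag)=0 then f=0} to the single element $\minus(\Zag^{y})\otimes\minus(\Zag^{xy,x})$ and the $x\leftrightarrow y$ swap automorphism of $\mathcal{O}$ is exactly the intended symmetry argument, made explicit.
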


\begin{proof}
It is immediate by the symmetry of $x$ and $y$.
\end{proof}

\begin{rem}\label{rem: existence of bal}
We note that  the existence of $\bal$ is a consequence of Theorem \ref{thm: rec diff eq for I}.
More precisely $\bal$ is described by
$$
\bal(M)
\varia{{-u_{1},\dots,-u_{r};-\ u_{r+1},\dots,-u_{r+s}}}{ 1,\dots\dots,1 \ ; \
\sigma(\epsilon_{1}),\dots,\sigma(\epsilon_{s})}
=\sum_{\bf j} c_{\bf j}
 M\varia{{-v_{{\bf j},1}, \dots, -v_{{\bf j},{r_{\bf j}}} ; \ -w_{{\bf j},1}, \dots,  -w_{{\bf j},s_{\bf j}}}}
 {\quad 1,\dots\dots,1  ; \ \sigma(\tau_{{\bf j},1}),\dots,\sigma(\tau_{{\bf j},s_{\bf j}})},
$$
that is,
\begin{equation}\label{eq:decM}
\minus(\bal(M))
\varia{{u_{1},\dots,u_{r};\ u_{r+1},\dots,u_{r+s}}}{ 1,\dots\dots,1 \ ; \
\sigma(\epsilon_{1}),\dots,\sigma(\epsilon_{s})}
=\sum_{\bf j} c_{\bf j} \
\minus( M)\varia{{v_{{\bf j},1}, \dots, v_{{\bf j},{r_{\bf j}}} ; \ w_{{\bf j},1}, \dots,  w_{{\bf j},s_{\bf j}}}}
 {\quad 1,\dots\dots,1  ; \ \sigma(\tau_{{\bf j},1}),\dots,\sigma(\tau_{{\bf j},s_{\bf j}})},
\end{equation}
where 
we employ the symbols in Theorem \ref{thm: rec diff eq for I}
and put
\begin{equation}\label{eq: 1-2 rule}
\sigma(\epsilon):=
\begin{cases}
1 & (\epsilon=y), \\
2 & (\epsilon=xy),
\end{cases}
\qquad
\sigma(\tau):=
\begin{cases}
1 & (\tau=x), \\
2 & (\tau=xy).
\end{cases}
\end{equation}
\end{rem}

{\it Proof of Theorem-Definition \ref{thm-defn: dec}. }
The existence of such mould proper map is assured by Remark \ref{rem: existence of bal}.
The uniqueness follows from Lemma \ref{lem: f(Zag)=0 then f=0}.
\qed

			\medskip

\begin{eg}\label{eg: val_dep1}
  By Example \ref{eg: cj_const}, we have
\begin{align*}
\bal(M)\varia{\emptyset;\emptyset}{\emptyset;\emptyset} & =M\varia{\emptyset;\emptyset}{\emptyset;\emptyset},\\
\bal(M)\varia{u;\emptyset}{1;\emptyset} & =M\varia{u;\emptyset}{1;\emptyset},\\
\bal(M)\varia{u;\emptyset}{2;\emptyset} & =M\varia{\emptyset;u}{\emptyset;1},\\
\bal(M)\varia{\emptyset;u}{\emptyset;1} & =M\varia{u;\emptyset}{2;\emptyset}.
\end{align*}
\end{eg}

\begin{defn}\label{defn: Xi and rev}
Let $\rev$ be the involution of $\widehat{U{\mathfrak{t}}_{4}}^{\dag}$ induced by
the involution of
$\widehat{U{\mathfrak{f}}_{2}}^{\dag}\widehat{\otimes}\widehat{U{\mathfrak{f}}_{3}}^{\dag}$
 given by
 \[
w_{2}(f_{0},f_{1})\otimes w_{3}(f_{0},f_{1},f_{2})\mapsto
w_{2}(-f_{0}-f_{1},f_{1})\otimes  w_{3}(-f_{0}-f_{1}-f_{2},f_{2},f_{1}),
\]
under the map $\dec^{-1}$ given in  \eqref{eq:dec isom for dag}.
\end{defn}

By definition,
we have
\begin{equation}\label{eq:w3w2=Xiw2w3}
\rev(w_{2}(t_{02},t_{12})w_{3}(t_{03},t_{13},t_{23}))=
w_{2}(-t_{02}-t_{12},t_{12})w_{3}(-t_{03}-t_{13}-t_{23},t_{23},t_{13}).
\end{equation}

Under the sequence of isomorphisms
\begin{equation}\label{eq:seq of isom: UT4 and M2F}
\madec_4:\widehat{U{\mathfrak t}_4}^\dag\quad
\overset{\dec}{\simeq}\quad
\widehat{U{\mathfrak f}_2}^\dag
\widehat\otimes
\widehat{U{\mathfrak f}_3}^\dag\quad
\overset{\ma_{[1]}\otimes \ma_{[2]}}{\simeq}
\quad
\mathcal{M}_2(\mathcal{F}_\ser;[1],[2])=
\mathcal P_4(\mathcal F_\ser),
\end{equation}
we transmit the involutions $\flip$ on
$\widehat{U{\mathfrak t}_4}^\dag$ in \eqref{eq: flip}
and $\rev$  on $\widehat{U{\mathfrak f}_2}^\dag
\widehat\otimes
\widehat{U{\mathfrak f}_3}^\dag$
to the ones on $\mathcal P_4(\mathcal F_\ser)$
and denote their $\mathcal O$-linear extensions
by the same symbols $\flip$ and $\rev$.
We note that $\flip$ and $\rev$ are mould proper maps
and they are not commutative.
We add that the involutions
$\minus$ defined in \eqref{eq:neg} on
$\mathcal{M}_2(\mathcal{F}_\ser;[1])$ and
$\mathcal{M}_2(\mathcal{F}_\ser;[2])$
induce an involution
on $\mathcal P_4(\mathcal F_\ser)$, hence
on $\widehat{U{\mathfrak t}_4}^\dag$,
which again we denote the same symbol $\minus$.

\begin{prop}\label{prop: bal= flip}
Under the identification of isomorphisms \eqref{eq:seq of isom: UT4 and M2F},
we have the following commutative diagram
$$
\xymatrix{
\mathcal P_4(\mathcal F_\ser)
\ar@{->}[d]_{\rev}  \ar@{->}[rr]^\bal
&& \mathcal P_4(\mathcal F_\ser)\ar@{->}[d]^{\rev}
\\
\mathcal P_4(\mathcal F_\ser)
\ar@{->}[rr]_{\flip}&&
\mathcal P_4(\mathcal F_\ser).
}
$$
\end{prop}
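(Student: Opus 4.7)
The strategy is to reduce the commutativity of the diagram to a single identity on a distinguished mould by combining the unique prolongation theorem with Lemma~\ref{lem: f(Zag)=0 then f=0}. Both composites $\rev\circ\bal$ and $\flip\circ\rev$ are mould-proper endomorphisms of $\mathcal{P}_4(\mathcal{F}_\ser)$: $\bal$ is mould-proper by Theorem-Definition~\ref{thm-defn: dec}, while $\rev$ and $\flip$ are mould-proper because each arises from a fixed $\Q$-linear involution of $\widehat{U\mathfrak{t}_4}^\dag$ (or of $\widehat{U\mathfrak{f}_2}^\dag\widehat\otimes\widehat{U\mathfrak{f}_3}^\dag$) transported via $\madec_4$. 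Consequently their difference is mould-proper, and an adaptation of Lemma~\ref{lem: f(Zag)=0 then f=0} in which $\Zag^y\otimes\Zag^{xy,x}$ is replaced by $\minus(\Zag^y)\otimes\minus(\Zag^{xy,x})$ (the same linear-independence argument applies, since $\minus$ merely substitutes $u_i$ by $-u_i$) reduces the proposition to verifying
\[
\rev_\mathcal{O}\bigl(\bal_\mathcal{O}(\minus(\Zag^{y})\otimes\minus(\Zag^{xy,x}))\bigr)
=\flip_\mathcal{O}\bigl(\rev_\mathcal{O}(\minus(\Zag^{y})\otimes\minus(\Zag^{xy,x}))\bigr).
\]

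By the defining property of $\bal$ in Theorem-Definition~\ref{thm-defn: dec}, the left-hand side equals $\rev_\mathcal{O}(\minus(\Zag^{x})\otimes\minus(\Zag^{xy,y}))$, so the task becomes the symmetry identity
\[
\rev_\mathcal{O}(\minus(\Zag^{x})\otimes\minus(\Zag^{xy,y}))
=\flip_\mathcal{O}\bigl(\rev_\mathcal{O}(\minus(\Zag^{y})\otimes\minus(\Zag^{xy,x}))\bigr).
\]
I would prove this by transporting both sides to $\widehat{U\mathfrak{t}_4}^\dag\,\widehat\otimes\,\mathcal{O}$ via $\madec_4^{-1}$, using the expansion~\eqref{eq: I= sum Li} to identify $\ma_{[1]}^{-1}(\minus(\Zag^w))$ and $\ma_{[2]}^{-1}(\minus(\Zag^{xy,w}))$ as explicit generating series in $\widehat{U\mathfrak{f}_2}^\dag$ and $\widehat{U\mathfrak{f}_3}^\dag$ whose coefficients are regularized multiple polylogarithms at specific powers of $x$ and $y$. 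After substitution via $\dec^{-1}$ these become products $w_2(t_{02},t_{12})\cdot w_3(t_{03},t_{13},t_{23})$ in $\widehat{U\mathfrak{t}_4}^\dag$, to which $\rev$ applies by the explicit formula~\eqref{eq:w3w2=Xiw2w3} and $\flip$ applies via $t_{ij}\mapsto t_{4-i,4-j}$ (extended on $t_{0j}$ by $t_{0j}\mapsto-\sum_k t_{k,4-j}$).

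The main obstacle is checking that, after all these substitutions, the two sides literally coincide in $\widehat{U\mathfrak{t}_4}^\dag\,\widehat\otimes\,\mathcal{O}$. The conceptual content is that $\bal$ was constructed (see Remark~\ref{rem: existence of bal}) precisely as the mould-theoretic incarnation of the $(x,y)\leftrightarrow(y,x)$ exchange of iterated integrals, which in turn is induced by the projective involution of $\mathsf{Conf}_4(\mathbb{A}^1)$ swapping $0\leftrightarrow\infty$; under this interpretation $\flip$ realizes this same involution on $\widehat{U\mathfrak{t}_4}^\dag$, and $\rev$ is engineered in Definition~\ref{defn: Xi and rev} to be the analogous pullback after passing to the tensor decomposition $\widehat{U\mathfrak{f}_2}^\dag\widehat\otimes\widehat{U\mathfrak{f}_3}^\dag$ via $\dec$. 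The required identity thus amounts to the naturality of this pullback under $\dec$; formally, the verification comes down to matching the signs contributed by the two $\minus$-operations with those produced by the rule $t_{0j}\mapsto-\sum_k t_{k,4-j}$ of $\flip$ and by the rule $f_0\mapsto-f_0-\cdots$ of $\rev$, and tracing through the substitution of braid generators. This is a careful bookkeeping exercise that should yield the equality on the generators $w_2,w_3$ and hence finish the proof.
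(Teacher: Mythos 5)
Your reduction is exactly the one the paper uses: both $\rev\circ\bal$ and $\flip\circ\rev$ are mould-proper, so by the uniqueness statement behind Theorem-Definition \ref{thm-defn: dec} (i.e.\ Lemma \ref{lem: f(Zag)=0 then f=0}, which applies verbatim after the harmless substitution $u_i\mapsto -u_i$) everything comes down to the single identity
$\rev_{\mathcal O}\circ\flip_{\mathcal O}\circ\rev_{\mathcal O}\,(\minus(\Zag^{y})\otimes\minus(\Zag^{xy,x}))=\minus(\Zag^{x})\otimes\minus(\Zag^{xy,y})$
in $\mathcal P_4({}_{\mathcal O}\mathcal F_\ser)$, which is equivalent to your formulation since $\rev$ is an involution. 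Up to this point the proposal is correct and matches the paper.

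The gap is in how you propose to finish. You claim the remaining identity ``amounts to the naturality of this pullback under $\dec$'' and is ``a careful bookkeeping exercise that should yield the equality on the generators $w_2,w_3$.'' It is not: the two sides are not related by any formal substitution of braid generators, because $\Zag^{x},\Zag^{y},\Zag^{xy,x},\Zag^{xy,y}$ are specific transcendental moulds whose coefficients are regularized multiple polylogarithms in $x$ and $y$, and the asserted equality is a genuine functional equation among these functions --- precisely the kind of statement that cannot be verified ``on generators.'' The paper's proof supplies the missing analytic input: it identifies $\minus(\Zag^{y})$ and $\minus(\Zag^{xy,x})$ with $\ma$ of the normalized inverses $y^{f_0}G_0(f_0,f_1)(y)^{-1}$ and $x^{f_0}G_y(f_0,f_1,f_2)(x)^{-1}$ of one-variable KZ fundamental solutions, invokes the nontrivial factorization $W(x,y)=G_y(t_{01},t_{21},t_{31})(x)\cdot G_0(t_{01}+t_{02}+t_{12},t_{32})(y)$ of the two-variable fundamental solution on $\mathcal M_{0,5}$ (\cite[Proposition 12.2]{OU}), and then uses the symmetry $\flip(W(x,y))=W(y,x)\cdot(xy)^{-c_4}$ of that solution under exchanging $x$ and $y$. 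Without these three ingredients (or an equivalent derivation of the underlying functional equation of two-variable multiple polylogarithms), the key identity is asserted rather than proved, so the argument as written does not close.
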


\begin{proof}
By Theorem-Definition \ref{thm-defn: dec},
it is enough to prove the following equality
holds in 
$\mathcal P_4({}_{\mathcal O}\mathcal F_\ser)$
\begin{equation}\label{eq: flip Zag}
\rev\circ\flip\circ\rev \ \left(\minus(\Zag^{y}){\otimes} \minus(\Zag^{xy,x})\right)=
\minus(\Zag^{x})\otimes\minus(\Zag^{xy,y}).
\end{equation}
We consider the fundamental solutions
 $G_0(f_0,f_1)(y)$ and $G_y(f_0,f_1,f_2)(x)$ of
the fundamental solutions of the following KZ-type differential equations
$$
\frac{dG_0}{dy}=\left\{\frac{f_0}{y}+\frac{f_1}{y-1}\right\}G_0(y)
$$
and
$$
\frac{dG_y}{dx}=\left\{\frac{f_0}{x}+\frac{f_1}{x-1}+\frac{f_2}{x-y^{-1}}\right\}G_y(x)
$$
(with $y$ fixed)
with the prescribed approximation condition
$G_0(f_0,f_1)(y)\approx y^{f_0}$ ($y\to 0$ in $\R_+$)
and
$G_y(f_0,f_1,f_2)(x)\approx x^{f_0}$  ($x\to 0$ in $\R_+$)  respectively.
In the same way, we define
$G_0(f_0,f_1)(x)$ and $G_x(f_0,f_1,f_2)(y)$.

Fix small $x$ and $y$ in $\C$.
We have
\begin{align*}
G_0(f_0,f_1)(y)=1+\sum_{d=1}^\infty\sum_{(n_1,\dots,n_d)\in\N^d}& (-1)^d\Li_{n_1,\dots,n_d}(1,\dots,1,y) \\
\cdot f_0^{n_d-1}f_1\cdots f_0^{n_1-1}f_1
&+\text{(the terms ending on $f_0$),} \\
G_y(f_0,f_1,f_2)(x)=1+
\sum_{d=1}^\infty\sum_{\substack{(n_1,\dots,n_d)\in\N^d \\ (\sigma_1,\dots,\sigma_d)\in [2 ]^d}}&
(-1)^d\Li_{n_1,\dots,n_d}\left(\frac{\epsilon(\sigma_1)}{\epsilon(\sigma_2)},\dots,\frac{\epsilon(\sigma_{d-1})}{\epsilon(\sigma_{d})},{\epsilon(\sigma_d)}\right) \\
\cdot f_0^{n_d-1}f_{\sigma_d}\cdots f_0^{n_1-1}f_{\sigma_1} &
+\text{(the terms ending on $f_0$)},
\end{align*}
where the coefficients of 'the terms ending on $f_0$' can be calculated by
regularization method (cf. \cite{IKZ, G-MPL, R}).
Since $G_0(f_0,f_1)(y)$ and $G_y(f_0,f_1,f_2)(x)$ are group-like,
we have
\begin{align*}
&G_0(f_0,f_1)(y)^{-1}=1+\sum_{d=1}^\infty\sum_{(n_1,\dots,n_d)\in\N^d} (-1)^{n_1+\cdots+n_d+d}\Li_{n_1,\dots,n_d}(1,\dots,1,y) \\
&\qquad\qquad \cdot f_1f_0^{n_1-1}f_1\cdots f_1f_0^{n_d-1}
+\text{(the terms starting from $f_0$),} \\
&G_y(f_0,f_1,f_2)(x)^{-1} \\
&\qquad =1+
\sum_{d=1}^\infty \sum_{\substack{(n_1,\dots,n_d)\in\N^d \\
(\sigma_1,\dots,\sigma_d)\in [2 ]^d}}
(-1)^{n_1+\cdots+n_d+d}
\Li_{n_1,\dots,n_d}\left(\frac{\epsilon(\sigma_1)}{\epsilon(\sigma_2)},\dots,\frac{\epsilon(\sigma_{d-1})}{\epsilon(\sigma_{d})},{\epsilon(\sigma_d)}\right) \\
&\qquad\qquad \cdot f_{\sigma_1}f_0^{n_1-1}\cdots f_{\sigma_d}f_0^{n_d-1}
+\text{(the terms starting from $f_0$)},
\end{align*}

We have
$$
y^{f_0}G_0(f_0,f_1)(y)^{-1}\in \widehat{U{\mathfrak f}_2}^\dag, \qquad 
x^{f_0}G_y(f_0,f_1,f_2)(x)^{-1}\in\widehat{U{\mathfrak f}_3}^\dag.
$$
By Definition \ref{def:ma}, Definition \ref{def:Zag and Zag} and \cite{FK} Lemma 2.5,
we have
\begin{align*}
&\ma_{[1]}\left(y^{f_0}G_0(f_0,f_1)(y)^{-1}\right)
=\minus(\Zag^{y}), \\
&\ma_{[2]}\left(x^{f_0}G_y(f_0,f_1,f_2)(x)^{-1}\right)
=\minus(\Zag^{xy,x}).
\end{align*}

Whence we have
\begin{align*}
(\ma_{[1]}\otimes \ma_{[2]})^{-1}&
\left(\minus(\Zag^{y}){\otimes}\minus(\Zag^{xy,x})\right) \\
&=
y^{f_0} G_0(f_0,f_1)(y)^{-1}\otimes x^{f_0} G_y(f_0,f_1,f_2)(x)^{-1}.
\end{align*}
Similarly we have
\begin{align*}
(\ma_{[1]}\otimes \ma_{[2]})^{-1}&
\left(\minus(\Zag^{x}){\otimes}\minus( \Zag^{xy,y})\right) \\
&=
x^{f_0} G_0(f_0,f_1)(x)^{-1}
\otimes y^{f_0} G_x(f_0,f_1,f_2)(y)^{-1}.
\end{align*}
Whence \eqref{eq: flip Zag} is reduced to
\begin{align}\label{eq:revfliprev}
\rev \circ \flip \ \circ & \rev\circ\dec^{-1} \left( y^{f_{0}}G_0(f_0,f_1)(y)^{-1}\otimes x^{f_{0}}G_y(f_0,f_1,f_2)(x)^{-1}\right)  \\
&= \dec^{-1} \left(x^{f_{0}}G_0(f_0,f_1)(x)^{-1}\otimes y^{f_{0}} G_x(f_0,f_1,f_2)(y)^{-1}\right).\notag
\end{align}
By \eqref{eq:w3w2=Xiw2w3}, we have
\begin{align} \label{eq:flip rev dec-1=xyG0-1Gy-1}
&\flip  \circ\rev\circ (\dec)^{-1}
 \left( y^{f_{0}}G_0(f_0,f_1)(y)^{-1}\otimes x^{f_{0}}G_y(f_0,f_1,f_2)(x)^{-1}
\right) \\ \notag
&
=y^{(t_{02}+t_{12})}G_0(t_{02}+t_{12},t_{32})(y)^{-1}\cdot x^{t_{01}}G_y(t_{01},t_{21},t_{31})(x)^{-1} \\  \notag
&
=y^{(t_{01}+t_{02}+t_{12})}G_0(t_{01}+t_{02}+t_{12},t_{32})(y)^{-1}\cdot x^{t_{01}}G_y(t_{01},t_{21},t_{31})(x)^{-1} \\ \notag
&
={x^{t_{01}}}\cdot y^{(t_{01}+t_{02}+t_{12})}\cdot G_0(t_{01}+t_{02}+t_{12},t_{32})(y)^{-1}\cdot G_y(t_{01},t_{21},t_{31})(x)^{-1}. 
\end{align}

We consider the fundamental solution  $W(x,y)=W(x,y)(\{t_{ij}\})$
of the following differential equation
\begin{align*}
dW(x,y)=\bigl\{d\log(xy)t_{01}&+d\log(y)t_{02}+d\log(xy-y)t_{12} \\
&+d\log(1-xy)t_{13}+d\log(y-1)t_{23}
\bigr\}\cdot W(x,y)
\end{align*}
appearing in \cite[p.347]{F11}
with the prescribed approximation condition
$W(x,y)\approx x^{t_{01}}y^{t_{01}+t_{02}+t_{12}}$ ($x,y\to 0$ in $\R_+$)
over
the space
\begin{equation*}
\mathcal M_{0,5}:=
\{(x,y)\in\mathbb G_m^2 \bigm| x,y\neq 0,1,xy\neq 1\}.
\end{equation*}
By \cite[Proposition 12.2]{OU},
we have
\begin{equation}\label{eq:W=Gy G0}
W(x,y)=
G_y(t_{01},t_{21},t_{31})(x)\cdot G_0(t_{01}+t_{02}+t_{12},t_{32})(y).
\end{equation}

By \eqref{eq:flip rev dec-1=xyG0-1Gy-1} and \eqref{eq:W=Gy G0},
we obtain
\begin{align}\label{eq:flip rev dec-1=W-1}
\flip\circ\rev\circ (\dec)^{-1} &
 \left( y^{f_{0}}G_0(f_0,f_1)(y)^{-1}\otimes x^{f_{0}}G_y(f_0,f_1,f_2)(x)^{-1}\right) \\ \notag
&\qquad\qquad
=\left( W(x,y)(\{t_{ij}\})\cdot x^{-t_{01}}y^{-(t_{01}+t_{02}+t_{12})}\right)^{-1} .
\end{align}

%
By definition, we have
$$
\flip\left(W(x,y)(\{t_{ij}\})\right)=
W(y,x)(\{t_{ij}\})\cdot(xy)^{-c_4}
$$
where $c_4$ is the center $\sum_{i<j} t_{ij}$
given in \eqref{eq: center of braid Lie algebra}.
So we have
\begin{equation}\label{eq: flip W(x,y)=W(y,x)}
\flip\left(
W(x,y)(\{t_{ij}\})\cdot x^{-t_{01}}y^{-(t_{01}+t_{02}+t_{12})}\right)
=W(y,x)(\{t_{ij}\})\cdot y^{-t_{01}}x^{-(t_{01}+t_{02}+t_{12})}.
\end{equation}
Whence by \eqref{eq:flip rev dec-1=W-1} and \eqref{eq: flip W(x,y)=W(y,x)},
we have
\begin{align*}
\rev & \circ (\dec)^{-1}
\left( y^{f_{0}}G_0(f_0,f_1)(y)^{-1}\otimes x^{f_{0}}G_y(f_0,f_1,f_2)(x)^{-1}
\right) \\
&=\flip\circ\flip\circ\rev\circ (\dec)^{-1}
\left( y^{f_{0}}G_0(f_0,f_1)(y)^{-1}\otimes x^{f_{0}}G_y(f_0,f_1,f_2)(x)^{-1}
\right) \\
&=\flip\left(
W(x,y)(\{t_{ij}\})\cdot x^{-t_{01}}y^{-(t_{01}+t_{02}+t_{12})} \right)^{-1} \\
&=\left(
W(y,x)(\{t_{ij}\})\cdot y^{-t_{01}}x^{-(t_{01}+t_{02}+t_{12})}
\right)^{-1}.
\end{align*}
While similarly to \eqref{eq:flip rev dec-1=W-1}, we have
\begin{align*}
\flip &\circ\rev  \circ (\dec)^{-1}
\left( x^{f_{0}}G_0(f_0,f_1)(x)^{-1} \otimes y^{f_{0}}G_x(f_0,f_1,f_2)(y)^{-1}
\right) \\
&=\left(W(y,x)(\{t_{ij}\})\cdot y^{-t_{01}}x^{-(t_{01}+t_{02}+t_{12})}
\right)^{-1}.
\end{align*}
Thus \eqref{eq:revfliprev}, and hence \eqref{eq: flip Zag}, is obtained.
\end{proof}

\subsection{
$\GARI(\mathcal{F})_{\as+\abal}$ and $\ASTR$}
\label{subsec:def and prop of GARI as pent}
We introduce the sets $\GARI(\mathcal{F})_{\as+\abal}$,
$\GARI(\mathcal{F})_{\underline{\as+\abal}}$
and $\ARI(\mathcal{F})_{\underline{\al+\abal}}$, in Definition \ref{defn: GARI as+bal}
by using the map $\bal$ introduced  in \eqref{eq:bal P4F}.
We also introduce 
the sets
$\GARI(\mathcal{F})_{\as+\pent}$,
$\GARI(\mathcal{F})_{\underline{\as+\pent}}$ 
and $\ARI(\mathcal{F})_{\underline{\al+\pent}}$ 
using the pentagon equation in Definition  \ref{defn: GARI as+pent}.
By proving in Theorem \ref{pentagonal characterization theorem} that
$\GARI(\mathcal{F})_{\as+\abal} = \GARI(\mathcal{F})_{\as+\pent}$, 
$\GARI(\mathcal{F})_{\underline{\as+\abal}} = \GARI(\mathcal{F})_{\underline{\as+\pent}}$, 
$\ARI(\mathcal{F})_{\underline{\al+\abal}} = \ARI(\mathcal{F})_{\underline{\al+\pent}}$,
we show that, 
when ${\mathcal F}={\mathcal F}_\ser$,
they recover $\ASTR$, $\GRT_1$, and $\grt_1$, respectively, as stated in Theorem \ref{thm: recovery theorem of ASTR and GRT}.

\begin{defn}\label{def: well-balanced}
(1). Let $\mathcal{F}$ be a divisible family of functions.
A mould $M\in \mathcal M(\mathcal F)$ is called {\it well-balanced}
when
$$
\bal (M_{[1]}\otimes M_{[2]})=M_{[1]}\otimes M_{[2]}
$$
holds in $\mathcal P_4(\mathcal F)$.
It is called {\it balanced} when
\begin{equation}\label{eq:almost-balanced condition}
\bal (M_{[1]}\otimes M_{[2]})=M_{[1]}\otimes (M_{[2]}\times C)
\end{equation}
holds  with a constant mould  $C\in\mathcal{M}({{\mathcal F}};[2])$.

(2).
A mould $M\in \mathcal M(\mathcal F)$ is called {\it linearly balanced}
when
\begin{equation}\label{eq: linearly almost-balanced condition}
\bal (M_{[1]}\otimes 1 +1 \otimes M_{[2]})=M_{[1]}\otimes 1 +1\otimes (M_{[2]}+ C)
\end{equation}
holds  with a constant mould  $C\in\mathcal{M}({{\mathcal F}};[2])$.
\end{defn}

\begin{lem}\label{lem: maC is commutator group-like}
(1).
Assume that $M\in\GARI(\mathcal{F})_{\as}$ is balanced as \eqref{eq:almost-balanced condition},
Then, the constant mould $C$ is symmetral and $C\varia u1=C\varia u2=0$, i.e., $\ma_{[2]}^{-1}(C)\in\mathbb{Q}\langle\langle f_{1},f_{2}\rangle\rangle\subset\widehat{U\mathfrak{f}_{3}}$
is commutator group-like.

(2). Assume that $M\in\ARI(\mathcal{F})_{\al}$ is linearly balanced as \eqref{eq: linearly almost-balanced condition}.
Then, the constant mould $C$ is alternal and $C\varia u1=C\varia u2=0$, i.e., $\ma_{[2]}^{-1}(C)\in\mathbb{Q}\langle\langle f_{1},f_{2}\rangle\rangle\subset\widehat{U\mathfrak{f}_{3}}$
is commutator Lie-like.
\end{lem}

\begin{proof}
(1).
The symmetrality follows from the symmetrality of $\bal(M_{[2]}\otimes M_{[1]})$
and $M_{[2]}\otimes M_{[1]}$.
The condition $C\varia u1=C\varia u2=0$
follows from the direct calculation using Example \ref{eg: val_dep1}.

(2). The proof proceeds in the same way as the proof of (1).
\end{proof}

\begin{defn}\label{defn: GARI as+bal}
Let $\mathcal{F}$ be a divisible family of functions.
We consider the following two subsets
$\GARI(\mathcal{F})_{\as+\abal}$ ($\supset$)
$\GARI(\mathcal{F})_{\underline{\as+\abal}}$ of $\GARI(\mathcal{F})_\as$ (cf. Definition \ref{def:al-il-as-is}):
\begin{align*}
\GARI(\mathcal{F})_{\as+\abal}
&:=\left\{M\in\GARI(\mathcal{F})_{\as}\ \middle|\
M \text{ is balanced}
\right\}, \\
\GARI(\mathcal{F})_{\underline{\as+\abal}}
&:=\{M\in\GARI(\mathcal{F})_{\as+\abal}\bigm| M(x_1) \text{ is even }
\},
\end{align*}
and the following subset
$\ARI(\mathcal{F})_{\underline{\al+\abal}}$ of $\ARI(\mathcal{F})_{\al}$ 
(cf. Definition \ref{def:al-il-as-is}):
$$
\ARI(\mathcal{F})_{\underline{\al+\abal}}
:=\left\{M\in\ARI(\mathcal{F})_{\al}\ \middle|\
M \text{ is  linearly balanced and } M(x_1) \text{ is even}
\right\}.
$$
\end{defn}

\begin{rem}
We note that
another (more pentagon-like) characterization of $\GARI(\mathcal{F})_{\as+\abal}$ will be given in
Theorem \ref{pentagonal characterization theorem}.
\end{rem}

For $\psi\in\widehat{U\mathfrak{f}_{2}}$
and disjoint subsets $S_{1},S_{2},S_{3}$ of $\{1,\dots,n-1\}$,
we write
$$\psi_{\mathcal P}^{S_{1},S_{2},S_{3}}\in {\mathcal P}_{n}(\mathcal{F})$$
for $\madec_{n}(\psi^{S_{1},S_{2},S_{3}}) $
(cf. Definition \ref{defn:polymoulds of pure braid type}).

\begin{lem}\label{lem: calc of as+bal}
For $M\in\mathcal{M}(\mathcal{F})$ and a constant mould
$$
C=\ma_{[2]}(\psi(-f_{1}-f_{2}, f_1))
$$
with a commutator group-like series $\psi\in\widehat{U\mathfrak{f}_{2}}$, we have
\begin{align*}
\bal(M_{[1]}\otimes M_{[2]}) & =
\rev  \ (M^{01,2,3}\mulp M^{0,1,23}),\\
M_{[1]}\otimes (M_{[2]}\times C)
& =
\rev  \ (M^{0,1,2}\mulp M^{0,12,3}\mulp \psi_{\mathcal P}^{1,2,3} )
\end{align*}
\end{lem}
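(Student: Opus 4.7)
The plan is to translate both identities, via the isomorphism $\madec_4^{-1}$, into explicit equalities in $\widehat{U\mathfrak{t}_4}^\dag$. Set $\varphi\coloneqq\ma^{-1}(M)\in\widehat{U\mathfrak{f}_2}^\dag$. Then $M_{[1]}\otimes M_{[2]}$ corresponds under $\madec_4^{-1}$ to $\dec^{-1}(\varphi(f_0,f_1)\otimes\varphi(f_0,f_1+f_2))=\varphi(t_{02},t_{12})\cdot\varphi(t_{03},t_{13}+t_{23})$, while by Definition \ref{defn:Ev S0 S1 S2} each $M^{S_0,S_1,S_2}$ corresponds to $\varphi^{S_0,S_1,S_2}$ as in \eqref{eq:varphi S0,S1,S2}, and $\psi_{\mathcal P}^{1,2,3}$ to $\psi(t_{12},t_{23})$. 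By mould-properness of the relevant maps together with the unique prolongation theorem (Theorem \ref{thm:UniqueProlongation}), it suffices to verify both identities for $\mathcal{F}=\mathcal{F}_\ser$.

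For the first equation, I would use Proposition \ref{prop: bal= flip} to rewrite $\bal=\rev\circ\flip\circ\rev$, and, since $\rev$ is an involution, reduce to proving
\[
\flip\circ\rev\bigl(M_{[1]}\otimes M_{[2]}\bigr)=M^{01,2,3}\mulp M^{0,1,23}.
\]
Formula \eqref{eq:w3w2=Xiw2w3} gives $\rev(M_{[1]}\otimes M_{[2]})\leftrightarrow\varphi(-t_{02}-t_{12},t_{12})\cdot\varphi(-t_{03}-t_{13}-t_{23},t_{13}+t_{23})$. Applying $\flip$ (an algebra automorphism by Lemma \ref{lem: Hopf algebra with flip}) and using its explicit action $t_{ij}\mapsto t_{4-i,4-j}$ for $i,j>0$ and $t_{0j}\mapsto-\sum_{k}t_{k,4-j}$, a direct calculation yields $\flip(-t_{02}-t_{12})=t_{02}+t_{12}$, $\flip(t_{12})=t_{23}$, $\flip(-t_{03}-t_{13}-t_{23})=t_{01}$, $\flip(t_{13}+t_{23})=t_{12}+t_{13}$, so the image is $\varphi(t_{02}+t_{12},t_{23})\cdot\varphi(t_{01},t_{12}+t_{13})$, which is exactly $M^{01,2,3}\mulp M^{0,1,23}$.

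For the second equation, applying $\rev$ to both sides reduces to showing $\rev(M_{[1]}\otimes(M_{[2]}\times C))=M^{0,1,2}\mulp M^{0,12,3}\mulp\psi_{\mathcal P}^{1,2,3}$. The LHS corresponds under $\madec_4^{-1}$ to $\dec^{-1}\bigl(\varphi(f_0,f_1)\otimes(\varphi(f_0,f_1+f_2)\cdot\psi(-f_1-f_2,f_1))\bigr)$, and the substitution rule \eqref{eq:w3w2=Xiw2w3}, applied factor by factor, produces
\[
\varphi(-t_{02}-t_{12},t_{12})\cdot\varphi(-t_{03}-t_{13}-t_{23},t_{13}+t_{23})\cdot\psi(-t_{13}-t_{23},t_{23}).
\]
Three successive applications of Lemma \ref{lem:phi_change} then transform the three factors in turn into $\varphi(t_{01},t_{12})$, $\varphi(t_{01}+t_{02},t_{13}+t_{23})$, and $\psi(t_{12},t_{23})$, producing the desired triple product.

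The main obstacle is the verification of the two commutation hypotheses of Lemma \ref{lem:phi_change} for each of the three replacements. The required identities, such as $[-t_{02}-t_{12},t_{01}]=0$, $[t_{01}+t_{02},t_{12}]=0$, $[t_{12},t_{13}+t_{23}]=0$, and the analogue at level three, all follow from the infinitesimal braid relations $[t_{ij},t_{ik}+t_{jk}]=0$ together with the centrality of $c_4=\sum_{i<j}t_{ij}$; the latter is used crucially to rewrite sums like $t_{01}+t_{02}+t_{03}+t_{13}+t_{23}$ as $c_4-t_{12}$ and thereby to commute them with $t_{13}+t_{23}$. A preliminary point that must be noted is $\psi\in\widehat{U\mathfrak{f}_2}^\dag$: since $\psi$ is commutator group-like, $\log\psi\in[\hat{\mathfrak{f}}_2,\hat{\mathfrak{f}}_2]$, whose degree-$1$ component vanishes, so $\log\psi\in D^1\mathfrak{f}_2$ and hence $\psi\in\widehat{U}(D^1\mathfrak{f}_2)=\widehat{U\mathfrak{f}_2}^\dag$ by Proposition \ref{prop: UF=UD1F}.
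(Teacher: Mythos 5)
Your proposal is correct and follows essentially the same route as the paper's proof: reduce to $M=\ma(\varphi)$ with $\mathcal{F}=\mathcal{F}_\ser$ via the unique prolongation theorem, transport everything to $\widehat{U\mathfrak{t}_4}^\dag$, use \eqref{eq:w3w2=Xiw2w3} together with Lemma \ref{lem:phi_change} (braid relations plus centrality of $c_4$) for the second identity, and combine Proposition \ref{prop: bal= flip} with the explicit action of $\flip$ on the $t_{ij}$ for the first. The only cosmetic difference is that the paper obtains the first identity by recycling the rewriting of $M_{[1]}\otimes M_{[2]}$ from the second computation and then applying $\bal\circ\rev=\rev\circ\flip$, whereas you compute $\flip\circ\rev(M_{[1]}\otimes M_{[2]})$ directly; the underlying calculation is identical.
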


\begin{proof}
It is enough to show the equality for the case $M=\ma(\varphi)$ for
$\varphi\in\widehat{U{\mathfrak{f}}_{2}}^{\dag}$. Then
\begin{align*}
M_{[1]}\otimes & (M_{[2]}\times C)
=(\ma_{[1]}\otimes\ma_{[2]})
(\varphi(f_{0},f_{1})\otimes \varphi(f_{0},f_{1}+f_{2})\psi(-f_{1}-f_2,f_1)
).\\
\intertext{Under the identification \eqref{eq:seq of isom: UT4 and M2F}, we have}
& =\varphi(t_{02},t_{12})\varphi(t_{03},t_{13}+t_{23})\psi(-t_{13}-t_{23},t_{13}) \\
 & = \rev (\varphi(-t_{02}-t_{12},t_{12})\varphi(t_{43},t_{13}+t_{23})\psi(-t_{13}-t_{23},t_{23}) )\\
 &= \rev (\varphi(t_{01},t_{12})\varphi(t_{01}+t_{02},t_{13}+t_{23})\psi(t_{12},t_{23})) \\
 & = \rev  \ (\varphi^{0,1,2}\varphi^{0,12,3}\psi^{1,2,3} )\\
 & = \rev  \ (M^{0,1,2}\mulp M^{0,12,3}\mulp \psi_{\mathcal P}^{1,2,3} ),
\end{align*}
with $t_{4i}=t_{i4}:=-\sum_{0\leq k<4, k\neq i}t_{4k}$.
And we have
\begin{align*}
\bal
(M_{[1]}& \otimes M_{[2]})
=
\bal
\circ(\ma_{[1]}\otimes\ma_{[2]})
(\varphi(f_{0},f_{1})\otimes \varphi(f_{0},f_{1}+f_{2})). \\
\intertext{By the above computations,
we have}
& = {\bal\circ \rev (\varphi(t_{01},t_{12})\varphi(t_{43},t_{13}+t_{23}))}\\
& = \rev\circ\flip (\varphi(t_{01},t_{12})\varphi(t_{43},t_{13}+t_{23})) \\
& = \rev  \ (\varphi(t_{43},t_{32})\varphi(t_{01},t_{31}+t_{21}))\\
& = \rev  \ (\varphi(t_{02}+t_{12},t_{32})\varphi(t_{01},t_{31}+t_{21}))\\
& = \rev  \ (\varphi^{01,2,3}\varphi^{0,1,23})\\
& = \rev  \ (M^{01,2,3}\mulp M^{0,1,23}).
\end{align*}
\end{proof}

\begin{lem}\label{lem: calc of al+bal}
For $M\in\ARI(\mathcal{F})$ and a constant mould
$$
C=\ma_{[2]}(\psi(-f_{1}-f_{2}, f_1))
$$
with a commutator Lie-like series $\psi\in\widehat{U\mathfrak{f}_{2}}$, we have
\begin{align*}
\bal(M_{[1]}\otimes1 +1\otimes M_{[2]}) & =
\rev  \ (M^{01,2,3}+ M^{0,1,23}),\\
M_{[1]}\otimes 1 +1\otimes  (M_{[2]}+ C)
& =
\rev  \ (M^{0,1,2}+ M^{0,12,3}+ \psi_{\mathcal P}^{1,2,3} ).
\end{align*}
\end{lem}
\begin{proof}
It can be proved in the same way as that of Lemma \ref{lem: calc of as+bal}.
\end{proof}

We consider the following pentagonal variants of
$\GARI(\mathcal{F})_{\as+\bal}$,
$\GARI(\mathcal{F})_{\underline{\as+\bal}}$
and
$\ARI(\mathcal{F})_{\underline{\al+\bal}}$:

\begin{defn}\label{defn: GARI as+pent}
For a divisible family $\mathcal{F}$ of functions, we define
the following  two subsets of $\GARI(\mathcal{F})_{\as}$ (cf. Definition \ref{def:al-il-as-is}):
\begin{align*}
\GARI(\mathcal{F})_{\as+\pent}
&:=\left\{M\in\GARI(\mathcal{F})_{\as}\ \middle|\
\begin{array}{c}
\text{there is a  commutator group-like series } \\ \psi\in\widehat{U\mathfrak{f}_{2}} \
\text{satisfying} \ \eqref{eq: MM=MMpsi} \ \text{below  in} \ \mathcal P_4(\mathcal F)
\end{array}
\right\}, \\
\GARI(\mathcal{F})_{\underline{\as+\pent}}
&:=\{M\in\GARI(\mathcal{F})_{\as+\pent}\bigm| M(x_1) \text{ is even }
\},
\end{align*}
\begin{equation}\label{eq: MM=MMpsi}
M^{01,2,3}\mulp M^{0,1,23}=M^{0,1,2}\mulp M^{0,12,3}\mulp\psi_{\mathcal{P}}^{1,2,3},
\end{equation}
(see \eqref{eq: mulp:Pn} for $\mulp$)
and the following subset
of $\ARI(\mathcal{F})_{\al}$ (cf. Definition \ref{def:al-il-as-is}):
$$
\ARI(\mathcal{F})_{\underline{\al+\pent}}
:=\left\{M\in\ARI(\mathcal{F})_{\al}\ \middle|\
\begin{array}{c}
\text{there is a  commutator group-like series } \\ \psi\in\widehat{U\mathfrak{f}_{2}} \
\text{satisfying} \ \eqref{eq: linMM=MMpsi} \ \text{below  in} \ \mathcal P_4(\mathcal F) \\
\text{ and }M(x_1)\text{ is even}
\end{array}
\right\},
$$
\begin{equation}\label{eq: linMM=MMpsi}
M^{01,2,3}+M^{0,1,23}=M^{0,1,2}+ M^{0,12,3}+\psi_{\mathcal{P}}^{1,2,3}.
\end{equation}
\end{defn}

The following ensures that they are identical to our previous objects:

\begin{thm}\label{pentagonal characterization theorem}
We have
\begin{align*}
\GARI(\mathcal{F})_{\as+\bal} = \GARI(\mathcal{F})_{\as+\pent},\\
\GARI(\mathcal{F})_{\underline{\as+\bal}} = \GARI(\mathcal{F})_{\underline{\as+\pent}},\\
\ARI(\mathcal{F})_{\underline{\al+\bal}} = \ARI(\mathcal{F})_{\underline{\al+\pent}}.
\end{align*}
More precisely,  under the identification,
the constant mould $C$ appearing in the definition of
$\GARI(\mathcal{F})_{\as+\bal}$ 
(resp. $\ARI(\mathcal{F})_{\underline{\al+\bal}}$)
is given by
$$C=\ma_{[2]}(\psi(-f_{1}-f_{2},f_1))$$ with
the series $\psi$ appearing in $\GARI(\mathcal{F})_{\as+\pent}$
(resp. $\ARI(\mathcal{F})_{\underline{\al+\pent}}$)
\end{thm}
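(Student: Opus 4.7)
The plan is to deduce this equivalence directly from Lemma \ref{lem: calc of as+bal}, using only that the involution $\rev$ on $\mathcal P_4(\mathcal F)$ is a bijection. Concretely, Lemma \ref{lem: calc of as+bal} rewrites both sides of the balanced condition \eqref{eq:almost-balanced condition} as $\rev$ applied to the two sides of the pentagon-type identity \eqref{eq: MM=MMpsi}, provided the mould $C$ and the series $\psi$ are linked by $C = \ma_{[2]}(\psi(-f_1-f_2, f_1))$. Applying $\rev^{-1}$ to both sides then translates one condition to the other.

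For the inclusion $\GARI(\mathcal F)_{\as+\pent} \subset \GARI(\mathcal F)_{\as+\bal}$, suppose $M \in \GARI(\mathcal F)_{\as+\pent}$ with commutator group-like $\psi \in \widehat{U\mathfrak f_2}$ witnessing \eqref{eq: MM=MMpsi}. I set $C := \ma_{[2]}(\psi(-f_1-f_2, f_1)) \in \mathcal M(\mathcal F;[2])$, which is a constant mould because $\psi$ is a constant series. Lemma \ref{lem: calc of as+bal} then immediately yields $\bal(M_{[1]} \otimes M_{[2]}) = \rev(M^{01,2,3} \mulp M^{0,1,23}) = \rev(M^{0,1,2} \mulp M^{0,12,3} \mulp \psi_{\mathcal P}^{1,2,3}) = M_{[1]} \otimes (M_{[2]} \times C)$, so $M$ is balanced.

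For the converse, suppose $M \in \GARI(\mathcal F)_{\as+\bal}$, so \eqref{eq:almost-balanced condition} holds for some constant mould $C$. By Lemma \ref{lem: maC is commutator group-like}, the series $\chi := \ma_{[2]}^{-1}(C) \in \widehat{U\mathfrak f_3}$ in fact lies in $\mathbb Q\langle\langle f_1, f_2\rangle\rangle \subset \widehat{U\mathfrak f_3}$ and is commutator group-like there. The Lie algebra automorphism of $\mathfrak f_2$ given by $f_0 \mapsto f_1$, $f_1 \mapsto -f_0 - f_1$ is an involution and preserves $\exp[\widehat{\mathfrak f_2}, \widehat{\mathfrak f_2}]$; hence the series $\psi(f_0, f_1) := \chi(f_1, -f_0 - f_1)$ is commutator group-like in $\widehat{U\mathfrak f_2}$, and by construction satisfies $\psi(-f_1-f_2, f_1) = \chi(f_1, f_2)$, so $C = \ma_{[2]}(\psi(-f_1-f_2, f_1))$. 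Applying Lemma \ref{lem: calc of as+bal} and the fact that $\rev$ is an involution, the balanced equation \eqref{eq:almost-balanced condition} becomes exactly \eqref{eq: MM=MMpsi}, so $M \in \GARI(\mathcal F)_{\as+\pent}$.

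The statement for the underlined (even) variants follows at once, since the extra condition that $M(x_1)$ be even is imposed identically in both definitions and is untouched by the above correspondence. I do not anticipate a serious obstacle: the entire argument is a two-line consequence of Lemma \ref{lem: calc of as+bal} and Lemma \ref{lem: maC is commutator group-like}, the only mild point being the verification that the change of variables $(f_0, f_1) \mapsto (f_1, -f_0-f_1)$ is an involutive Lie algebra automorphism so that the inverse passage from $C$ to $\psi$ preserves commutator group-likeness.
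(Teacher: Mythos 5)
Your proposal is correct and follows essentially the paper's own proof, which consists of the single line ``It follows from Lemmas \ref{lem: maC is commutator group-like} and \ref{lem: calc of as+bal}''; you have simply filled in the routine details (applying the bijection $\rev$ to both identities of Lemma \ref{lem: calc of as+bal}, and in the converse direction producing $\psi$ from $C$ via Lemma \ref{lem: maC is commutator group-like}). One small inaccuracy: the substitution $(f_0,f_1)\mapsto(f_1,-f_0-f_1)$ has order three, not two, so it is not an involution --- but all your argument actually uses is that it is an automorphism of the free Lie algebra carrying $\exp[\widehat{\mathfrak f_2},\widehat{\mathfrak f_2}]$ to itself, which is true.
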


\begin{proof}
It follows from
 Lemmas \ref{lem: maC is commutator group-like} and \ref{lem: calc of as+bal}.
\end{proof}

\begin{thm}\label{thm: recovery theorem of ASTR and GRT}
We have
\[
(\ma)^{-1}(\GARI(\mathcal{F}_{\ser})_{\as+\bal})={\ASTR}\cdot \exp{\Q f_{1}},
\]
\[
(\ma)^{-1}(\GARI(\mathcal{F}_{\ser})_{\underline{\as+\bal}})=\GRT_{1}\cdot\exp{\Q f_{1}},
\]
\[
(\ma)^{-1}(\ARI(\mathcal{F}_{\ser})_{\underline{\al+\bal}})=\grt_{1}\oplus{\Q f_{1}}.
\]
\end{thm}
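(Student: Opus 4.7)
The plan is to invoke Theorem \ref{pentagonal characterization theorem} to replace $\as+\bal$ by $\as+\pent$, and then Proposition \ref{prop:bijection expf and GARIas} to translate the question via $\ma^{-1}$ to a question about $\varphi\in\exp\widehat{\mathfrak f_2}^\dag$. Under \eqref{eq:seq of isom: UT4 and M2F}, the defining equation of $\GARI(\mathcal{F}_\ser)_{\as+\pent}$ becomes
\[
\varphi^{01,2,3}\varphi^{0,1,23}=\varphi^{0,1,2}\varphi^{0,12,3}\psi^{1,2,3}\qquad\text{in }\widehat{U\mathfrak t_4}
\]
for some commutator group-like $\psi$, and the task reduces to showing that this equation is solvable in $\psi$ precisely when $\varphi\in\ASTR\cdot\exp(\Q f_1)$.

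For $\supset$, write $\varphi=\varphi_0\exp(cf_1)$ with $\varphi_0\in\ASTR$ and $c\in\Q$. Each $\varphi^{S_0,S_1,S_2}$ splits multiplicatively as $\varphi_0^{S_0,S_1,S_2}\cdot\exp(c\sum_{S_1\times S_2}t_{jk})$, and the braid relations $[t_{23},t_{12}+t_{13}]=[t_{12},t_{01}+t_{02}]=[t_{12},t_{13}+t_{23}]=0$ let us commute the exponential factors past the $\varphi_0$-factors on each side and assemble them into the common central factor $\exp(c(t_{12}+t_{13}+t_{23}))$; after cancellation the equation reduces to Drinfeld's pentagon for $\varphi_0$, so it holds with $\psi:=\varphi_0$. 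Note that $\varphi_0\in\ASTR$ is commutator group-like: evaluating the pentagon in degree $1$ yields $\alpha t_{01}+\beta t_{23}=0$, forcing the $f_0$- and $f_1$-linear parts $\alpha,\beta$ of $\log\varphi_0$ to vanish.

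For $\subset$, suppose $\varphi$ satisfies the generalized pentagon with some commutator group-like $\psi$. Set $c:=\langle\varphi\mid f_1\rangle\in\Q$ and $\varphi_0:=\varphi\cdot\exp(-cf_1)\in\exp\widehat{\mathfrak f_2}^\dag$; running the above computation backwards shows $\varphi_0$ satisfies the same generalized pentagon with the same $\psi$, and now $\log\varphi_0\in[\widehat{\mathfrak f_2},\widehat{\mathfrak f_2}]$, so $\varphi_0(0,v)=1$ for any $v$. Applying the Lie-algebra quotient $\pi:\mathfrak t_4\twoheadrightarrow\mathfrak t_3$ that kills each $t_{0j}$ and fixes the remaining generators annihilates $\varphi_0^{0,1,23}$, $\varphi_0^{0,1,2}$ and $\varphi_0^{0,12,3}$, collapsing the pentagon to
\[
\varphi_0(t_{12},t_{23})=\psi(t_{12},t_{23})\qquad\text{in }\widehat{U\mathfrak t_3}.
\]
Since $\mathfrak t_3\cong\mathfrak f_2\oplus\Q(t_{12}+t_{13}+t_{23})$ via the $1$-dimensional center, $\langle t_{12},t_{23}\rangle$ is free of rank $2$, so the induced map $\widehat{U\mathfrak f_2}\hookrightarrow\widehat{U\mathfrak t_3}$ is injective, forcing $\psi=\varphi_0$. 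Hence $\varphi_0\in\ASTR$, and $\varphi\in\ASTR\cdot\exp(\Q f_1)$. This rigidity step---pinning down $\psi$ through a single strand-$0$ degeneration---is expected to be the main obstacle.

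For the even statement, evenness of $M(x_1)=\sum_k\langle\varphi\mid f_1f_0^k\rangle x_1^k$ amounts to $\langle\varphi\mid f_1f_0^k\rangle=0$ for odd $k\geq1$. Since $\exp(cf_1)$ contains no $f_0$, the terminal letter $f_0$ of $f_1f_0^k$ (for $k\geq1$) forces any factorization in the product $\varphi_0\exp(cf_1)$ to assign the empty word to the $\exp(cf_1)$ factor, giving $\langle\varphi\mid f_1f_0^k\rangle=\langle\varphi_0\mid f_1f_0^k\rangle$ for $k\geq1$. The case $k=1$ is $\varphi_0\in\ASTR_0=\GRT_1$, while the cases of odd $k\geq3$ follow automatically from the standard parity of $\GRT_1$, under which the depth-$1$ part of $\log\varphi_0$ is a $\Q$-linear combination of $\mathrm{ad}_{f_0}^{2j}(f_1)$ with $j\geq1$; that is, only even powers of $f_0$ appear.
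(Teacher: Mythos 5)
Your proposal is correct and follows essentially the same route as the paper: reduce to $\as+\pent$ via Theorem \ref{pentagonal characterization theorem}, then apply the degeneration $t_{0j}\mapsto 0$ from $\widehat{U\mathfrak t_4}$ to $\widehat{U\mathfrak t_3}$ together with the centrality of $t_{12}+t_{13}+t_{23}$ to force $\psi=\varphi$ up to $\exp(\Q f_1)$, and then substitute back to recover Drinfeld's pentagon. The only differences are cosmetic (you normalize $\varphi$ to $\varphi_0$ before degenerating, whereas the paper extracts $\exp(\alpha f_1)$ afterwards) plus the fact that you spell out the even-part statement—via $\langle\varphi\mid f_1f_0^{k}\rangle=\langle\varphi_0\mid f_1f_0^{k}\rangle$ for $k\geq 1$ and the parity of depth-one coefficients of $\GRT_1$-elements—which the paper leaves implicit.
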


\begin{proof}
By Theorem \ref{pentagonal characterization theorem},
it is reduced to show the following equalities:
\begin{align*}
(\ma)^{-1}(\GARI(\mathcal{F}_{\ser})_{\as+\pent})={\ASTR}\cdot \exp{\Q f_{1}}, \\
(\ma)^{-1}(\GARI(\mathcal{F}_{\ser})_{\underline{\as+\pent}})=\GRT_{1}\cdot\exp{\Q f_{1}},\\
(\ma)^{-1}(\ARI(\mathcal{F}_{\ser})_{\underline{\al+\pent}})=\grt_{1}\oplus{\Q f_{1}}.
\end{align*}
By definition, the left-hand side of the first equality
$(\ma)^{-1}(\GARI(\mathcal{F}_{\ser})_{\as+\pent})$
is the set of group-like elements $\varphi$ of $\widehat{U\mathfrak{f}_{2}}^{\dagger}$
such that there exists a commutator group-like series $\psi$ satisfying
\begin{equation*}\label{eq:pent_phi_psi}
\varphi^{01,2,3}\varphi^{0,1,23}=
\varphi^{0,1,2}\varphi^{0,12,3}\psi^{1,2,3}.
\end{equation*}
By Theorem \ref{thm: flip characterization},
this set coincides with  ${\ASTR}\cdot \exp{\Q f_{1}}$
and hence the claim follows.
The second equality follows from the first one.
The proof of third one proceed similarly.
%
\end{proof}


\begin{rem}
Drinfeld \cite{Dr} showed that  $\GRT_1$ forms a group, $\ASTR$ forms a $\GRT_1$-set
under the operation $\circledast$
and $\grt_1$ forms a Lie algebra under  the bracket $\{,\}$
(cf. Definition \ref{defn:ASTR and GRT1}).
In our setting, however, it is not known whether the set
the set $\GARI(\mathcal{F})_{\underline{\as+\abal}}$ forms a group,
whether $\GARI(\mathcal{F})_{\as+\abal}$  ((or an appropriate subset thereof) 
forms a $\GARI(\mathcal{F})_{\underline{\as+\abal}}$-set
(or a torsor)
under the product $\gari$ (Definition \ref{def:gari})
and  $\ARI(\mathcal{F})_{\underline{\al+\abal}}$  form a Lie algebra
under the $\ari$-bracket (Definition \ref{defn:ari-bracket}).
\end{rem}

\subsection{Associators accompanied with moulds in $\GARI(\mathcal{F})_{\as+\pent}$ }
\label{subsec: pentagon equation for the associated constant moulds}
We prepare the fact that the series $\psi$ appearing  in the definition of
$\GARI(\mathcal{F})_{\as+\pent}$ (Definition \ref{defn: GARI as+pent})
is actually an associator in Proposition \ref{prop:psi satisfies the pentagon equation}.
This fact will be required in the next subsection.

For distinct subsets $S_{1},S_{2},S_{3},S_{4}$ of $\{1,2,3,4,5\}$,
define the map 
\[
\ev^{S_{1},S_{2},S_{3},S_{4}}:\widehat{U\mathfrak{t}_{4}}\to\widehat{U\mathfrak{t}_{5}}
\]
by $t_{ij}\mapsto\sum_{k\in S_{i},l\in S_{j}}t_{kl}$. Put
\[
\ev_{1}=\ev^{12,3,4,5},\quad \ev_{2}=\ev^{1,23,4,5},\quad \ev_{3}=\ev^{1,2,34,5},
\quad \ev_{4}=\ev^{1,2,3,45}.
\]
Then $\ev_{i}(\widehat{U\mathfrak{t}_{4}}^\dag)\subset\widehat{U\mathfrak{t}_{5}}^\dag$ for $1\leq i\leq4$.
Since $\widehat{U\mathfrak{t}_{n}}^\dag$ is canonically
isomorphic to $\mathcal{M}(\mathcal{F}_{{\ser}};\{1,\dots,n-1\})$,
$\ev_{i}$ ($1\leq i\leq4$) induce the map
\[
\Ev_{i}:\mathcal{M}(\mathcal{F}_{{\ser}};\{1,2,3\})\to\mathcal{M}(\mathcal{F}_{{\ser}};\{1,2,3,4\}).
\]
By Lemma \ref{lem:mould-proper-B}, the map $\Ev_{i}$ is mould-proper. Thus by Theorem \ref{thm:UniqueProlongation}, $\Ev_{i}$ uniquely
extends to a mould-map from $\mathcal{M}(\mathcal{F};\{1,2,3\})$
to $\mathcal{M}(\mathcal{F};\{1,2,3,4\})$ for any divisible family of functions of $\mathcal{F}$.

\begin{lem}\label{lem:g1234}
Let $\mathcal{F}$ be a divisible family of functions.
Let $M\in\mathcal{M}(\mathcal{F})$ and $\psi\in\widehat{U\mathfrak{f}_{2}}$. For $i=0,1,2,3$, define $f_{i}:\{0,1,2,3,4\}\to\{0,1,2,3\}$ by $f(p)=p$
for $p\leq i$ and $f(p)=p-1$ for $p\geq i+1$.
Then we have
\begin{align*}f_{0}^{*}(M^{01,2,3}\mulp M^{0,1,23}&-M^{0,1,2}\mulp M^{0,12,3}\mulp\psi_{\mathcal P}^{1,2,3} ) \\
& = M^{012,3,4}\mulp M^{01,2,34}-M^{01,2,3}\mulp M^{01,23,4}\mulp \psi_{\mathcal P}^{2,3,4},\\
f_{1}^{*}(M^{01,2,3}\mulp M^{0,1,23}&-M^{0,1,2}\mulp M^{0,12,3}\mulp\psi_{\mathcal P}^{1,2,3} ) \\
& = M^{012,3,4}\mulp M^{0,12,34}-M^{0,12,3} \mulp M^{0,123,4}\mulp \psi_{\mathcal P}^{12,3,4},\\
f_{2}^{*}(M^{01,2,3}\mulp M^{0,1,23}&-M^{0,1,2}\mulp M^{0,12,3}\mulp\psi_{\mathcal P}^{1,2,3} ) \\
& =M^{01,23,4} \mulp M^{0,1,234}-M^{0,1,23}\mulp M^{0,123,4}\mulp \psi_{\mathcal P}^{1,23,4},\\
f_{3}^{*}(M^{01,2,3}\mulp M^{0,1,23}&-M^{0,1,2}\mulp M^{0,12,3}\mulp\psi_{\mathcal P}^{1,2,3} ) \\
& =M^{01,2,34} \mulp M^{0,1,234}-M^{0,1,2} \mulp M^{0,12,34}\mulp \psi_{\mathcal P}^{1,2,34}
\end{align*}
in ${\mathcal P}_5({\mathcal F})$.
For the definition of $f_i^{*}$, see Definition \ref{def:the induced f*}.
\end{lem}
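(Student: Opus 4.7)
The plan is to reduce to the case $\mathcal{F}=\mathcal{F}_\ser$ via the unique prolongation theorem (Theorem \ref{thm:UniqueProlongation}), where we can work directly inside $\widehat{U\mathfrak{t}_5}^{\dag}$ through the isomorphism $\madec_5$, and then verify the four identities by pure bookkeeping on how $f_i^{*}$ acts on the expressions $\varphi^{S_0,S_1,S_2}$ defined in \eqref{eq:varphi S0,S1,S2}.

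First I would observe that each $f_i$ satisfies $f_i(0)=0$, so by Definition \ref{def:the induced f*} the map $f_i^{*,\mathcal{P}}:\mathcal{P}_5(\mathcal{F})\to\mathcal{P}_5(\mathcal{F})$ is well-defined, and by Lemma \ref{lem:mould-proper-fpullback} it is mould-proper. Both sides of each of the four identities are mould-proper functions of $M$ (built from $\mulp$ and $\Ev^{S_0,S_1,S_2}$, which are mould-proper by Lemma \ref{lem:mould-proper-mulB} and by the lemma preceding Definition \ref{defn:Ev S0 S1 S2}), so it suffices by Theorem \ref{thm:UniqueProlongation} to verify them on $\mathcal{M}(\mathcal{F}_\ser)$. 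There, unwinding $\madec_5^{-1}$, we need only check the corresponding identities for $f_i^{*}(\varphi^{S_0,S_1,S_2})$ in $\widehat{U\mathfrak{t}_5}^{\dag}$ with $\varphi=\ma^{-1}(M)$, and for $\psi$ (which is fixed).

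Next I would establish the master formula: for any disjoint $S_0,S_1,S_2\subset\{0,\dots,3\}$ and any $\varphi\in\widehat{U\mathfrak{f}_2}$,
\[
f_i^{*}\bigl(\varphi^{S_0,S_1,S_2}\bigr)=\varphi^{f_i^{-1}(S_0),\,f_i^{-1}(S_1),\,f_i^{-1}(S_2)}.
\]
This follows immediately because $f_i^{*}$ is a continuous $\Q$-algebra homomorphism (Definition \ref{def:the induced f*}) acting on the generators by $t_{ab}\mapsto\sum_{c\in f_i^{-1}(a),\,d\in f_i^{-1}(b)}t_{cd}$, so it carries the arguments $\sum_{(a,b)\in S_0\times S_1}t_{ab}$ and $\sum_{(b,c)\in S_1\times S_2}t_{bc}$ of $\varphi$ to the corresponding sums indexed by the preimages. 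Combined with the fact that $f_i^{*}$ preserves the product on $\widehat{U\mathfrak{t}_n}^{\dag}$, this shows that $f_i^{*,\mathcal{P}}$ distributes over $\mulp$ and sends each pentagon factor $M^{S_0,S_1,S_2}$ to $M^{f_i^{-1}(S_0),f_i^{-1}(S_1),f_i^{-1}(S_2)}$, and likewise for $\psi_{\mathcal{P}}^{S_0,S_1,S_2}$.

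Finally, one reads off the preimages directly from the definition $f_i(p)=p$ for $p\le i$ and $f_i(p)=p-1$ for $p\ge i+1$. For instance, $f_0$ merges $\{0,1\}\mapsto 0$, and collecting preimages gives $f_0^{-1}(0)=\{0,1\}$, $f_0^{-1}(1)=\{2\}$, $f_0^{-1}(2)=\{3\}$, $f_0^{-1}(3)=\{4\}$, which converts $(01,2,3)\mapsto(012,3,4)$, $(0,1,23)\mapsto(01,2,34)$, $(0,1,2)\mapsto(01,2,3)$, $(0,12,3)\mapsto(01,23,4)$, $(1,2,3)\mapsto(2,3,4)$; this yields the first identity. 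The remaining three cases $i=1,2,3$ are identical bookkeeping with the merged pair being $\{1,2\}$, $\{2,3\}$ and $\{3,4\}$ respectively. The only thing to watch for is that the merging never disturbs the index $0$ and preserves the disjointness of the three blocks, which is clear from the explicit description. No serious obstacle is expected; the entire proof is a routine verification, modulo the initial reduction step which leverages the unique prolongation theorem.
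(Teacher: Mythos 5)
Your proposal is correct and follows essentially the same route as the paper: verify the identities in $\widehat{U\mathfrak{t}_{5}}$ for $\mathcal{F}=\mathcal{F}_{\ser}$ (where $f_i^{*}$ acts on each $\varphi^{S_0,S_1,S_2}$ by replacing the blocks with their preimages), then extend to general divisible $\mathcal{F}$ by the unique prolongation theorem since both sides are mould-proper in $M$. The paper states this in two sentences; your write-up just makes the bookkeeping explicit.
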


\begin{proof}
The case $\mathcal{F}=\mathcal{F}_{{\ser}}$ easily follows
from the corresponding identity in $\widehat{U\mathfrak{t}_{5}}$.
Since both sides in each equation can be viewed as mould-maps of $M$,
the lemma follows from Theorem \ref{thm:UniqueProlongation}.
\end{proof}

\begin{prop}\label{prop:psi satisfies the pentagon equation}
Let $\mathcal{F}$ be a divisible family of functions.
Let $(M,\psi)\in \GARI(\mathcal{F})_{\as+\pent}$,
that is, the pair $(M,\psi)$ satisfies \eqref{eq: MM=MMpsi}.
Then we have $\psi\in\ASTR$.
\end{prop}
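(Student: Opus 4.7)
The plan is to mimic Drinfeld's classical deduction of the pentagon from the combinatorics of a fivefold bracketing. The input is the defining relation \eqref{eq: MM=MMpsi} in $\mathcal{P}_4(\mathcal{F})$; applying Lemma~\ref{lem:g1234} to it produces four consequences, which I denote $A_0, A_1, A_2, A_3$, in $\mathcal{P}_5(\mathcal{F})$, each rewriting a product of two $M^{\cdot,\cdot,\cdot}_{\mathcal{P}}$'s as another such product times a single $\psi_{\mathcal{P}}^{\cdot,\cdot,\cdot}$ factor.

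The key move will be to compute the triple product
\[
T := M^{012,3,4}_{\mathcal{P}} \mulp M^{01,2,34}_{\mathcal{P}} \mulp M^{0,1,234}_{\mathcal{P}}
\]
in two different ways. Left-associating and applying successively $A_0$, commutativity of $\psi_{\mathcal{P}}^{2,3,4}$ past $M^{0,1,234}_{\mathcal{P}}$, then $A_2$, then \eqref{eq: MM=MMpsi}, then commutativity of $\psi_{\mathcal{P}}^{1,2,3}$ past $M^{0,123,4}_{\mathcal{P}}$, will reduce $T$ to
\[
M^{0,1,2}_{\mathcal{P}} \mulp M^{0,12,3}_{\mathcal{P}} \mulp M^{0,123,4}_{\mathcal{P}} \mulp \psi_{\mathcal{P}}^{1,2,3} \mulp \psi_{\mathcal{P}}^{1,23,4} \mulp \psi_{\mathcal{P}}^{2,3,4}.
\]
Right-associating and applying successively $A_3$, commutativity of $M^{012,3,4}_{\mathcal{P}}$ past $M^{0,1,2}_{\mathcal{P}}$, then $A_1$, will reduce $T$ to
\[
M^{0,1,2}_{\mathcal{P}} \mulp M^{0,12,3}_{\mathcal{P}} \mulp M^{0,123,4}_{\mathcal{P}} \mulp \psi_{\mathcal{P}}^{12,3,4} \mulp \psi_{\mathcal{P}}^{1,2,34}.
\]
All commutativity claims used boil down, via $\madec_5$, to standard commutations in $\widehat{U\mathfrak{t}_5}$ under the infinitesimal braid relations ($[t_{ij}, t_{kl}]=0$ for disjoint indices and $[t_{ij}, t_{ik}+t_{jk}]=0$); they transfer to general divisible $\mathcal{F}$ via the unique prolongation theorem (Theorem~\ref{thm:UniqueProlongation}).

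Since each $M^{S_0,S_1,S_2}_{\mathcal{P}}$ has constant term $1$ under $\mulp$ and is hence invertible, left-cancellation of the common $M$-prefactor yields
\[
\psi_{\mathcal{P}}^{1,2,3} \mulp \psi_{\mathcal{P}}^{1,23,4} \mulp \psi_{\mathcal{P}}^{2,3,4} = \psi_{\mathcal{P}}^{12,3,4} \mulp \psi_{\mathcal{P}}^{1,2,34}
\]
in $\mathcal{P}_5(\mathcal{F})$. Because both sides are $\madec_5$-images of elements of $\widehat{U\mathfrak{t}_5}^\dag$ depending only on the fixed $\psi$, this lifts to $\psi^{1,2,3}\psi^{1,23,4}\psi^{2,3,4} = \psi^{12,3,4}\psi^{1,2,34}$ in $\widehat{U\mathfrak{t}_5}^\dag$, which after relabeling $\{1,2,3,4\} \leftrightarrow \{0,1,2,3\}$ is the pentagon equation for $\psi$ in $\widehat{U\mathfrak{t}_4}^\dag$. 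Combined with the automatic group-likeness from $\psi \in \exp[\widehat{\mathfrak{f}}_2, \widehat{\mathfrak{f}}_2]$, this gives $\psi \in \ASTR$. The main obstacle will be the careful bookkeeping of the commutativity relations and the technical passage between $\mathcal{P}_n(\mathcal{F})$ and $\widehat{U\mathfrak{t}_n}^\dag$ for arbitrary divisible $\mathcal{F}$, both of which are handled by the unique prolongation framework.
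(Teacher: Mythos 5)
Your proposal is correct and follows essentially the same route as the paper: both rest on the four specializations from Lemma~\ref{lem:g1234}, the same three commutation relations in $\mathcal{P}_5(\mathcal{F})$ (justified in $\widehat{U\mathfrak{t}_5}^\dag$ and transferred by the unique prolongation theorem), and cancellation of the invertible $M$-prefactor. The only difference is cosmetic: you evaluate the triple product $M^{012,3,4}\mulp M^{01,2,34}\mulp M^{0,1,234}$ in two ways and compare, whereas the paper solves the four relations for $\psi_{\mathcal{P}}^{12,3,4}\mulp\psi_{\mathcal{P}}^{1,2,34}$ and simplifies directly — the same algebra rearranged.
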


\begin{proof}
Since $\psi$ is a group-like, it is enough to show the pentagon equation for $\psi$.
By Lemma \ref{lem:g1234}, we have
\begin{align*}
  M^{012,3,4}\mulp M^{01,2,34}=M^{01,2,3}\mulp M^{01,23,4}\mulp \psi_{\mathcal P}^{2,3,4},\\
M^{012,3,4}\mulp M^{0,12,34}=M^{0,12,3} \mulp M^{0,123,4}\mulp \psi_{\mathcal P}^{12,3,4},\\
M^{01,23,4} \mulp M^{0,1,234}=M^{0,1,23}\mulp M^{0,123,4}\mulp \psi_{\mathcal P}^{1,23,4},\\
M^{01,2,34} \mulp M^{0,1,234}=M^{0,1,2} \mulp M^{0,12,34}\mulp \psi_{\mathcal P}^{1,2,34}
\end{align*}
in ${\mathcal P}_5({\mathcal F})$.
Furthermore, by Theorem \ref{thm:UniqueProlongation}, we have the following commutativities
\begin{align*}
(M^{0,1,2})^{\mulp -1}\mulp M^{012,3,4} & =M^{012,3,4}\mulp (M^{0,1,2})^{\mulp -1},\\
M^{0,1,234}\mulp \psi_{\mathcal P}^{2,3,4} & =\psi_{\mathcal P}^{2,3,4}\mulp M^{0,1,234},\\
M^{0,123,4}\mulp \psi_{\mathcal P}^{1,2,3} & =\psi_{\mathcal P}^{1,2,3} \mulp M^{0,123,4}
\end{align*}
in ${\mathcal P}_5({\mathcal F})$,
since we can check these equalities for general $M\in {\rm GARI}(\mathcal{F}_{{\ser}})$,
whose validities are deduced from the ones in  $\widehat{U\mathfrak{t}_{5}}^\dag$ via $\madec_5$.
Thus
\begin{align*}\psi_{\mathcal{P}}^{12,3,4}&\mulp\psi_{\mathcal{P}}^{1,2,34}  =(M^{0,12,3}\mulp M^{0,123,4})^{\mulp-1}\mulp M^{012,3,4}\mulp M^{0,12,34} \\
&\qquad\qquad\qquad\qquad
\mulp(M^{0,1,2}\mulp M^{0,12,34})^{\mulp{-1}}\mulp M^{01,2,34}\mulp M^{0,1,234}\\
 & =(M^{0,1,2}\mulp M^{0,12,3}\mulp M^{0,123,4})^{\mulp-1}\mulp M^{012,3,4}\mulp M^{01,2,34}\mulp M^{0,1,234}\\
 & =(M^{0,1,2}\mulp M^{0,12,3}\mulp M^{0,123,4})^{\mulp-1}\mulp M^{01,2,3}\mulp M^{01,23,4}\mulp\psi_{\mathcal{P}}^{2,3,4}\mulp M^{0,1,234}\\
 & =(M^{0,1,2}\mulp M^{0,12,3}\mulp M^{0,123,4})^{\mulp-1}\mulp M^{01,2,3}\mulp M^{01,23,4}\mulp M^{0,1,234}\mulp\psi_{\mathcal{P}}^{2,3,4}\\
 & =(M^{0,1,2}\mulp M^{0,12,3}\mulp M^{0,123,4})^{\mulp-1}\mulp M^{01,2,3}\mulp M^{0,1,23}\mulp M^{0,123,4}\\
 &\qquad\qquad\qquad\qquad\qquad
 \mulp\psi_{\mathcal{P}}^{1,23,4}\mulp\psi_{\mathcal{P}}^{2,3,4}\\
 & =(M^{0,123,4})^{\mulp-1}\mulp\psi_{\mathcal{P}}^{1,2,3}\mulp M^{0,123,4}
 \mulp\psi_{\mathcal{P}}^{1,23,4}\mulp\psi_{\mathcal{P}}^{2,3,4}\\
 & =\psi_{\mathcal{P}}^{1,2,3}\mulp\psi_{\mathcal{P}}^{1,23,4}\mulp\psi_{\mathcal{P}}^{2,3,4}
\end{align*}
in ${\mathcal P}_5({\mathcal F})$.
Hence $\psi$ satisfies the pentagon equation.
\end{proof}

We call such  $\varphi\in\mathsf{ASTR}$ as in Proposition
\ref{prop:psi satisfies the pentagon equation}
as the {\it associator accompanied with} 
$M\in\GARI(\mathcal{F})_{\as + \mathsf{pent} }$,


\subsection{Inclusions to \'{E}calle's sets $\GARI(\mathcal{F})_{\as\ast\is}$}
\label{sec: inclusion from GARI as pent to GARI as is}
We introduce a series $\Zig$ in Definition \ref{def: Zig} and
construct inclusions
$\GARI(\mathcal{F})_{\as+\pent}\hookrightarrow \GARI(\mathcal{F})_{\as\ast\is}$
(cf. Definition \ref{def:GARIas*is})
and
 $\GARI(\mathcal{F})_{\protect\underline{\as+\pent}}\hookrightarrow \GARI(\mathcal{F})_{\protect\underline{\as\ast\is}}$ in Theorem \ref{thm: inclusion GARIas+pent to GARIas*is}.


\begin{defn}\label{defn:Mini}
Following \cite[(1.29) and (1.30)]{E-flex},
for an associator $\varphi\in{\rm ASTR}$, we consider the constant mould
${\Mini}_{\varphi}\in\overline{\mathcal{M}}(\mathcal{F})$ defined by

\[
{\Mini}_{\varphi}(x_{1},\dots,x_{m})={\Mono}_{\varphi,m}
\]
with
\[
\sum_{r=0}^{\infty}{\Mono}_{\varphi,r}t^{r}=\exp(\sum_{k=2}^{\infty}(-1)^{k-1}\frac{\zeta_{\varphi}(k)}{k}t^{k}),
\]
and 
{$\zeta_{\varphi}(k):={-\langle\varphi| f_1f_{0}^{k-1}\rangle}$}.
\end{defn}

\begin{lem}\label{lem:I rel imply decM rel}
Suppose that the following identity holds
as complex functions with respect to  complex parameters $x$ and $y$
\begin{equation}\label{eq:sumcI=sumcII}
\sum_{{{\bf i}\in S_1}}c_{\bf i}\cdot I\varia{u_{{\bf i},1},\dots,u_{{\bf i},d_{{\bf i}}}}{\epsilon_{{\bf i},1},\dots,\epsilon_{{\bf i},d_{{\bf i}}}}
=\sum_{{{\bf j}\in S_2}}c_{{\bf j}}\cdot
I\varia{v_{{\bf j},1},\dots,v_{{\bf j},r_{{\bf j}}}}{\ y,\dots\dots,y}
\cdot
I\varia{w_{{\bf j},1},\dots,w_{{\bf j},s_{{\bf j}}}}{\ \tau_{{\bf j},1},\dots,\tau_{{\bf j},s_{{\bf j}}}}.
\end{equation}
with finite sets $S_1$, $S_2,$
$c_{\bf i},c_{\bf j}\in{\mathcal K}
=\Q(x_i\ |\ i\in\N)
$, $\epsilon_{{\bf i},p}\in\{xy,y\}$, $\tau_{{\bf j},q}\in\{xy,x\}$,
and $I$ defined by \eqref{eq:power series I}.
Then the following identity
\[
{
\sum_{{\bf i}\in S_1}c_{\bf i}\cdot \minus(\bal(M))\varia{{\emptyset;}\quad u_{{\bf i},1},\dots,u_{{\bf i},d_{{\bf i}}}}{{\emptyset;}\ \sigma(\epsilon_{{\bf i},1}),\dots,\sigma(\epsilon_{{\bf i},d_{{\bf i}}})}
=\sum_{{\bf j}\in S_2}c_{{\bf j}}\cdot \minus(M)\varia{v_{{\bf j},1},\dots,v_{{\bf j},{r_{{\bf j}}}};\quad w_{{\bf j},1},\dots,w_{{\bf j},s_{{\bf j}}}}{\quad 1,\dots\dots,1\ ;\ \sigma(\tau_{{\bf j},1}),\dots,\sigma(\tau_{{\bf j},s_{{\bf j}}})}
}
\]
holds in $\mathcal P_4(\mathcal F)$
for any mould $M\in\mathcal{M}(\mathcal{F}_{{\rm Lau}})$,
where we follow the rule in \eqref{eq: 1-2 rule}.
\end{lem}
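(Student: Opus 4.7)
The strategy is to recognize both sides of the claimed identity as the images of mould-proper maps on $\mathcal P_4(\mathcal F_{\ser})$, reduce to checking the identity on the distinguished polymould $\Zag^{y}\otimes \Zag^{xy,x}$ via Lemma \ref{lem: f(Zag)=0 then f=0}, use the defining property of $\bal$ from Theorem-Definition \ref{thm-defn: dec} together with the dictionary between $\Zag$-components and iterated integrals $I$ to reduce the test-case equality to the hypothesis \eqref{eq:sumcI=sumcII}, and finally pass to an arbitrary divisible family by the unique prolongation theorem (Theorem \ref{thm:UniqueProlongation}).

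Concretely, I would fix the combinatorial data $(c_{\bf i}, u_{{\bf i},p}, \epsilon_{{\bf i},p})_{{\bf i}\in S_1}$ and $(c_{\bf j}, v_{{\bf j},k}, w_{{\bf j},l}, \tau_{{\bf j},q})_{{\bf j}\in S_2}$ and, for $N\in\mathcal P_4(\mathcal F_{\ser})$, define
\[
\Phi(N)=\sum_{{\bf i}\in S_1}c_{\bf i}\,\minus(\bal(N))\varia{\emptyset;\ u_{{\bf i},1},\dots,u_{{\bf i},d_{\bf i}}}{\emptyset;\ \sigma(\epsilon_{{\bf i},1}),\dots,\sigma(\epsilon_{{\bf i},d_{\bf i}})},
\]
\[
\Psi(N)=\sum_{{\bf j}\in S_2}c_{{\bf j}}\,\minus(N)\varia{v_{{\bf j},1},\dots,v_{{\bf j},r_{\bf j}};\ w_{{\bf j},1},\dots,w_{{\bf j},s_{\bf j}}}{1,\dots,1;\ \sigma(\tau_{{\bf j},1}),\dots,\sigma(\tau_{{\bf j},s_{\bf j}})}.
\]
Each summand is a $\mathcal K$-linear composition of the mould-proper operations $\bal$ (by Theorem-Definition \ref{thm-defn: dec}), $\minus$, and evaluation at a fixed component, so both $\Phi$ and $\Psi$ are mould-proper maps $\mathcal P_4(\mathcal F_{\ser})\to\mathcal F_{\ser}$. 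The desired conclusion (for $\mathcal F=\mathcal F_{\ser}$) is the equality $\Phi(M_{[1]}\otimes M_{[2]})=\Psi(M_{[1]}\otimes M_{[2]})$ for all $M\in\mathcal M(\mathcal F_{\ser})$.

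Next I would evaluate $\Phi$ and $\Psi$ at the specific polymould $N_{0}=\Zag^{y}\otimes \Zag^{xy,x}$. On one hand, by Theorem-Definition \ref{thm-defn: dec},
\[
\bal_{\mathcal O}\bigl(\minus(\Zag^{y})\otimes \minus(\Zag^{xy,x})\bigr)=\minus(\Zag^{x})\otimes \minus(\Zag^{xy,y}),
\]
and hence by Definition \ref{def:Zag and Zag} the components appearing in $\Phi(N_{0})$ are exactly iterated integrals of the form $I\binom{v}{y,\dots,y}\cdot I\binom{w}{\tau}$ with $\tau\in\{x,xy\}$, i.e.\ the terms on the right-hand side of \eqref{eq:sumcI=sumcII}. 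On the other hand, the components appearing in $\Psi(N_{0})$ are iterated integrals of the form $I\binom{u}{\epsilon}$ with $\epsilon\in\{y,xy\}$, i.e.\ the terms on the left-hand side of \eqref{eq:sumcI=sumcII}. Translating $\epsilon$- and $\tau$-labels to $\sigma$-labels via \eqref{eq: 1-2 rule}, the equality $\Phi(N_{0})=\Psi(N_{0})$ is literally the hypothesis \eqref{eq:sumcI=sumcII}.

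Since $\Phi-\Psi$ is mould-proper and vanishes at $\Zag^{y}\otimes \Zag^{xy,x}$, Lemma \ref{lem: f(Zag)=0 then f=0} forces $\Phi=\Psi$ identically on $\mathcal P_4(\mathcal F_{\ser})$, which yields the claim for $\mathcal F=\mathcal F_{\ser}$. To extend the result to an arbitrary divisible family $\mathcal F$, and in particular to $\mathcal F_{\Lau}$, I would invoke the unique prolongation theorem (Theorem \ref{thm:UniqueProlongation}): the maps $\Phi$ and $\Psi$ are represented by common rational polynomials in the mould-values of $N$, and their agreement over $\mathcal M(\mathcal F_{\ser})$ therefore forces their agreement over $\mathcal M(\mathcal F)$. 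The main obstacle is the bookkeeping in the test-case verification: one has to track carefully how the involution $\minus$ and the label conversions $\epsilon\mapsto\sigma(\epsilon)$, $\tau\mapsto\sigma(\tau)$ interact, and in particular make sure that the correspondence of components of $\minus(\Zag^{x})\otimes\minus(\Zag^{xy,y})$ with the products $I\binom{v}{y,\dots,y}\cdot I\binom{w}{\tau}$ uses compatible orderings and sign conventions with those implicit in \eqref{eq:sumcI=sumcII}. This is routine but delicate combinatorics, and is the only place where the specific structure of $\Zag^{y}\otimes\Zag^{xy,x}$ is used.
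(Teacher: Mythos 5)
Your proposal is correct and follows essentially the same route as the paper: both reduce the identity to its evaluation on the distinguished polymould $\Zag^{y}\otimes\Zag^{xy,x}$ via Lemma \ref{lem: f(Zag)=0 then f=0}, where the defining property of $\bal$ from Theorem-Definition \ref{thm-defn: dec} turns the claim into the hypothesis \eqref{eq:sumcI=sumcII} (after the substitution $x\mapsto x^{-1}$, $y\mapsto y^{-1}$ built into Definition \ref{def:Zag and Zag}), and then pass to $\mathcal F_{\Lau}$ by the unique prolongation theorem. The only differences are organizational and cosmetic --- the paper first expands the left-hand side term by term using Theorem \ref{thm: rec diff eq for I} and \eqref{eq:decM} before invoking the $\Zag$-lemma, and in your test-case computation the roles of the two sides of \eqref{eq:sumcI=sumcII} are interchanged (once the $\minus$'s are placed correctly it is $\Phi(N_0)$ that yields the single-$I$ terms and $\Psi(N_0)$ the products $I\binom{v}{y,\dots,y}\cdot I\binom{w}{\tau}$) --- but neither point affects the validity of the argument.
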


\begin{proof}
By Theorem \ref{thm: rec diff eq for I}, for each ${\bf i}$, $I\varia{u_{{\bf i},1},\dots,u_{{\bf i},d_{{\bf i}}}}{\epsilon_{{\bf i},1},\dots,\epsilon_{{\bf i},d_{{\bf i}}}}$
can be expressed as
\[
I\varia{u_{{\bf i},1},\dots,u_{{\bf i},d_{{\bf i}}}}{\epsilon_{{\bf i},1},\dots,\epsilon_{{\bf i},d_{{\bf i}}}}
=\sum_{{\bf j}({\bf i})}c_{{\bf j}({\bf i})}\cdot
I\varia{v_{{\bf j}({\bf i}),1},\dots,v_{{\bf j}({\bf i}),r_{{\bf j}({\bf i})}}}{\qquad y,\dots\dots,y}
\cdot
I\varia{w_{{\bf j}({\bf i}),1},\dots,w_{{\bf j}({\bf i}),s_{{\bf j}({\bf i})}}}
{\tau_{{\bf j}({\bf i}),1},\dots,\tau_{{\bf j}({\bf i}),s_{{\bf j}({\bf i})}}}
\]
with $\tau_{{\bf j}({\bf i}),q}\in\{xy,x\}$, and by (\ref{eq:decM}),
\begin{align*}
\minus(\bal(M)) & \varia{\emptyset; \quad u_{{\bf i},1},\dots,u_{{\bf i},d_{{\bf i}}}}{\emptyset;\ \sigma(\epsilon_{{\bf i},1}),\dots,\sigma(\epsilon_{{\bf i},d_{{\bf i}}})} \\
&=\sum_{{\bf j}({\bf i})}c_{{\bf j}({\bf i})}\cdot
\minus(M)\varia{v_{{\bf j}({\bf i}),1},\dots,v_{{\bf j}({\bf i}),{r_{{\bf j}({\bf i})}}}\ ; \ w_{{\bf j}({\bf i}),1},\dots,w_{{\bf j}({\bf i}),s_{{\bf j}({\bf i})}}}
{\qquad 1,\dots\dots,1 \qquad ;\ \sigma(\tau_{{\bf j}({\bf i}),1}),\dots,\sigma(\tau_{{\bf j}({\bf i}),s_{{\bf j}({\bf i})}})}.
\end{align*}
Since we have
\begin{align*}
 \sum_{{\bf i}\in S_1}c_{\bf i}I\varia{u_{{\bf i},1},\dots,u_{{\bf i},d_{{\bf i}}}}{\epsilon_{{\bf i},1},\dots,\epsilon_{{\bf i},d_{{\bf i}}}}
  =\sum_{{\bf i}\in S_1}c_{\bf i}\sum_{{\bf j}({\bf i})}c_{{\bf j}({\bf i})}\cdot I\varia{v_{{\bf j}({\bf i}),1},\dots,v_{{\bf j}({\bf i}),r_{{\bf j}({\bf i})}}}{\quad y,\dots\dots,y}\cdot
  I\varia{w_{{\bf j}({\bf i}),1},\dots,w_{{\bf j}({\bf i}),s_{{\bf j}({\bf i})}}}
  {\tau_{{\bf j}({\bf i}),1},\dots,\tau_{{\bf j}({\bf i}),s_{{\bf j}({\bf i})}}},
\end{align*}
we obtain
\begin{align*}
\sum_{{\bf j}\in S_2}c_{{\bf j}}\cdot &
I\varia{v_{{\bf j},1},\dots,v_{{\bf j},r_{{\bf j}}}}{\ y,\dots\dots,y}
\cdot
I\varia{w_{{\bf j},1},\dots,w_{{\bf j},s_{{\bf j}}}}{\ \tau_{{\bf j},1},\dots,\tau_{{\bf j},s_{{\bf j}}}} \\
&=\sum_{{\bf i}\in S_1}c_{\bf i}\sum_{{\bf j}({\bf i})}c_{{\bf j}({\bf i})}\cdot I\varia{v_{{\bf j}({\bf i}),1},\dots,v_{{\bf j}({\bf i}),r_{{\bf j}({\bf i})}}}{\quad y,\dots\dots,y}\cdot
  I\varia{w_{{\bf j}({\bf i}),1},\dots,w_{{\bf j}({\bf i}),s_{{\bf j}({\bf i})}}}
  {\tau_{{\bf j}({\bf i}),1},\dots,\tau_{{\bf j}({\bf i}),s_{{\bf j}({\bf i})}}}
\end{align*}
by \eqref{eq:sumcI=sumcII}.
By change of variables $x$ and $y$ with $x^{-1}$ and $y^{-1}$ respectively, we obtain
\begin{align*}
\sum_{{\bf j}\in S_2}c_{{\bf j}}\cdot & (\Zag^{{y}}\otimes\Zag^{{xy},{x}})\varia{v_{{\bf j},1},\dots,v_{{\bf j},r_{{\bf j}}}\ ;\quad w_{{\bf j},1},\dots,w_{{\bf j},s_{{\bf j}}}}{\quad1,\dots\dots,1\ ;\ \sigma(\tau_{{\bf j},1}),\dots,\sigma(\tau_{{\bf j},s_{{\bf j}}})}\\
 & =\sum_{{\bf i}\in S_1}c_{{\bf i}}\sum_{{\bf j}({\bf i})}c_{{\bf j}({\bf i})}\cdot(\Zag^{{y}}\otimes\Zag^{{xy},{x}})\varia{v_{{\bf j}({\bf i}),1},\dots,v_{{\bf j}({\bf i}),r_{{\bf j}({\bf i})}}\ ;\quad w_{{\bf j}({\bf i}),1},\dots,w_{{\bf j}({\bf i}),s_{{\bf j}({\bf i})}}}{\qquad1,\dots\dots,1\qquad;\ \sigma(\tau_{{\bf j}({\bf i}),1}),\dots,\sigma(\tau_{{\bf j}({\bf i}),s_{{\bf j}({\bf i})}})}.
\end{align*}
Thus by Lemma \ref{lem: f(Zag)=0 then f=0}, we have 
\begin{align*}
\sum_{{\bf j}\in S_2}c_{{\bf j}}\cdot
&M\varia{v_{{\bf j},1},\dots,v_{{\bf j},r_{{\bf j}}}\ ;\quad w_{{\bf j},1},\dots,w_{{\bf j},s_{{\bf j}}}}
{\quad 1,\dots\dots,1\ ; \ \sigma(\tau_{{\bf j},1}),\dots,\sigma(\tau_{{\bf j},s_{{\bf j}}})} \\
&=\sum_{{\bf i}\in S_1}c_{\bf i}\sum_{{\bf j}({\bf i})}c_{{\bf j}({\bf i})}\cdot M\varia{v_{{\bf j}({\bf i}),1},\dots,v_{{\bf j}({\bf i}),r_{{\bf j}({\bf i})}}\ ; \quad w_{{\bf j}({\bf i}),1},\dots,w_{{\bf j}({\bf i}),s_{{\bf j}({\bf i})}}}{\qquad 1,\dots\dots,1\qquad ; \ \sigma(\tau_{{\bf j}({\bf i}),1}),\dots,\sigma(\tau_{{\bf j}({\bf i}),s_{{\bf j}({\bf i})}})}
\end{align*}
for any mould $M$,
as well as $\minus(M)$,
which implies the theorem.
\end{proof}

\begin{defn}\label{def: Zig}
Similarly to Definition \ref{def:Zag and Zag}, we consider the following series:
\begin{align}\label{eq: Zig}
\Zig\binom{\epsilon_{1},\dots,\epsilon_{d}}{u_{1},\dots,u_{d}}
& :=\sum_{n_{1},\dots,n_{d}\geq0}\Li_{n_{d},\dots,n_{1}}(\epsilon_{d},\dots,\epsilon_{1})u_{1}^{n_{1}-1}\cdots u_{d}^{n_{d}-1} \\ \notag
& =\sum_{0<m_{d}<\cdots<m_{1}}\frac{\epsilon_{d}^{m_{d}}\cdots\epsilon_{1}^{m_{1}}}{(m_{d}-u_{d})\cdots(m_{1}-u_{1})}
\in\C[[u_1,\dots,u_d]],
\end{align}
where $\epsilon_1,\dots,\epsilon_d$ are chosen to be suitable complex numbers.
\end{defn}

By \eqref{eq: I= sum Li},  we have
\begin{equation}\label{eq: Zig=I}
\Zig\binom{\epsilon_{1},\dots,\epsilon_{d}}{u_{1},\dots,u_{d}}
  =I\binom{\qquad u_{d},\qquad u_{d-1}-u_{d},\ \dots\ ,u_{1}-u_{2}}{\epsilon_{1}^{-1}\cdots\epsilon_{d}^{-1},\epsilon_{1}^{-1}\cdots\epsilon_{d-1}^{-1},\ \dots \ ,\epsilon_{1}^{-1}}.
\end{equation}

\begin{thm}\label{thm: inclusion GARIas+pent to GARIas*is}
The operation $\minus$ in \eqref{eq:neg} induces the inclusion
$$\GARI(\mathcal{F})_{\as+\pent}\subset\GARI(\mathcal{F})_{\as*\is}.$$
Particularly we have
$$
\minus:\GARI(\mathcal{F})_{\underline{\as+\pent}}\subset\GARI(\mathcal{F})_{\underline{\as*\is}}.$$
\end{thm}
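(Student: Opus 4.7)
The plan is, given $M \in \GARI(\mathcal{F})_{\as+\pent}$, to show (i) $\minus(M)$ is symmetral and (ii) there exists a constant mould $C'$ such that $C' \times \swap(\minus(M))$ is symmetril; together these give $\minus(M) \in \GARI(\mathcal{F})_{\as\ast\is}$. Claim (i) is routine: the shuffle coefficients $\Sh{\omega}{\eta}{\alpha}$ in Definition \ref{def:al,il,as,is} do not depend on the $x_i$, while $\minus$ only negates those variables, so every symmetrality identity for $M$ transfers directly to $\minus(M)$.

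For (ii), I would take $C' := \Mini_\psi$, where $\psi \in \ASTR$ is the accompanying associator of $M$ furnished by Proposition \ref{prop:psi satisfies the pentagon equation}. This choice is motivated by the model identity $\Zig = \Mini \times \swap(\Zag)$ recalled in Example \ref{eg: Zig and Zag}. Setting $N := \Mini_\psi \times \swap(\minus(M))$, the task is to establish that $N$ is symmetril. The key translation device is Lemma \ref{lem:I rel imply decM rel}: any shuffle-type identity among the iterated integral series $I\binom{u}{\epsilon}$ lifts to a corresponding mould identity for $\minus(M)$ and $\minus(\bal(M_{[1]}\otimes M_{[2]}))$. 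Since $M$ is balanced, the condition \eqref{eq:almost-balanced condition} lets me replace $\bal(M_{[1]}\otimes M_{[2]})$ by $M_{[1]}\otimes(M_{[2]}\times C)$ with $C = \ma_{[2]}(\psi(-f_1-f_2,f_1))$, eliminating $\bal$ and leaving a relation purely among components of $\minus(M)$ and data coming from $\psi$.

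The concrete strategy is therefore: start with the stuffle identity $\shmap_\ast(\Zig) = \Zig \otimes \Zig$ that $\Zig$ satisfies by its construction from multiple polylogarithms, re-express it via \eqref{eq: Zig=I} as an identity among $I$-series of the shape appearing in Theorem \ref{thm: rec diff eq for I}, promote it via Lemma \ref{lem:I rel imply decM rel} to a mould identity valid for every balanced $M$, and finally repackage the result using the balance condition together with the $\Mini_\psi$ factor to obtain the symmetrility of $N$. The underlined (even) statement is then automatic, since if $M(x_1)$ is even then $\minus(M)(x_1) = M(-x_1) = M(x_1)$ is also even, and $\Mini_\psi$ does not alter the length-one component.

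The hard part will be the conversion step: producing a stuffle identity for $\Zig$ expressed as a $\mathcal{K}$-linear combination of products of two $I$-series of precisely the form admitted by Lemma \ref{lem:I rel imply decM rel}, with the $\Mini_\psi$ regularization constants correctly tracked. This is combinatorial bookkeeping about how the harmonic structure of $\Zig$ is encoded in shuffle-type relations of iterated integrals after absorbing the regularization by $\Mini_\psi$; I expect this to reduce to identities satisfied by fundamental solutions of the KZ-type connection on $\mathcal{M}_{0,5}$ used in the proof of Proposition \ref{prop: bal= flip}, parallel to the classical derivation of $\ASTR \hookrightarrow \DMR$ found in \cite{EF2, F11}.
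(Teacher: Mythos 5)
Your proposal follows essentially the same route as the paper's proof: the paper likewise starts from the stuffle product of two $\Zig$-series, converts it to an $I$-series identity via \eqref{eq: Zig=I}, promotes it to a mould identity through Lemma \ref{lem:I rel imply decM rel}, eliminates $\bal$ using the balance condition \eqref{eq:almost-balanced condition}, tracks the regularization constants $\beta_C$ into a constant correction dimould, and closes the argument by invoking the double shuffle property of the accompanied associator (via \cite{F11}) applied to the special case $M=\ma(\psi)$ to show that correction is trivial. One small adjustment: the constant mould that makes the signs close is $\Mini_{\iota_0\psi}$ with $\iota_0\psi:=\psi(-f_0,f_1)$ rather than $\Mini_\psi$, since $\minus\circ\ma(\psi)=\ma(\iota_0\psi)$; with that change your plan is the paper's.
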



\begin{proof}
\underline{\sf{Step 1:}}
Let $x$ and $y$ be complex numbers whose absolute value is smaller
than $1$. Then we have

\begin{align*}
\Zig\binom{x,1,\dots,1}{x_{1},\dots,x_{p}}\cdot \Zig\binom{y,1,\dots,1}{x_{p+1},\dots,x_{p+q}} & =\sum_{\alpha\in Y_{\mathbb{Z}}^{\bullet}}{\rm Sh}_{*}\binom{\left(_{x_{1},\dots,x_{p}}^{x,1,\dots,1}\right);\left(_{x_{p+1},\dots,x_{p+q}}^{\ \ y,1,\dots,1}\right)}{\alpha}\cdot \Zig\left(\alpha\right).
\end{align*}
Hereafter, for $\sum_{\alpha}c_{\alpha}\alpha\in\mathcal{K}\langle Y_{\Z}\rangle$, we write $\Zig(\sum_{\alpha}c_{\alpha}\alpha)$ for $\sum_{\alpha}c_{\alpha}\cdot\Zig(\alpha)$.
We have
\begin{align*}
&\sum_{\alpha\in Y_{\mathbb{Z}}^{\bullet}}
{\rm Sh}_{*}\binom{\left(_{x_{1},\dots,x_{p}}^{x,1,\dots,1}\right);\left(_{x_{1},\dots,x_{p}}^{y,1,\dots,1}\right)}{\alpha}\cdot \Zig\left(\alpha\right) \\
&\ = \Zig\left( \varia{x,1,\dots,1}{x_{1},\dots,x_{p}} \shuffle_* \varia{y,1,\dots,1}{x_{p+1},\dots,x_{p+q}}  \right) \\
&\ = \frac{1}{x_1-x_{p+1}}\Zig\left( \left(\varia{xy}{x_1}-\varia{xy}{x_{p+1}}\right) \left( \varia{1,\dots,1}{x_{2},\dots,x_{p}} \shuffle_* \varia{1,\dots,1}{x_{p+2},\dots,x_{p+q}} \right)  \right) \\
&\quad + \Zig\left( \varia{x}{x_1} \left( \varia{1,\dots,1}{x_{2},\dots,x_{p}} \shuffle_* \varia{y,1,\dots,1}{x_{p+1},\dots,x_{p+q}} \right)\right)
 + \Zig\left( \varia{y}{x_{p+1}} \left( \varia{x,1,\dots,1}{x_{1},\dots,x_{p}} \shuffle_* \varia{1,\dots,1}{x_{p+2},\dots,x_{p+q}} \right)\right).
\end{align*}
We rewrite the above three terms simply by
\begin{align*}
\sum_{j}c_{j} \Zig &\binom{xy,1,\dots,1}{u_{j,1},\dots,u_{j,d_{j}}}+\sum_{j}c_{j}'\Zig\binom{x,\{1\}^{r_{j}-1},y,\{1\}^{s_{j}-1}}{v_{j,1},\dots,v_{j,r_{j}+s_{j}}}\\
&+\sum_{j}c_{j}''\Zig\binom{y,\{1\}^{a_{j}-1},x,\{1\}^{b_{j}-1}}{w_{j,1},\dots,w_{j,a_{j}+b_{j}}}
\end{align*}
with {$c_{j}, c_{j}', c_{j}''\in\mathcal K$.}
Thus by changing  variables $x$ and $y$ with $x^{-1}$ and $y^{-1}$,
we deduce from  \eqref{eq: Zig=I},
\begin{align*}
 & \sum_{j}c_{j}I\binom{u_{j,d_{j}},u_{j,d_{j-1}}-u_{j,d_{j}},\dots,u_{j,1}-u_{j,2}}{xy,\dots,\dots, xy} \\
  & \qquad\qquad +\sum_{j}c_{j}'I\binom{v_{j,r_{j}+s_{j}},v_{j,r_{j}+s_{j}-1}-v_{j,r_{j}+s_{j}},\dots,v_{j,1}-v_{j,2}}{\{xy\}^{s_{j}},\{x\}^{r_{j}}} \\ \notag
 &\qquad\qquad + \sum_{j}c_{j}''I\binom{w_{j,a_{j}+b_{j}},w_{j,a_{j}+b_{j}-1}-w_{j,a_{j}+b_{j}},\dots,w_{j,1}-w_{j,2}}{\{xy\}^{b_{j}},\{y\}^{a_{j}}}\\ \notag
 &\quad =I\binom{x_{p},x_{p-1}-x_{p},\dots,x_{1}-x_{2}}{x,\dots,x}\cdot I\binom{x_{p+q},x_{p+q-1}-x_{p+q},\dots,x_{p+1}-x_{p+2}}{y,\dots,y}.
\end{align*}
holds as functions with respect to complex variables.
By considering the change of variables $x$ and $y$ with $x^{-1}$ and $y^{-1}$,
we obtain, by Lemma \ref{lem:I rel imply decM rel},
\begin{align} \label{eq: func eq for N}
 & \sum_{j}c_{j}\minus(N)\binom{\emptyset;u_{j,d_{j}},u_{j,d_{j-1}-u_{j,d_{j}}},\dots,u_{j,1}-u_{j,2}}{\emptyset; \qquad\qquad\{2\}^{d_j}\qquad\qquad \qquad\qquad } \\ \notag
  & \qquad\qquad +\sum_{j}c_{j}'\minus(N)\binom{\emptyset; v_{j,r_{j}+s_{j}},v_{j,r_{j}+s_{j}-1}-v_{j,r_{j}+s_{j}},\dots,v_{j,1}-v_{j,2}}{\emptyset; \qquad \qquad \qquad \{2\}^{s_{j}},\{1\}^{r_{j}}\qquad\qquad \qquad\qquad  }\\ \notag
 &\qquad\qquad + \sum_{j}c_{j}''\minus(\bal(N))\binom{\emptyset;\quad w_{j,a_{j}+b_{j}},w_{j,a_{j}+b_{j}-1}-w_{j,a_{j}+b_{j}},\dots,w_{j,1}-w_{j,2}}{\emptyset;\qquad\qquad\qquad\qquad \{2\}^{b_{j}},\{1\}^{a_{j}}\qquad\qquad\qquad\qquad }\\ \notag
 &\quad =\minus(N)\binom{x_{p+q},x_{p+q-1}-x_{p+q},\dots,x_{p+1}-x_{p+2}\ ; \ x_{p},x_{p-1}-x_{p},\dots,x_{1}-x_{2}}
 {\quad\quad\{1\}^q\quad\quad\quad\quad;\quad\{1\}^p} 
\end{align}
for any $N \in \mathcal P_4(\mathcal F_\Lau)$.

\underline{\sf{Step 2:}}
Let $M\in\GARI(\mathcal{F})_{\as+\pent}$.
Then by Theorem \ref{pentagonal characterization theorem},
we have $M\in \GARI(\mathcal{F})_{\as+\abal}$.
Put
$N=M_{[1]}\otimes M_{[2]}\in \mathcal P_4(\mathcal F)$.
Since  we have
$$\bal (M_{[1]}\otimes M_{[2]})=M_{[1]}\otimes (M_{[2]}\times C)$$
by definition of $\GARI(\mathcal{F})_{\as+\abal}$,
we deduce from \eqref{eq: func eq for N} the following
\begin{align}
 & \sum_{j}c_{j}\minus(M)(u_{j,d_{j}},u_{j,d_{j-1}}-u_{j,d_{j}},\dots,u_{j,1}-u_{j,2}) \\ \notag
  & \qquad +\sum_{j}c_{j}'\minus(M)(v_{j,r_{j}+s_{j}},v_{j,r_{j}+s_{j}-1}-v_{j,r_{j}+s_{j}},\dots,v_{j,1}-v_{j,2})\\ \notag
 &\qquad +\sum_{j}c_{j}''\left(\minus(M)_{[2]}\times C\right)\binom{w_{j,a_{j}+b_{j}},w_{j,a_{j}+b_{j}-1}-w_{j,a_{j}+b_{j}},\dots,w_{j,1}-w_{j,2}}{\{2\}^{b_{j}},\{1\}^{a_{j}}}\\ \notag
 &\quad =\minus(M)(x_{p},x_{p-1}-x_{p},\dots,x_{1}-x_{2})\cdot \minus(M)(x_{p+q},x_{p+q-1}-x_{p+q},\dots,x_{p+1}-x_{p+2}).
\end{align}
Put
\begin{equation}\label{eq:beta}
\beta_{C}(k,l)=C\binom{x_{1},\dots,x_{k+l}}{{\{2\}^{k},\{1\}^{l}}}\in\mathbb{Q}.
\end{equation}
Then since $\beta_{C}(i,0)=\beta_{C}(0,i)=\delta_{i,0}$ by Lemma \ref{lem: maC is commutator group-like}, we have
\begin{align*}
 & \sum_{j} c_{j}''\left(\minus(M)_{[2]}\times C\right)\binom{w_{j,a_{j}+b_{j}},w_{j,a_{j}+b_{j}-1}-w_{j,a_{j}+b_{j}},\dots,w_{j,1}-w_{j,2}}{{\{2\}^{b_{j}},\{1\}^{a_{j}}}}\\
 & =\sum_{j}c_{j}''\minus(M)(w_{j,a_{j}+b_{j}},w_{j,a_{j}+b_{j}-1}-w_{j,a_{j}+b_{j}},\dots,w_{j,1}-w_{j,2})\\
 & \quad +\sum_{j}c_{j}''\sum_{k=1}^{b_{j}}\minus(M)(w_{j,a_{j}+b_{j}},w_{j,a_{j}+b_{j}-1}-w_{j,a_{j}+b_{j}},\dots,w_{j,a_{j}+k+1}-w_{j,a_{j}+k+2}) \\
&\qquad\qquad\qquad\qquad\qquad \cdot\beta_{C}(k,a_{j}).
\end{align*}
Then
\begin{align*}
  \sum_{j}&c_{j}\swap(\minus(M))(u_{j,1},\dots,u_{j,d_{j}})+\sum_{j}c_{j}'\swap(\minus(M))(v_{j,1},\dots,v_{j,r_{j}+s_{j}})\\ \notag
 &+\sum_{j}c_{j}''\swap(\minus(M))(w_{j,1},\dots,w_{j,a_{j}+b_{j}}) \\ \notag
 & +\sum_{j}c_{j}''\sum_{k=1}^{b_{j}}\swap(\minus(M))(w_{j,a_{j}+k+1},\dots,w_{j,a_{j}+b_{j}})\cdot\beta_{C}(k,a_{j})\\ \notag
 &\qquad =\swap(\minus(M))(x_{1},\dots,x_{p})\cdot\swap(\minus(M))(x_{p+1},\dots,x_{p+q}).
\end{align*}
So we have
\begin{align*}
\shmap_{*}&(\swap(\minus(M)))(x_{1},\dots,x_{p};x_{p+1},\dots,x_{p+q})
\\ \notag
&+\sum_{j}c_{j}''\sum_{k=1}^{b_{j}}\swap(\minus(M))(w_{j,a_{j}+k+1},\dots,w_{j,a_{j}+b_{j}})\cdot\beta_{C}(k,a_{j})\\ \notag
&=\swap(\minus(M))(x_{1},\dots,x_{p})\cdot\swap(\minus(M))(x_{p+1},\dots,x_{p+q}).
\end{align*}

\underline{\sf{Step 3:}}
By definition, we have
\begin{align*}
 & \sum_{j}c_{j}''\varia{y,\{1\}^{a_{j}-1},x,\{1\}^{b_{j}-1}}{w_{j,1},\dots,w_{j,a_{j}+b_{j}}}
  =\varia y{x_{p+1}}\left(\varia{x,1,\dots,1}{x_{1},\dots,x_{p}}\shuffle_{*}\varia{1,\dots,1}{x_{p+2},\dots,x_{p+q}}\right). \\
\intertext{A repeated application of \eqref{eqn:shufflestar product} yields}
 & =\sum_{a=1}^{q}\varia{y,\{1\}^{a-1},x}{x_{p+1},\dots,x_{p+a},x_{1}}\left(\varia{1,\dots,1}{x_{2},\dots,x_{p}}\shuffle_{*}\varia{1,\dots,1}{x_{p+a+1},\dots,x_{p+q}}\right)\\
 & \quad+\sum_{a=1}^{q-1}\frac{1}{x_{1}-x_{p+a+1}}\varia{y,\{1\}^{a-1}}{x_{p+1},\dots,x_{p+a}}\left(\varia x{x_{1}}-\varia x{x_{p+a+1}}\right)\left(\varia{1,\dots,1}{x_{2},\dots,x_{p}}\shuffle_{*}\varia{1,\dots,1}{x_{p+a+2},\dots,x_{p+q}}\right).
\end{align*}
Thus
\[
\sum_{\substack{j\\a_{j}=a}}c_{j}''\varia{\{1\}^{b_{j}-1}}{w_{j,a_{j}+2},\dots,w_{j,a_{j}+b_{j}}} =\left(\varia{1,\dots,1}{x_{2},\dots,x_{p}}\shuffle_{*}\varia{1,\dots,1}{x_{p+a+1},\dots,x_{p+q}}\right).
\]
Define $\mathcal{K}$-linear map
$
D:\mathcal{K}\langle Y_{\Z}\rangle\to\mathcal{K}\langle Y_{\Z}\rangle
$
by
$
D\left(\varia{\epsilon_{1},\dots,\epsilon_{n}}{u_{1},\dots,u_{n}}\right)=\begin{cases}
\varia{\epsilon_{2},\dots,\epsilon_{n}}{u_{2},\dots,u_{n}} & n>0\\
0 & n=0.
\end{cases}
$
Then by definition,
\[
D\left(\varia{\epsilon_{1},\dots,\epsilon_{n}}{u_{1},\dots,u_{n}}\shuffle_{*}\varia{\epsilon_{1}',\dots,\epsilon_{n}'}{u_{1}',\dots,u_{n}'}\right)=\varia{\epsilon_{2},\dots,\epsilon_{n}}{u_{2},\dots,u_{n}}\shuffle_{*}\varia{\epsilon_{1}',\dots,\epsilon_{n}'}{u_{1}',\dots,u_{n}'}+\varia{\epsilon_{1},\dots,\epsilon_{n}}{u_{1},\dots,u_{n}}\shuffle_{*}\varia{\epsilon_{2}',\dots,\epsilon_{n}'}{u_{2}',\dots,u_{n}'}.
\]
Thus we have
\begin{align*}
\sum_{\substack{j\\a_{j}=a,b_j\geq k}}& c_{j}''\varia{\{1\}^{b_{j}-k}}{w_{j,a_{j}+k+1},\dots,w_{j,a_{j}+b_{j}}}  =D^{k-1}\left(\sum_{\substack{j\\a_{j}=a}}c_{j}''\varia{\{1\}^{b_{j}-1}}{w_{j,a_{j}+2},\dots,w_{j,a_{j}+b_{j}}}\right)\\
 & =D^{k-1}\left(\varia{1,\dots,1}{x_{2},\dots,x_{p}}\shuffle_{*}\varia{1,\dots,1}{x_{p+a+1},\dots,x_{p+q}}\right)\\
 & =\sum_{\substack{l_{1}+l_{2}=k-1\\
0\leq l_{1}\leq p-1,0\leq l_{2}\leq q-a
}
}\binom{l_{1}+l_{2}}{l_{1}}\cdot\varia{1,\dots,1}{x_{l_{1}+2},\dots,x_{p}}\shuffle_{*}\varia{1,\dots,1}{x_{p+a+l_{2}+1},\dots,x_{p+q}}.
\end{align*}
Thus
\begin{align*}
 & \sum_{j}c_{j}''\sum_{k=1}^{b_{j}}\swap(\minus(M))(w_{j,a_{j}+k+1},\dots,w_{j,a_{j}+b_{j}})\cdot\beta_{C}(k,a_{j})\\
 & = \sum_{k=1}^{\infty}\sum_{a=1}^{q}\beta_C(k,a) \sum_{\substack{j\\a_j=a,b_j\geq k} }c_{j}''\cdot\swap(\minus(M))(w_{j,a_{j}+k+1},\dots,w_{j,a_{j}+b_{j}})\\
  & = \sum_{k=1}^{\infty}\sum_{a=1}^{q}\beta_C(k,a)
  \sum_{\substack{l_{1}+l_{2}=k-1\\0\leq l_{1}\leq p-1,0\leq l_{2}\leq q-a}} \binom{l_{1}+l_{2}}{l_{1}} \\
  &\qquad\qquad\qquad \cdot{\rm swap}(\minus(M))\left( \varia{1,\dots,1}{x_{l_{1}+2},\dots,x_{p}}\shuffle_{*}\varia{1,\dots,1}{x_{p+a+l_{2}+1},\dots,x_{p+q}} \right)\\
 & =\sum_{a=1}^{q}\sum_{l_{1}=0}^{p-1}\sum_{l_{2}=0}^{q-a}{l_{1}+l_{2} \choose l_{2}}\beta_{C}(l_{1}+l_{2}+1,a) \\
  &\qquad\qquad\qquad \cdot{\rm swap}(\minus(M))\left( \varia{1,\dots,1}{x_{l_{1}+2},\dots,x_{p}}\shuffle_{*}\varia{1,\dots,1}{x_{p+a+l_{2}+1},\dots,x_{p+q}} \right).\\
 \intertext{Since $\beta_{C}(i,0)=\beta_{C}(0,i)=\delta_{i,0}$ by Lemma \ref{lem: maC is commutator group-like}, we have}
 & =\sum_{r_{1}=1}^{p}\sum_{r_{2}=0}^{q}\left(\sum_{l=0}^{r_{2}}{r_{1}-1+l \choose l}\beta_{C}(r_{1}+l,r_{2}-l)\right) \\
   &\qquad\qquad\qquad
 \cdot{\rm swap}(\minus(M))\left( \left(_{x_{1+r_{1}},\dots,x_{p}}^{1,\dots,1}\right) \shuffle_{*} \left(_{x_{p+r_{2}+1},\dots,x_{p+q}}^{1,\dots,1}\right) \right)\\
 & =(A_{\varphi}\times\shmap_{*}(\swap(\minus(M))))(x_{1},\dots,x_{p};x_{p+1},\dots,x_{q}).
\end{align*}
Here $A_{\varphi}\in\overline{\mathcal{M}_{2}}(\mathcal{F})$ is
a constant mould defined by
\[
A_{\varphi}(x_{1},\dots,x_{r_{1}};x_{r_{1}+1},\dots x_{r_{1}+r_{2}})=\begin{cases}
0 & r_{1}=0\\
\sum_{l=0}^{r_{2}}{r_{1}-1+l \choose l}\beta_{C}(r_{1}+l,r_{2}-l) & r_{1}>0,
\end{cases}
\]
with $\beta_C$ given by \eqref{eq:beta}
and $\shmap_{*}:\overline{\mathcal M}(\mathcal{F}_{{\rm Lau}})\to \overline{\mathcal M}_{2}(\mathcal{F}_{{\rm Lau}})$
is the map in Definition \ref{def:shmap}.
Thus we have
\begin{align*}
\swap(\minus(M))&(x_{1},\dots,x_{p})\cdot\swap(\minus(M))(x_{p+1},\dots,x_{p+q})\\
&=\left((I+A_{\varphi})\times\shmap_{*}\left(\swap(\minus(M))\right)\right)(x_{1},\dots,x_{p};x_{p+1},\dots,x_{q}),
\end{align*}
or equivalently
\begin{equation}\label{eq: swap swap = I+A Delta swap}
\swap(\minus(M))\otimes\swap(\minus(M))=(I+A_{\varphi})\times\shmap_{*}(\swap(\minus(M))).
\end{equation}

\underline{\sf{Step 4:}}
For $\varphi=\varphi(f_0,f_1)\in \widehat{U{\mathfrak f}_2}$,
we put
$$\iota_0\varphi:=\varphi(-f_0,f_1).$$
By \eqref{eq: swap swap = I+A Delta swap}, we have
\begin{align}\label{eq: mini swap minus M times}
&({\Mini}_{\iota_0\varphi}\times\swap(\minus(M)))\otimes({\Mini}_{\iota_0\varphi}\times\swap(\minus(M))) \\ \notag
&=({\Mini}_{\iota_0\varphi}\otimes{\Mini}_{\iota_0\varphi})\times(I+A_{\varphi})\times\shmap{*}({\Mini}_{\iota_0\varphi})^{-1}\times\shmap_{*}({\Mini}_{\iota_0\varphi}\times\swap(\minus(M))),
\end{align}
where $\Mini_{\iota_0\varphi}$ is the constant mould defined in Definition \ref{defn:Mini}.
In particular, by considering the case where $M=\ma(\varphi)$,
and using $\ma(\iota_0\varphi)=\minus\circ\ma(\varphi)$,
we have
\begin{align}\label{eq:mini swap minus ma}
&({\Mini}_{\iota_0\varphi}\times\swap\circ\ma(\iota_0\varphi))\otimes
({\Mini}_{\iota_0\varphi}\times\swap\circ\ma(\iota_0\varphi)) \\ \notag
=&({\Mini}_{\iota_0\varphi}\otimes{\Mini}_{\iota_0\varphi})\times(I+A_{\varphi})\times\shmap_{*}({\Mini}_{\iota_0\varphi})^{-1}\times\shmap_{*}({\Mini}_{\iota_0\varphi}\times\swap\circ\ma(\iota_0\varphi)).
\end{align}
On the other hand,
since the accompanied associator $\varphi$ is an associator
by Proposition \ref{prop:psi satisfies the pentagon equation},
we have
$\Delta_\ast((\iota_1\varphi)_\ast)=(\iota_1\varphi)_\ast\otimes (\iota_1\varphi)_\ast$
with $\iota_1\varphi:=\varphi(f_0,-f_1)$
by \cite[Theorem 0.2]{F11},
(for $\varphi_\ast$ and $\Delta_\ast$,
see \eqref{eq: varphi ast} and \eqref{eq: harmonic coproduct}
in Appendix \ref{sec: appendix GARIasis} respectively).
Whence we have $\Delta_\ast((\iota_0\varphi)_\ast)=(\iota_0\varphi)_\ast\otimes (\iota_0\varphi)_\ast$.
Then by Proposition \ref{prop: equivalent formulation of double shuffle relations} in our appendix, we obtain
\begin{equation}\label{eq: Mini swap ma times}
({\Mini}_{\iota_0\varphi}\times\swap(\ma(\iota_0\varphi)))\otimes({\Mini}_{\iota_0\varphi}\times\swap(\ma(\iota_0\varphi)))=\shmap_{*}({\Mini}_{\iota_0\varphi}\times\swap(\ma(\iota_0\varphi))).
\end{equation}
By \eqref{eq:mini swap minus ma} and \eqref{eq: Mini swap ma times}, we obtain
we have
\begin{equation}\label{eq: Mini Mini I+A Delta Mini inv =1}
({\Mini}_{\iota_0\varphi}\otimes{\Mini}_{\iota_0\varphi})\times (I+A_{\varphi})\times \shmap_{*}({\Mini}_{\iota_0\varphi})^{-1}=I.
\end{equation}
By \eqref{eq: mini swap minus M times}  and \eqref{eq: Mini Mini I+A Delta Mini inv =1}, we have
\[
({\Mini}_{\iota_0\varphi}\times\swap(\minus(M)))\otimes({\Mini}_{\iota_0\varphi}\times\swap(\minus(M)))=\shmap_{*}({\Mini}_{\iota_0\varphi}\times\swap(\minus(M))),
\]
i.e. ${\Mini}_{\iota_0\varphi}\times\swap(\minus(M))$ is symmetril in general.
\end{proof}

\begin{rem}
The main arguments in Step 1-3 of the above proof are based on  the same idea  of
\cite{F11} \S 3 where we find the arguments  deducing  the
harmonic product formula 
from that of two variable multiple polylogarithms.
Also, we expect  that 
an embedding  from $\ARI(\mathcal{F})_{\underline{\al+\pent}}$
to $\ARI(\mathcal{F})_{\underline{\al\ast\il}}$ is
obtained by the similar arguments.
\end{rem}

As a corollary of  Theorem \ref{thm: inclusion GARIas+pent to GARIas*is}, we obtain the following

\begin{cor}
When $\mathcal F=\mathcal F_\ser$, we have the following commutative diagram:
\begin{equation*}
\xymatrix{
 \ASTR \cdot \exp{\Q f_{1}}\ar@{->}[rr]^{\simeq}_{\ma_{[1]}}\ar@{^{(}->}^{\iota_0}[d]
 && \GARI(\mathcal{F})_{{\as+\abal}}\ar@{^{(}->}^{\minus}[d] \\
\DMR\cdot \exp{\Q f_{1}} \ar@{->}[rr]^{\simeq}_{\ma_{[1]}}&&\GARI(\mathcal{F})_{\as\ast\is}
}
\end{equation*}
where $\iota_0$ is the map sending $\varphi\mapsto\varphi(-f_0,f_1)$.
\end{cor}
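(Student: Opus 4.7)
The plan is to assemble the diagram edge by edge from results already established in the body of the paper and its appendix, and then to verify commutativity by a direct coefficient computation. The top horizontal isomorphism is exactly Theorem \ref{thm: recovery theorem of ASTR and GRT}, and the right vertical inclusion $\minus:\GARI(\mathcal{F}_{\ser})_{\as+\abal}\hookrightarrow\GARI(\mathcal{F}_{\ser})_{\as\ast\is}$ is Theorem \ref{thm: inclusion GARIas+pent to GARIas*is}. For the bottom isomorphism, I would invoke the characterization of $\DMR\cdot\exp\Q f_1$ given in the appendix (Proposition \ref{prop: equivalent formulation of double shuffle relations}), which identifies the image under $\ma_{[1]}$ of double shuffle series (modulo the $\exp\Q f_1$ ambiguity on the depth-one part) with the set $\GARI(\mathcal{F}_{\ser})_{\as\ast\is}$ via the condition $\shmap_*({\Mini}\times\swap M) = ({\Mini}\times\swap M)^{\otimes 2}$.

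Next I would prove the key identity
\[
\minus\circ\ma_{[1]}=\ma_{[1]}\circ\iota_0
\]
on $\widehat{U\mathfrak{f}_2}^\dag$. Writing $\varphi\in\widehat{U\mathfrak{f}_2}^\dag$ as in \eqref{eq: word expansion} and using Definition \ref{def:ma}, the substitution $f_0\mapsto -f_0$ produces a sign $(-1)^{k_0+\cdots+k_r}$ in each coefficient of $\iota_0\varphi$, which by the relation
\[
\vimo^r_{\iota_0\varphi}(z_0,\dots,z_r)=\vimo^r_\varphi(-z_0,\dots,-z_r)
\]
and the specialization $z_0=0,\ z_i=x_1+\cdots+x_i$ (so $-z_i=(-x_1)+\cdots+(-x_i)$) yields exactly $\minus\circ\ma_{[1]}(\varphi)$. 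This is essentially the same argument as in the proof of Proposition \ref{prop: Hopf str on MF ser}.

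Once this identity is in hand, the left vertical inclusion $\iota_0:\ASTR\cdot\exp\Q f_1\hookrightarrow\DMR\cdot\exp\Q f_1$ follows automatically from the commutativity itself: for $\varphi\in\ASTR\cdot\exp\Q f_1$, Theorem \ref{thm: recovery theorem of ASTR and GRT} places $\ma_{[1]}(\varphi)$ in $\GARI(\mathcal{F}_{\ser})_{\as+\abal}$, Theorem \ref{thm: inclusion GARIas+pent to GARIas*is} then sends it via $\minus$ into $\GARI(\mathcal{F}_{\ser})_{\as\ast\is}$, and the identity above rewrites this image as $\ma_{[1]}(\iota_0\varphi)$, forcing $\iota_0\varphi\in\DMR\cdot\exp\Q f_1$ by the bottom isomorphism. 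Thus the whole corollary reduces to the elementary identity $\minus\circ\ma_{[1]}=\ma_{[1]}\circ\iota_0$ together with previously proved inputs.

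The main obstacle is not the computation itself but ensuring that the bottom isomorphism is stated in exactly the form needed; the paper defers the precise match between $\DMR\cdot\exp\Q f_1$ and $\GARI(\mathcal{F}_{\ser})_{\as\ast\is}$ under $\ma_{[1]}$ to an appendix, so I would cite Proposition \ref{prop: equivalent formulation of double shuffle relations} once, verify the depth-one-even versus general correspondence is compatible with the $\exp\Q f_1$ factor, and otherwise keep the argument short.
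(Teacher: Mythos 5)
Your proposal is correct and follows essentially the same route as the paper, which deduces the corollary from Theorems \ref{pentagonal characterization theorem}, \ref{thm: recovery theorem of ASTR and GRT}, \ref{thm: inclusion GARIas+pent to GARIas*is} and \ref{theorem: GARI and DMR}; your explicit coefficient check of $\minus\circ\ma_{[1]}=\ma_{[1]}\circ\iota_0$ is exactly the identity the paper uses tacitly in Step 4 of the proof of Theorem \ref{thm: inclusion GARIas+pent to GARIas*is}. The only adjustments are that the bottom isomorphism should be cited as Theorem \ref{theorem: GARI and DMR} rather than Proposition \ref{prop: equivalent formulation of double shuffle relations} alone (passing from the latter to the statement about $\DMR\cdot\exp{\Q f_{1}}$ requires the regularization argument already carried out in the appendix), and the identification of $\GARI(\mathcal{F})_{\as+\abal}$ with $\GARI(\mathcal{F})_{\as+\pent}$ used for the right vertical arrow is Theorem \ref{pentagonal characterization theorem}.
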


\begin{proof}
It follows from
Theorems \ref{pentagonal characterization theorem},
\ref{thm: recovery theorem of ASTR and GRT},
\ref{thm: inclusion GARIas+pent to GARIas*is}
and
Theorem \ref{theorem: GARI and DMR} in Appendix \ref{sec: appendix GARIasis}.
\end{proof}

\subsection{$\paj$  and $\GARI(\mathcal{F}_\Lau)_{\as+\abal}$}
\label{sec: specific element paj}
In this subsection, we recall \'{E}calle's definition of $\paj\in\GARI$
(\cite[(4.72)]{E-flex})
and we prove that $\paj$ gives an element of $\GARI(\mathcal{F}_\Lau)_{\as+\abal}$
in Theorem \ref{thm: paj in GARI as+pent}.

We consider the $\mathcal K$-linear subspace $\mathcal K\langle X^{\Gamma}_{\Z} \rangle^{\rm lindep}$ of $\mathcal K\langle X^{\Gamma}_{\Z} \rangle$ generated by
\[
\left\{ \varia{u_1,\dots,u_d}{\varepsilon_1,\dots, \varepsilon_d}\in \left( X^{\Gamma}_{\Z} \right)^\bullet \ \middle|\ \mbox{$u_1,\dots,u_d$: linearly independent} \right\}.
\]

We define the $\Q$-linear map
$$
\psi:\Q\langle X^{\{xy,y\}}_{\Z} \rangle^{\rm lindep} \rightarrow \Q\langle X^{\{y\}}_{\Z} \rangle^{\rm lindep} \otimes\Q\langle X^{\{xy,x,0\}}_{\Z} \rangle^{\rm lindep}
$$
inductively by 
\begin{equation*}\label{eqn:special definition of psi}
\psi\varia{u_1,\dots,u_d}{y,\dots,y}:= \varia{u_1,\dots,u_d}{y,\dots,y}\otimes\emptyset
\end{equation*}
for $d\geq 0$, and
\begin{align}\label{eqn:general definition of psi}
&\psi(\omega_1,\dots,\omega_d)
=\psi\varia{u_1,\dots,u_d}{\varepsilon_1,\dots, \varepsilon_d} \\  \nonumber
&\qquad:=\sum_{i=1}^d\psi(\omega_1,\cdots,\omega_{i-1},\urflex{\omega_i}{\omega_{i+1}},\cdots, \omega_d) \\  \nonumber
&\qquad\qquad\cdot \Bigl( \emptyset\otimes\bigl( \delta_{\varepsilon_i,xy}(\delta_{\varepsilon_{i+1},y}+\delta_{\varepsilon_{i+1},1})\llflex{\omega_i}{\omega_{i+1}} + \delta_{\varepsilon_i,y}\delta_{\varepsilon_{i+1},xy} \left( \varia{-u_i}{x}- \varia{-u_i}{0}\right) \bigr)  \Bigr) \nonumber \\  \nonumber
&\qquad\quad -\sum_{i=2}^d\psi(\omega_1,\cdots,\ulflex{\omega_{i-1}}{\omega_i},\omega_{i+1},\cdots, \omega_d)\\
&\qquad\qquad\quad\cdot
\Bigl( \emptyset \otimes\bigl( \delta_{\varepsilon_i,xy}\delta_{\varepsilon_{i-1},y}\lrflex{\omega_{i-1}}{\omega_i} + \delta_{\varepsilon_i,y}\delta_{\varepsilon_{i-1},xy} \left( \varia{-u_i}{x}-\varia{-u_i}{0} \right) \bigr) \Bigr), \nonumber
\end{align}
for 
$(\varepsilon_1,\dots, \varepsilon_d)\neq(y,\dots,y)$.
Here we use $\omega_{d+1}=\emptyset$ and $\varepsilon_{d+1}=1$.
We note that
$$
\psi\varia{u_1,\dots,u_d}{xy,\dots,xy}:=\emptyset\otimes \varia{u_1,\dots,u_d}{xy,\dots,xy}.
$$

\begin{rem}
By rephrasing the arguments of the proof of Theorem \ref{thm: rec diff eq for I pre},
we could reformulate
\eqref{eq:I dec pre} as
$$
I(\omega)=\sum_{(\omega)}I(\omega')I(\omega''),
$$
for any word $\omega\in \Q\langle X^{\{xy,y\}}_{\Z} \rangle^{\rm lindep}$,
where we employ the Sweedler's notation of  the above $\psi$:
\begin{equation}\label{eqn:Sweedler notation of psi}
\psi(\omega)=\sum_{(\omega)}\omega'\otimes \omega''.
\end{equation}
\end{rem}


We define the $\mathcal K$-linear map
$$
\zerocut:\mathcal K\langle X^{\{xy,x,0\}}_{\Z} \rangle^{\rm lindep}\rightarrow\mathcal K\langle X^{\{xy,x\}}_{\Z} \rangle^{\rm lindep}
$$
by $\zerocut(\emptyset):=\emptyset$ and if
$(\omega_1,\cdots, \omega_d)=\varia{u_1,\dots,u_d}{\varepsilon_1,\dots, \varepsilon_d} $
is with
$\varepsilon_1,\dots, \varepsilon_d\neq0$,
\begin{align*}
\zerocut\left(\omega_1,\dots,\omega_d\right)
:=(\omega_1,\dots,\omega_d),
\end{align*}
and if there is $1\leq i\leq d$ such that $\varepsilon_i=0$ and $\varepsilon_k\neq0$ for $i<k$,
\begin{align*}
\zerocut\left(\omega_1,\dots,\omega_d\right)
:=\frac{1}{u_i}\left\{ \zerocut(\omega_1,\dots,\ulflex{\omega_i}{\omega_{i+1}},\omega_{i+2},\dots,\omega_d) - \zerocut(\omega_1,\dots,\omega_{i-2},\urflex{\omega_{i-1}}{\omega_{i}},\dots,\omega_d) \right\}.
\end{align*}
We define the $\Q$-linear map
$$
\widetilde{\psi}:\Q\langle X^{\{xy,y\}}_{\Z} \rangle^{\rm lindep} \rightarrow \Q\langle X^{\{y\}}_{\Z} \rangle^{\rm lindep} \otimes\Q\langle X^{\{xy,x\}}_{\Z} \rangle^{\rm lindep}
$$
by
$$
\widetilde{\psi}:=({\rm Id} \otimes {\zerocut})\circ\psi.
$$

By definition, we know  $\widetilde{\psi}(\emptyset)=\emptyset\otimes \emptyset$ and
\begin{equation*}
\widetilde{\psi}\varia{u_1}{xy}:=\emptyset\otimes \varia{u_1}{xy},
\hspace{10ex} \widetilde{\psi}\varia{u_1,\dots,u_d}{y,\dots,y}:= \varia{u_1,\dots,u_d}{y,\dots,y}\otimes\emptyset
\end{equation*}
for $d\geq1$.

We define the $\Q$-linear map
$$
\overline{\psi}:\Q\langle X^{\{x\}}_{\Z} \rangle^{\rm lindep} \otimes \Q\langle X^{\{xy,y\}}_{\Z} \rangle^{\rm lindep} \rightarrow \mathcal K\langle X^{\{y\}}_{\Z} \rangle^{\rm lindep}\otimes \mathcal K\langle X^{\{xy,x\}}_{\Z} \rangle^{\rm lindep}
$$
by
\begin{equation}\label{eq:overline psi eta otimes omega}
\overline{\psi}(\eta\otimes \omega):=\widetilde{\psi}(\omega)\shuffle (\emptyset\otimes \eta)
\end{equation}
for $\eta\in\left( X^{\{x\}}_{\Z} \right)^\bullet$ and $\omega\in \left( X^{\{xy,y\}}_{\Z} \right)^\bullet$.

\begin{rem}\label{rem: reform minus bal}
By rephrasing the arguments of the proof of Theorem \ref{thm: rec diff eq for I},
we could reformulate
\eqref{eq:I dec}  as 
$$
I(\eta)I(\omega)=\sum_{\bf j} c_{\bf j} \cdot I(a_{\bf j})I(b_{\bf j})
$$
for $\eta\in\left( X^{\{x\}}_{\Z} \right)^\bullet$ and $\omega\in \left( X^{\{xy,y\}}_{\Z} \right)^\bullet$
when
$$\overline{\psi}(\eta\otimes \omega)=\sum_{{\bf j}}c_{\bf j} (a_{\bf j}\otimes b_{\bf j}) \qquad (c_{\bf j}\in\mathcal K).
$$

By Remark \ref{rem: existence of bal},
the map $\bal$ in \eqref{eq:bal P4F} is described as,
for $M\in\mathcal P_4(\mathcal{F})$,
\begin{equation}\label{eq:minus bal = sum minus}
\minus(\bal(M))(\tilde\sigma(\eta);\tilde\sigma(\omega))=\sum_{{\bf j}}c_{{\bf j}}\cdot \minus(M)({\tilde\sigma}(a_{\bf j});{\tilde\sigma}(b_{\bf j}))
\end{equation}
where $\tilde\sigma$ is the identification of
$X^{\{\epsilon\}}_{\Z}$ ($\epsilon=x,y$)
and $X^{\{\epsilon_1,\epsilon_2\}}$
 ($(\epsilon_1,\epsilon_2)=(xy,x), (xy,y)$)
with  $X^{\{1\}}_{\Z}$ and $X^{\{1,2\}}_{\Z}$ respectively
under the notation \eqref{eq: 1-2 rule}.
\end{rem}

\begin{defn}
The mould $\paj \in \GARI(\mathcal{F}_\Lau)$ is defined by $\paj(\emptyset):=1$ and
$$
\paj(x_1,\dots,x_m)
:=\frac{1}{x_1(x_1+x_2)\cdots(x_1+\cdots+x_m)}
$$
for $m\geq1$.
\end{defn}

\begin{rem}
We put $\pic:=\swap(\paj)\in \overline{\GARI}(\mathcal{F}_\Lau)$.
Then $\pic$ is explicitly given by
$$
\pic({x_1,\dots,x_m})
:=\frac{1}{x_1x_2\cdots x_m}
$$
for $m\geq1$.
\end{rem}

In the beginning of \cite[\S 4.3]{E-flex},
it is stated that the mould $\paj$ is symmetral and $\pic=\swap(\paj)$ is symmetril
though its proof does not look presented there.
We give a complete proof below:

\begin{prop}\label{prop:paj in GRAI asis}
We have $\paj\in\GARI(\mathcal{F}_\Lau)_{\as*\is}$.
\end{prop}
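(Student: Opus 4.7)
The plan is to verify the two defining conditions of $\GARI(\mathcal{F}_\Lau)_{\as*\is}$ in Definition \ref{def:GARI as*is}: first, that $\paj\in\GARI(\mathcal{F}_\Lau)_\as$, and second, that there exists a constant mould $C$ such that $C\times\swap(\paj)=C\times\pic$ is symmetril. I expect the simplest choice is $C=\unitmould$, in which case the second task reduces to showing that $\pic$ itself is symmetril.

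For the symmetrility of $\pic$, I would proceed by induction on the total length $p+q$, using the recursion \eqref{eqn:shufflestar product} for $\shuffle_*$. The key feature is that $\pic(\varia{1}{v}\omega)=\frac{1}{v}\pic(\omega)$, which makes $\pic$ behave multiplicatively under prepending letters. Applying $\pic$ to the three terms produced by \eqref{eqn:shufflestar product} and invoking the inductive hypothesis to factor each as a product, the inductive step reduces to the elementary rational identity
$$\frac{1}{vv'}+\frac{1}{v'v}+\frac{1}{v-v'}\left(\frac{1}{v}-\frac{1}{v'}\right)=\frac{1}{vv'},$$
which immediately yields $\pic(\omega\shuffle_*\eta)=\pic(\omega)\pic(\eta)$. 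The base case $\pic(\emptyset)=1$ holds by definition.

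For the symmetrality of $\paj$, my preferred approach is to exploit the iterated integral representation
$$\paj(x_1,\dots,x_m)=\int_{0<t_1<\cdots<t_m<1} t_1^{x_1-1}t_2^{x_2-1}\cdots t_m^{x_m-1}\,dt_1\cdots dt_m,$$
valid for $\mathrm{Re}(x_i)>0$, which follows from $\frac{1}{u}=\int_0^1 s^{u-1}\,ds$ via the substitution $t_j=\prod_{k\geq j}s_k$. The product $\paj(x_1,\dots,x_p)\cdot\paj(x_{p+1},\dots,x_{p+q})$ is then an integral over a product of two ordered simplices in $[0,1]^{p+q}$, which decomposes (up to measure zero) into the disjoint union of the $\binom{p+q}{p}$ ordered simplices indexed by shuffles $\sigma\in\mathrm{Sh}(p,q)$. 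Each such simplex contributes an integral equal to $\paj$ of the corresponding interleaved sequence, yielding exactly
$$\paj(x_1,\dots,x_p)\paj(x_{p+1},\dots,x_{p+q})=\sum_{\alpha\in X_\Z^\bullet}\Sh{(x_1,\dots,x_p)}{(x_{p+1},\dots,x_{p+q})}{\alpha}\paj(\alpha),$$
the symmetrality condition. Since both sides are rational functions of $(x_1,\dots,x_{p+q})$, the identity on the convergent region extends as an identity in $\mathcal{F}_{\Lau,p+q}$.

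The main obstacle will be the careful bookkeeping in matching shuffles with simplex decompositions and their exponents. A purely algebraic induction on $p+q$ using the shuffle recursion \eqref{eqn:shuffle product} and partial-fraction identities for $\paj$ would serve as an alternative if the integral approach becomes cumbersome; however, the integral argument has the virtue of being conceptually clean and mirroring the classical proof of shuffle relations for iterated integrals.
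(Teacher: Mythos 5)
Your proposal is correct. For the symmetrility of $\pic=\swap(\paj)$ you argue exactly as the paper does: induction on total length driven by the prefix recursion $\pic(x_1,\dots,x_r)=\tfrac{1}{x_1}\pic(x_2,\dots,x_r)$ together with \eqref{eqn:shufflestar product}, the third (contraction) term being absorbed by the identity $\tfrac{1}{v-v'}\bigl(\tfrac{1}{v}-\tfrac{1}{v'}\bigr)=-\tfrac{1}{vv'}$; taking $C=\unitmould$ is also what the paper implicitly does. For the symmetrality of $\paj$, however, you take a genuinely different route. The paper runs a purely algebraic induction on $p+q$ using the suffix recursion $\paj(\alpha,x)=\tfrac{1}{|\alpha|+x}\paj(\alpha)$: since every shuffle of the two arguments has the same total weight, the common factor $\tfrac{1}{|\omega|+|\eta|}$ pulls out of the whole sum, and the induction closes with $\tfrac{1}{a+b}\bigl(\tfrac{1}{a}+\tfrac{1}{b}\bigr)=\tfrac{1}{ab}$. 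You instead use the integral representation $\paj(x_1,\dots,x_m)=\int_{0<t_1<\cdots<t_m<1}\prod_i t_i^{x_i-1}\,dt_i$ and the decomposition of a product of two ordered simplices into shuffled simplices, then descend from an identity of analytic functions on $\{\mathrm{Re}(x_i)>0\}$ to an identity of rational functions, hence in $\mathcal{F}_{\Lau,p+q}$. Both arguments are complete: the paper's is self-contained and entirely formal (no convergence or specialization step to justify), while yours explains conceptually why $\paj$ is symmetral --- it genuinely is an iterated integral --- at the cost of the extra, routine, analytic-to-formal passage. The algebraic induction you name as a fallback is precisely the paper's proof.
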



\begin{proof}
We first prove that $\paj$ is symmetral.
Because we have $\paj(\emptyset)=1$, it suffices to prove
\begin{equation*}
\sum_{\alpha\in X_\Z^\bullet}\Sh{\omega}{\eta}{\alpha}\paj(\alpha)=\paj(\omega)\paj(\eta),
\end{equation*}
for $\omega,\eta\in X_\Z^\bullet$ with $l(\omega),l(\eta)\geq1$.
We prove this by induction on $r=l(\omega)+l(\eta)$ ($\geq2$) below.
The case of $l(\omega)=l(\eta)=1$ is obvious.
Assume that it holds for the case of $r\ (\geq2)$.
For $\omega=(u,x)$ and $\eta=(v,y)$ with $u,v\in X_\Z^\bullet$ and $x,y\in X_\Z$, we have
\begin{align*}
&\sum_{\alpha\in X_\Z^\bullet}\Sh{(u,x)}{(v,y)}{\alpha}\paj(\alpha) \\
&=\sum_{\alpha\in X_\Z^\bullet}
\left\{\Sh{u}{(v,y)}{\alpha}\paj(\alpha,x)+\Sh{(u,x)}{v}{\alpha}\paj(\alpha,y)\right\}.
\intertext{For $\omega=(x_1,\dots,x_r)$ ($x_i\in X$), put $|\omega|:=x_1+\cdots+x_r$.
Then we have $\paj(\omega)=\frac{1}{|\omega|}\paj(x_1,\dots,x_{r-1})$.
So we get }
&=\sum_{\alpha\in X_\Z^\bullet}
\left\{\Sh{u}{(v,y)}{\alpha}\frac{1}{|\alpha|+x}\paj(\alpha)+\Sh{(u,x)}{v}{\alpha}\frac{1}{|\alpha|+y}\paj(\alpha)\right\}.
\intertext{When $\Sh{\omega}{\eta}{\alpha}\neq0$ holds, we have $|\alpha|=|\omega|+|\eta|$. Therefore, we calculate}
&=\frac{1}{|u|+|v|+x+y}\sum_{\alpha\in X_\Z^\bullet}
\left\{\Sh{u}{(v,y)}{\alpha}\paj(\alpha)+\Sh{(u,x)}{v}{\alpha}\paj(\alpha)\right\}.
\intertext{By induction hypothesis, we get}
&=\frac{1}{|u|+|v|+x+y}\left\{\paj(u)\paj(v,y)+\paj(u,x)\paj(v)\right\} \\
&=\frac{1}{|u|+|v|+x+y}\left\{\frac{1}{|v|+y}\paj(u)\paj(v)+\frac{1}{|u|+x}\paj(u)\paj(v)\right\} \\
&=\frac{1}{|u|+x}\frac{1}{|v|+y}\paj(u)\paj(v) \\
&=\paj(u,x)\paj(v,y).
\end{align*}
Hence, $\paj$ is symmetral.

We next prove that $\pic=\swap(\paj)$ is symmetril by induction on $r=l(\omega)+l(\eta)\geq2$.
When $l(\omega)=l(\eta)=1$, i.e., $\omega=x_1$ and $\eta=x_2$, we have
\begin{align*}
\pic(x_1,x_2)+\pic(x_2,x_1)+\frac{1}{x_1-x_2}\left\{\pic(x_1)-\pic(x_2)\right\}
=&\frac{2}{x_1x_2}+\frac{1}{x_1-x_2}\left(\frac{1}{x_1}-\frac{1}{x_2}\right) \\
=&\frac{1}{x_1x_2} \\
=&\pic(x_1)\pic(x_2).
\end{align*}
Assume that symmetrility of $\pic$ holds when the total length is less than $r(\geq2)$.
For $r=l(\omega)+l(\eta)$ with $l(\omega),l(\eta)\geq1$, put $\omega=xu$ and $\eta=yv$ ($x,y\in  Y_\Z$ and $u,v\in  Y_\Z^\bullet$).
Then we have
\begin{align*}
&\sum_{\alpha\in  Y_\Z^\bullet}\Shstar{xu}{yv}{\alpha}\pic(\alpha) \\
&=\sum_{\alpha\in Y_\Z^\bullet}
\left[\Shstar{u}{yv}{\alpha}\pic(x,\alpha)
+\Shstar{xu}{v}{\alpha}\pic(y,\alpha)\right. \\
&\hspace{5cm}\left.+\frac{1}{x-y}\Shstar{u}{v}{\alpha}\left\{\pic(x,\alpha)-\pic(y,\alpha)\right\}\right].
\intertext{By definition, we have $\pic(x_1,\dots,x_r)=\frac{1}{x_1}\pic(x_2,\dots,x_r)$ ($r\geq1$). So we get}
&=\frac{1}{x}\sum_{\alpha\in  Y_\Z^\bullet}\Shstar{u}{yv}{\alpha}\pic(\alpha)
+\frac{1}{y}\sum_{\alpha\in  Y_\Z^\bullet}\Shstar{xu}{v}{\alpha}\pic(\alpha) \\
&\hspace{5cm}+\frac{1}{x-y}\left(\frac{1}{x}-\frac{1}{y}\right)\sum_{\alpha\in Y_\Z^\bullet}\Shstar{u}{v}{\alpha}\pic(\alpha).
\intertext{By induction hypothesis, we have}
&=\frac{1}{x}\pic(u)\pic(y,v)
+\frac{1}{y}\pic(x,u)\pic(v)
-\frac{1}{xy}\pic(u)\pic(v) \\
&=\pic(x,u)\pic(y,v).
\end{align*}
Therefore, $\pic$ is symmetril.
Hence, we obtain $\paj\in\GARI(\mathcal{F}_\Lau)_{\as*\is}$.
\end{proof}

We define $\rho=\rho_{\Gamma}:\mathcal{K}\langle X_{\Z}^{\Gamma}\rangle\to\mathcal{K}\langle X_{\Z}^{\{1\}}\rangle$
by
\[
\rho\binom{u_{1},\dots,u_{d}}{\epsilon_{1},\dots,\epsilon_{d}}=\binom{u_{1},\dots,u_{d}}{1,\dots,1}.
\]

\begin{lem}\label{lem:paj_reduce}
For $\omega\in\mathcal{K}\langle X_{\Z}^{\{xy,x,0\}}\rangle^{{\rm lindep}}$,
we have
\[
\paj_{\{xy,x\}}(\zerocut(\omega))=\paj(\rho(w)).
\]
\end{lem}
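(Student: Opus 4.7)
The plan is to proceed by induction on the length $d$ of $\omega=(\omega_1,\dots,\omega_d)$, exploiting the fact that the recursive clause in the definition of $\zerocut$ has exactly the shape of a partial-fraction identity satisfied by $\paj$. Since $\paj_{\{xy,x\}}$ ignores the upper row by \eqref{eq: mould extension by Gamma}, one has $\paj_{\{xy,x\}}(\eta)=\paj(\rho(\eta))$ for every single word $\eta\in\mathcal{K}\langle X_{\Z}^{\{xy,x\}}\rangle^{\rm lindep}$; extending $\mathcal{K}$-linearly, this handles the base case where no $\epsilon_k$ equals $0$ and $\zerocut(\omega)=\omega$.

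For the inductive step, I pick the index $i$ from the recursive clause (the largest with $\epsilon_i=0$) and apply the definition
\[
\zerocut(\omega)=\frac{1}{u_i}\bigl\{\zerocut(A)-\zerocut(B)\bigr\},
\]
where $A=(\omega_1,\dots,\omega_{i-1},\ulflex{\omega_i}{\omega_{i+1}},\omega_{i+2},\dots,\omega_d)$ and $B=(\omega_1,\dots,\omega_{i-2},\urflex{\omega_{i-1}}{\omega_i},\omega_{i+1},\dots,\omega_d)$ both have length $d-1$ and still lie in the required lindep domain, since replacing two consecutive $u$-entries by their sum preserves linear independence. The inductive hypothesis then yields
\[
\paj_{\{xy,x\}}(\zerocut(A))=\paj(u_1,\dots,u_{i-1},u_i+u_{i+1},u_{i+2},\dots,u_d),
\]
together with the analogous formula for $B$. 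Setting $s_k:=u_1+\cdots+u_k$, so that $\paj(u_1,\dots,u_d)=(s_1\cdots s_d)^{-1}$, the partial sums associated to $\rho(A)$ (resp.\ $\rho(B)$) coincide with $(s_1,\dots,s_d)$ with $s_i$ (resp.\ $s_{i-1}$) omitted; hence
\[
\paj(\rho(A))-\paj(\rho(B))=\frac{s_i-s_{i-1}}{s_1s_2\cdots s_d}=u_i\cdot\paj(\rho(\omega)),
\]
and dividing by $u_i$ closes the induction.

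The main obstacle is the bookkeeping for the boundary indices $i=1$ and $i=d$, where either $\omega_{i-1}$ or $\omega_{i+1}$ is absent and one of the two terms in the recursive definition of $\zerocut$ must be interpreted as vanishing (in keeping with the flexion conventions $\urflex{\emptyset}{\gamma}=\ulflex{\gamma}{\emptyset}=\gamma$ and the natural reading $s_0=0$). In each of these boundary cases one must verify separately that the partial-fraction identity continues to match the recursion; this is routine once the conventions are fixed but is the only place in the argument where one has to be careful.
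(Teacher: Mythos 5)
Your proof is correct and follows essentially the same route as the paper's: induction driven by the recursive clause of $\zerocut$, the observation that $\paj_{\{xy,x\}}$ depends only on the $u$-row, and the partial-fraction identity $\paj(\rho(A))-\paj(\rho(B))=u_i\cdot\paj(\rho(\omega))$ coming from the omitted partial sums $s_i$ and $s_{i-1}$. The only cosmetic difference is that the paper inducts on the number of zero labels rather than on the length $d$; both inductions terminate for the same reason, so this is immaterial.
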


\begin{proof}
Put $\omega=\binom{u_{1},\dots,u_{d}}{\epsilon_{1},\dots,\epsilon_{d}}$.
We prove the claim by induction on $\#\{i\mid\epsilon_{i}=0\}$. The
case when $\{i\mid\epsilon_{i}=0\}$ is empty follows from definition.
Let $i$ be a minimum such that $\epsilon_{i}=0$. Then by definition,
\begin{align*}
\paj{}_{\{xy,x\}}(\zerocut(\omega)) & =\frac{1}{u_{i}}\paj{}_{\{xy,x\}}\left(\zerocut\binom{u_{1},\dots u_{i-1},u_{i}+u_{i+1},u_{i+2},\dots,u_{d}}{\epsilon_{1},\dots,\epsilon_{i-1,}\ \ \ \ \epsilon_{i+1},\epsilon_{i+2},\dots,\epsilon_{d}}\right)\\
 & \quad-\frac{1}{u_{i}}\paj{}_{\{xy,x\}}\left(\zerocut\binom{u_{1},\dots u_{i-2},u_{i-1}+u_{i},u_{i+1},\dots,u_{d}}{\epsilon_{1},\dots,\epsilon_{i-2,}\ \ \ \ \epsilon_{i-1},\epsilon_{i+1},\dots,\epsilon_{d}}\right),
\end{align*}
and by induction hypothesis,
\begin{align*}
 & =\frac{1}{u_{i}}\paj\left(\rho\binom{u_{1},\dots u_{i-1},u_{i}+u_{i+1},u_{i+2},\dots,u_{d}}{\epsilon_{1},\dots,\epsilon_{i-1,}\ \ \ \ \epsilon_{i+1},\epsilon_{i+2},\dots,\epsilon_{d}}\right)\\
 & \quad-\frac{1}{u_{i}}\paj\left(\rho\binom{u_{1},\dots u_{i-2},u_{i-1}+u_{i},u_{i+1},\dots,u_{d}}{\epsilon_{1},\dots,\epsilon_{i-2,}\ \ \ \ \epsilon_{i-1},\epsilon_{i+1},\dots,\epsilon_{d}}\right)\\
 & =\frac{1}{u_{i}}\left((u_{1}+\cdots+u_{i})-(u_{1}+\cdots+u_{i-1})\right)\paj(\rho(\omega))\\
 & =\paj(\rho(\omega))
\end{align*}
which completes the proof.
\end{proof}

\begin{thm}\label{thm: paj in GARI as+pent}
$\minus(\paj)\in\GARI(\mathcal{F}_{\Lau})_{\as+\abal}$.
\end{thm}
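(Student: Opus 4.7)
The proof has two parts: symmetrality and balance. Symmetrality of $\minus(\paj)$ is immediate from Proposition \ref{prop:paj in GRAI asis}, since $\minus$ is a $\Q$-algebra automorphism of $(\mathcal{M}(\mathcal{F}_{\Lau}), \times)$ (it merely substitutes $u_i \mapsto -u_i$ in every component) and hence preserves symmetrality.

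For the balance condition, the strategy is to apply the involution $\minus$ to both sides of the putative equation
\[
\bal(\minus(\paj)_{[1]} \otimes \minus(\paj)_{[2]}) = \minus(\paj)_{[1]} \otimes (\minus(\paj)_{[2]} \times C)
\]
(using that constant moulds are $\minus$-invariant) and invoke the explicit formula \eqref{eq:minus bal = sum minus} of Remark \ref{rem: reform minus bal} with $M = \minus(\paj)_{[1]} \otimes \minus(\paj)_{[2]}$, for which $\minus(M) = \paj_{[1]} \otimes \paj_{[2]}$. Unfolding $\overline{\psi}(\eta \otimes \omega) = \widetilde{\psi}(\omega) \shuffle (\emptyset \otimes \eta)$, using symmetrality of $\paj$ to separate the shuffle factor, and using Lemma \ref{lem:paj_reduce} to rewrite $\paj(\zerocut(\gamma))$ as $\paj(\rho(\gamma))$, I claim that $\minus(\paj)$ is in fact \emph{well}-balanced (that is, one may take $C = \unitmould$); the balance identity then reduces to the single combinatorial statement
\[
\sum_{(\omega)} \paj(\omega') \cdot \paj(\rho(\omega'')) = \paj(\omega)
\]
for every $\omega \in \langle X_{\Z}^{\{xy, y\}} \rangle^{\mathsf{lindep}}$, where $\psi(\omega) = \sum_{(\omega)} \omega' \otimes \omega''$ is the Sweedler notation for the map $\psi$ of \eqref{eqn:general definition of psi}.

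I plan to establish this identity by induction on the length $d$ of $\omega$. The base cases $\omega = \binom{u_1, \dots, u_d}{y, \dots, y}$ and $\omega = \binom{u_1, \dots, u_d}{xy, \dots, xy}$ are trivial because $\psi(\omega) = \omega \otimes \emptyset$ and $\psi(\omega) = \emptyset \otimes \omega$, respectively; for the inductive step, the recursive definition \eqref{eqn:general definition of psi} of $\psi$ splits $\psi(\omega)$ into contributions coming from the strictly shorter words $\psi(\dots, \urflex{\omega_i}{\omega_{i+1}}, \dots)$ and $\psi(\dots, \ulflex{\omega_{i-1}}{\omega_i}, \dots)$, and one substitutes the inductive hypothesis and collapses the resulting rational expression via the symmetrality of $\paj$ and the partial-fraction identities encoded in $\paj(u_1, \dots, u_d) = 1/(u_1(u_1+u_2)\cdots(u_1+\cdots+u_d))$.

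The main obstacle will be controlling the ``$0$-label'' contributions in the inductive step: the pairs $\binom{-u_i}{x} - \binom{-u_i}{0}$ in \eqref{eqn:general definition of psi} produce terms whose images under $\rho$ coincide, so by Lemma \ref{lem:paj_reduce} their combined $\paj$-value vanishes; but this cancellation interacts nontrivially with the flexion contributions from the other summands, and systematically tracking which combinations of flexion terms survive---and then matching the resulting rational functions against the partial-fraction expansion of $\paj(\omega)$---is the technical heart of the argument.
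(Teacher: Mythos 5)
Your proposal is correct and follows essentially the same route as the paper's proof: symmetrality from Proposition \ref{prop:paj in GRAI asis}, the claim of well-balancedness (so $C=\unitmould$), reduction via Remark \ref{rem: reform minus bal}, factoring out $\eta$ by symmetrality, Lemma \ref{lem:paj_reduce} to pass to $\psi_{\mathrm{red}}=(\mathrm{Id}\otimes\rho)\circ\psi$, and the final induction on length. The only minor remark is that the ``$0$-label'' cancellation you flag as the main obstacle is in fact immediate, since $\rho$ sends $\binom{-u_i}{x}$ and $\binom{-u_i}{0}$ to the same letter, so those terms vanish already in $\psi_{\mathrm{red}}$ and the remaining induction is a clean telescoping of the partial fractions defining $\paj$.
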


\begin{proof}
Since we know that $\minus(\paj)\in\GARI(\mathcal{F}_{\Lau})_{\as}$
from Proposition \ref{prop:paj in GRAI asis},
our claim is reduced to prove that $\minus(\paj)$ is balanced.
Actually  we prove  below that it is well-balanced, that is,
$$
\bal (\minus(\paj)_{[1]}\otimes \minus(\paj)_{[2]})=\minus(\paj)_{[1]}\otimes \minus(\paj)_{[2]}.
$$
By applying $\minus$ to both sides, we see that it is equivalent to
\begin{equation*}
\minus \circ\bal (\minus(\paj)_{[1]}\otimes \minus(\paj)_{[2]})=\minus\circ(\minus(\paj)_{[1]}\otimes \minus(\paj)_{[2]}).
\end{equation*}
Thus it is enough to show
\begin{align*}
\minus  \circ\bal & (\minus(\paj)_{[1]}\otimes \minus(\paj)_{[2]})({\tilde\sigma}(\eta);{\tilde\sigma}(\omega)) \\
&=
\minus\circ(\minus(\paj)_{[1]}\otimes \minus(\paj)_{[2]})({\tilde\sigma}(\eta);{\tilde\sigma}(\omega)).
\end{align*}
By Remark \ref{rem: reform minus bal}, the left hand side
$$
\minus\circ\bal (\minus(\paj)_{[1]}\otimes \minus(\paj)_{[2]})({\tilde\sigma}(\eta);{\tilde\sigma}(\omega) ))
$$
is equal to
$$
(\paj_{[1]}\otimes \paj_{[2]})(\bar{\psi}({\tilde\sigma}(\eta)\otimes {\tilde\sigma}(\omega) )) = (\paj_{\{y\}}\otimes \paj_{\{xy,x\}})(\bar{\psi}(\eta\otimes\omega)).
$$
While the right hand side is equal to
$$
\paj_{\{x\}}(\eta)\paj_{\{xy,y\}}(w).
$$

Therefore our claim is reduced to show
\begin{equation*}\label{eq: paj paj=paj paj}
(\paj_{\{y\}}\otimes\paj_{\{xy,x\}})(\bar{\psi}(\eta\otimes w))
=\paj_{\{x\}}(\eta)\paj_{\{xy,y\}}(w).
\end{equation*}
By \eqref{eq:overline psi eta otimes omega},
the left hand side is calculated to be
\begin{align*}
  (\paj_{\{y\}}&\otimes\paj_{\{xy,x\}})(\bar{\psi}(\eta\otimes w))
  =(\paj_{\{y\}}\otimes\paj_{\{xy,x\}})(\tilde{\psi}(w)\shuffle(\emptyset\otimes\eta)).
\end{align*}
By $\paj\in\GARI(\mathcal{F}_{\Lau})_{\as}$,
\begin{align*}
 & =\paj_{\{xy,x\}}(\eta)(\paj_{\{y\}}\otimes\paj_{\{xy,x\}})(\tilde{\psi}(w))\\
  & =\paj_{\{xy,x\}}(\eta)(\paj_{\{y\}}\otimes\paj_{\{xy,x\}})((\mathrm{Id}\otimes\zerocut)\psi(w)),\\
\intertext{and by Lemma \ref{lem:paj_reduce},}
&
=\paj_{\{xy,x\}}(\eta)(\paj_{\{y\}}\otimes\paj)\left((\mathrm{Id}\otimes\rho)\psi(w)\right).
\end{align*}

We have  $\paj_{\{x\}}(\eta)=\paj_{\{xy,x\}}(\eta)$ by definition.
Thus our claim 
is deduced to show
\[
(\paj_{\{y\}}\otimes\paj)(\psi_{\mathrm{red}}(w))
=\paj_{\{xy,y\}}(w)
\]
where we put $\psi_{\mathrm{red}}=(\mathrm{Id}\otimes\rho)\circ\psi$.
Then, by \eqref{eqn:general definition of psi}, we have
\begin{align*}
 & \psi_{\mathrm{red}}(\omega_{1},\dots,\omega_{d})=\psi_{\mathrm{red}}\varia{u_{1},\dots,u_{d}}{\varepsilon_{1},\dots,\varepsilon_{d}}\\
 & \quad =\sum_{\substack{1\leq i\leq d\\
\epsilon_{i}=xy,\epsilon_{i+1}\in\{y,1\}
}
}\psi_{\mathrm{red}}(\omega_{1},\cdots,\omega_{i-1},\urflex{\omega_{i}}{\omega_{i+1}},\cdots,\omega_{d})\cdot\Bigl(\emptyset\otimes\binom{u_{i}}{1}\Bigr)\\
 & \qquad\qquad -\sum_{\substack{2\leq i\leq d\\
\epsilon_{i}=xy,\epsilon_{i-1}=y
}
}\psi_{\mathrm{red}}(\omega_{1},\cdots,\ulflex{\omega_{i-1}}{\omega_{i}},\omega_{i+1},\cdots,\omega_{d})\cdot\Bigl(\emptyset\otimes\binom{u_{i}}{1}\Bigr)
\end{align*}
when
$(\omega_1,\cdots, \omega_d)=\varia{u_1,\dots,u_d}{\varepsilon_1,\dots, \varepsilon_d} $
is with
$(\epsilon_{1},\dots,\epsilon_{d})\neq(y,\dots,y)$.
Then we can easily check that
\[
\psi_{\mathrm{red}}\varia{u_{1},\dots,u_{d}}{\varepsilon_{1},\dots,\varepsilon_{d}}\in\mathbb{Q}\langle X_{\Z}^{\{y\}}\rangle\otimes\mathbb{Q}\langle X_{\Z}^{\{1\},a}\rangle
\]
where
$$a=\sum_{\substack{1\leq i\leq d,\omega_{i}=xy}}u_{i}$$
and
$\Q\langle X_{\Z}^{\{1\},a}\rangle$ is the $\Q$-linear subspace spanned by
$\varia{v_{1},\dots,v_{d}}{\varepsilon_{1},\dots,\varepsilon_{d}}$
with $v_{1}+\cdots+v_{d}=a$.

Let us show the claim
\[
(\paj_{\{y\}}\otimes\paj)\left(\psi_{\mathrm{red}}\varia{u_{1},\dots,u_{d}}{\varepsilon_{1},\dots,\varepsilon_{d}}\right)
=\paj_{\{xy,y\}}\varia{u_{1},\dots,u_{d}}{\varepsilon_{1},\dots,\varepsilon_{d}}
\]
by induction on $d$. The case $(\epsilon_{1},\dots,\epsilon_{d})=(y,\dots,y)$
follows from definition. Assume that $(\epsilon_{1},\dots,\epsilon_{d})\neq(y,\dots,y)$.
Then we have
\begin{align*}
&  (\paj_{\{y\}}\otimes\paj)\left(\psi_{\mathrm{red}}\varia{u_{1},\dots,u_{d}}{\varepsilon_{1},\dots,\varepsilon_{d}}\right)\\
 & =\sum_{\substack{1\leq i\leq d\\
\epsilon_{i}=xy,\epsilon_{i+1}\in\{y,1\}
}
}(\paj_{\{y\}}\otimes\paj)\left(\psi_{\mathrm{red}}(\omega_{1},\cdots,\omega_{i-1},\urflex{\omega_{i}}{\omega_{i+1}},\cdots,\omega_{d})\cdot\Bigl(\emptyset\otimes\binom{u_{i}}{1}\Bigr)\right)\\
 & \qquad\quad  -\sum_{\substack{2\leq i\leq d\\
\epsilon_{i}=xy,\epsilon_{i-1}=y
}
}(\paj_{\{y\}}\otimes\paj)\left(\psi_{\mathrm{red}}(\omega_{1},\cdots,\ulflex{\omega_{i-1}}{\omega_{i}},\omega_{i+1},\cdots,\omega_{d})\cdot\Bigl(\emptyset\otimes\binom{u_{i}}{1}\Bigr)\right),
\end{align*}
and since $\psi_{\mathrm{red}}(\omega_{1},\cdots,\omega_{i-1},\urflex{\omega_{i}}{\omega_{i+1}},\cdots,\omega_{d})$
and $\psi_{\mathrm{red}}(\omega_{1},\cdots,\ulflex{\omega_{i-1}}{\omega_{i}},\omega_{i+1},\cdots,\omega_{d})$
are in $\mathbb{Q}\langle X_{\Z}^{\{y\}}\rangle\otimes\mathbb{Q}\langle X_{\Z}^{\{1\},a-u_{i}}\rangle$
with $a=\sum_{\epsilon_{j}=xy}u_{j}$,
\begin{align*}
 & =\sum_{\substack{1\leq i\leq d\\
\epsilon_{i}=xy,\epsilon_{i+1}\in\{y,1\}
}
}(\paj_{\{y\}}\otimes\paj)\left(\psi_{\mathrm{red}}(\omega_{1},\cdots,\omega_{i-1},\urflex{\omega_{i}}{\omega_{i+1}},\cdots,\omega_{d})\right)\frac{1}{\sum_{\epsilon_{j}=xy}u_{j}}\\
 & \qquad -\sum_{\substack{2\leq i\leq d\\
\epsilon_{i}=xy,\epsilon_{i-1}=y
}
}(\paj_{\{y\}}\otimes\paj)\left(\psi_{\mathrm{red}}(\omega_{1},\cdots,\ulflex{\omega_{i-1}}{\omega_{i}},\omega_{i+1},\cdots,\omega_{d})\right)\frac{1}{\sum_{\epsilon_{j}=xy}u_{j}},
\end{align*}
and by induction hypothesis,
\begin{align*}
 & =\sum_{\substack{1\leq i\leq d\\
\epsilon_{i}=xy,\epsilon_{i+1}\in\{y,1\}
}
}\paj_{\{xy,y\}}\left(\omega_{1},\cdots,\omega_{i-1},\urflex{\omega_{i}}{\omega_{i+1}},\cdots,\omega_{d}\right)\frac{1}{\sum_{\epsilon_{j}=xy}u_{j}}\\
 & \qquad -\sum_{\substack{2\leq i\leq d\\
\epsilon_{i}=xy,\epsilon_{i-1}=y
}
}\paj_{\{xy,y\}}\left(\omega_{1},\cdots,\ulflex{\omega_{i-1}}{\omega_{i}},\omega_{i+1},\cdots,\omega_{d}\right)\frac{1}{\sum_{\epsilon_{j}=xy}u_{j}}\\
 & =\sum_{\substack{1\leq i\leq d\\
\epsilon_{i}=xy,\epsilon_{i+1}\in\{y,1\}
}
}\frac{u_{1}+\cdots+u_{i}}{u_{1}(u_{1}+u_{2})\cdots(u_{1}+\cdots+u_{d})}\frac{1}{\sum_{\epsilon_{j}=xy}u_{j}}\\
 & \qquad -\sum_{\substack{2\leq i\leq d\\
\epsilon_{i}=xy,\epsilon_{i-1}=y
}
}\frac{u_{1}+\cdots+u_{i-1}}{u_{1}(u_{1}+u_{2})\cdots(u_{1}+\cdots+u_{d})}\frac{1}{\sum_{\epsilon_{j}=xy}u_{j}}\\
 & =\sum_{\substack{1\leq i\leq d\\
\epsilon_{i}=xy
}
}\frac{(u_{1}+\cdots+u_{i})-(u_{1}+\cdots+u_{i-1})}{u_{1}(u_{1}+u_{2})\cdots(u_{1}+\cdots+u_{d})}\frac{1}{\sum_{\epsilon_{j}=xy}u_{j}}\\
 &=\paj_{\{xy,y\}}\varia{u_{1},\dots,u_{d}}{\varepsilon_{1},\dots,\varepsilon_{d}},
\end{align*}
which completes the proof.
\end{proof}

We remark that actually
the statement of Proposition \ref{prop:paj in GRAI asis}
can also be obtained by
Theorems \ref{thm: inclusion GARIas+pent to GARIas*is} and
\ref{thm: paj in GARI as+pent}.

\appendix
\section{Correspondence between $\exp^\circledast$ and $\expari$ under $\ma_\Gamma$}
\label{app:correspondence between exp and expari}


In this appendix, we give complete proofs of the fact
that the Lie bracket $\{, \}$ corresponds to the $\ari$-bracket in Theorem \ref{thm:ma(<,>)=ari(ma,ma)}
and the fact
that Racinet's exponential map $\exp^\circledast$ corresponds to \'{E}calle's $\expari$ under the map
$\ma_\Gamma$ in Theorem \ref{thm:ma(exp)=expari(ma)},
both of which are sometimes implicitly used in the literature
without proof.

In this appendix, we assume $\Gamma$ is an abelian group.
For $\psi\in\widehat{U{\mathfrak f}_\Gamma}$, we define the derivation $D_\psi$ on $\widehat{U{\mathfrak f}_\Gamma}$ by
$$
D_\psi(f_0):=0, \qquad\qquad
{D_\psi(f_\sigma):=[t_\sigma(\psi), f_\sigma] \quad(\sigma\in\Gamma),}
$$
where $t_\sigma$ is the group action of $\Gamma$ on $\widehat{U{\mathfrak f}_\Gamma}$ defined by
$$
t_\sigma(f_0):=f_0, \qquad\qquad
t_\sigma(f_\tau):=f_{\sigma\tau} \quad(\tau\in\Gamma).
$$
We put {$s_\varphi(\psi):=\psi\varphi + D_\varphi(\psi)$} and we define the bracket
\begin{equation*}\label{eq:Ihara bracket on UfGamma}
\{\psi,  \varphi \}
:= s_\psi(\varphi) - s_\varphi(\psi)
=[\varphi,\psi]  +  D_\psi(\varphi) - D_\varphi(\psi) 
\end{equation*}
for $\varphi,\psi\in\widehat{U{\mathfrak f}_\Gamma}$.
We note that it agrees with \eqref{eq: Ihara bracket} when $\Gamma=\{e\}$.
The $\Q$-linear space $(\widehat{U\frak f_{\Gamma}}^\dag)_0$ (resp. $(\widehat{U\frak f_{\Gamma}}^\dag)_1$)
 is defined to be the subset of elements in $\widehat{U\frak f_{\Gamma}}^\dag$ whose constant term is equal to 0 (resp. 1).
In \cite{R} \S 3.1,
the exponential map $\exp^\circledast:(\widehat{U\frak f_{\Gamma}}^\dag)_0 \rightarrow (\widehat{U\frak f_{\Gamma}}^\dag)_1$
is  defined by
$$
\exp^\circledast(\varphi)
:=\sum_{k\geq0}\frac{1}{k!}s_\varphi^k(1)
=1 + \varphi + \frac{1}{2!}s_\varphi(\varphi) + \cdots.
$$

\begin{prop}\label{prop:ma_D=arit'ma)(ma)}
For $\mathcal F={\mathcal F}_\ser$ and for $\psi\in\widehat{U{\mathfrak f}_\Gamma}$,
we have the following commutative diagram:
\begin{equation}\label{eq:CD ma_D=arit'ma)(ma)}
\xymatrix{
\widehat{U\frak f_{\Gamma}}^\dag \ar@{->}[rr]^{\simeq}_{\ma_{\Gamma}}\ar@{->}[d]_{D_\psi} && \mathcal{M}(\mathcal F;\Gamma) \ar@{->}[d]^{\arit(\ma_\Gamma(\psi))} \\
\widehat{U\frak f_{\Gamma}}^\dag  \ar@{->}[rr]^{\simeq}_{\ma_{\Gamma}}&&\mathcal{M}(\mathcal F;\Gamma).
}
\end{equation}
That is, we have $\ma_{\Gamma} \circ D_\psi = \arit(\ma_\Gamma(\psi)) \circ \ma_{\Gamma}$.
\end{prop}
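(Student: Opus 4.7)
My plan is to verify the identity $\ma_\Gamma \circ D_\psi = \arit(\ma_\Gamma(\psi)) \circ \ma_\Gamma$ by direct computation, reading both sides as formal power series via the generating function $\vimo^r_{\Gamma,\cdot}$ from Definition \ref{def:ma}. Since both sides are $\Q$-linear and adic-continuous in the pair $(\psi,h)$, I first reduce to the case where both arguments are single monomials
\[
\psi = f_0^{l_0} f_{\tau_1} f_0^{l_1} \cdots f_{\tau_s} f_0^{l_s}, \qquad h = f_0^{k_0} f_{\sigma_1} f_0^{k_1} \cdots f_{\sigma_r} f_0^{k_r},
\]
understood under the extension of $\ma_\Gamma$ to all of $\widehat{U\mathfrak{f}_\Gamma}$ via \eqref{eq:vimo}. (As a preliminary sanity check, one can verify that $D_\psi$ preserves $\widehat{U\mathfrak{f}_\Gamma}^\dag$ whenever $\psi\in\widehat{U\mathfrak{f}_\Gamma}^\dag$, by noting that $[d_0, D_\psi] = D_{d_0(\psi)}$ vanishes in that case.)

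Next I would unfold both sides. For the LHS, the Leibniz rule together with $D_\psi(f_0) = 0$ and $D_\psi(f_{\sigma_j}) = t_{\sigma_j}(\psi)\, f_{\sigma_j} - f_{\sigma_j}\, t_{\sigma_j}(\psi)$ gives
\[
D_\psi(h) = \sum_{j=1}^{r} \bigl(\, h_{< j}\cdot t_{\sigma_j}(\psi)\cdot f_{\sigma_j} \cdot h_{> j}\;-\; h_{< j}\cdot f_{\sigma_j} \cdot t_{\sigma_j}(\psi)\cdot h_{> j}\, \bigr),
\]
where $h_{<j} = f_0^{k_0} f_{\sigma_1} \cdots f_0^{k_{j-1}}$ and $h_{>j} = f_0^{k_j} f_{\sigma_{j+1}} \cdots f_0^{k_r}$. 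Applying $\ma_\Gamma$, which intertwines concatenation with the mould product $\times$ (Proposition \ref{prop: isom ma}), each summand becomes a concrete $\times$-product of single-word moulds that I can read off from \eqref{eq:vimo}. For the RHS, I would unfold $\arit(\ma_\Gamma(\psi))(\ma_\Gamma(h))$ using Definition \ref{def:aritu}: the two signed sums are indexed by decompositions $\vecx_m = \alpha\beta\gamma$, and each term is the product of $\ma_\Gamma(h)$ evaluated on a flexed tuple (via $\urflex$ or $\ulflex$) with $\ma_\Gamma(\psi)$ evaluated on the corresponding block (via $\llflex$ or $\lrflex$), which I again rewrite through \eqref{eq:vimo}.

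The proof is completed by matching the two signed sums term-by-term. The summand indexed by $j$ on the LHS corresponds to those terms on the RHS whose central block $\beta$ is anchored at the $j$-th $f_\sigma$-slot of $h$; the two signs in the commutator $[t_{\sigma_j}(\psi), f_{\sigma_j}]$ match the two signed sums in $\arit$; and the translation $t_{\sigma_j}$ acting on $\psi$ (which multiplies each $\tau$-label of $\psi$ by $\sigma_j$) is reproduced on the mould side by the $\tau_n^{-1}$ and $\tau_1^{-1}$ factors built into $\lrflex$ and $\llflex$ (Definition \ref{def:flexion}). The main obstacle will be the careful combinatorial bookkeeping in this matching: tracking how $\urflex$ and $\ulflex$ merge adjacent $x$-variables (corresponding to collapsing differences of the $z_i$'s), and invoking the translation invariance of $\vimo_{\Gamma, h}$ (Lemma \ref{lem:UfGamma} (4)) to reconcile the base point $z_0 = 0$ hard-wired into $\ma_\Gamma$ with the shifted base points that arise after applying a flexion.
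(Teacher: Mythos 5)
Your proposal follows essentially the same route as the paper's proof: expand $D_\psi(h)$ by the Leibniz rule into left-insertion minus right-insertion terms (the paper's $L_\psi-R_\psi$), pass to the generating series $\vimo$, use the translation invariance of Lemma \ref{lem:UfGamma}~(4) to re-anchor the $\psi$-block at base point $0$, and match the resulting signed sums against the flexion decompositions in Definition \ref{def:aritu}. The only caution is that the intermediate objects are not literal $\times$-products of moulds (the $\psi$-factor appears with shifted variables, which is exactly what $\ulflex$, $\urflex$ and the lower flexions encode), but you flag and resolve this base-point issue at the end, so the argument goes through as in the paper.
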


\begin{proof}
We consider elements $\varphi,\psi\in \widehat{U{\mathfrak f}_\Gamma}^\dag$
given by
\begin{align*}
\varphi
&:= \sum_{s\geq0}\sum_{\sigma_i\in\Gamma}\sum_{k_i\in\N_0}
\coeff{\varphi}{k_0,\dots,k_s}{\sigma_1,\dots,\sigma_s}f_0^{k_0}f_{\sigma_1}\cdots f_{\sigma_s}f_0^{k_s}, \\
\psi
&:= \sum_{r\geq0}\sum_{\tau_i\in\Gamma}\sum_{l_i\in\N_0}
\coeff{\psi}{l_0,\dots,l_r}{\tau_1,\dots,\tau_r}f_0^{l_0}f_{\tau_1}\cdots f_{\tau_r}f_0^{l_r}.
\end{align*}
We have
\footnote{Because we have $D_\psi(f_0^k)=0$ for $k\geq0$ by the definition of $D_\psi$, the case of $s=0$ does not appear in the summation of $D_\psi(\varphi)$.}
\begin{align*}
D_\psi(\varphi)
&= \sum_{s\geq1}\sum_{\sigma_i\in\Gamma}\sum_{k_i\in\N_0}
\coeff{\varphi}{k_0,\dots,k_s}{\sigma_1,\dots,\sigma_s}
\sum_{p=1}^s f_0^{k_0}f_{\sigma_1}\cdots f_0^{k_{p-1}}[t_{\sigma_p}(\psi),f_{\sigma_p}]f_0^{k_{p}}f_{\sigma_{p+1}} \cdots f_{\sigma_s}f_0^{k_s} \\
&= \sum_{s\geq p\geq1}\sum_{\sigma_i\in\Gamma}\sum_{k_i\in\N_0}
\coeff{\varphi}{k_0,\dots,k_s}{\sigma_1,\dots,\sigma_s}
f_0^{k_0}f_{\sigma_1}\cdots f_0^{k_{p-1}}[t_{\sigma_p}(\psi),f_{\sigma_p}]f_0^{k_{p}}f_{\sigma_{p+1}} \cdots f_{\sigma_s}f_0^{k_s} \\
&=L_\psi(\varphi) - R_\psi(\varphi),
\end{align*}
where we put
\begin{align*}
R_\psi(\varphi)
&:= \sum_{s\geq p\geq1}\sum_{\sigma_i\in\Gamma}\sum_{k_i\in\N_0}
\coeff{\varphi}{k_0,\dots,k_s}{\sigma_1,\dots,\sigma_s}
f_0^{k_0}f_{\sigma_1}\cdots f_0^{k_{p-1}}f_{\sigma_p}t_{\sigma_p}(\psi)f_0^{k_{p}}f_{\sigma_{p+1}} \cdots f_{\sigma_s}f_0^{k_s}, \\
L_\psi(\varphi)
&:= \sum_{s\geq p\geq1}\sum_{\sigma_i\in\Gamma}\sum_{k_i\in\N_0}
\coeff{\varphi}{k_0,\dots,k_s}{\sigma_1,\dots,\sigma_s}
f_0^{k_0}f_{\sigma_1}\cdots f_0^{k_{p-1}}t_{\sigma_p}(\psi)f_{\sigma_p}f_0^{k_{p}}f_{\sigma_{p+1}} \cdots f_{\sigma_s}f_0^{k_s}.
\end{align*}
We calculate $R_\psi(\varphi)$ in the following:
\begin{align*}
&R_\psi(\varphi) \\
&= \sum_{\substack{s\geq p\geq1 \\ r\geq0}}\sum_{\sigma_i,\tau_i\in\Gamma}\sum_{k_i,l_i\in\N_0}
\coeff{\varphi}{k_0,\dots,k_s}{\sigma_1,\dots,\sigma_s}\coeff{\psi}{l_0,\dots,l_r}{\tau_1,\dots,\tau_r} \\
&\hspace{3cm}\cdot  f_0^{k_0}f_{\sigma_1}\cdots f_0^{k_{p-1}}f_{\sigma_p}
\left( f_0^{l_0}f_{\sigma_p\tau_1}\cdots f_0^{l_{r-1}}f_{\sigma_p\tau_r}f_0^{l_r} \right) f_0^{k_{p}}f_{\sigma_{p+1}} \cdots f_{\sigma_s}f_0^{k_s}.
\intertext{By replacing $s=p+q$ ($q\geq0$), we have}
&= \sum_{\substack{q,r\geq0 \\ p\geq1}}\sum_{\sigma_i,\tau_i\in\Gamma}\sum_{k_i,l_i\in\N_0}
\coeff{\varphi}{k_0,\dots,k_{p+q}}{\sigma_1,\dots,\sigma_{p+q}}\coeff{\psi}{l_0,\dots,l_r}{\tau_1,\dots,\tau_r} \\
&\hspace{3cm}\cdot  f_0^{k_0}f_{\sigma_1}\cdots f_0^{k_{p-1}}f_{\sigma_p}
f_0^{l_0}f_{\sigma_p\tau_1}\cdots f_0^{l_{r-1}}f_{\sigma_p\tau_r}f_0^{l_r+k_{p}}f_{\sigma_{p+1}} \cdots f_{\sigma_{p+q}}f_0^{k_{p+q}} \\
&= \sum_{m\geq1}\sum_{\substack{p+q+r=m \\ q,r\geq0,\ p\geq1}}\sum_{\sigma_i,\tau_i\in\Gamma}\sum_{k_i,l_i\in\N_0}
\coeff{\varphi}{k_0,\dots,k_{p+q}}{\sigma_1,\dots,\sigma_{p+q}}\coeff{\psi}{l_0,\dots,l_r}{\sigma_p^{-1}\tau_1,\dots,\sigma_p^{-1}\tau_r} \\
&\hspace{3cm}\cdot  f_0^{k_0}f_{\sigma_1}\cdots f_0^{k_{p-1}}f_{\sigma_p}
f_0^{l_0}f_{\tau_1}\cdots f_0^{l_{r-1}}f_{\tau_r}f_0^{l_r+k_{p}}f_{\sigma_{p+1}} \cdots f_{\sigma_{p+q}}f_0^{k_{p+q}}.
\intertext{By replacing $\tau_i\mapsto\sigma_{p+i}$ ($1\leq i\leq r$) and $\sigma_{p+j}\mapsto\sigma_{p+j+r}$ ($1\leq j\leq q$), we get}
&= \sum_{m\geq1}\sum_{\substack{p+q+r=m \\ q,r\geq0,\ p\geq1}}\sum_{\sigma_i\in\Gamma}\sum_{k_i,l_i\in\N_0}
\coeff{\varphi}{k_0,\dots,k_{p+q}}{\sigma_1,\dots,\sigma_{p},\sigma_{p+r+1},\dots,\sigma_{m}}\coeff{\psi}{l_0,\dots,l_r}{\sigma_p^{-1}\sigma_{p+1},\dots,\sigma_p^{-1}\sigma_{p+r}}
 \\
&\hspace{3cm}\cdot  f_0^{k_0}f_{\sigma_1}\cdots f_0^{k_{p-1}}f_{\sigma_p}
f_0^{l_0}f_{\sigma_{p+1}}\cdots f_0^{l_{r-1}}f_{\sigma_{p+r}}f_0^{l_r+k_{p}}f_{\sigma_{p+r+1}} \cdots f_{\sigma_{m}}f_0^{k_{p+q}}.
\end{align*}
So, for $m\geq1$ and for $\sigma_1,\dots,\sigma_m\in\Gamma$, we get
\begin{align*}
&\vimo^m_{\Gamma, R_\psi(\varphi)}\varia{z_0,\dots,z_m}{\sigma_1,\dots,\sigma_m} \\
&= \sum_{\substack{p+q+r=m \\ q,r\geq0,\ p\geq1}}\sum_{k_i,l_i\in\N_0}
\coeff{\varphi}{k_0,\dots,k_{p+q}}{\sigma_1,\dots,\sigma_{p},\sigma_{p+r+1},\dots,\sigma_{m}}\coeff{\psi}{l_0,\dots,l_r}{\sigma_p^{-1}\sigma_{p+1},\dots,\sigma_p^{-1}\sigma_{p+r}} \\
&\hspace{4cm}\cdot  z_0^{k_0}\cdots z_{p-1}^{k_{p-1}}
z_p^{l_0}\cdots z_{p+r-1}^{l_{r-1}}z_{p+r}^{l_r+k_{p}}
z_{p+r+1}^{k_{p+1}} \cdots z_m^{k_{p+q}} \\
&= \sum_{\substack{p+q+r=m \\ q,r\geq0,\ p\geq1}}
\left\{\sum_{k_i\in\N_0}
\coeff{\varphi}{k_0,\dots,k_{p+q}}{\sigma_1,\dots,\sigma_{p},\sigma_{p+r+1},\dots,\sigma_{m}}
z_0^{k_0}\cdots z_{p-1}^{k_{p-1}}
z_{p+r}^{k_{p}}
z_{p+r+1}^{k_{p+1}} \cdots z_m^{k_{p+q}} \right\} \\
&\hspace{4cm}\cdot
\left\{\sum_{l_i\in\N_0}\coeff{\psi}{l_0,\dots,l_r}{\sigma_p^{-1}\sigma_{p+1},\dots,\sigma_p^{-1}\sigma_{p+r}}
z_p^{l_0}\cdots z_{p+r-1}^{l_{r-1}}z_{p+r}^{l_r}
\right\} \\
&= \sum_{\substack{p+q+r=m \\ q,r\geq0,\ p\geq1}}
\vimo^{p+q}_{\Gamma, \varphi}\varia{z_0,\cdots, z_{p-1},z_{p+r},z_{p+r+1}, \cdots, z_m}{\sigma_1,\dots,\sigma_{p},\sigma_{p+r+1},\dots,\sigma_{m}}
\vimo^r_{\Gamma, \psi}\varia{z_p, z_{p+1}, \cdots, z_{p+r}}{\sigma_p^{-1}\sigma_{p+1},\dots,\sigma_p^{-1}\sigma_{p+r}}.
\intertext{By using Lemma \ref{lem:UfGamma} for $\alpha=-z_p$, we obtain}
&= \sum_{\substack{p+q+r=m \\ q,r\geq0,\ p\geq1}}
\vimo^{p+q}_{\Gamma, \varphi}\varia{z_0,\cdots, z_{p-1},z_{p+r},z_{p+r+1}, \cdots, z_m}{\sigma_1,\dots,\sigma_{p},\sigma_{p+r+1},\dots,\sigma_{m}}
\vimo^r_{\Gamma, \psi}\varia{0, z_{p+1}-z_p, \cdots, z_{p+r}-z_p}{\sigma_p^{-1}\sigma_{p+1},\dots,\sigma_p^{-1}\sigma_{p+r}}.
\end{align*}
Therefore, by substituting $z_0=0$ and $z_i=x_1+\cdots+x_i$ ($1\leq i\leq m$), we get
\begin{align*}
&\ma^m_{\Gamma, R_\psi(\varphi)}(\vecx_m)
=\ma^m_{\Gamma, R_\psi(\varphi)}\varia{x_1,\dots,x_m}{\sigma_1,\dots,\sigma_m} \\
&= \sum_{\substack{p+q+r=m \\ q,r\geq0,\ p\geq1}}
\ma^{p+q}_{\Gamma, \varphi}\varia{x_1,\cdots, x_{p-1},x_p+\cdots +x_{p+r},x_{p+r+1}, \cdots, x_m}{\sigma_1,\dots,\sigma_{p-1},\sigma_{p+r},\sigma_{p+r+1},\dots,\sigma_{m}}
\ma^r_{\Gamma, \psi}\varia{x_{p+1}, \cdots, x_{p+r}}{\sigma_p^{-1}\sigma_{p+1},\dots,\sigma_p^{-1}\sigma_{p+r}}.
\intertext{By putting $\omega_i=\varia{x_i}{\sigma_i}$, we calculate}
&= \sum_{\substack{p+q+r=m \\ q,r\geq0,\ p\geq1}}
\ma^{p+q}_{\Gamma, \varphi}(\omega_1,\cdots, \omega_{p-1},\ulflex{\omega_p}{(\omega_{p+1},\dots ,\omega_{p+r})},\omega_{p+r+1}, \cdots, \omega_m) \\
&\qquad\qquad\qquad\qquad\qquad\qquad\qquad\qquad\qquad\qquad\qquad
\cdot\ma^r_{\Gamma, \psi}(\lrflex{\omega_p}{(\omega_{p+1},\dots ,\omega_{p+r})}).
\end{align*}By putting $\alpha=(\omega_1,\cdots,\omega_p)$, $\beta=(\omega_{p+1},\cdots,\omega_{p+r})$ and $\gamma=(\omega_{p+r+1},\cdots,\omega_m)$, we obtain
\begin{align}\label{eq:ma(Rpsi(phi))}
\ma^m_{\Gamma, R_\psi(\varphi)}(\vecx_m)
= \sum_{\substack{\vecx_m=\alpha\beta\gamma \\ \alpha\neq\emptyset}}
\ma^{l(\alpha)+l(\gamma)}_{\Gamma, \varphi}(\ulflex{\alpha}{\beta}\gamma)
\ma^{l(\beta)}_{\Gamma, \psi}(\lrflex{\alpha}{\beta}).
\end{align}
By similar computation, we get
\begin{align}\label{eq:ma(Lpsi(phi))}
\ma^m_{\Gamma, L_\psi(\varphi)}(\vecx_m)
= \sum_{\substack{\vecx_m=\alpha\beta\gamma \\ \gamma\neq\emptyset}}
\ma^{l(\alpha)+l(\gamma)}_{\Gamma, \varphi}(\alpha\urflex{\beta}{\gamma})
\ma^{l(\beta)}_{\Gamma, \psi}(\llflex{\beta}{\gamma}).
\end{align}
Hence, for $m\geq1$, we obtain
\begin{align*}
&\ma^m_{\Gamma, D_\psi(\varphi)}(\vecx_m) \\
&=\ma^m_{\Gamma, L_\psi(\varphi)}(\vecx_m) - \ma^m_{\Gamma, R_\psi(\varphi)}(\vecx_m) \\
&=\sum_{\substack{\vecx_m=\alpha\beta\gamma \\ \gamma\neq\emptyset}}
\ma^{l(\alpha)+l(\gamma)}_{\Gamma, \varphi}(\alpha\urflex{\beta}{\gamma})
\ma^{l(\beta)}_{\Gamma, \psi}(\llflex{\beta}{\gamma})
- \sum_{\substack{\vecx_m=\alpha\beta\gamma \\ \alpha\neq\emptyset}}
\ma^{l(\alpha)+l(\gamma)}_{\Gamma, \varphi}(\ulflex{\alpha}{\beta}\gamma)
\ma^{l(\beta)}_{\Gamma, \psi}(\lrflex{\alpha}{\beta}) \\
&=\sum_{\substack{\vecx_m=\alpha\beta\gamma \\ \beta,\gamma\neq\emptyset}}
\ma^{l(\alpha)+l(\gamma)}_{\Gamma, \varphi}(\alpha\urflex{\beta}{\gamma})
\ma^{l(\beta)}_{\Gamma, \psi}(\llflex{\beta}{\gamma})
- \sum_{\substack{\vecx_m=\alpha\beta\gamma \\ \alpha,\beta\neq\emptyset}}
\ma^{l(\alpha)+l(\gamma)}_{\Gamma, \varphi}(\ulflex{\alpha}{\beta}\gamma)
\ma^{l(\beta)}_{\Gamma, \psi}(\lrflex{\alpha}{\beta}) \\
&=(\arit(\ma_{\Gamma, \psi})(\ma_{\Gamma, \varphi}))(\vecx_m).
\end{align*}
The case of $m=0$ is obvious, so we get
$$
\ma_{\Gamma} \circ D_\psi(\varphi)
= \ma_{\Gamma, D_\psi(\varphi)}
= (\arit(\ma_{\Gamma, \psi})(\ma_{\Gamma, \varphi}))
= \arit(\ma_\Gamma(\psi)) \circ \ma_{\Gamma}(\varphi).
$$
Thus we complete  the proof.
\end{proof}

We recall the pre-Lie bracket $\preari$ of $\mathcal M(\mathcal F;\Gamma)$ introduced in \cite[(2.46)]{E-flex} by
$$
\preari(A,B):=\arit(B)(A) + A\times B,
$$
for $A,B\in\mathcal M(\mathcal F;\Gamma)$.
In \cite[Proposition A.6]{FK}, it is shown that it satisfies the axiom of pre-Lie bracket, that is,
\begin{align*}
	&\preari(A,\preari(B,C))-\preari(\preari(A,B),C) \\
	&=\preari(A,\preari(C,B))-\preari(\preari(A,C),B)
\end{align*}
for $A,B,C\in\ARI(\Gamma)$.
Actually, we have the following lemma.
\begin{lem}\label{lem:ma(s)=preari(ma,ma)}
For $\varphi,\psi\in\widehat{U\frak f_{\Gamma}}^\dag$, we have
$$
\ma_\Gamma(s_\varphi(\psi))=\preari(\ma_\Gamma(\psi),\ma_\Gamma(\varphi))
$$
\end{lem}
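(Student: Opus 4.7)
The plan is to decompose $s_\varphi(\psi) = \psi\varphi + D_\varphi(\psi)$ and apply $\ma_\Gamma$ termwise, matching the two pieces to the two summands in the pre-Lie bracket $\preari(A,B) = \arit(B)(A) + A\times B$.

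First, I would use the fact that $\ma_\Gamma$ is a $\Q$-algebra homomorphism (Proposition \ref{prop: isom ma}), so in particular $\ma_\Gamma(\psi\varphi) = \ma_\Gamma(\psi) \times \ma_\Gamma(\varphi)$; this takes care of the $A\times B$ summand. Next, I would invoke Proposition \ref{prop:ma_D=arit'ma)(ma)}, which gives the commutative diagram $\ma_\Gamma \circ D_\varphi = \arit(\ma_\Gamma(\varphi)) \circ \ma_\Gamma$; applying this to $\psi$ yields $\ma_\Gamma(D_\varphi(\psi)) = \arit(\ma_\Gamma(\varphi))(\ma_\Gamma(\psi))$, which is exactly the $\arit(B)(A)$ summand.

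Finally, by $\Q$-linearity of $\ma_\Gamma$ I would add the two identities to obtain
\[
\ma_\Gamma(s_\varphi(\psi)) = \ma_\Gamma(\psi\varphi) + \ma_\Gamma(D_\varphi(\psi)) = \ma_\Gamma(\psi)\times \ma_\Gamma(\varphi) + \arit(\ma_\Gamma(\varphi))(\ma_\Gamma(\psi)) = \preari(\ma_\Gamma(\psi),\ma_\Gamma(\varphi)),
\]
which is the desired identity. There is no real obstacle here since both key inputs have already been established; the lemma is essentially a bookkeeping consequence of Propositions \ref{prop: isom ma} and \ref{prop:ma_D=arit'ma)(ma)}. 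The only thing to double-check is that $s_\varphi(\psi)$ indeed lies in $\widehat{U\mathfrak f_\Gamma}^\dag$ so that $\ma_\Gamma$ may be applied, but this follows from the fact that $\widehat{U\mathfrak f_\Gamma}^\dag$ is a subalgebra (Lemma \ref{lem: Hopf structure of UF-Gamma-dag}) closed under $D_\varphi$ (as can be checked by direct computation using $d_0 \circ D_\varphi = 0$, which follows from $D_\varphi(f_0)=0$).
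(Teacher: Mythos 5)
Your proposal is correct and is exactly the paper's argument: the paper proves this lemma by citing Proposition \ref{prop: isom ma} (the algebra homomorphism property handling the $\psi\varphi$ term) together with Proposition \ref{prop:ma_D=arit'ma)(ma)} (handling the $D_\varphi(\psi)$ term), which is precisely your termwise decomposition. Your extra check that $s_\varphi(\psi)$ stays in $\widehat{U\frak f_{\Gamma}}^\dag$ is a reasonable bit of added care that the paper leaves implicit.
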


\begin{proof}
By Proposition \ref{prop: isom ma} and Proposition \ref{prop:ma_D=arit'ma)(ma)}, we get the above equation.
\end{proof}

By Proposition \ref{prop: isom ma}, we know the map $\ma_\Gamma$ induces the Lie algebra isomorphisms $(\widehat{U\frak f_{\Gamma}}^\dag)_0 \rightarrow \ARI(\mathcal F_\ser;\Gamma)$ and the group isomorphism $(\widehat{U\frak f_{\Gamma}}^\dag)_1 \rightarrow \GARI(\mathcal F_\ser;\Gamma)$.


\begin{thm}\label{thm:ma(<,>)=ari(ma,ma)}
We have the following commutative diagram:
\begin{equation}\label{eq:CD ma(<,>)=ari(ma,ma)}
\xymatrix{
(\widehat{U\frak f_{\Gamma}}^\dag)_0 \times (\widehat{U\frak f_{\Gamma}}^\dag)_0 \ar@{->}[rr]^{\simeq}_{\ma_{\Gamma}\times\ma_{\Gamma}\quad }\ar@{->}[d]_{-\{\ ,\  \}\ } && \ARI(\mathcal F_\ser;\Gamma) \times \ARI(\mathcal F_\ser;\Gamma) \ar@{->}[d]^{\ari} \\
(\widehat{U\frak f_{\Gamma}}^\dag)_0  \ar@{->}[rr]^{\simeq}_{\ma_{\Gamma}}&&\ARI(\mathcal F_\ser;\Gamma).
}
\end{equation}
That is, we have $\ma_{\Gamma} ({\{ \psi, \varphi \}}) = \ari(\ma_\Gamma(\varphi), \ma_\Gamma(\psi))$ for $\varphi,\psi\in (\widehat{U\frak f_{\Gamma}}^\dag)_0$.
\end{thm}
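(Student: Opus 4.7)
The theorem is effectively an antisymmetrization of the pre-ari identity already established in Lemma~\ref{lem:ma(s)=preari(ma,ma)}, so my plan is to reduce it to two invocations of that lemma together with a short bookkeeping calculation. Concretely, I unfold the bracket via its definition $\{\psi,\varphi\}=s_\psi(\varphi)-s_\varphi(\psi)$, apply the $\Q$-linear map $\ma_\Gamma$ termwise, then transport each summand through Lemma~\ref{lem:ma(s)=preari(ma,ma)}.

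Applying the lemma with the roles of $\varphi,\psi$ suitably chosen gives
\[
\ma_\Gamma(s_\psi(\varphi))=\preari(\ma_\Gamma(\varphi),\ma_\Gamma(\psi)),\qquad
\ma_\Gamma(s_\varphi(\psi))=\preari(\ma_\Gamma(\psi),\ma_\Gamma(\varphi)).
\]
Subtracting these and expanding $\preari(A,B)=\arit(B)(A)+A\times B$ produces
\[
\ma_\Gamma(\{\psi,\varphi\})=\arit(\ma_\Gamma(\psi))(\ma_\Gamma(\varphi))-\arit(\ma_\Gamma(\varphi))(\ma_\Gamma(\psi))+[\ma_\Gamma(\varphi),\ma_\Gamma(\psi)],
\]
where the commutator $[A,B]=A\times B-B\times A$ arises from the two multiplicative pieces of $\preari$. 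Comparing with the defining formula \eqref{ari-bracket} for the $\ari$-bracket identifies the right-hand side as $\ari(\ma_\Gamma(\varphi),\ma_\Gamma(\psi))$, which is the desired identity.

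The only subtlety worth mentioning is the antisymmetry convention visible in the commutative diagram: the left vertical arrow carries a minus sign because the pair $(\psi,\varphi)$ is sent to $\ari(\ma_\Gamma(\psi),\ma_\Gamma(\varphi))$ on the right-hand side rather than to $\ari(\ma_\Gamma(\varphi),\ma_\Gamma(\psi))$. Since both $\{\,,\,\}$ and $\ari$ are antisymmetric in their two entries, this is a consistency check rather than a genuine obstacle. All of the substantive combinatorics—the flexion-sum expansion \eqref{eq:ma(Rpsi(phi))}--\eqref{eq:ma(Lpsi(phi))} that governs how the derivation $D_\psi$ corresponds to the $\arit$-operator—has already been performed inside Proposition~\ref{prop:ma_D=arit'ma)(ma)} and absorbed into Lemma~\ref{lem:ma(s)=preari(ma,ma)}, so the present proof requires no new mould-theoretic computation beyond a two-line algebraic manipulation.
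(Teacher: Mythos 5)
Your proposal is correct and follows essentially the same route as the paper's own proof: both reduce the statement to Lemma~\ref{lem:ma(s)=preari(ma,ma)} applied twice and the identity $\ari(A,B)=\preari(A,B)-\preari(B,A)$, with your explicit expansion of $\preari$ into $\arit$ plus the commutator being just a spelled-out version of what the paper states in one line. Your remark about the sign in the diagram is a correct consistency check and does not change the argument.
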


\begin{proof}
Note that we have
$$
\ari(A,B)=\preari(A,B) - \preari(B,A)
$$
for $A,B\in\mathcal{M}(\mathcal F;\Gamma)$.
By Lemma \ref{lem:ma(s)=preari(ma,ma)}, we get
\begin{align*}
\ma_{\Gamma} ({\{ \psi, \varphi \}})
&=\ma_\Gamma(s_\psi(\varphi) - s_\varphi(\psi)) \\
&=\preari(\ma_\Gamma(\varphi),\ma_\Gamma(\psi)) - \preari(\ma_\Gamma(\psi),\ma_\Gamma(\varphi)) \\
&=\ari(\ma_\Gamma(\varphi), \ma_\Gamma(\psi)).
\end{align*}
Hence, we obtain the above commutative diagram.
\end{proof}

For $k\in\N_0$, we define the map $\preari_k: \mathcal{M}(\mathcal F;\Gamma) \rightarrow \mathcal{M}(\mathcal F;\Gamma)$ inductively by
\begin{equation*}
\preari_k(A):=
\left\{\begin{array}{ll}
	\unitmould & (k=0), \\
	\preari(\preari_{k-1}(A),A) & (k\geq1),
\end{array}\right.
\end{equation*}
for $A\in\mathcal{M}(\mathcal F;\Gamma)$.
The exponential map $\expari:\ARI(\Gamma)\rightarrow\GARI(\Gamma)$ is defined in \cite{E-flex} by
\begin{align*}
\expari(A)
&:=\sum_{k\geq0}\frac{1}{k!}\preari_k(A),
\end{align*}
for $A\in\ARI(\Gamma)$.

\begin{thm}\label{thm:ma(exp)=expari(ma)}
We have the following commutative diagram:
\begin{equation}\label{eq:CD ma(exp)=expari(ma)}
\xymatrix{
(\widehat{U\frak f_{\Gamma}}^\dag)_0 \ar@{->}[rr]^{\simeq}_{\ma_{\Gamma}}\ar@{->}[d]_{\exp^\circledast} && \ARI(\mathcal F_\ser;\Gamma) \ar@{->}[d]^{\expari} \\
(\widehat{U\frak f_{\Gamma}}^\dag)_1  \ar@{->}[rr]^{\simeq}_{\ma_{\Gamma}}&&\GARI(\mathcal F_\ser;\Gamma).
}
\end{equation}
That is, we have $\ma_{\Gamma} \circ \exp^\circledast = \expari \circ \ma_{\Gamma}$.
\end{thm}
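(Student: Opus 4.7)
The plan is to reduce this to a clean induction using Lemma \ref{lem:ma(s)=preari(ma,ma)}. Set $A = \ma_\Gamma(\varphi)$ for $\varphi \in (\widehat{U\frak f_\Gamma}^\dag)_0$. I would prove by induction on $k \geq 0$ that
\[
\ma_\Gamma(s_\varphi^k(1)) = \preari_k(A)
\]
as elements of $\mathcal M(\mathcal F_\ser;\Gamma)$. The base case $k=0$ is immediate: $s_\varphi^0(1)=1$ lies in $\widehat{U\frak f_\Gamma}^\dag$ (since $e_0(1)=0$, so $(e_0\otimes\id)\circ\Delta(1)=0$), and $\ma_\Gamma$ sends the unit to $\unitmould = \preari_0(A)$. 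For the inductive step, assuming $\ma_\Gamma(s_\varphi^{k-1}(1)) = \preari_{k-1}(A)$, Lemma \ref{lem:ma(s)=preari(ma,ma)} applied to the pair $(\varphi,\ s_\varphi^{k-1}(1))$ gives
\[
\ma_\Gamma(s_\varphi^k(1)) = \ma_\Gamma(s_\varphi(s_\varphi^{k-1}(1))) = \preari(\ma_\Gamma(s_\varphi^{k-1}(1)), \ma_\Gamma(\varphi)) = \preari(\preari_{k-1}(A), A) = \preari_k(A).
\]

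Before running this, one must check that the iterates $s_\varphi^k(1)$ actually stay inside $\widehat{U\frak f_\Gamma}^\dag$, so that Lemma \ref{lem:ma(s)=preari(ma,ma)} is applicable at every step. Since $s_\varphi(\psi)=\psi\varphi + D_\varphi(\psi)$, the first summand lies in $\widehat{U\frak f_\Gamma}^\dag$ by Lemma \ref{lem: Hopf structure of UF-Gamma-dag}, and the second summand lies in $\widehat{U\frak f_\Gamma}^\dag$ by the commutative diagram \eqref{eq:CD ma_D=arit'ma)(ma)} in Proposition \ref{prop:ma_D=arit'ma)(ma)}: since $\ma_\Gamma$ is an isomorphism onto $\mathcal M(\mathcal F_\ser;\Gamma)$ and $\arit(\ma_\Gamma(\varphi))$ is a self-map of $\mathcal M(\mathcal F_\ser;\Gamma)$, the derivation $D_\varphi$ preserves $\widehat{U\frak f_\Gamma}^\dag$.

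Once the induction is established, the theorem follows by summing, using that $\ma_\Gamma$ is a continuous $\Q$-linear isomorphism for the degree-adic topology (Proposition \ref{prop: isom ma}). Since $\varphi$ has no constant term, $s_\varphi$ raises total degree by at least one, so $s_\varphi^k(1)$ has degree $\geq k$ and the series $\exp^\circledast(\varphi) = \sum_{k \geq 0} \frac{1}{k!} s_\varphi^k(1)$ converges in $\widehat{U\frak f_\Gamma}^\dag$; likewise $\preari_k(A)$ has length $\geq k$, so the mould series $\expari(A)$ converges componentwise. Commuting $\ma_\Gamma$ with the convergent sum yields
\[
\ma_\Gamma(\exp^\circledast(\varphi)) = \sum_{k \geq 0} \frac{1}{k!} \ma_\Gamma(s_\varphi^k(1)) = \sum_{k \geq 0} \frac{1}{k!} \preari_k(A) = \expari(\ma_\Gamma(\varphi)),
\]
completing the proof. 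No step looks like a serious obstacle here: the real content was bundled into Lemma \ref{lem:ma(s)=preari(ma,ma)} (and ultimately Proposition \ref{prop:ma_D=arit'ma)(ma)}), and what remains is a straightforward induction plus a filtration-convergence argument; the only minor bookkeeping point is confirming stability of $\widehat{U\frak f_\Gamma}^\dag$ under $s_\varphi$, which I addressed above.
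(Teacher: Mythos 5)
Your proof is correct and follows essentially the same route as the paper: an induction showing $\ma_\Gamma(s_\varphi^k(1))=\preari_k(\ma_\Gamma(\varphi))$ via Lemma \ref{lem:ma(s)=preari(ma,ma)}, then summing the exponential series. The extra checks you include (stability of $\widehat{U\frak f_\Gamma}^\dag$ under $s_\varphi$ and convergence of both series) are points the paper leaves implicit, and your justifications of them are sound.
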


\begin{proof}
By definition, we have
$$
\ma_\Gamma(s_\varphi^0(1))=\ma_\Gamma(1)=\unitmould=\preari_0(\ma_\Gamma(\varphi)).
$$
Let $k\in\N$.
By Lemma \ref{lem:ma(s)=preari(ma,ma)} for $\psi=s_\varphi^{k-1}(1)$, we have
$$
\ma_\Gamma(s_\varphi^{k}(1))
=\ma_\Gamma(s_\varphi(s_\varphi^{k-1}(1)))
=\preari(\ma_\Gamma(s_\varphi^{k-1}(1)),\ma_\Gamma(\varphi)).
$$
So by induction on $k$, we get
$$
\ma_\Gamma(s_\varphi^{k}(1))=\preari_k(\ma_\Gamma(\varphi)).
$$
Hence, by using this equation, we obtain
\begin{align*}
\ma_\Gamma \left( \exp^\circledast(\varphi) \right)
=\sum_{k\geq0}\frac{1}{k!} \ma_\Gamma \left(s_\varphi^k(1) \right)
=\sum_{k\geq0}\frac{1}{k!} \preari_k(\ma_\Gamma(\varphi))
=\expari(\ma_\Gamma(\varphi)).
\end{align*}
Thus we get the above commutative diagram.
\end{proof}

\section{Correspondence between $\circledast$ and $\gari$ under $\ma_\Gamma$}
\label{app:correspondence between circledast and gari}
In this appendix, we give an explicit proof that Racinet's product $\circledast$ corresponds to \'{E}calle's $\gari$-product under the map $\ma_\Gamma$
in Theorem \ref{thm:ma(circledast)=gari(ma,ma)}, which looks implicitly employed in the literature
without proofs.

In this appendix, we again assume $\Gamma$ is an abelian group.
For $\varphi\in (\widehat{U\frak f_{\Gamma}}^\dag)_1$
(defined in Appendix \ref{app:correspondence between exp and expari}),
we define the map $\kappa_\psi:\widehat{U\mathfrak{f}_{\Gamma}}^\dag \rightarrow \widehat{U\mathfrak{f}_{\Gamma}}^\dag$ by
$$
\kappa_\psi(\varphi)\left(f_0,f_\sigma  \middle|\, \sigma\in\Gamma \right)
:=\varphi \left(f_0,t_\sigma(\psi)f_\sigma t_\sigma(\psi^{-1}) \middle|\, \sigma\in\Gamma \right)
$$
for $\varphi\in\widehat{U\mathfrak{f}_{\Gamma}}^\dag$.
We define the product $\circledast$  in \eqref{eq:circled-ast-product}
given by
\begin{equation*}
\psi \circledast \varphi:=\kappa_\psi(\varphi) \psi
\end{equation*}
for $\varphi, \psi \in (\widehat{U\frak f_{\Gamma}}^\dag)_1$.
We note that it agrees with \eqref{eq:circled-ast-product}  when $\Gamma=\{e\}$.
We prepare two notations to prove Proposition \ref{prop:ma_kappa=garit(ma)(ma)}.

\begin{nota}\label{lem:variable transformation of sigma}
Let $p\in\N$ and $r_1,r_2,\dots,r_{2p-1},r_{2p}\in\N_0$.
Put $R_0:=0$ and
\begin{equation*}
\left\{\begin{array}{l}
R_{2i-1}:=r_1+\cdots+r_{2i-1} +i-1, \\
\hphantom{{}_{-1}}R_{2i}:=r_1+\cdots+r_{2i} \hphantom{{}_{-1}} +i,
\end{array}\right.
\end{equation*}
for $1\leq i\leq p$.
Define two sets $X,Y$ of indeterminates by
\begin{align*}
X&:=\bigcup_{i=1}^p \left\{ \sigma_{i},\sigma_{2i-1,j_{2i-1}},\sigma_{2i,j_{2i}} \ |\ 1\leq j_k\leq r_k,\ k=2i-1,2i \right\}, \\
Y&:=\{ \sigma_i \ |\ 1\leq i\leq R_{2p} \}.
\end{align*}
We have
$
\#X = p+\sum_{k=1}^{2p}r_k = \#Y,
$
and we consider the following
bijection $g:X\rightarrow Y$:
\begin{center}
\begin{tabular}{|c||c|c|c|c|c|c|c|}
\hline
X & $\sigma_{2i-1,1}$ & $\cdots$ & $\sigma_{2i-1,r_{2i-1}}$ & $\sigma_i$ & $\sigma_{2i,1}$ & $\cdots$ & $\sigma_{2i,r_{2i}}$ \\ \hline
Y & $\sigma_{R_{2i-2}+1}$ & $\cdots$ & $\sigma_{R_{2i-2}+r_{2i-1}}$ & $\sigma_{R_{2i-1}+1}$ & $\sigma_{R_{2i-1}+2}$ & $\cdots$ & $\sigma_{R_{2i-1}+r_{2i}+1}$ \\
 &  &  & $=\sigma_{R_{2i-1}}$ &  &  &  & $=\sigma_{R_{2i}}$ \\ \hline
\end{tabular}
\end{center}
Here  $i$ is in $\{1,\dots,p\}$.
\end{nota}

\begin{nota}\label{lem:D(x_m)=D'(x_m)}
We consider the tensor algebra $T\left(\Q\langle X_\Z^\bullet\rangle\right)$ of $\Q\langle X_\Z^\bullet\rangle$.
For $m\geq1$, we define the two subsets ${\mathcal S}(\vecx_m)$,
${\mathcal S}'(\vecx_m)$ of $T\left(\Q\langle X_\Z^\bullet\rangle\right)$
with $\vecx_m=\varia{x_1,\ \dots,\ x_m}{\sigma_1,\ \dots,\ \sigma_m}$
by
\begin{align*}
{\mathcal S}(\vecx_m)
&:= \left\{
 \alpha_1\otimes\beta_1\otimes\gamma_1\otimes \cdots \otimes \alpha_s\otimes\beta_s\otimes\gamma_s 
\ \middle|\
\begin{array}{l}
1 \leq s,\\
\alpha_i,\beta_i,\gamma_i\in X_\Z^\bullet\ (1\leq i\leq s), \\
\beta_i\neq\emptyset \ (1\leq i\leq s), \\
\gamma_j\alpha_{j+1}\neq\emptyset\ (1\leq j\leq s-1), \\
\vecx_m= \alpha_1\beta_1\gamma_1 \cdots  \alpha_s\beta_s\gamma_s
\end{array}
\right\}, \\
{\mathcal S}' (\vecx_m)
&:=\left\{
 \alpha_1\otimes\omega_{k_1}\otimes\gamma_1\otimes \cdots \otimes \alpha_p\otimes\omega_{k_p}\otimes\gamma_p 
\ \middle|\
\begin{array}{l}
1 \leq p,\\
\alpha_i,\gamma_i\in X_\Z^\bullet\ (1\leq i\leq p), \\
\{k_1,\dots,k_p\}\subset[m], \\
\vecx_m= \alpha_1\omega_{k_1}\gamma_1 \cdots  \alpha_p\omega_{k_p}\gamma_p
\end{array}
\right\},
\end{align*}
where we put $\omega_k:=\varia{x_k}{\sigma_k}$ for $k\in\N$ and
denote $\{i_1,\dots,i_p\}$ to be a subset of $[m]:=\{1,\dots,m\} (\subset\N)$ with $1\leq i_1<\cdots<i_p\leq m$.

For $m\geq1$, we consider  the  following map $g_m:{\mathcal S}(\vecx_m)\rightarrow {\mathcal S}'(\vecx_m)$ defined by
\begin{align*}
g_m(\omega)
&:=\alpha_1\otimes(\omega_{k_1}\otimes\emptyset\otimes\emptyset\otimes\omega_{k_1+1}\otimes\emptyset\otimes\cdots\otimes\emptyset\otimes\omega_{k_1+l_1})\otimes\gamma_1\otimes \\
&\qquad \cdots \otimes \alpha_s\otimes(\omega_{k_s}\otimes\emptyset\otimes\emptyset\otimes\omega_{k_s+1}\otimes\emptyset\otimes\cdots\otimes\emptyset\otimes\omega_{k_s+l_s})\otimes\gamma_s,
\end{align*}
for $\omega= \alpha_1\otimes\beta_1\otimes\gamma_1\otimes \cdots \otimes \alpha_s\otimes\beta_s\otimes\gamma_s\in {\mathcal S}(\vecx_m)$ with $\beta_j=(\omega_{k_j},\omega_{k_j+1},\dots,\omega_{k_j+l_j})$ ($l_j\geq0$).
It is easy to show that $g_m$ forms a bijection.
\end{nota}

\begin{eg}
For $m=1$, we have
$$
{\mathcal S}(\vecx_1)=\{ \emptyset\otimes\omega_1\otimes\emptyset \}
={\mathcal S}'(\vecx_1).
$$
Hence, $g_1$ is the identity map.
For $m=2$, we have
\begin{align*}
{\mathcal S}(\vecx_2)
&=\{ \emptyset\otimes(\omega_1,\omega_2)\otimes \emptyset,\ \omega_1\otimes\omega_2\otimes \emptyset,\ \emptyset\otimes\omega_1\otimes\omega_2  \}, \\
{\mathcal S}'(\vecx_2)
&=\{ \emptyset\otimes\omega_1\otimes\emptyset\otimes\emptyset\otimes\omega_2\otimes \emptyset,\ \omega_1\otimes\omega_2\otimes \emptyset,\ \emptyset\otimes\omega_1\otimes\omega_2  \}.
\end{align*}
Hence, we can take $g_2$ as
\begin{align*}
\left\{\begin{array}{rl}
g_2(\emptyset\otimes(\omega_1,\omega_2)\otimes \emptyset)
&=\emptyset\otimes\omega_1\otimes\emptyset\otimes\emptyset\otimes\omega_2\otimes \emptyset , \\
g_2(\omega_1\otimes\omega_2\otimes \emptyset)&=\omega_1\otimes\omega_2\otimes \emptyset , \\
g_2(\emptyset\otimes\omega_1\otimes\omega_2)&=\emptyset\otimes\omega_1\otimes\omega_2 .
\end{array}\right.
\end{align*}
\end{eg}

By using the above notations, we prove the following proposition.
\begin{prop}\label{prop:ma_kappa=garit(ma)(ma)}
For $\mathcal F={\mathcal F}_\ser$ and for $\psi\in (\widehat{U\frak f_{\Gamma}}^\dag)_1$,
we have the following commutative diagram:
\begin{equation}\label{eq:CD ma_kappa=garit(ma)(ma)}
\xymatrix{
\widehat{U\frak f_{\Gamma}}^\dag \ar@{->}[rr]^{\simeq}_{\ma_{\Gamma}}\ar@{->}[d]_{\kappa_\psi} && \mathcal{M}(\mathcal F;\Gamma) \ar@{->}[d]^{\garit(\ma_\Gamma(\psi))} \\
\widehat{U\frak f_{\Gamma}}^\dag  \ar@{->}[rr]^{\simeq}_{\ma_{\Gamma}}&&\mathcal{M}(\mathcal F;\Gamma).
}
\end{equation}
That is, we have
\[
\ma_{\Gamma} \circ \kappa_\psi = \garit(\ma_\Gamma(\psi)) \circ \ma_{\Gamma}.
\]
\end{prop}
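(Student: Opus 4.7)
The plan is to mimic the strategy of Proposition \ref{prop:ma_D=arit'ma)(ma)} but with the richer combinatorics demanded by the two-sided substitution defining $\kappa_\psi$. First I expand
\[
\kappa_\psi(\varphi)=\sum_{p\geq 0}\sum_{\sigma_i\in\Gamma}\sum_{k_i\in\N_0}\coeff{\varphi}{k_0,\dots,k_p}{\sigma_1,\dots,\sigma_p}\,f_0^{k_0}\prod_{i=1}^p\Bigl(t_{\sigma_i}(\psi)\,f_{\sigma_i}\,t_{\sigma_i}(\psi^{-1})\,f_0^{k_i}\Bigr),
\]
and substitute the word expansions of $\psi$ and $\psi^{-1}$, using \eqref{eq: ma and inverse} to identify the contribution of $\psi^{-1}$ with $\ma_\Gamma(\psi)^{\times -1}$. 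Any length-$m$ word appearing on the right is indexed by choosing positions $1\le k_1<\cdots<k_p\le m$ marking the $\varphi$-letters together with the string $\alpha_i$ of letters coming from the $t_{\sigma_i}(\psi)$-block to the left of $\omega_{k_i}$ and the string $\gamma_i$ coming from the $t_{\sigma_i}(\psi^{-1})$-block to the right; this datum is precisely an element $\alpha_1\otimes\omega_{k_1}\otimes\gamma_1\otimes\cdots\otimes\alpha_p\otimes\omega_{k_p}\otimes\gamma_p$ of $\mathcal{S}'(\vecx_m)$ in Notation \ref{lem:D(x_m)=D'(x_m)}.

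Next I read off $\ma_\Gamma(\kappa_\psi(\varphi))(\vecx_m)$ via the $\vimo$-construction. Invoking Lemma \ref{lem:UfGamma}(4) separately on each $\alpha_i$-block and each $\gamma_i$-block shifts their $z$-variables so that they depend only on internal differences, producing exactly the arguments of $\ma_\Gamma(\psi)$ on the flexion $\llflex{\alpha_i}{\omega_{k_i}}$ and of $\ma_\Gamma(\psi)^{\times -1}$ on $\lrflex{\omega_{k_i}}{\gamma_i}$, with the conjugations $\sigma_i\tau_j\sigma_i^{-1}$ in the labels collapsing to $\tau_j$ because $\Gamma$ is abelian. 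Simultaneously, the remaining $\varphi$-coefficient $\vimo_{\Gamma,\varphi}$ is evaluated at the $z$-values $z_{a_0}=0, z_{a_1},\dots,z_{a_p}$ where $a_i$ is the position just after $\gamma_i$; under $z_j=x_1+\cdots+x_j$ this is precisely $\ma_\Gamma(\varphi)$ evaluated at the tuple whose $i$-th variable equals the total weight of $\alpha_i\omega_{k_i}\gamma_i$, that is, at $\ulflex{\urflex{\alpha_1}{\omega_{k_1}}}{\gamma_1}\cdots\ulflex{\urflex{\alpha_p}{\omega_{k_p}}}{\gamma_p}$. This rewrites $\ma_\Gamma(\kappa_\psi(\varphi))(\vecx_m)$ as a sum over $\mathcal{S}'(\vecx_m)$ in exactly the flexion form of Definition \ref{def:garit}, save that each $\beta$ is a singleton.

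Finally, I apply the bijection $g_m:\mathcal{S}(\vecx_m)\xrightarrow{\simeq}\mathcal{S}'(\vecx_m)$ from Notation \ref{lem:D(x_m)=D'(x_m)}: it matches each decomposition in $\mathcal{S}$ having a multi-letter $\beta_j$ with the refined decomposition in $\mathcal{S}'$ splitting $\beta_j$ into singletons separated by empty $\gamma\alpha$-pairs. Since $\ma_\Gamma(\psi)(\emptyset)=\ma_\Gamma(\psi)^{\times-1}(\emptyset)=1$, each such empty pair contributes trivially on the $\mathcal{S}'$-side, so the $\mathcal{S}'$-sum collapses onto the $\mathcal{S}$-sum defining $\garit(\ma_\Gamma(\psi))(\ma_\Gamma(\varphi))$, closing the commutative diagram. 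The main obstacle is the detailed variable bookkeeping in the second step: tracking the cumulative $z$-shifts so that the flexion-modified labels and variables agree with those on the $\garit$-side on the nose, especially when absorbing $\alpha_i$ and $\gamma_i$ into the first and last entries of each block after the $g_m$-grouping.
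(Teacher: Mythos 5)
Your proposal follows essentially the same route as the paper's proof: expand $\kappa_\psi(\varphi)$ word by word, pass to $\vimo$, apply the translation invariance of Lemma \ref{lem:UfGamma}(4) block by block to produce the flexions $\llflex{\alpha_i}{\omega_{k_i}}$ and $\lrflex{\omega_{k_i}}{\gamma_i}$ (with $\ma_\Gamma(\psi^{-1})=\ma_\Gamma(\psi)^{\times-1}$ via \eqref{eq: ma and inverse}), recognize the result as a sum over $\mathcal{S}'(\vecx_m)$, and then transport it to the $\garit$-sum over $\mathcal{S}(\vecx_m)$ via the bijection $g_m$, using $\ma_\Gamma(\psi)(\emptyset)=1$ to absorb the empty blocks. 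This matches the paper's argument step for step (including the role of commutativity of $\Gamma$ in reconciling the left-multiplied labels from $t_{\sigma_i}$ with the right-divided labels from the flexions), so the approach is correct and not genuinely different.
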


\begin{proof}
For $\psi\in (\widehat{U\frak f_{\Gamma}}^\dag)_1$ and $\varphi\in\widehat{U\mathfrak{f}_{\Gamma}}^\dag$, we prove
\begin{equation}\label{eqn:ma_kappa=garit(ma)(ma)}
\ma_\Gamma (\kappa_\psi(\varphi))(\vecx_m)
=\garit(\ma_\Gamma(\psi)) (\ma_{\Gamma}(\varphi))(\vecx_m).
\end{equation}
for all $m\geq 0$.
We have
\begin{align}\label{eqn:kappa_psi(phi)}
&\kappa_\psi(\varphi) \\
&=\sum_{p\geq0}\sum_{\sigma_{i}\in\Gamma}\sum_{k_{i}\in\N_0}
\coeff{\varphi}{k_0,\dots,k_p}{\sigma_1,\dots,\sigma_p}
f_0^{k_0}\left[ t_{\sigma_1}(\psi)f_{\sigma_1} t_{\sigma_1}(\psi^{-1}) \right]f_0^{k_1} \cdots \left[ t_{\sigma_p}(\psi)f_{\sigma_p} t_{\sigma_p}(\psi^{-1}) \right]f_0^{k_p} \nonumber\\
&=\sum_{k_{0}\in\N_0}
\coeff{\varphi}{k_0}{}
f_0^{k_0} \nonumber\\
&\quad +\sum_{p\geq1}\sum_{\sigma_{i}\in\Gamma}\sum_{k_{i}\in\N_0}
\coeff{\varphi}{k_0,\dots,k_p}{\sigma_1,\dots,\sigma_p}
f_0^{k_0}\left[ t_{\sigma_1}(\psi)f_{\sigma_1} t_{\sigma_1}(\psi^{-1}) \right]f_0^{k_1} \cdots \left[ t_{\sigma_p}(\psi)f_{\sigma_p} t_{\sigma_p}(\psi^{-1}) \right]f_0^{k_p}. \nonumber
\end{align}
By Definition \ref{def:garit} and Definition \ref{def:ma}, we know
$$
\ma_\Gamma (\kappa_\psi(\varphi))(\emptyset)
=\ma_{\Gamma,\kappa_\psi(\varphi)}^0(\emptyset)
=\coeff{\varphi}{0}{}
=\bigl(\garit(\ma_\Gamma(\psi)) (\ma_{\Gamma}(\varphi))\bigr)(\emptyset),
$$
so the case for $m=0$ of \eqref{eqn:ma_kappa=garit(ma)(ma)} holds.

Next  we  prove the case for $m\geq1$.
Note that we have
\begin{align*}
&t_{\sigma_i}(\psi)f_{\sigma_i} t_{\sigma_i}(\psi^{-1}) \\
&=\sum_{r_{2i-1},r_{2i}\geq0}\sum_{\sigma_{2i-1,j},\sigma_{2i,j}\in\Gamma}\sum_{l_{2i-1,j},l_{2i,j}\in\N_0} \\
&\quad \cdot \coeff{\psi}{l_{2i-1,0},\ \dots,\ l_{2i-1,r_{2i-1}}}{\sigma_i^{-1}\sigma_{2i-1,1},\ \dots,\ \sigma_i^{-1}\sigma_{2i-1,r_{2i-1}}}
\coeff{\psi^{-1}}{l_{2i,0},\ \dots,\ l_{2i,r_{2i}}}{\sigma_i^{-1}\sigma_{2i,1},\ \dots,\ \sigma_i^{-1}\sigma_{2i,r_{2i}}} \\
&\hspace{3cm} \cdot
f_0^{l_{2i-1,0}} \left( \prod_{j=1}^{r_{2i-1}} f_{\sigma_{2i-1,j}}f_0^{l_{2i-1,j}} \right)f_{\sigma_i}
f_0^{l_{2i,0}} \left( \prod_{j=1}^{r_{2i}} f_{\sigma_{2i,j}}f_0^{l_{2i,j}} \right),
\end{align*}
for $i\in\N$.
So by  \eqref{eqn:kappa_psi(phi)}, we have
\begin{align*}
&\kappa_\psi(\varphi)
-\sum_{k_{0}\in\N_0}
\coeff{\varphi}{k_0}{}
f_0^{k_0} \\
&=\sum_{\substack{r_{2i-1},r_{2i}\geq0 \\ p\geq1}}\sum_{\substack{\sigma_{2i-1,j},\sigma_{2i,j}\in\Gamma \\ \sigma_i\in\Gamma}}\sum_{\substack{l_{2i-1,j},l_{2i,j}\in\N_0 \\ k_i\in\N_0}}
\coeff{\varphi}{k_0,\dots,k_p}{\sigma_1,\dots,\sigma_p} \\
&\quad \cdot \prod_{i=1}^p\left\{
\coeff{\psi}{l_{2i-1,0},\ \dots,\ l_{2i-1,r_{2i-1}}}{\sigma_i^{-1}\sigma_{2i-1,1},\ \dots,\ \sigma_i^{-1}\sigma_{2i-1,r_{2i-1}}}
\coeff{\psi^{-1}}{l_{2i,0},\ \dots,\ l_{2i,r_{2i}}}{\sigma_i^{-1}\sigma_{2i,1},\ \dots,\ \sigma_i^{-1}\sigma_{2i,r_{2i}}}
\right\} \\
&\qquad \cdot f_0^{k_0} \prod_{i=1}^p\left\{
f_0^{l_{2i-1,0}} \left( \prod_{j=1}^{r_{2i-1}} f_{\sigma_{2i-1,j}}f_0^{l_{2i-1,j}} \right)f_{\sigma_i}
f_0^{l_{2i,0}} \left( \prod_{j=1}^{r_{2i}} f_{\sigma_{2i,j}}f_0^{l_{2i,j}} \right) f_0^{k_i}
\right\} \\
\end{align*}
\begin{align*}
&=\sum_{m\geq1}\sum_{\substack{p+ \sum_i(r_{2i-1}+r_{2i})=m \\ r_{2i-1},r_{2i}\geq0 \\ p\geq1}}\sum_{\substack{\sigma_{2i-1,j},\sigma_{2i,j}\in\Gamma \\ \sigma_i\in\Gamma}}\sum_{\substack{l_{2i-1,j},l_{2i,j}\in\N_0 \\ k_i\in\N_0}}
\coeff{\varphi}{k_0,\dots,k_s}{\sigma_1,\dots,\sigma_s} \\
&\quad \cdot \prod_{i=1}^p\left\{
\coeff{\psi}{l_{2i-1,0},\ \dots,\ l_{2i-1,r_{2i-1}}}{\sigma_i^{-1}\sigma_{2i-1,1},\ \dots,\ \sigma_i^{-1}\sigma_{2i-1,r_{2i-1}}}
\coeff{\psi^{-1}}{l_{2i,0},\ \dots,\ l_{2i,r_{2i}}}{\sigma_i^{-1}\sigma_{2i,1},\ \dots,\ \sigma_i^{-1}\sigma_{2i,r_{2i}}}
\right\} \\
&\qquad \cdot f_0^{k_0} \prod_{i=1}^p\left\{
f_0^{l_{2i-1,0}} \left( \prod_{j=1}^{r_{2i-1}} f_{\sigma_{2i-1,j}}f_0^{l_{2i-1,j}} \right)f_{\sigma_i}
f_0^{l_{2i,0}} \left( \prod_{j=1}^{r_{2i}} f_{\sigma_{2i,j}}f_0^{l_{2i,j}} \right) f_0^{k_i}
\right\}.
\end{align*}
Therefore, for $m\geq1$ and for $\sigma_{i},\sigma_{2i-1,j_{2i-1}},\sigma_{2i,j_{2i}}\in\Gamma$ ($1\leq i\leq p$, $1\leq j_k\leq r_k$, $k=2i-1,2i$), we have
\begin{align*}
&\vimo^m_{\Gamma, \kappa_\psi(\varphi)}\varia{z_0,\dots,z_m}{(\sigma_{2i-1,1},\cdots,\sigma_{2i-1,r_{2i-1}},\sigma_i,\sigma_{2i,1},\cdots,\sigma_{2i,r_{2i}})_{1\leq i\leq p}} \\
&=\sum_{\substack{p+ \sum_i(r_{2i-1}+r_{2i})=m \\ r_{2i-1},r_{2i}\geq0 \\ p\geq1}}\sum_{\substack{l_{2i-1,j},l_{2i,j}\in\N_0 \\ k_i\in\N_0}}
\coeff{\varphi}{k_0,\dots,k_p}{\sigma_1,\dots,\sigma_p} \\
&\quad \cdot \prod_{i=1}^p\left\{
\coeff{\psi}{l_{2i-1,0},\ \dots,\ l_{2i-1,r_{2i-1}}}{\sigma_i^{-1}\sigma_{2i-1,1},\ \dots,\ \sigma_i^{-1}\sigma_{2i-1,r_{2i-1}}}
\coeff{\psi^{-1}}{l_{2i,0},\ \dots,\ l_{2i,r_{2i}}}{\sigma_i^{-1}\sigma_{2i,1},\ \dots,\ \sigma_i^{-1}\sigma_{2i,r_{2i}}}
\right\} \\
&\qquad \cdot z_0^{k_0} \prod_{i=1}^p\left\{
z_{R_{2i-2}}^{l_{2i-1,0}} \left( \prod_{j=1}^{r_{2i-1}} z_{R_{2i-2}+j}^{l_{2i-1,j}} \right)z_{R_{2i-1}+1}^{l_{2i,0}} \left( \prod_{j=1}^{r_{2i}} z_{R_{2i-1}+1+j}^{l_{2i,j}} \right) z_{R_{2i}}^{k_i}
\right\}
\intertext{where see \eqref{eq:vimo} for $\vimo$. By rearranging each term, we calculate}
&=\sum_{\substack{p+ \sum_i(r_{2i-1}+r_{2i})=m \\ r_{2i-1},r_{2i}\geq0 \\ p\geq1}}
\left\{
\sum_{k_i\in\N_0}
\coeff{\varphi}{k_0,\dots,k_p}{\sigma_1,\dots,\sigma_p} z_0^{k_0} z_{R_{2}}^{k_1}\cdots z_{R_{2p}}^{k_p}
\right\} \\
&\quad \cdot \prod_{i=1}^p
\left\{
\sum_{l_{2i-1,j}\in\N_0}
\coeff{\psi}{l_{2i-1,0},\ \dots,\ l_{2i-1,r_{2i-1}}}{\sigma_i^{-1}\sigma_{2i-1,1},\ \dots,\ \sigma_i^{-1}\sigma_{2i-1,r_{2i-1}}}
 \prod_{j=0}^{r_{2i-1}} z_{R_{2i-2}+j}^{l_{2i-1,j}}
\right\} \\
&\qquad \cdot \prod_{i=1}^p
\left\{
\sum_{l_{2i,j}\in\N_0}
\coeff{\psi^{-1}}{l_{2i,0},\ \dots,\ l_{2i,r_{2i}}}{\sigma_i^{-1}\sigma_{2i,1},\ \dots,\ \sigma_i^{-1}\sigma_{2i,r_{2i}}}
 \prod_{j=0}^{r_{2i}} z_{R_{2i-1}+1+j}^{l_{2i,j}}
\right\}.
\intertext{By definition of $\vimo_\Gamma$, we get}
&=\sum_{\substack{p+ \sum_i(r_{2i-1}+r_{2i})=m \\ r_{2i-1},r_{2i}\geq0 \\ p\geq1}}
\vimo^p_{\Gamma, \varphi}\varia{z_0, z_{R_{2}},\dots, z_{R_{2p}}}{\sigma_1,\dots,\sigma_p} \\
&\quad \cdot \prod_{i=1}^p
\left\{
 \vimo^{r_{2i-1}}_{\Gamma, \psi}\varia{z_{R_{2i-2}}, z_{R_{2i-2}+1},\dots, z_{R_{2i-2}+r_{2i-1}}}{\sigma_i^{-1}\sigma_{2i-1,1},\ \dots,\ \sigma_i^{-1}\sigma_{2i-1,r_{2i-1}}}
\right\} \\
&\qquad \cdot \prod_{i=1}^p
\left\{
 \vimo^{r_{2i}}_{\Gamma, \psi^{-1}}\varia{z_{R_{2i-1}+1}, z_{R_{2i-1}+2},\dots, z_{R_{2i-1}+1+r_{2i}}}{\sigma_i^{-1}\sigma_{2i,1},\ \dots,\ \sigma_i^{-1}\sigma_{2i,r_{2i}}}
\right\}.
\intertext{By using Lemma \ref{lem:UfGamma}.(4), we obtain}
&=\sum_{\substack{p+ \sum_i(r_{2i-1}+r_{2i})=m \\ r_{2i-1},r_{2i}\geq0 \\ p\geq1}}
\vimo^p_{\Gamma, \varphi}\varia{z_0, z_{R_{2}},\dots, z_{R_{2p}}}{\sigma_1,\dots,\sigma_p} \\
&\quad \cdot \prod_{i=1}^p
\left\{
 \vimo^{r_{2i-1}}_{\Gamma, \psi}\varia{0, z_{R_{2i-2}+1}-z_{R_{2i-2}},\dots, z_{R_{2i-2}+r_{2i-1}}-z_{R_{2i-2}}}{\sigma_i^{-1}\sigma_{2i-1,1},\ \dots,\ \sigma_i^{-1}\sigma_{2i-1,r_{2i-1}}}
\right\} \\
&\qquad \cdot \prod_{i=1}^p
\left\{
 \vimo^{r_{2i}}_{\Gamma, \psi^{-1}}\varia{0, z_{R_{2i-1}+2}-z_{R_{2i-1}+1},\dots, z_{R_{2i-1}+1+r_{2i}}-z_{R_{2i-1}+1}}{\sigma_i^{-1}\sigma_{2i,1},\ \dots,\ \sigma_i^{-1}\sigma_{2i,r_{2i}}}
\right\}.
\end{align*}

Change of variables under the bijection $g$ in Notation \ref{lem:variable transformation of sigma} and  substitution of $z_0=0$ and $z_i=x_1+\cdots+x_i$ ($1\leq i\leq m$) yield
\footnote{We note that $m=p+ \sum_{i=1}^p(r_{2i-1}+r_{2i})=R_{2p}$.}
\begin{align*}
&\ma^m_{\Gamma}(\kappa_\psi(\varphi))(\vecx_m)
=\ma^m_{\Gamma, \kappa_\psi(\varphi)}\varia{x_1,\dots,x_m}{\sigma_1,\dots,\sigma_m} \\
&=\sum_{\substack{p+ \sum_i(r_{2i-1}+r_{2i})=m \\ r_{2i-1},r_{2i}\geq0 \\ p\geq1}}
\ma^p_{\Gamma, \varphi}\varia{x_{R_0+1}+\cdots+x_{R_2},\ \dots,\ x_{R_{2p-2}+1}+\cdots+x_{R_{2p}}}{\sigma_{R_1+1},\ \dots,\ \sigma_{R_{2p-1}+1}} \\
&\quad \cdot \prod_{i=1}^p
\left\{
 \ma^{r_{2i-1}}_{\Gamma, \psi}\varia{x_{R_{2i-2}+1},\ \dots,\ x_{R_{2i-1}}}{\sigma_{R_{2i-1}+1}^{-1}\sigma_{R_{2i-2}+1},\ \dots,\ \sigma_{R_{2i-1}+1}^{-1}\sigma_{R_{2i-1}}}
\right\} \\
&\qquad \cdot \prod_{i=1}^p
\left\{
 \ma^{r_{2i}}_{\Gamma, \psi^{-1}}\varia{x_{R_{2i-1}+2},\ \dots,\ x_{R_{2i}}}{\sigma_{R_{2i-1}+1}^{-1}\sigma_{R_{2i-1}+2},\ \dots,\ \sigma_{R_{2i-1}+1}^{-1}\sigma_{R_{2i}}}
\right\}.
\intertext{By putting $\omega_i=\binom{x_i}{\sigma_i}$ and by using flexions in Definition \ref{def:flexion}, we get}
&=\sum_{\substack{p+ \sum_i(r_{2i-1}+r_{2i})=m \\ r_{2i-1},r_{2i}\geq0 \\ p\geq1}} \\
&\ma^p_{\Gamma, \varphi}\Bigl( \urflex{(\omega_{R_0+1},\dots, \omega_{R_1})}{\ulflex{\omega_{R_1+1}}{(\omega_{R_1+2},\dots, \omega_{R_2})}},
\dots, \urflex{(\omega_{R_{2p-2}+1},\dots, \omega_{R_{2p-1}})}{\ulflex{\omega_{R_{2p-1}+1}}{(\omega_{R_{2p-1}+2},\dots, \omega_{R_{2p}})}} \Bigr) \\
&\quad \cdot \prod_{i=1}^p
\left\{
 \ma^{r_{2i-1}}_{\Gamma, \psi} \left( \llflex{(\omega_{R_{2i-2}+1},\dots, \omega_{R_{2i-1}})}{\omega_{R_{2i-1}+1}} \right)
 \ma^{r_{2i}}_{\Gamma, \psi^{-1}} \left( \lrflex{\omega_{R_{2i-1}+1}}{(\omega_{R_{2i-1}+2},\dots, \omega_{R_{2i}})} \right)
\right\}.
\intertext{By putting $\alpha_i=(\omega_{R_{2i-2}+1},\dots, \omega_{R_{2i-1}})$, $\gamma_i=(\omega_{R_{2i-1}+2},\dots, \omega_{R_{2i}})$ and $k_i=R_{2i-1}+1$ ($1\leq i\leq p$), we have}
&=\sum_{\alpha_1\otimes\omega_{k_1}\otimes\gamma_1\otimes \cdots \otimes \alpha_p\otimes\omega_{k_p}\otimes\gamma_p \in {\mathcal S}'(\vecx_m)}
\ma^p_{\Gamma, \varphi}\left( \urflex{\alpha_1}{\ulflex{\omega_{k_1}}{\gamma_1}}, \dots, \urflex{\alpha_p}{\ulflex{\omega_{k_p}}{\gamma_p}} \right) \\
&\hspace{4cm} \cdot \prod_{i=1}^p
\left\{
 \ma^{l(\alpha_i)}_{\Gamma, \psi} \left( \llflex{\alpha_i}{\omega_{k_i}} \right)
 \ma^{l(\gamma_i)}_{\Gamma, \psi^{-1}} \left( \lrflex{\omega_{k_i}}{\gamma_i} \right)
\right\}.
\intertext{By changing  variables under the bijection $g_m$ in  Notation \ref{lem:D(x_m)=D'(x_m)}, we calculate}
&=\sum_{\alpha_1\otimes\beta_{1}\otimes\gamma_1\otimes \cdots \otimes \alpha_s\otimes\beta_{s}\otimes\gamma_s \in {\mathcal S}(\vecx_m)}
\ma^{l(\beta_1,\dots,\beta_s)}_{\Gamma, \varphi}\left( \urflex{\alpha_1}{\ulflex{\beta_{1}}{\gamma_1}} \cdots \urflex{\alpha_s}{\ulflex{\beta_{s}}{\gamma_s}} \right) \\
&\hspace{4cm} \cdot \prod_{i=1}^s
\left\{
 \ma^{l(\alpha_i)}_{\Gamma, \psi} \left( \llflex{\alpha_i}{\beta_{i}} \right)
 \ma^{l(\gamma_i)}_{\Gamma, \psi^{-1}} \left( \lrflex{\beta_{i}}{\gamma_i} \right)
\right\}.
\intertext{Note that $\ma_{\Gamma}(\psi^{-1})=\ma_{\Gamma}(\psi)^{\times-1}$ by
\eqref{eq: ma and inverse}. So, we obtain}
&=\sum_{s\geq1}\sum_{\substack{\vecx_m=\alpha_1\beta_{1}\gamma_1 \cdots\alpha_s\beta_{s}\gamma_s \\ \beta_i,\gamma_j\alpha_{j+1}\neq\emptyset \\ 1\leq i\leq s, 1\leq j\leq s-1}}
\ma^{l(\beta_1,\dots,\beta_s)}_{\Gamma}(\varphi)\left( \urflex{\alpha_1}{\ulflex{\beta_{1}}{\gamma_1}} \cdots \urflex{\alpha_s}{\ulflex{\beta_{s}}{\gamma_s}} \right) \\
&\hspace{4cm} \cdot \prod_{i=1}^s
\left\{
 \ma^{l(\alpha_i)}_{\Gamma}(\psi) \left( \llflex{\alpha_i}{\beta_{i}} \right)
 \ma^{l(\gamma_i)}_{\Gamma}(\psi)^{\times-1} \left( \lrflex{\beta_{i}}{\gamma_i} \right)
\right\}. 
\intertext{By Definition \ref{def:garit},}
&=\bigl(\garit(\ma_\Gamma(\psi)) (\ma_{\Gamma}(\varphi))\bigr)(\vecx_m).
\end{align*}
Hence, we get \eqref{eqn:ma_kappa=garit(ma)(ma)} for $m\geq0$, and we finish the proof.
\end{proof}

\begin{thm}\label{thm:ma(circledast)=gari(ma,ma)}
We have the following commutative diagram:
\begin{equation}\label{eq:CD ma(circledast)=gari(ma,ma)}
\xymatrix{
(\widehat{U\frak f_{\Gamma}}^\dag)_1 \times (\widehat{U\frak f_{\Gamma}}^\dag)_1 \ar@{->}[rr]^{\simeq\qquad}_{\ma_{\Gamma} \times \ma_{\Gamma}\qquad}\ar@{->}[d]_{\circledast\circ\mathrm{switch}} && \GARI(\mathcal F_\ser;\Gamma) \times \GARI(\mathcal F_\ser;\Gamma) \ar@{->}[d]^{\gari} \\
(\widehat{U\frak f_{\Gamma}}^\dag)_1  \ar@{->}[rr]^{\simeq}_{\ma_{\Gamma}}&&\GARI(\mathcal F_\ser;\Gamma).
}
\end{equation}
That is, we have $\ma_{\Gamma} ({\psi \circledast \varphi}) = \gari( \ma_\Gamma(\varphi), \ma_\Gamma(\psi))$ for $\varphi,\psi\in(\widehat{U\frak f_{\Gamma}}^\dag)_1$.
\end{thm}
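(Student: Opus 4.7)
The plan is to reduce the identity $\ma_\Gamma(\psi\circledast\varphi)=\gari(\ma_\Gamma(\varphi),\ma_\Gamma(\psi))$ to the combination of two results that have just been established: the algebra homomorphism property of $\ma_\Gamma$ from Proposition \ref{prop: isom ma}, and the compatibility of the twisting operator $\kappa_\psi$ with $\garit$ from Proposition \ref{prop:ma_kappa=garit(ma)(ma)}.

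First, I would unpack both sides according to their defining formulas. On the source side, by the definition of $\circledast$ we have
\[
\psi\circledast\varphi=\kappa_\psi(\varphi)\cdot\psi
\]
in $(\widehat{U\frak f_{\Gamma}}^\dag)_1$. Since $\ma_\Gamma$ is a $\Q$-algebra isomorphism from $\widehat{U\frak f_{\Gamma}}^\dag$ to $\mathcal M(\mathcal F_\ser;\Gamma)$ by Proposition \ref{prop: isom ma}, we obtain
\[
\ma_\Gamma(\psi\circledast\varphi)=\ma_\Gamma(\kappa_\psi(\varphi))\times\ma_\Gamma(\psi).
\]
On the target side, Definition \ref{def:gari} gives
\[
\gari(\ma_\Gamma(\varphi),\ma_\Gamma(\psi))=\garit(\ma_\Gamma(\psi))(\ma_\Gamma(\varphi))\times\ma_\Gamma(\psi).
\]
Comparing these two expressions, the desired identity is equivalent to
\[
\ma_\Gamma(\kappa_\psi(\varphi))=\garit(\ma_\Gamma(\psi))(\ma_\Gamma(\varphi)),
\]
which is exactly the commutativity of the diagram \eqref{eq:CD ma_kappa=garit(ma)(ma)} in Proposition \ref{prop:ma_kappa=garit(ma)(ma)}.

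Since both of the inputs we are invoking have already been proved, the proof is essentially a one-line composition and there is no real obstacle. If anything needs care, it is only the bookkeeping that $\ma_\Gamma$ sends $(\widehat{U\frak f_{\Gamma}}^\dag)_1$ into $\GARI(\mathcal F_\ser;\Gamma)$ (so that the outputs of $\kappa_\psi$ and $\circledast$ land where $\garit$ and $\gari$ are defined, respectively), and this is immediate from $M(\emptyset)=$ constant term of the preimage. Thus the plan is simply: (i) expand both sides by definition, (ii) apply the algebra property of $\ma_\Gamma$ on the left, (iii) substitute Proposition \ref{prop:ma_kappa=garit(ma)(ma)}, and (iv) match with Definition \ref{def:gari}.
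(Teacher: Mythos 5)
Your proposal is correct and follows exactly the same route as the paper's proof: expand $\psi\circledast\varphi=\kappa_\psi(\varphi)\cdot\psi$, apply the algebra isomorphism property of $\ma_\Gamma$ from Proposition \ref{prop: isom ma}, substitute Proposition \ref{prop:ma_kappa=garit(ma)(ma)}, and match with Definition \ref{def:gari}. Nothing is missing.
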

\begin{proof}
By Proposition \ref{prop: isom ma} and Proposition \ref{prop:ma_kappa=garit(ma)(ma)}, we have
\begin{align*}
\ma_{\Gamma} ({\psi \circledast \varphi})
&=\ma_\Gamma (\kappa_\psi(\varphi) \psi) \\
&=\ma_\Gamma (\kappa_\psi(\varphi)) \times \ma_\Gamma (\psi) \\
&=\garit(\ma_\Gamma(\psi)) (\ma_{\Gamma}(\varphi)) \times \ma_\Gamma (\psi) \\
&=\gari( \ma_\Gamma(\varphi), \ma_\Gamma(\psi)).
\end{align*}
Hence we obtain the above commutative diagram.
\end{proof}


\section{On $\GARI(\mathcal{F})_{\protect\underline{\as\ast\is}}$}\label{sec: appendix GARIasis}
We will give a complete proof that \'{E}calle's set $\GARI(\mathcal{F})_{{\as\ast\is}}$ (resp. $\GARI(\mathcal{F})_{\underline{\as\ast\is}}$) (cf. Definition \ref{def:GARIas*is})
recovers
Racinet's double shuffle set $\DMR$ (resp. $\DMR_0$)
(cf. Definition \ref{def:DMR})
under the map $\ma$
in Theorem \ref{theorem: GARI and DMR},
which extends the results \cite{S-ARIGARI} in  the Lie algebra  setting.

Consider the set $Y:=\{Y_k\}_{k\in\N}$.
Let $\Q\langle\langle Y \rangle\rangle$ to be the non-commutative formal power series ring over $\Q$ generated by $Y$,
which is equipped with a structure of Hopf algebra with the  harmonic (stuffle) coproduct
given by
\begin{equation}\label{eq: harmonic coproduct}
\Delta_\ast (Y_n)=Y_n\otimes1 +1\otimes Y_n +\sum_{i=1}^{n-1} Y_i\otimes Y_{n-i}
\end{equation}
for $n\geq 1$.

\begin{defn}
Following \cite{S-ARIGARI},
we consider two $\Q$-linear maps
$$\mi:\widehat{U\frak f_2}^\dagger\rightarrow \overline{\mathcal M}(\mathcal F_\ser)
\
\footnote{
It looks that there is an error on the definition of $\mi$ in \S 3.2 of \cite{S-ARIGARI}. The map  $\iota_Y$ in (3.2.3) should be corrected by
$\iota_Y:y_{a_1}\cdots y_{a_r}\mapsto v_1^{a_r-1}\cdots v_r^{a_1-1}$.
}
$$
and
$$\overline{\mi}:\Q\langle\langle Y \rangle\rangle \rightarrow \overline{\mathcal M}(\mathcal F_\ser)$$
by
\begin{align*}
&\mi(\varphi)
:=(\mi_\varphi^r(x_1,\dots,x_r))_{r\geq0}, \\
&\mi_\varphi^r(x_1,\dots,x_r)
:=\vimo_\varphi^r\varia{0,x_r,\dots,x_1}{1,\dots,1}, \\
&\overline{\mi}(\Phi)
:=(\overline{\mi}_\Phi^r(x_1,\dots,x_r))_{r\geq0}, \\
&\overline{\mi}_\Phi^r(x_1,\dots,x_r)
:=\sum_{k_1,\dots,k_r\geq1}\langle \Phi | k_1,\dots,k_r \rangle x_1^{k_r-1}\cdots x_r^{k_1-1},
\end{align*}
for $\varphi=\sum_{r\geq0}\sum_{k_1,\dots,k_r\geq0}\langle \varphi | k_1,\dots,k_r \rangle f_0^{k_0}f_1f_0^{k_1}\cdots f_1f_0^{k_r}\in \widehat{U\frak f_2}^\dagger$ and\\
$\Phi=\sum_{r\geq0}\sum_{k_1,\dots,k_r\geq1}\langle \Phi | k_1,\dots,k_r \rangle Y_{k_1}\cdots Y_{k_r}\in \Q\langle\langle Y \rangle\rangle$.
\end{defn}

\begin{lem}\label{lem:algebra hom of mi}
The maps $\mi$ and $\overline{\mi}$ are {anti-}isomorphisms of algebras.
Particularly, for $\varphi,\psi\in\widehat{U\frak f_2}^\dagger$ and $\Phi,\Psi\in\Q\langle\langle Y \rangle\rangle$, we have
$$
\mi(\varphi\psi)=\mi(\psi)\times \mi(\varphi), \qquad
\overline{\mi}(\Phi\Psi)=\overline{\mi}(\Psi)\times \overline{\mi}(\Phi)
$$
\end{lem}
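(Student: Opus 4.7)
The plan is to treat $\overline{\mi}$ and $\mi$ separately, reducing both to coefficient-level manipulations built on the existing algebra isomorphism structure.

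For $\overline{\mi}$, the claim can be verified by a direct computation on basis monomials. If $\Phi = Y_{j_1}\cdots Y_{j_s}$ and $\Psi = Y_{j'_1}\cdots Y_{j'_t}$, then the defining formula yields
\[
\overline{\mi}(\Phi)(x_1,\dots,x_s) = x_1^{j_s-1}\cdots x_s^{j_1-1}, \qquad \overline{\mi}(\Psi)(x_1,\dots,x_t) = x_1^{j'_t-1}\cdots x_t^{j'_1-1},
\]
while $\overline{\mi}(\Phi\Psi)$ is the analogous monomial of length $s+t$. Since $\overline{\mi}(\Phi)$ (resp.\ $\overline{\mi}(\Psi)$) is supported only in length $s$ (resp.\ $t$), the expansion $\overline{\mi}(\Psi)\times \overline{\mi}(\Phi)$ collapses to a single term, the split at index $t$, which is precisely $\overline{\mi}(\Phi\Psi)$. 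Bilinearity and the completion with respect to degree extend the identity to all of $\Q\langle\langle Y\rangle\rangle$, and bijectivity is clear from the explicit 1--1 correspondence between basis words $Y_{a_r+1}\cdots Y_{a_1+1}$ and monomials $x_1^{a_1}\cdots x_r^{a_r}$.

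For $\mi$, the plan is to factor $\mi = \swap \circ \ma$, where $\swap$ is the map of Definition~\ref{defn:swap} (with $\Gamma=\{1\}$) extended by continuity to the full space $\mathcal M(\mathcal F_\ser)$. Indeed, unwinding definitions and computing the telescoping partial sums gives
\[
\swap(\ma_\varphi)(x_1,\dots,x_r) = \ma_\varphi(x_r, x_{r-1}-x_r, \dots, x_1-x_2) = \vimo_\varphi^r(0,x_r,\dots,x_1) = \mi_\varphi^r(x_1,\dots,x_r).
\]
Since $\ma$ is an algebra isomorphism by Proposition~\ref{prop: isom ma} and $\swap$ is a $\Q$-linear bijection, so is $\mi$. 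It therefore suffices to establish $\mi(\varphi\psi) = \mi(\psi)\times\mi(\varphi)$, which thanks to $\ma(\varphi\psi) = \ma(\varphi)\times \ma(\psi)$ reduces to the identity $\swap(A\times B) = \swap(B)\times \swap(A)$ on the image of $\ma$.

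This last identity is extracted by specializing the product formula
\[
\vimo_{\varphi\psi}^m(z_0,\dots,z_m) = \sum_{r=0}^m \vimo_\varphi^r(z_0,\dots,z_r)\,\vimo_\psi^{m-r}(z_r,\dots,z_m)
\]
from the proof of Proposition~\ref{prop: isom ma} at $z_0=0$ and $z_i = x_{m-i+1}$. The first factor is immediately recognized as $\mi_\varphi^r(x_{m-r+1},\dots,x_m)$. The second factor has leading argument $z_r = x_{m-r+1}\neq 0$, so we invoke translation invariance of $\vimo_\psi$ (Lemma~\ref{lem:UfGamma}.(4), valid because $\psi\in\widehat{U\frak f_2}^\dag$) to recast it as $\mi_\psi^{m-r}$ evaluated on the shifted variables $x_1-x_{m-r+1},\dots,x_{m-r}-x_{m-r+1}$. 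Reindexing $s=m-r$ and matching with the product structure on $\overline{\mathcal M}(\mathcal F_\ser)$ then delivers the anti-homomorphism identity. The main technical obstacle lies in the careful bookkeeping of these variable substitutions; it is precisely the translation-invariance step that forces the hypothesis $\psi\in\widehat{U\frak f_2}^\dag$ and that accounts for the reversal of factors in the target.
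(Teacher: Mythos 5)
Your treatment of $\overline{\mi}$ is correct, and it is exactly the ``direct calculation'' the paper leaves implicit: on monomials the expansion of $\overline{\mi}(\Psi)\times\overline{\mi}(\Phi)$ collapses to the single split at index $t$ and reproduces $\overline{\mi}(\Phi\Psi)$. The identification $\mi=\swap\circ\ma$ is also correct (it is part of Lemma \ref{lem:swap ma=mi pi ret}).

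The $\mi$ half of your argument, however, has a genuine gap at its final step, and the gap cannot be closed in the stated generality. Your computation correctly produces
\[
\mi_{\varphi\psi}^m(x_1,\dots,x_m)=\sum_{s=0}^m \mi_\varphi^{m-s}(x_{s+1},\dots,x_m)\cdot \mi_\psi^{s}\bigl(x_1-x_{s+1},\dots,x_s-x_{s+1}\bigr),
\]
but the product on $\overline{\mathcal M}(\mathcal F_\ser)$ requires the second factor to be $\mi_\psi^{s}(x_1,\dots,x_s)$ with \emph{unshifted} arguments. The translation invariance of $\vimo_\psi$ has already been spent normalizing its leading slot to $0$; it does not make $\mi_\psi^s$ invariant under translating its own variables, so ``matching with the product structure'' does not go through. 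Concretely, take $\varphi=f_1$ and $\psi=f_1f_0-f_0f_1$, both in $\widehat{U\frak f_2}^\dag$. Then $\mi_\psi^1(x_1)=x_1$ and $\mi_\varphi^1(x_1)=1$, so $(\mi(\psi)\times\mi(\varphi))(x_1,x_2)=x_1$, while $\vimo^2_{\varphi\psi}(z_0,z_1,z_2)=z_2-z_1$ gives $\mi^2_{\varphi\psi}(x_1,x_2)=x_1-x_2$. Thus the identity $\mi(\varphi\psi)=\mi(\psi)\times\mi(\varphi)$ fails here (this is the familiar phenomenon that $\swap$ does not intertwine $\times$ with itself). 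What your computation actually establishes is the shifted product formula displayed above, which reduces to the asserted identity precisely when $\mi(\psi)$ is a constant mould --- for instance when $\psi$ is a series in $f_1$ alone, which is the only situation where the $\mi$-half of the lemma is invoked in the paper (in the proof of Theorem \ref{theorem: GARI and DMR}, with $\psi=\exp(-\kappa f_1)$). You should either restrict the $\mi$-statement to that case or record the shifted formula as the actual conclusion.
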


\begin{proof}
It is clear that the maps $\mi$ and $\overline{\mi}$ are bijections.
By direct calculation, we know that these $\mi$ are {anti-}algebra homomorphisms.
\end{proof}

We define the $\Q$-linear map
$\pi_Y:\widehat{U\frak f_2}^\dagger \rightarrow \Q\langle\langle Y \rangle\rangle$
by
\begin{align*}
\pi_Y(w)&:=\left\{\begin{array}{ll}
1 & (w=1), \\
{Y_{k_1}\cdots Y_{k_r}}&(w=f_1f_0^{k_1-1}\cdots f_1f_0^{k_r-1}), \\
0&(\mbox{otherwise}),
\end{array}\right.
\end{align*}
for $r,k_1,\dots,k_r\geq1$.

\begin{lem}\label{lem:swap ma=mi pi ret}
The following diagram commutes:
\begin{equation}\label{eq:CD swap ma=mi pi ret}
\xymatrix{
 \widehat{U\frak f_2}^\dag \ar@{->}[rr]^{\ma}\ar@{->}[d]_{\pi_Y}\ar[drr]^{\mi} && \mathcal M(\mathcal F_\ser)\ar@{->}[d]^{\swap} \\
\Q\langle\langle Y \rangle\rangle \ar@{->}[rr]_{\overline{\mi}}&&\overline{\mathcal M}(\mathcal F_\ser).
}
\end{equation}
\end{lem}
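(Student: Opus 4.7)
The plan is to verify the two triangles of the diagram separately: the upper triangle $\swap\circ\ma=\mi$, and the lower triangle $\overline{\mi}\circ\pi_Y=\mi$. Both reduce to elementary substitutions in the power-series definitions; there is no deeper input required, and in particular no analytic estimate.

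For the upper triangle I would start from $\ma^m_\varphi(x_1,\dots,x_m)=\vimo^m_\varphi(0,x_1,x_1+x_2,\dots,x_1+\cdots+x_m)$ and then apply $\swap$ as in \eqref{eq:swap}. With $\Gamma=\{1\}$ only the length-variables matter, so I would set $u_1=x_m$ and $u_i=x_{m-i+1}-x_{m-i+2}$ for $2\le i\le m$. A direct telescoping gives $u_1+\cdots+u_i=x_{m-i+1}$, and therefore $\swap(\ma(\varphi))(x_1,\dots,x_m)=\vimo^m_\varphi(0,x_m,x_{m-1},\dots,x_1)$. This is precisely $\mi_\varphi^m(x_1,\dots,x_m)$ by the definition of $\mi$, which settles the upper triangle.

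For the lower triangle, the key observation is that $\pi_Y$ selects exactly those monomials of $\varphi$ of the form $f_1 f_0^{a_1}\cdots f_1 f_0^{a_r}$ with $a_i\ge 0$, sending them to $Y_{a_1+1}\cdots Y_{a_r+1}$. Writing $\varphi$ in the form \eqref{eq: word expansion}, this amounts to the identity $\langle \pi_Y(\varphi) \mid a_1+1,\dots,a_r+1\rangle=\langle\varphi\mid 0,a_1,\dots,a_r\rangle$. Applying $\overline{\mi}$ and substituting $k_i=a_i+1$ then yields $\overline{\mi}(\pi_Y(\varphi))^r(x_1,\dots,x_r)=\sum_{a_1,\dots,a_r\ge 0}\langle\varphi\mid 0,a_1,\dots,a_r\rangle\,x_1^{a_r}\cdots x_r^{a_1}$. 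Meanwhile, by \eqref{eq:vimo} the specialization $z_0\mapsto 0$ forces the $k_0$-index to vanish, giving $\mi_\varphi^r(x_1,\dots,x_r)=\vimo_\varphi^r(0,x_r,\dots,x_1)=\sum_{a_1,\dots,a_r\ge 0}\langle\varphi\mid 0,a_1,\dots,a_r\rangle\,x_r^{a_1}\cdots x_1^{a_r}$, which is the same monomial-by-monomial.

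Combining the two triangles yields the commutativity of the square. The only subtlety worth flagging is purely bookkeeping: the reversal of indices induced by $\swap$ must be matched with the reversal built into the definitions of both $\mi$ (via the argument order $0,x_r,\dots,x_1$) and $\overline{\mi}$ (via the monomial $x_1^{k_r-1}\cdots x_r^{k_1-1}$). Once these reversals are lined up, both identities become tautological, and no step presents a genuine obstacle.
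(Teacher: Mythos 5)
Your proposal is correct and follows essentially the same route as the paper: both reduce the claim to explicit monomial expansions, showing $\swap(\ma(\varphi))$, $\mi(\varphi)$, and $\overline{\mi}(\pi_Y(\varphi))$ all equal $\sum\langle\varphi\mid 0,k_1,\dots,k_r\rangle\,x_r^{k_1}x_{r-1}^{k_2}\cdots x_1^{k_r}$, with the telescoping substitution handling the $\swap$ and the $z_0=0$ specialization killing the $k_0>0$ terms. The index bookkeeping you flag is exactly the only point of care in the paper's proof as well.
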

\begin{proof}
Consider $\varphi\in\widehat{U\frak f_2}^\dag$ with
$$
\varphi
=\sum_{r\geq0}\sum_{k_0,\dots,k_r\geq0}\langle \varphi | k_0,k_1,\dots,k_r \rangle f_0^{k_0}f_1f_0^{k_1}\cdots f_1f_0^{k_r}.
$$
By definition of $\vimo$, we have
$$
\vimo_\varphi^r\varia{z_0,z_1,\dots,z_r}{1,\dots,1}
=\sum_{k_0,\dots,k_r\geq0}\langle \varphi | k_0,k_1,\dots,k_r \rangle z_0^{k_0}z_1^{k_1}\cdots z_r^{k_r},
$$
for $r\geq0$.
So by definition of the maps $\ma$ and $\mi$, we have
\begin{align}
\ma_\varphi^r(x_1,\dots,x_r)
&=\sum_{k_1,\dots,k_r\geq0}\langle \varphi | 0,k_1,\dots,k_r \rangle x_1^{k_1}(x_1+x_2)^{k_2}\cdots (x_1+\cdots+x_r)^{k_r}, \\
\label{eqn:mi expression for varphi}
\mi_\varphi^r(x_1,\dots,x_r)
&=\sum_{k_1,\dots,k_r\geq0}\langle \varphi | 0,k_1,\dots,k_r \rangle x_r^{k_1}x_{r-1}^{k_2}\cdots x_1^{k_r}.
\end{align}
Therefore, we get
\begin{align}\label{eqn:ma expression for varphi}
\swap (\ma(\varphi))(x_1,\dots,x_r)
&=\sum_{k_1,\dots,k_r\geq0}\langle \varphi | 0,k_1,\dots,k_r \rangle x_r^{k_1}x_{r-1}^{k_2}\cdots x_1^{k_r},
\end{align}
for $r\geq0$.
On the other hand, we calculate
\begin{align*}
\pi_Y(\varphi)
&=\pi_Y\left(
\sum_{r\geq0}\sum_{k_0,\dots,k_r\geq0}\langle \varphi | k_0,k_1,\dots,k_r \rangle f_0^{k_0}f_1f_0^{k_1}\cdots f_1f_0^{k_r}
\right) \\
&=\sum_{r\geq0}\sum_{k_1,\dots,k_r\geq1}\langle \varphi | 0,k_1-1,\dots,k_r-1 \rangle Y_{k_1}\cdots Y_{k_r}.
\end{align*}
So we get
\begin{align}\label{eqn:barmi expression for varphi}
\overline{\mi}(\pi_Y(\varphi))(x_1,\dots,x_r)
=\sum_{k_1,\dots,k_r\geq1}\langle \varphi | 0,k_1-1,\dots,k_r-1 \rangle x_1^{k_r-1}\cdots x_{r-1}^{k_2-1}x_r^{k_1-1}.
\end{align}
Hence, by \eqref{eqn:mi expression for varphi}, \eqref{eqn:ma expression for varphi} and \eqref{eqn:barmi expression for varphi}, we obtain the claim.
\end{proof}

\begin{defn}
We define the $\Q$-linear map $\overline{\dimi}:\Q\langle\langle Y \rangle\rangle \widehat{\otimes} \Q\langle\langle Y \rangle\rangle \rightarrow \overline{\mathcal M}_2(\mathcal F_\ser)$ by
\begin{align*}
&\overline{\dimi}(\Phi):=(\dimi_\Phi^{p,q}(x_1,\dots,x_p;x_{p+1},\dots,x_{p+q}))_{p,q\geq0}, \\
&\overline{\dimi}_\Phi^{p,q}(x_1,\dots,x_p;x_{p+1},\dots,x_{p+q}) \\
& :=
\sum_{d'_i,d''_j\in\N}\left\langle \Phi \middle| (d'_1,\dots,d'_p);(d''_1,\dots,d''_q) \right\rangle
x_1^{d'_p-1}\cdots x_p^{d'_1-1}x_{p+1}^{d''_q-1}\cdots x_{p+q}^{d''_1-1}
\end{align*}
for $\Phi\in\Q\langle\langle Y \rangle\rangle \widehat{\otimes} \Q\langle\langle Y \rangle\rangle$ which is described as
$$
\Phi=\sum_{p,q\geq0}\sum_{d'_i,d''_j\in\N}\left\langle \Phi \middle| (d'_1,\dots,d'_p);(d''_1,\dots,d''_q) \right\rangle
Y_{d'_1}\cdots Y_{d'_p}\otimes Y_{d''_1}\cdots Y_{d''_q}.
$$
\end{defn}
Similarly to \eqref{eq: dima=ma otimes ma}, for $\Phi=h_1\otimes h_2$ with $h_1,h_2\in \Q\langle\langle Y \rangle\rangle$, we have
\begin{align}\label{eqn:dimibar=mibar otimes mibar}
\overline{\dimi}(\Phi)=\overline{\mi}(h_1)\otimes \overline{\mi}(h_2).
\end{align}

\begin{lem}\label{lem:dimi delta*=sh* mi}
We have the following commutative diagram:
\begin{equation}\label{eq:CD dimi delta*=sh* mi}
\xymatrix{
\Q\langle\langle Y \rangle\rangle \ar@{->}[rr]^{\overline{\mi}}\ar@{->}[d]_{\Delta_*} && \overline{\mathcal M}(\mathcal F_\ser)\ar@{->}[d]^{\shmap_*} \\
\Q\langle\langle Y \rangle\rangle^{\widehat{\otimes}2} \ar@{->}[rr]_{\overline{\dimi}}&&\overline{\mathcal M}_2(\mathcal F_\ser).
}
\end{equation}
Here, the coproduct $\Delta_*$ is given by
$$
\Delta_*(Y_n):=Y_n\otimes 1 + 1\otimes Y_n + \sum_{\substack{i+j=n \\ i,j\geq1}}Y_i\otimes Y_j \qquad (n\geq1).
$$
\end{lem}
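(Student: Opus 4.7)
The strategy is to mimic the proof of Lemma \ref{lem: dima delta=sh ma} but in the harmonic setting. First, since $\overline{\mi}$, $\overline{\dimi}$, $\shmap_*$ and $\Delta_*$ are all continuous $\Q$-linear maps and monomials $Y_{k_1}\cdots Y_{k_r}$ span a dense subspace of $\Q\langle\langle Y\rangle\rangle$, it suffices to verify the identity $\overline{\dimi}\circ\Delta_*=\shmap_*\circ\overline{\mi}$ on each such monomial.

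Second, I would compute the left-hand side. Writing $w=Y_{k_1}\cdots Y_{k_r}$, a standard iteration of \eqref{eq: harmonic coproduct} yields
\[
\Delta_*(w)=\sum w'\otimes w'',
\]
where the sum runs over all ways of partitioning the positions $\{1,\dots,r\}$ into two ordered subsequences $\{i_1<\cdots<i_p\}$ and $\{j_1<\cdots<j_q\}$ (with $p+q\ge r$) and, for each of the at most $\min(p,q)$ positions where the two subsequences ``overlap'', replacing the pair $Y_{k_a},Y_{k_b}$ by $Y_{k_a+k_b}$. Applying $\overline{\dimi}$ and using \eqref{eqn:dimibar=mibar otimes mibar} then expresses $(\overline{\dimi}\circ\Delta_*)(w)(x_1,\dots,x_p;x_{p+1},\dots,x_{p+q})$ as an explicit combination of monomials in $x_i$.

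Third, I would expand the right-hand side. By the very definition of $\shmap_*$ (Definition immediately preceding Lemma \ref{lem:dimi delta*=sh* mi}),
\[
\shmap_*(\overline{\mi}(w))\varia{\sigma_1,\dots,\sigma_{p+q}}{x_1,\dots,x_{p+q}}=\sum_{\alpha}\Shstar{\varia{\sigma_1,\dots,\sigma_p}{x_1,\dots,x_p}}{\varia{\sigma_{p+1},\dots,\sigma_{p+q}}{x_{p+1},\dots,x_{p+q}}}{\alpha}\overline{\mi}(w)(\alpha).
\]
Using the recursive rule \eqref{eqn:shufflestar product}, the contribution of each term $\alpha$ can be rewritten, after invoking the divisibility identity
\[
\frac{x_i^{n}-x_j^{n}}{x_i-x_j}=\sum_{a+b=n-1}x_i^{a}x_j^{b},
\]
as a sum of monomials in which certain pairs of variables have their exponents merged. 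This is exactly the combinatorial pattern produced by the stuffle coproduct on the other side. Matching the two expansions term by term then gives the identity on $w$, and hence on all of $\Q\langle\langle Y\rangle\rangle$.

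The main obstacle is the fourth step, namely the bookkeeping that matches the two expansions. The difficulty is that $\shuffle_*$ is defined using the denominators $\frac{1}{v-v'}$ whereas $\Delta_*$ is purely combinatorial: one must recognise that, after taking $\overline{\mi}$, those denominators telescope (via the divisibility identity above) into precisely the monomials coming from the merge terms $Y_{k_a+k_b}$ of the harmonic coproduct. Once this dictionary is spelled out, the remaining combinatorics is the same bookkeeping with ordered insertions as in the proof of Lemma \ref{lem: dima delta=sh ma}: the ordered partitions of $\{1,\dots,p+q\}$ into two subsequences correspond to the pure shuffle terms, while the ``overlap'' positions correspond to the contraction terms $Y_{k_a+k_b}$ produced by $\Delta_*$. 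In fact one can organise the argument exactly as in Lemma \ref{lem: dima delta=sh ma} (with the shuffle coproduct replaced by the stuffle coproduct and the shuffle $\shmap$ replaced by $\shmap_*$), so that the verification becomes a direct analogue; the proof in \textit{loc.\ cit.} can therefore be reused almost verbatim once the divisibility identity above has been invoked to dispose of the extra denominators.
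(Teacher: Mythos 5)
Your proposal is correct and follows essentially the same route as the paper, whose own proof of this lemma consists of the single remark that the claim is obtained ``in the same way as Lemma \ref{lem: dima delta=sh ma}''; your additional observation that the denominators $\tfrac{1}{v-v'}$ in \eqref{eqn:shufflestar product} telescope via $\tfrac{x_i^{n}-x_j^{n}}{x_i-x_j}=\sum_{a+b=n-1}x_i^{a}x_j^{b}$ into exactly the contraction terms $\sum_{a+b=n}Y_a\otimes Y_b$ of $\Delta_*$ is precisely the point that makes the analogy work. (Only a wording quibble: in your second step the overlap positions of $\Delta_*(Y_{k_1}\cdots Y_{k_r})$ should be described as \emph{splitting} $Y_{k_i}$ into all $Y_a\otimes Y_b$ with $a+b=k_i$, not as merging a pair, since $\Delta_*$ is multiplicative for concatenation.)
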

\begin{proof}
We get this claim in the same way as Lemma \ref{lem: dima delta=sh ma}.
\end{proof}

\begin{defn}
For $\varphi \in \widehat{U\frak f_2}^\dag$, we define $\varphi_{\rm corr} \in \Q\langle\langle Y \rangle\rangle$ by
$$
\varphi_{\rm corr}
:=\exp\left(
\sum_{k=2}^\infty (-1)^{k}\frac{\langle \varphi | f_1f_0^{k-1} \rangle}{k} Y_1^k
\right).
$$
\end{defn}

\begin{lem}\label{lem:mi phicorr=Miniphi}
For $\varphi \in \widehat{U\frak f_2}^\dag$, we have
$$
\overline{\mi}(\varphi_{\rm corr})
=\Mini_\varphi,
$$
where $\Mini_\varphi$ is the mould in Definition \ref{defn:Mini}.
\end{lem}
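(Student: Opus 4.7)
The plan is a direct bookkeeping identity: both sides reduce to the same scalar exponential generating series once I observe that $\varphi_{\rm corr}$ involves only the letter $Y_1$.

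First I would note that $\varphi_{\rm corr}$ lies in the commutative subalgebra $\Q[[Y_1]]\subset\Q\langle\langle Y\rangle\rangle$, so I may write
$$\varphi_{\rm corr}=\sum_{r\geq 0}c_r\,Y_1^{r},\qquad \sum_{r\geq 0}c_r\,t^{r}=\exp\!\Bigl(\sum_{k=2}^{\infty}(-1)^{k}\tfrac{\langle\varphi|f_1f_0^{k-1}\rangle}{k}\,t^{k}\Bigr),$$
simply by replacing the commuting letter $Y_1$ by a formal variable $t$. Next I would rewrite the generating series defining $\Mono_{\varphi,r}$ using $\zeta_{\varphi}(k)=-\langle\varphi|f_1f_0^{k-1}\rangle$:
$$\sum_{r\geq 0}\Mono_{\varphi,r}\,t^{r}=\exp\!\Bigl(\sum_{k=2}^{\infty}(-1)^{k-1}\tfrac{\zeta_{\varphi}(k)}{k}\,t^{k}\Bigr)=\exp\!\Bigl(\sum_{k=2}^{\infty}(-1)^{k}\tfrac{\langle\varphi|f_1f_0^{k-1}\rangle}{k}\,t^{k}\Bigr).$$
Comparing the two displays, $c_r=\Mono_{\varphi,r}$ for every $r\geq 0$.

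Finally I would evaluate $\overline{\mi}$ term by term. By the definition of $\overline{\mi}$, the length-$m$ component of $\overline{\mi}(Y_1^{r})$ is
$$\sum_{k_1,\dots,k_m\geq 1}\langle Y_1^{r}\mid k_1,\dots,k_m\rangle\,x_1^{k_m-1}\cdots x_m^{k_1-1}=\delta_{m,r},$$
since the only nonzero bracket occurs at $m=r$, $k_1=\cdots=k_m=1$. By $\Q$-linearity, the length-$m$ component of $\overline{\mi}(\varphi_{\rm corr})$ equals $c_m=\Mono_{\varphi,m}$, which is exactly $\Mini_{\varphi}(x_1,\dots,x_m)$ by Definition \ref{defn:Mini}. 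This gives the claimed equality.

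There is no genuine obstacle; the lemma is purely formal. The only point requiring care is the sign conversion $(-1)^{k-1}\zeta_{\varphi}(k)=(-1)^{k}\langle\varphi|f_1f_0^{k-1}\rangle$, which makes the two exponentials coincide.
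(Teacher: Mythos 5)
Your proof is correct and follows essentially the same route as the paper: expand $\varphi_{\rm corr}=\sum_r c_r Y_1^r$, observe that $\overline{\mi}$ sends $Y_1^r$ to the constant $1$ in length $r$ and $0$ otherwise, and match the exponential generating series of the $c_r$ with that of $\Mono_{\varphi,r}$ via the sign identity $(-1)^{k-1}\zeta_\varphi(k)=(-1)^k\langle\varphi|f_1f_0^{k-1}\rangle$. No gaps.
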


\begin{proof}
Put $\varphi_{\rm corr}=\sum_{r\geq0}c_r Y_1^r$ ($c_r\in \Q$).
Then by definition of map $\overline{\mi}$, we have
$$
\overline{\mi}^r_{\varphi_{\rm corr}}(x_1,\dots,x_r)
=c_r x_1^0\cdots x_r^0
=c_r.
$$
So we get
\begin{align*}
\sum_{r\geq0}\overline{\mi}^r_{\varphi_{\rm corr}}(x_1,\dots,x_r)t^r
&=\exp\left(
\sum_{k=2}^\infty (-1)^k\frac{\langle \varphi | f_1f_0^{k-1} \rangle}{k} t^k
\right)
=\sum_{r\geq0}\Mono_{\varphi,r}t^r.
\end{align*}
Hence, we obtain $\overline{\mi}_{\varphi_{\rm corr}}^r(x_1,\dots,x_r)=\Mini_\varphi(x_1,\dots,x_r)$,
which proves the claim.
\end{proof}

Following Racinet \cite{R}, we consider  the element
\footnote{
For our technical reason, we reverse the order of the product in \cite{R}.
}
\begin{equation}\label{eq: varphi ast}
\varphi_*:=\pi_Y(\varphi)\cdot\varphi_{\rm corr}
\end{equation}
in $\Q\langle\langle Y \rangle\rangle$ for $\varphi \in \widehat{U\frak f_2}^\dag$.

\begin{prop}\label{prop: equivalent formulation of double shuffle relations}
Let $\varphi\in\widehat{U\frak f_2}^\dag$.
The following two are equivalent:
\begin{enumerate}
\item $\Delta_*(\varphi_*)=\varphi_*\otimes \varphi_*$,
\item $\shmap_*\Bigl(\Mini_\varphi\times \swap(\ma(\varphi))\Bigr)
=\Bigl(\Mini_\varphi\times \swap(\ma(\varphi))\Bigr)\otimes \Bigl(\Mini_\varphi\times \swap(\ma(\varphi))\Bigr)$.
\end{enumerate}
\end{prop}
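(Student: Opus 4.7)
The plan is to reduce the equivalence to a transport along the anti-algebra isomorphism $\overline{\mi}$ and the compatibility diagram of Lemma \ref{lem:dimi delta*=sh* mi}. The first task is to identify $\overline{\mi}(\varphi_*)$ explicitly with $\Mini_\varphi \times \swap(\ma(\varphi))$. Since $\varphi_* = \pi_Y(\varphi)\cdot\varphi_{\rm corr}$, applying the anti-homomorphism $\overline{\mi}$ from Lemma \ref{lem:algebra hom of mi} gives
\[
\overline{\mi}(\varphi_*) = \overline{\mi}(\varphi_{\rm corr}) \times \overline{\mi}(\pi_Y(\varphi)),
\]
and then Lemma \ref{lem:mi phicorr=Miniphi} identifies the first factor with $\Mini_\varphi$ while Lemma \ref{lem:swap ma=mi pi ret} identifies the second with $\swap(\ma(\varphi))$. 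This establishes the key dictionary between the two ``sides'' of the proposition.

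Next I would run this dictionary through the coproduct-compatibility square of Lemma \ref{lem:dimi delta*=sh* mi}, which tells us that $\shmap_* \circ \overline{\mi} = \overline{\dimi} \circ \Delta_*$. Applied to $\varphi_*$, this yields
\[
\shmap_*\bigl(\Mini_\varphi \times \swap(\ma(\varphi))\bigr) = \overline{\dimi}(\Delta_*(\varphi_*)).
\]
On the other hand, \eqref{eqn:dimibar=mibar otimes mibar} gives $\overline{\dimi}(\varphi_*\otimes\varphi_*) = \overline{\mi}(\varphi_*)\otimes\overline{\mi}(\varphi_*) = \bigl(\Mini_\varphi \times \swap(\ma(\varphi))\bigr)\otimes\bigl(\Mini_\varphi \times \swap(\ma(\varphi))\bigr)$. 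Consequently the group-like identity for $\varphi_*$ transports directly to condition (2), proving the implication (1)$\Rightarrow$(2).

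For the converse I would rely on the injectivity of $\overline{\dimi}$. The map $\overline{\dimi}$ factors as $\overline{\mi}\,\widehat\otimes\,\overline{\mi}$ followed by the isomorphism $i_{\widehat\otimes}$ of Lemma \ref{lem: tensor isom between moulds and dimoulds}; since $\overline{\mi}$ is a $\Q$-linear bijection by Lemma \ref{lem:algebra hom of mi}, so is $\overline{\dimi}$. Therefore from
\[
\overline{\dimi}(\Delta_*(\varphi_*)) = \overline{\dimi}(\varphi_*\otimes\varphi_*)
\]
we may conclude $\Delta_*(\varphi_*) = \varphi_*\otimes\varphi_*$, which is (1).

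No step here is really the main obstacle: all heavy lifting (the interplay of $\ma$, $\mi$, $\pi_Y$, $\varphi_{\rm corr}$ with $\swap$, $\Mini$, $\shmap_*$) has already been encapsulated in Lemmas \ref{lem:algebra hom of mi}--\ref{lem:dimi delta*=sh* mi} together with the tensor-product formula \eqref{eqn:dimibar=mibar otimes mibar}. The only point requiring care is to remember that $\overline{\mi}$ is an \emph{anti}-isomorphism, so that the order of the two factors $\pi_Y(\varphi)$ and $\varphi_{\rm corr}$ is reversed when passing to moulds; this is precisely why the mould side reads $\Mini_\varphi \times \swap(\ma(\varphi))$ rather than $\swap(\ma(\varphi)) \times \Mini_\varphi$.
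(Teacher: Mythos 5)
Your proposal is correct and follows essentially the same route as the paper's own proof: the identification $\overline{\mi}(\varphi_*)=\Mini_\varphi\times\swap(\ma(\varphi))$ via Lemmas \ref{lem:algebra hom of mi}, \ref{lem:swap ma=mi pi ret} and \ref{lem:mi phicorr=Miniphi}, transport of the coproduct through Lemma \ref{lem:dimi delta*=sh* mi} together with \eqref{eqn:dimibar=mibar otimes mibar}, and bijectivity of $\overline{\dimi}$ for the converse. Your brief justification of that bijectivity (via the factorization through $\overline{\mi}\,\widehat\otimes\,\overline{\mi}$) is a small but welcome addition to what the paper merely asserts.
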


\begin{proof}
By Lemmas \ref{lem:algebra hom of mi}, \ref{lem:swap ma=mi pi ret} and \ref{lem:mi phicorr=Miniphi}, we have
\begin{equation}\label{eq:mi=mini x swap ma}
\overline{\mi}(\varphi_*)
=\overline{\mi}(\varphi_{\rm corr})\times \overline{\mi}(\pi_Y(\varphi))
=\Mini_\varphi\times \swap(\ma(\varphi)).
\end{equation}
Assume (1).
Then by the above equation, we calculate
\begin{align*}
\shmap_*\Bigl(\Mini_\varphi\times \swap(\ma(\varphi))\Bigr)
&=\shmap_*(\overline{\mi}(\varphi_*)).
\intertext{By Lemma \ref{lem:dimi delta*=sh* mi}, we have}
&=\overline{\dimi} \circ \Delta_*(\varphi_*).
\intertext{By the assumption (1) and the equation \eqref{eqn:dimibar=mibar otimes mibar}, we get}
&=(\overline{\mi} \otimes \overline{\mi})(\varphi_*\otimes \varphi_*) \\
&=\overline{\mi}(\varphi_*)\otimes \overline{\mi}(\varphi_*).
\intertext{By \eqref{eq:mi=mini x swap ma}, we obtain}
&=\Bigl(\Mini_\varphi\times \swap(\ma(\varphi))\Bigr)\otimes \Bigl(\Mini_\varphi\times \swap(\ma(\varphi))\Bigr).
\end{align*}
Hence, we obtain the claim (2) from (1).

Assume (2).
As by \eqref{eq:mi=mini x swap ma}, we have
\begin{align*}
&\overline{\dimi} \circ \Delta_*(\varphi_*)
=\shmap_*\Bigl(\Mini_\varphi\times \swap(\ma(\varphi))\Bigr), \\
&(\overline{\mi} \otimes \overline{\mi})(\varphi_*\otimes \varphi_*)
=\Bigl(\Mini_\varphi\times \swap(\ma(\varphi))\Bigr)\otimes \Bigl(\Mini_\varphi\times \swap(\ma(\varphi))\Bigr).
\end{align*}
By using these equations, the assumption (2) and \eqref{eqn:dimibar=mibar otimes mibar}, we get
\begin{align*}
\overline{\dimi} \circ \Delta_*(\varphi_*)
&=\shmap_*\Bigl(\Mini_\varphi\times \swap(\ma(\varphi))\Bigr) \\
&=\Bigl(\Mini_\varphi\times \swap(\ma(\varphi))\Bigr)\otimes \Bigl(\Mini_\varphi\times \swap(\ma(\varphi))\Bigr) \\
&=(\overline{\mi} \otimes \overline{\mi})(\varphi_*\otimes \varphi_*) \\
&=\overline{\dimi}(\varphi_*\otimes \varphi_*).
\end{align*}
since $\overline{\dimi}$ is bijective, we obtain the claim (1).
\end{proof}

\begin{defn}[\cite{R}]\label{def:DMR}
The double shuffle set $\DMR$ is defined to be the set of series
$\varphi=\varphi(f_0,f_1)$ in
$\widehat{U\mathfrak f_2}$
which satisfies
$\varphi(0,0)=1$,
$\Delta(\varphi)=\varphi\otimes\varphi$,
$\Delta_\ast(\varphi_\ast)=\varphi_\ast\otimes \varphi_\ast$
and
$\langle\varphi|f_0\rangle=\langle\varphi|f_1\rangle=0$.
We define
$\DMR_0$ to be its subset defined by
$\langle\varphi|f_1f_0\rangle=0$.
\end{defn}

In \cite{R} it is shown that $\DMR_0$ forms a group under $\circledast$.

\begin{thm}[cf. {\cite[Theorem 3.4.4]{S-ARIGARI}}]
\label{theorem: GARI and DMR}
We have
\begin{align*}
&\ma^{-1}(\GARI(\mathcal{F}_\ser)_{{\as\ast\is}})=\DMR \cdot \exp{\Q f_{1}}, \\
&\ma^{-1}(\GARI(\mathcal{F}_\ser)_{\underline{\as\ast\is}})=\DMR_0 \cdot \exp{\Q f_{1}}.
\end{align*}
\end{thm}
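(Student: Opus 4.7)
The plan is to prove both inclusions separately, using Proposition \ref{prop: equivalent formulation of double shuffle relations} as the central bridge between the two formulations, and exploiting that the factor $\exp\Q f_{1}$ on the $\DMR$ side corresponds to the one-parameter freedom in the choice of the constant mould $C$ on the $\GARI$ side.

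For the inclusion $\DMR\cdot\exp\Q f_{1}\subseteq\ma^{-1}(\GARI(\mathcal{F}_\ser)_{\as\ast\is})$, I would take $\varphi\in\DMR$ and $\alpha\in\Q$, set $\psi=\varphi\cdot\exp(\alpha f_{1})$, and exhibit $C=\Mini_\psi$ as the required witness. Group-likeness of $\psi$ under $\Delta$ yields symmetrality of $\ma(\psi)$ via Proposition \ref{prop:bijection expf and GARIas}. The crux consists of two elementary identities: (a) $\psi_{\mathrm{corr}}=\varphi_{\mathrm{corr}}$, because $\exp(\alpha f_{1})$ contributes only words $f_{1}^{m}$ and so leaves every coefficient $\langle\cdot|f_{1}f_{0}^{k-1}\rangle$ with $k\geq 2$ unchanged; and (b) the standard Racinet-type identity $\psi_{\ast}=\varphi_{\ast}\cdot\exp(\alpha Y_{1})$, provable by direct expansion of $\pi_{Y}$. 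Since $\varphi_{\ast}$ is $\Delta_{\ast}$-group-like (as $\varphi\in\DMR$) and $\exp(\alpha Y_{1})$ is $\Delta_{\ast}$-group-like ($Y_{1}$ being $\Delta_{\ast}$-primitive), (b) gives that $\psi_{\ast}$ is $\Delta_{\ast}$-group-like. Proposition \ref{prop: equivalent formulation of double shuffle relations} applied to $\psi$, combined with $\Mini_\psi=\Mini_\varphi$ from (a), then yields the desired symmetrility.

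For the reverse inclusion, I would start with $\psi\in\widehat{U\frak f_{2}}^{\dag}$ such that $\ma(\psi)\in\GARI(\mathcal{F}_\ser)_{\as\ast\is}$, set $\alpha:=\langle\psi|f_{1}\rangle$, and consider $\varphi:=\psi\cdot\exp(-\alpha f_{1})$. Then $\varphi$ is $\Delta$-group-like with $\varphi(0,0)=1$, $\langle\varphi|f_{0}\rangle=0$ (automatic from $\widehat{U\frak f_{2}}^{\dag}$), and $\langle\varphi|f_{1}\rangle=0$; it remains to verify $\Delta_{\ast}(\varphi_{\ast})=\varphi_{\ast}\otimes\varphi_{\ast}$. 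Writing the witness mould as $C=\overline{\mi}(\Phi)$ with $\Phi\in\Q[[Y_{1}]]$, Lemmas \ref{lem:algebra hom of mi} and \ref{lem:swap ma=mi pi ret} identify $C\times\swap(\ma(\psi))$ with $\overline{\mi}(\pi_{Y}(\psi)\cdot\Phi)$, and Lemma \ref{lem:dimi delta*=sh* mi} together with the $\overline{\mi}$-analogue of Proposition \ref{prop:gp-like, Lie-like} translates the symmetrility of this mould into $\Delta_{\ast}$-group-likeness of $\pi_{Y}(\psi)\cdot\Phi$, that is, of $\psi_{\ast}\cdot\Theta$ where $\Theta:=\psi_{\mathrm{corr}}^{-1}\Phi\in\Q[[Y_{1}]]$.

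The hard part will be to conclude that $\Theta\in\exp(\Q Y_{1})$, which would force $\psi_{\ast}$ to be $\Delta_{\ast}$-group-like and hence $\varphi_{\ast}=\psi_{\ast}\cdot\exp(-\alpha Y_{1})$ as well. A soft observation is that the set of valid $\Phi$'s forms at most a one-parameter family: if $\Phi_{1},\Phi_{2}$ both qualify, then $\Phi_{1}^{-1}\Phi_{2}=(\pi_{Y}(\psi)\Phi_{1})^{-1}(\pi_{Y}(\psi)\Phi_{2})$ is $\Delta_{\ast}$-group-like in the commutative algebra $\Q[[Y_{1}]]$, hence of the form $\exp(\gamma Y_{1})$. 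The delicate content is to show that $\psi_{\mathrm{corr}}$ itself qualifies: this requires exploiting the shuffle relations implied by $\Delta$-group-likeness of $\psi$---in particular, that the pure-$Y_{1}$ part of $\pi_{Y}(\psi)$ equals $\exp(\alpha Y_{1})$ since $\langle\psi|f_{1}^{n}\rangle=\alpha^{n}/n!$---and analyzing the interaction with $\Delta_{\ast}$ degree by degree, in the spirit of Sauzin's \cite[Theorem 3.4.4]{S-ARIGARI}. The $\underline{\as\ast\is}$ statement then follows, since evenness of the length-one component of $\ma(\psi)$ translates into $\langle\psi|f_{1}f_{0}\rangle=0$ together with vanishing of the higher odd-degree coefficients, a condition preserved under multiplication by $\exp(\alpha f_{1})$ (which only affects the constant term of $\ma(\psi)(x_{1})$).
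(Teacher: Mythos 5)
Your proposal is correct and follows essentially the same route as the paper: both directions pass through the dictionary $\mi=\overline{\mi}\circ\pi_Y$ (Lemmas \ref{lem:algebra hom of mi}, \ref{lem:swap ma=mi pi ret}, \ref{lem:dimi delta*=sh* mi}) and Proposition \ref{prop: equivalent formulation of double shuffle relations}, with the normalization $\kappa=\langle\cdot\,|\,f_1\rangle$ accounting for the $\exp\Q f_{1}$ factor, and the step you flag as delicate --- that the constant correction series is forced to equal $\psi_{\mathrm{corr}}$ up to $\exp(\Q Y_1)$ --- is precisely the point the paper likewise delegates to the regularization arguments of \cite{IKZ}. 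Your forward direction, taking the witness $C=\Mini_\psi$ and using $\psi_{\ast}=\varphi_{\ast}\cdot\exp(\alpha Y_1)$, is if anything slightly more careful about the $\varphi_{\mathrm{corr}}$ factor than the paper's own choice of witness.
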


For $\GARI(\mathcal{F})_{{\as\ast\is}}$ and $\GARI(\mathcal{F})_{\underline{\as\ast\is}}$,
see  Definition \ref{def:GARIas*is}.

\begin{proof}
It is enough to show the first equality.
Let $M\in \GARI(\mathcal{F}_\ser)_{{\as\ast\is}}$ such that
$C\times \swap(M)$ is symmetril with a constant mould $C$.
Put $\varphi'=\ma^{-1}(M)$ and $\alpha'(f_1):=\mi^{-1}(C)\in\Q\langle\langle f_1\rangle\rangle$.
Put $\kappa=\langle \varphi'\bigm| f_1 \rangle$,
 $\varphi=\varphi'\cdot \exp\{-\kappa f_1\}$
 and
 $\alpha(f_1)=\exp(\kappa f_1)\alpha'(f_1)$.
Then $\pi_Y(\varphi)\cdot\alpha(Y_1)
=\pi_Y(\varphi')\cdot \alpha'(Y_1)
\in\Q\langle\langle Y\rangle\rangle$ is group-like with respect to $\Delta_\ast$
by Lemmas \ref{lem:swap ma=mi pi ret} and \ref{lem:dimi delta*=sh* mi}.
Since we have $\langle\varphi|f_1\rangle=0$,
$\alpha(Y_1)$ is  given by $\varphi_{\rm corr}(Y_1)$
by the arguments in \cite{IKZ}.
So we have $\varphi\in\DMR$, whence $\varphi'\in \DMR \cdot \exp{\Q f_{1}}$.

Let $\varphi'=\varphi\cdot\exp(\kappa f_1)$ with $\varphi\in\DMR$ and $\kappa\in\Q$.
Put $C=\mi(\exp(-\kappa f_1))$.
Note that $\mi(\varphi)=C\times \mi(\varphi')$.
Then, by Lemmas \ref{lem:algebra hom of mi}, \ref{lem:swap ma=mi pi ret} and \ref{lem:dimi delta*=sh* mi}, we have
\begin{align*}
\shmap_*(C\times \mi(\varphi'))
&=\shmap_*(\mi(\varphi'\cdot\exp(-\kappa f_1)))
=\shmap_*\circ\mi(\varphi)
=\overline{\dimi}\circ\Delta_*(\varphi).
\intertext{By \eqref{eqn:dimibar=mibar otimes mibar}, we get}
&=(\overline{\mi}\otimes \overline{\mi})(\varphi \otimes \varphi)
=(C\times \mi(\varphi')) \otimes (C\times \mi(\varphi')).
\end{align*}
So we obtain $C\times \mi(\varphi')=C\times \swap(\ma(\varphi'))$ is symmetril, whence $\ma(\varphi')\in\GARI(\mathcal{F}_\ser)_{{\as\ast\is}}$.
\end{proof}

\end{document}